\numberwithin{equation}{section}
\newcommand{\bC}{{\mathbb C}}
\newcommand{\bP}{{\mathbb P}}
\newcommand{\bR}{{\mathbb R}}
\newcommand{\bZ}{{\mathbb Z}}
\newcommand{\sF}{\EuScript F}
\newcommand{\cF}{\mathcal F}
\newcommand{\cH}{\mathcal H}
\newcommand{\cW}{\mathcal W}
\newcommand{\cC}{\mathcal C}
\newcommand{\sL}{\EuScript L}
\newcommand{\Tq}{T_{q}^{*}Q}
\newcommand{\TQ}{T^{*}Q}
\newcommand{\ev}{\operatorname{ev}}
\newcommand{\im}{\operatorname{im}}
\newcommand{\Moduli}{\mathcal M}
\newcommand{\Wrap}{\EuScript{W}}
\newcommand{\Tw}{\mathrm{Tw}}
\newcommand{\Aut}{\operatorname{Aut}}
\newcommand{\Auteq}{\operatorname{Auteq}}
\newcommand{\Symp}{\operatorname{Symp}}
\newcommand{\Laction}{\EuScript{A}}
\newcommand{\Energy}{\EuScript{E}}
\def\co{\colon\thinspace}
\newcommand{\Spin}{\operatorname{Spin}}
\newcommand{\Cone}{{\mathrm {Cone}}}
\renewcommand{\i}{\sqrt{-1}}
\renewcommand{\Tq}[1][\!]{T^{*}_{q_#1} Q_{#1}}
\renewcommand{\L}[1][\!]{L_{q_#1}}
\newcommand{\alg}{\mathrm{alg}}
\newtheorem{thm}{Theorem}[section]
\newtheorem{cor}[thm]{Corollary}
\newtheorem{lem}[thm]{Lemma}
\newtheorem{prop}[thm]{Proposition}
\newtheorem{defin}[thm]{Definition}
\newtheorem{def-lem}[thm]{Definition-Lemma}
\newtheorem{conj}[thm]{Conjecture}
\theoremstyle{remark}
\newtheorem{rem}[thm]{Remark}
\newtheorem{example}[thm]{Example}
\newcommand{\superscript}[1]{\ensuremath{^{\textrm{#1}}} }
\renewcommand{\th}[0]{\superscript{th}}
\newcommand{\noproof}{
\begin{flushright}
\qedsymbol
\end{flushright}}
\newcommand{\comment}[1]{}
\newcommand{\Acknowledgements}{{\em Acknowledgements.} }
\title[Exact Lagrangians in plumbings]{Exact Lagrangians in plumbings}
\author[M.~Abouzaid, I.~Smith]{Mohammed Abouzaid, Ivan Smith} \date{\today}
\thanks{This research was conducted during the period the first author served as a Clay Research Fellow. The second author was partially supported by European Research Council grant ERC-2007-StG-205349. }
\begin{document}

\begin{abstract}
Consider a Stein manifold $M$ obtained by plumbing cotangent bundles of manifolds of dimension greater than or equal to $3$ at points.  We prove that the Fukaya category of closed exact $Spin$ Lagrangians with vanishing Maslov class in $M$ is generated by the compact cores of the plumbing. As applications, we classify exact Lagrangian spheres in $A_2$-Milnor fibres of arbitrary dimension, derive constraints on exact Lagrangian fillings of Legendrian unknots in disk cotangent bundles, and prove that the categorical equivalence given by the spherical twist in a homology sphere is typically not realised by any compactly supported symplectomorphism.
\end{abstract}
\maketitle
\setcounter{tocdepth}{1}

\section{Introduction}

\subsection{Context} \label{sec:context}

A  classical problem in symplectic topology is to understand the topology of exact Lagrangian submanifolds of Stein manifolds. The case which has received most attention is Arnold's ``nearby Lagrangian submanifold" conjecture, asserting that an exact $L \subset T^*M$ should be Hamiltonian isotopic (and in particular diffeomorphic) to $M$.  A  homotopy-theoretic version of this, namely that the inclusion $L\subset T^*M$  is a homotopy equivalence whenever the Maslov class $\mu_L = 0$, has been proved in \cite{Abouzaid-ExactLag}. Moving away from cotangent bundles, much less is known. On the one hand, Dehn twists provide constructions of infinite families of distinct Lagrangian submanifolds in fixed homology or even smooth isotopy classes, for instance in the   $A_n$-surfaces arising as Milnor fibres of the singularities $\bC^2/\bZ_{n+1}$ with $n\geq 2$ \cite{Seidel:graded}.  On the other, there are a few uniqueness or non-existence theorems: 
\begin{itemize}
\item Hind \cite{Hind} proved there is a unique Lagrangian sphere up to isotopy in the $A_1$-space $T^*S^2$; 
\item Ishii, Ueda and Uehara  \cite{IU, IUU} classified Lagrangian spheres in the $A_n$-surfaces up to isomorphism in the Fukaya category, in particular showing that they formed a single orbit under the natural action of the braid group; 
\item Ritter \cite{Ritter} proved that the $A_n$-surfaces contain no exact Lagrangian tori.
\end{itemize}
Hind's work relied on existence results for holomorphic foliations which seem special to four dimensions, Ishii \emph{et al} appealed to sheaf theory computations valid only on complex surfaces, whilst Ritter used non-exact deformations of the symplectic form relying on positivity of the second Betti number.   

On the other hand, the systematic study of the symplectic topology of Stein manifolds has undergone new developments, with progress on at least 3 distinct fronts.  First, the work of Oancea \cite{Oancea:SpecSeq}, Seidel \cite{Seidel:bias}, McLean \cite{McLean} and others on symplectic homology (as defined by Floer-Hofer  and Viterbo) showed the importance of this invariant for distinguishing examples;  second, the work of Bourgeois, Eliashberg and Ekholm \cite{BEE} on Legendrian surgery rendered symplectic homology and its cousins amenable to inductive computation; and finally, there has been progress on foundational and structural results for Fukaya categories \cites{seidel-book, AS, generate}, and their connection with symplectic cohomology.

The purpose of this  note is to illustrate some of these developments, and their effectiveness in constraining the topology of Lagrangian embeddings in Stein manifolds. 

\vspace{0.5cm}

\noindent \textbf{\emph{Convention.}}  We work throughout over a field $k$.  If $k$ is not of characteristic 2, then all Lagrangian submanifolds should be equipped with (relative) Spin structures: note that the zero-section of a cotangent bundle $T^*Q$ is always relatively spin, relative to the background class $w_2(Q)$. As a standing convention throughout the paper, Lagrangian submanifolds are \emph{orientable} and \emph{connected}  (unless explicitly stated otherwise).  A small number of arguments in the paper rely on the freedom to choose the characteristic of $k$ coprime to the index of a covering space; in such situations we restrict to working with $Spin$ Lagrangians, and then the relevant Fukaya categories are defined over $\bZ$.

\subsection{Results} 
Let $Q_0$ and $Q_1$ be smooth Riemannian manifolds.  Recall the \emph{plumbing} $M = D^*Q_0 \# D^*Q_1$ of two cotangent bundles $T^*Q_i$ is given by choosing balls $B_{i}$ in $Q_i$, and identifying their disc cotangent bundles $D^{*} B_{i}  \subset D^*Q_i$  by a symplectomorphism which interchanges the zero-section and fibre directions (see Definition \ref{defin:plumbing} and Section \ref{sec:gener-comp-cores} for more details).  The plumbing $M$ is naturally a Liouville domain, which admits an exact symplectic form for which the obvious inclusions $Q_i \hookrightarrow M$ are exact Lagrangian submanifolds, and can be completed to a Liouville manifold we denote $ T^*Q_0 \# T^*Q_1 $.    There is a trivialisation of the bicanonical bundle $K_M^{\otimes 2}$, well-defined up to homotopy, which restricts on neighbourhoods of $Q_i \subset M$ to the trivialisations arising from complexified volume forms on the Lagrangians $Q_i$.

\begin{thm} \label{thm:main}
Let $M$ be a plumbing of two cotangent bundles. The Fukaya category $\cF(M)$ of closed exact Lagrangians fully faithfully embeds in the subcategory of the wrapped Fukaya category $\cW(M)$ generated by cotangent fibres.  If the real dimension of $M$ is greater than $4$, then every closed exact  Lagrangian with vanishing Maslov class is equivalent to a twisted complex over the components of the compact core equipped with local systems.
\end{thm}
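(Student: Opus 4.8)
The plan is to reduce both assertions to a study of the wrapped category $\cW(M)$, where a generation criterion is available. First I would introduce the canonical $A_\infty$-functor $\iota\colon \cF(M)\to\cW(M)$ that regards a closed exact Lagrangian as an object of the wrapped category, and check that it is fully faithful. For closed exact $L_0,L_1$ the wrapped complex $CW^{*}(L_0,L_1)$ is the colimit, over Hamiltonians of increasing slope at infinity, of the Floer complexes $CF^{*}(L_0,L_1;H)$; since $L_0$ and $L_1$ are compact, all generators and Floer solutions relevant to small-slope Hamiltonians already lie in a fixed compact region, the continuation maps in the colimit are quasi-isomorphisms, and $HW^{*}(L_0,L_1)\cong HF^{*}(L_0,L_1)$ compatibly with the product and higher operations. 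Hence $\iota$ is a fully faithful $A_\infty$-embedding.

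Second, I would show that the cotangent fibres $F_0,F_1$ of the two plumbed cotangent bundles split-generate $\cW(M)$, by Abouzaid's generation criterion: it suffices that the image of the open--closed map $\mathcal{OC}\colon HH_{*-n}\big(\cW_{\{F_0,F_1\}}\big)\to SH^{*}(M)$ contain the unit $1\in SH^{0}(M)$. For a single cotangent bundle this is known, and for the plumbing one bootstraps from the two factors -- e.g. by a Viterbo-restriction / Mayer--Vietoris argument, using that $F_i$ corresponds to a cotangent fibre of $T^{*}Q_i$ and that the unit of $SH^{*}(M)$ restricts to the units of the $SH^{*}(T^{*}Q_i)$, or by a direct computation of $SH^{*}(M)$ together with a check of surjectivity of $\mathcal{OC}$. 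Combined with the first step this proves the first assertion: $\cF(M)$ embeds fully faithfully into the subcategory of $\cW(M)$ generated by cotangent fibres, which, after idempotent completion, is all of $\cW(M)$.

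For the second assertion, assume $\dim_{\bR} M>4$ and let $L$ be closed exact with $\mu_L=0$. The idea is to trade the fibres for the cores by Koszul duality. One identifies $\cA:=\bigoplus_{i,j}CW^{*}(F_i,F_j)$ with the chains on the based path category of the compact core $K=Q_0\cup_{p}Q_1$ -- an Abbondandolo--Schwarz / Abouzaid-type computation of wrapped Floer cohomology of cotangent fibres carried out across the plumbing region, so that $CW^{*}(F_i,F_i)\simeq C_{-*}(\Omega_{x_i}K)$ and the off-diagonal pieces record paths through the plumbing point. This $\cA$ is smooth and connective, with $H^{0}(\cA)$ assembled from the group rings $k[\pi_1 Q_i]$, and is Koszul dual to the proper algebra $\cB:=\bigoplus_{i,j}CW^{*}(Q_i\otimes\mathcal E,\,Q_j\otimes\mathcal E')$ governing the cores equipped with (simple) local systems; the simple modules over $\cA$ correspond to those local systems. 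A closed exact Lagrangian is a compact object, so, being ``small'', its Yoneda image over $\cA$ corresponds under the duality to a perfect $\cB$-module -- that is, a twisted complex over the objects $Q_i\otimes\mathcal E$. The hypotheses $\mu_L=0$ and the chosen bicanonical trivialisation of $K_M^{\otimes 2}$ are what make the whole picture $\bZ$-graded, so that the relevant modules are built from genuine finite-rank local systems rather than more exotic objects; this is also where passing to a field of characteristic coprime to the order of the relevant monodromy intervenes, and where local systems are truly needed as building blocks (the plain zero section $Q_i$ is itself a twisted complex over the fibres only for suitable characteristics).

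The main obstacle will be Steps two and three together: establishing generation by fibres for the plumbing -- rather than for a single cotangent bundle, where it is known -- and pinning down $\cA$ and its Koszul dual across the plumbing neck. Both demand a careful analysis of wrapped Floer theory near the neck, and it is here that the dimension hypothesis $\dim_{\bR}M>4$ is used: it makes the Abbondandolo--Schwarz comparison and the control of the $\cA$-module attached to $L$ available, lets one invoke (or re-prove) the structural results on nearby Lagrangians of \cite{Abouzaid-ExactLag}, and avoids the extra holomorphic curves and isotopy pathologies special to four dimensions. One must also handle idempotent completion with care: generation gives $L$ only as a summand of a twisted complex over the $F_i$, and one uses that the objects $Q_i\otimes\mathcal E$ already absorb the relevant idempotents to produce an honest twisted complex over cores equipped with local systems.
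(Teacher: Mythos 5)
Your proposal diverges from the paper's proof at both of the key steps, and in ways that matter.

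For the first assertion, you propose to verify Abouzaid's split-generation criterion for the cotangent fibres (via the open--closed map into $SH^*(M)$) and then note that the idempotent completion of the triangulated hull of the fibres is all of $\cW(M)$. This is precisely the approach the authors considered and explicitly set aside: verifying the criterion for a plumbing essentially requires the Bourgeois--Ekholm--Eliashberg surgery formula, and at the time of writing the bridge between the SFT and Hamiltonian pictures was not in place. More to the point, even granting the criterion, you only obtain \emph{split}-generation, i.e.\ closed objects as summands of twisted complexes over fibres, whereas the statement claims an embedding into the subcategory \emph{generated} by the fibres. The paper instead proves honest generation (Theorem~\ref{thm:wrapped_category_generation}) by embedding the plumbing into a Lefschetz fibration over the connect sum $Q_0\#Q_1$, invoking Seidel's double-cover trick to realize any closed brane as a cone over thimbles modulo something disjoint from $E$ (Lemma~\ref{lem:seidel-generation}), applying the Viterbo restriction functor (Proposition~\ref{prop:restriction-functor}) to transfer this to the plumbing, and then geometrically eliminating the one extra generator (the ``diagonal'') via Polterovich surgery. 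That geometric route is what removes the idempotent-completion caveat that your plan still carries.

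For the second assertion, you invoke Koszul duality between the wrapped algebra of fibres and a proper algebra built from cores with local systems, and argue that compactness of $L$ forces its module to be perfect on the dual side. The paper's argument is more elementary and avoids setting up a Koszul duality for plumbings, which is not established: it shows by a Maslov index computation across the plumbing neck (Proposition~\ref{prop:non-positive}) that $A^*=\bigoplus_{i,j}HW^*(\L[i],\L[j])$ is supported in non-positive degrees when $n\ge 3$, identifies $HW^0(\L[i],\L[i])$ with the group ring $\bZ[\pi_1(Q_i)]$ via an Abbondandolo--Schwarz comparison adapted to the stretched plumbing (Proposition~\ref{prop:compute_HW_0_cotangents}), and then uses the purely algebraic observation (Lemma~\ref{lem:filtration_degree}) that a finite-rank module over a minimal non-positively graded $A_\infty$-algebra is filtered by degree with subquotients governed by $A^0$-modules, i.e.\ by local systems on the $Q_i$. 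That filtration argument directly produces the twisted complex, without any appeal to perfectness or compactness of objects. Your account of where $\dim_\bR M>4$ enters is also off: the hypothesis $n\ge 3$ is used concretely (i) to make the index of the short neck chord strictly negative (for $n=3$ there is a residual degree-zero chord that must be handled as a nilpotent ideal, and for $n=2$ the argument fails outright), and (ii) to ensure based loops and homotopies between them can be pushed off the chart $U_i$ in the comparison with $\Omega_{q_i}(Q_i)$. It is not about pathologies of four-dimensional holomorphic curves or the machinery of \cite{Abouzaid-ExactLag}, which the paper uses only for the stronger simple-connectivity refinement in Remark~\ref{rem:1-connected}.
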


\begin{rem}
The proof generalises to plumbings of many components, at least when the plumbing graph is a tree.  It seems likely that Theorem \ref{thm:main} holds in more generality (for self-plumbings; for plumbings along submanifolds of codimension $\geq 3$), but our argument employs a geometric trick using Lefschetz fibrations (Section \ref{sec:embedd-plumb-lefsch}) which is more restrictive.\end{rem}

 Theorem \ref{thm:main} is a generation, rather than split-generation, result for Fukaya categories, generalising the fact that the zero-section generates the Fukaya category of closed exact Lagrangians in a cotangent bundle.  Twisted complexes, rather than summands thereof, are more readily manipulated algebraically, whence the possibility of new applications.  Whilst these applications are not exhaustive, they should serve to illustrate the reach of Theorem \ref{thm:main}. Statements in the main body of the text are sometimes sharper.
 
 The first application generalises several of the four-dimensional results described in Section \ref{sec:context} to arbitrary dimensions, in the first non-trivial case.
Let $A_2^n$ denote the complex $n$-dimensional $A_2$-Milnor fibre obtained by plumbing two copies of $T^*S^n$, which can also be described as the affine variety
\[
A_2^n \ = \ \left\{ x_1^2 + \cdots + x_n^2 + t^{3} = 1 \right\} \subset \bC^{n+1}
\]
equipped with the restriction of the standard symplectic form $\omega_{\bC^{n+1}} = \frac{i}{2\pi}\sum_{j=1}^{n+1} dz_j \wedge d\bar{z}_j$. 

The zero section in either copy of $T^{*}S^{n}$ defines a Lagrangian sphere in $ A_2^n $, and the two spheres obtained in this way intersect at one point.  The Dehn twists about these spheres generate a subgroup of $\pi_{0}(\Symp(A_{2}^{n})) $ which is isomorphic to the braid group on three strands.  These symplectomorphisms act on the Fukaya category, so we  obtain a subgroup of the group of automorphisms of $ \cF( A_2^n ) $, which we denote $Br^{\alg}_{3}$.  Our next result asserts that, from the point of view of Floer theory, the collection of exact Lagrangians in $A_2^{n}$ is indistinguishable from the collection of spheres obtained by iteratively applying Dehn twists to either component of the skeleton:

\begin{thm} \label{thm:A_2}
Let $n\geq 3$.  Any object of $\cF(A_2^{n})$ corresponding to an exact Maslov zero Lagrangian in $A_{2}^{n}$  lies  in the orbit of the object defined by either component under the group generated by $Br^{\alg}_{3}$ and the shift functor.
\end{thm}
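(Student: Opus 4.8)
The plan is to feed Theorem~\ref{thm:main} into an essentially algebraic analysis of the tiny $A_\infty$-category generated by the two core spheres. Write $S_0,S_1\subset A_2^n$ for the two zero-sections; each is diffeomorphic to $S^n$, which is simply connected since $n\geq 3$, so carries only the trivial local system. As $\dim_{\bR}A_2^n=2n>4$, Theorem~\ref{thm:main} gives that any exact Maslov-zero Lagrangian $L$ is quasi-isomorphic, as an object of $\cF(A_2^n)$, to a twisted complex built from copies of $S_0$ and $S_1$; so $L$ lies in the triangulated subcategory $\mathcal D:=\langle S_0,S_1\rangle\subset\Tw\,\cF(A_2^n)$. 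Being closed, exact and Maslov-zero, $L$ has $HF^*(L,L)\cong H^*(L;k)$ finite-dimensional with $HF^0(L,L)=k$ ($L$ connected), so it is indecomposable in $\mathcal D$. It therefore suffices to show that every indecomposable object of $\mathcal D$ with finite-dimensional self-Hom is isomorphic to $T\cdot S_i[m]$ for some $T\in Br^{\alg}_3$, $i\in\{0,1\}$, $m\in\bZ$.

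\emph{Identifying the category.} Since $S_0$ and $S_1$ meet transversally in a single point, $\bigoplus_{i,j}HF^*(S_i,S_j)$ is the $A_2$ zigzag algebra, graded with the two path-algebra generators in complementary degrees summing to $n$ and the two short loops in degree $n$; for $n\geq 3$ a degree count forces the induced $A_\infty$-structure to be formal, so $\mathcal D$ is the perfect derived category of this explicit finite-dimensional algebra. Each $S_i$ is an $n$-spherical object; by Seidel--Thomas the twists $T_{S_0},T_{S_1}$ satisfy the braid relation (the spheres meet once) and generate a $Br_3$-action on $\mathcal D$, and Seidel's exact triangle for Dehn twists identifies $T_{S_i}$ with the autoequivalence induced by the geometric Dehn twist $\tau_{S_i}$, so $\langle T_{S_0},T_{S_1}\rangle=Br^{\alg}_3$. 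The half-twist interchanges $S_0$ and $S_1$ up to shift, so there is really a single orbit to consider.

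\emph{Classification.} The remaining point, and the crux, is that $Br^{\alg}_3$ acts transitively up to shift on the indecomposable objects of $\mathcal D$ with finite-dimensional endomorphisms, with the $S_i$ as the minimal such objects. I would argue by induction on a rank invariant $r(X)$ --- say the total dimension of $HF^*(X,S_0\oplus S_1)$, or the length of a minimal twisted complex representing $X$ --- the base case being $r(X)$ minimal, where one checks directly that $X\cong S_i[m]$. For the inductive step one must produce, for any non-minimal $X$, a word in $T_{S_0}^{\pm1},T_{S_1}^{\pm1}$ that strictly decreases $r$; concretely, a Harder--Narasimhan type decomposition of $X$ relative to the hearts generated by $S_0$ and by $S_1$ should single out which twist to apply, the process terminating by a Euclidean-algorithm argument modelled on the action of $SL_2(\bZ)$ on primitive vectors of $\bZ^2$ (the induced action on $K_0(\mathcal D)=\bZ^2$ is by reflections when $n$ is even and by symplectic transvections when $n$ is odd). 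The hard part is precisely this reduction at the chain level: the $K_0$-action is far too coarse to separate objects --- most visibly when $n$ is odd, where the Euler form is skew and the $K_0$-orbit of $(1,0)$ is every primitive vector while many non-isomorphic objects share a class --- so the argument has to be run uniformly in the parity of $n$. An acceptable alternative is first to prove directly that such an $L$ must be a homology $n$-sphere, hence a spherical object, and then invoke the (more tractable) classification of spherical objects in $\mathcal D$; but establishing that $L$ is a homology sphere looks to be of comparable difficulty.
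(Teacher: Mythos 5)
Your setup is correct and matches the paper's: Theorem~\ref{thm:main} reduces the problem to twisted complexes built from $S_0$ and $S_1$ with trivial local systems (since $\pi_1(S^n)=1$ for $n\geq 3$), you correctly note that $L$ being closed, connected and exact forces $HF^0(L,L)\cong k$ so that $L$ is indecomposable, and you correctly identify the relevant subcategory as the perfect derived category of the graded $A_2$ zigzag algebra with its $Br_3$-action by spherical twists. All of this agrees with the framework in Section~\ref{sec:exact-lagr-a_2}.

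However the actual content of the theorem --- that every such object lies in the braid orbit of a single $S_i$ up to shift --- is not proved in your proposal, and you say as much (``the hard part is precisely this reduction at the chain level''). The paper does not argue via Harder--Narasimhan filtrations or a $K_0$-level Euclidean algorithm, nor via first proving $L$ is a homology sphere and classifying spherical objects. Instead the driving constraint is that $\mathrm{End}(L)\cong H^*(L;k)$ is supported in \emph{non-negative} degrees. Proposition~\ref{prop:non-positive} and Lemma~\ref{lem:degree-at-least-2} first guarantee that the arrows in the representing twisted complex sit in degree $\geq 2$ (in particular no identity morphisms appear as components of the differential). Lemma~\ref{lem:classify_positive_endo_algebra} then introduces a specific complexity function (the length of the longest zig-zag through the bigraded array of vector spaces $U_i$, $V_j$) and its proof, via Lemmas~\ref{lem:first-step}, \ref{lem:second-step-1} and~\ref{lem:second-step-2}, is a hands-on analysis: the non-negativity of $\mathrm{End}(\cC^\bullet)$ is used repeatedly to force certain component maps $U_i\to V_i$, $V_j\to U_{j-n+2}$ to be injective, surjective or zero, and this in turn guarantees that applying one of $T_0^{\pm1}$, $T_1^{\pm1}$ strictly decreases complexity. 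Connectedness of $L$ then rules out nontrivial direct sums. This is precisely the chain-level reduction you flag as missing; it genuinely does not factor through $K_0$ (your observation about the skew Euler form for odd $n$ explains why), and the HN-type heuristic you gesture at is not carried out. The homology-sphere corollary is a \emph{consequence} of Proposition~\ref{prop:braid-orbit}, not an ingredient, so your proposed alternative route is circular as stated. The proposal is therefore incomplete at the step that constitutes the theorem's actual content.
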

\begin{cor}
  Every exact Maslov zero Lagrangian in $A_{2}^{n}$ is a homology sphere and lies in a primitive homology class.
\end{cor}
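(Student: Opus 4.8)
The plan is to derive both statements formally from Theorem~\ref{thm:A_2}, using only two standard facts about Floer cohomology of closed exact Lagrangians: that $HF^{*}(K,K;k)\cong H^{*}(K;k)$ for any closed exact Lagrangian $K$ (there are no nonconstant holomorphic discs, by exactness, so for a $C^2$-small Hamiltonian the Floer complex is a Morse complex), and that for transverse closed exact $K_0,K_1$ the number $\chi\,HF^{*}(K_0,K_1;k)$ equals, up to a global sign depending only on $K_0,K_1$, the homological intersection number $[K_0]\cdot[K_1]$.

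Let $L\subset A_2^{n}$ be a closed exact Maslov-zero Lagrangian (so $n\ge 3$, and $L$ is a graded object of $\cF(A_2^{n})$). By Theorem~\ref{thm:A_2}, $L$ is quasi-isomorphic in $\cF(A_2^{n})$ to an object in the orbit of one of the two core spheres under $Br^{\alg}_{3}$ and the shift functor; since every element of $Br^{\alg}_{3}$ is the action on $\cF(A_2^{n})$ of a symplectomorphism, and the shift functor commutes with such actions, this means $L\simeq \psi(S)[\sigma]$ for some symplectomorphism $\psi$ of $A_2^{n}$, some integer $\sigma$, and $S$ one of the two Lagrangian spheres $S\cong S^{n}$ forming the skeleton. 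Write $S'$ for the other core sphere; $S$ and $S'$ meet transversally in a single point, so $HF^{*}(S,S';k)$ is one-dimensional over $k$ (a single generator, with vanishing differential for degree reasons).

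For the homology-sphere claim, note that $HF^{*}(L,L;k)\cong HF^{*}(\psi(S)[\sigma],\psi(S)[\sigma];k)\cong HF^{*}(\psi(S),\psi(S);k)\cong HF^{*}(S,S;k)\cong H^{*}(S^{n};k)$: the first isomorphism is invariance of endomorphism cohomology under quasi-isomorphism (the shift cancelling in $\Hom(-,-)$), the second holds because $\psi$ acts as an autoequivalence of $\cF(A_2^{n})$, and the last is the first standard fact. Since also $HF^{*}(L,L;k)\cong H^{*}(L;k)$, we conclude $H^{*}(L;k)\cong H^{*}(S^{n};k)$ for every field $k$. As $L$ is closed, connected and orientable of dimension $n$, applying this over $\bQ$ and over all $\bF_{p}$ and invoking the universal coefficient theorem forces $H_{*}(L;\bZ)\cong H_{*}(S^{n};\bZ)$, so $L$ is a homology sphere. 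For primitivity, $HF^{*}(L,\psi(S');k)\cong HF^{*}(\psi(S)[\sigma],\psi(S');k)$ is a shift of $HF^{*}(S,S';k)$, hence one-dimensional over $k$, so $\chi\,HF^{*}(L,\psi(S'))=\pm 1$; by the second standard fact this equals $\pm[L]\cdot[\psi(S')]$, so $[L]\cdot[\psi(S')]=\pm1$ in $H_{n}(A_2^{n};\bZ)$. If $[L]=m\beta$ for some $\beta\in H_{n}(A_2^{n};\bZ)$, then $m\mid 1$, so $m=\pm1$ and $[L]$ is primitive.

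I do not expect a real obstacle: the geometric content sits entirely in Theorem~\ref{thm:A_2}. The one step that is not purely categorical is recovering the homology class $[L]$ from the quasi-isomorphism class of $L$, which we accomplish via the intersection pairing with $\psi(S')$ together with the identity $\chi\,HF^{*}=\pm\,[K_0]\cdot[K_1]$; the only care needed is in the sign and grading conventions behind that identity, and---if one wants an integral rather than a merely $k$-coefficient conclusion---in ensuring Theorem~\ref{thm:A_2} is available over $\bQ$ and each $\bF_{p}$, which is unproblematic since the core spheres are simply connected for $n\ge 3$.
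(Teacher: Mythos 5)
Your proof is correct and takes essentially the same route the paper has in mind: the paper derives this statement (and the more general Corollary~\ref{cor:homology}) as an ``immediate'' consequence of Proposition~\ref{prop:braid-orbit}, and your argument spells out the two standard Floer-theoretic facts --- $HF^*(K,K;k)\cong H^*(K;k)$ and $\chi\,HF^*(K_0,K_1)=\pm[K_0]\cdot[K_1]$ --- that make the deduction immediate.

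One small remark on scope. For the primitivity claim you invoke that every element of $Br^{\alg}_3$ is realized by an actual symplectomorphism of $A_2^n$, so that $\psi(S')$ is a genuine Lagrangian sphere to which the Euler-characteristic/intersection-number identity applies. That is available precisely because the cores of $A_2^n$ are spheres, so the twist functors are geometric Dehn twists. The paper's Corollary~\ref{cor:homology} is stated more generally, for plumbings of $\bZ$-homology spheres whose cores need not admit Dehn twists; there one must instead observe that the algebraic twist functors $T_{Q_i}$ act on Euler characteristics (equivalently, on the free part of $H_n(M;\bZ)$) by Picard--Lefschetz reflections preserving the intersection lattice, so the braid orbit of a primitive core class consists entirely of primitive classes, and then transfer this to $[L]$ via $\chi\,HF^*(L,Q_j)=\chi\,HF^*(\Phi(Q_0)[\sigma],Q_j)$. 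For the $A_2^n$ statement you were asked to prove, however, both versions of the argument go through, and yours is clean and complete.
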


\begin{rem} \label{rem:1-connected}
In fact, every exact Lagrangian of vanishing Maslov index in $A_{2}^{n}$ is homotopy equivalent to $S^{n}$. The proof  uses the technology from \cite{Abouzaid-ExactLag}.  We explain the main ideas in Section \ref{sec:exact-lagr-a_2}.
\end{rem}

\begin{rem}
Let $X$ be a Calabi-Yau $n$-fold, either closed or convex at infinity.  Thomas and Yau \cite{ThomasYau} conjectured that a graded Lagrangian submanifold of $X$ with well-defined Floer cohomology flows, under mean curvature flow, to a union of cleanly intersecting special Lagrangian submanifolds.  Whilst the (non-zero!) homology classes of the different components of the special Lagrangian limit can in principle cancel, it seems hard in examples to find nullhomologous configurations which could ever smooth to \emph{spheres} (at least in dimension 3, where smoothing tends to push up the first Betti number).  Since spheres in Calabi-Yau manifolds are always unobstructed \cite{FO3}, one is led to ask if every Lagrangian sphere in a Calabi-Yau is homologically essential.  The Corollary proves this in the first non-trivial case beyond the affine quadric $T^*S^n$.  \end{rem}

In our next application, we consider the \emph{Legendrian unknot}  $\Lambda \subset S^*Q = \partial D^*Q $ which is the boundary of a disk cotangent fibre in $D^*Q$.   We say the Weinstein surgery $M_Q$ obtained by adding an $n$-handle to $D^*Q$ along  $\Lambda$ is obtained by \emph{capping} the unknot.  The Legendrian $\Lambda$ naturally bounds a Lagrangian disk in $M_{Q}$ lying on the outside of $D^*Q$; in particular, any Lagrangian filling $K \subset D^*Q$ of $\Lambda$, with $\partial K = \Lambda$, defines a closed Lagrangian submanifold $\bar{K} \subset M_Q$.  The simplest case arises from choosing $K$ to be the cotangent fibre, yielding a Lagrangian sphere $S_{\Lambda}$ in  $M_Q$.

Let $\Sigma$ be an integral homology sphere with non-trivial fundamental group.  Let $M_{\Sigma}$ be obtained by capping the unknot in $D^*\Sigma$.  

\begin{thm} \label{thm:hlgy_sphere}
If $\bar{K} \subset M_{\Sigma}$ is obtained by capping an exact Maslov zero filling $K$ of $\Lambda$, then $\bar{K}$ is equivalent in $\cF(M_{\Sigma})$ either to the sphere $S_{\Lambda}$, or to a Lagrange surgery of $S_{\Lambda}$ with the zero-section $\Sigma$.
\end{thm}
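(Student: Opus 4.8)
The plan is to first recognise $M_{\Sigma}$ as a plumbing. Attaching a Weinstein $n$-handle to $D^{*}\Sigma$ along the Legendrian unknot $\Lambda$ (which bounds a cotangent fibre disc) produces $M_{\Sigma}\cong T^{*}\Sigma\,\#\,T^{*}S^{n}$ with $n=\dim\Sigma$; compare Definition~\ref{defin:plumbing} and \S\ref{sec:gener-comp-cores}. Its compact core is $\Sigma\cup S_{\Lambda}$, the two pieces meeting transversally in one point, and $S_{\Lambda}$ is the zero-section of the $T^{*}S^{n}$-factor, realised as $C\cup F$ with $C$ the core disc of the handle and $F:=D^{*}_{q}\Sigma$ the cotangent fibre disc. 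Since $\Sigma$ is an integral homology sphere with $\pi_{1}(\Sigma)\neq 1$ it is not a sphere, so $n\geq 3$ and $\dim_{\bR}M_{\Sigma}=2n\geq 6>4$. The capped Lagrangian $\bar{K}=K\cup C$ is closed, connected, exact, $Spin$ and of vanishing Maslov class, inherited from $K$ (using that the bicanonical trivialisation of $M_{\Sigma}$ restricts near $\Sigma$ and $S_{\Lambda}$ to the ones coming from complexified volume forms, and $H^{1}(\Lambda)=0$). Theorem~\ref{thm:main} then places $\bar{K}$ in the subcategory of $\cF(M_{\Sigma})$ (and its triangulated envelope $\Tw\cF(M_{\Sigma})$) generated by $\Sigma$ and $S_{\Lambda}$ equipped with local systems; as $\pi_{1}(S^{n})=1$ there are no non-trivial local systems on $S_{\Lambda}$.

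To identify the object, I would isolate the dependence on the filling. The fibre disc $F$ is itself an exact Maslov-zero filling of $\Lambda$ inside $D^{*}\Sigma$, with cap $S_{\Lambda}=C\cup F$. If $K$ is Hamiltonian isotopic rel $\Lambda$ to $F$, then $\bar{K}$ is Hamiltonian isotopic to $S_{\Lambda}$, and we are in the first case. Otherwise glue $K$ to the reversed fibre disc to form a \emph{closed} Lagrangian $\hat{K}:=K\cup_{\Lambda}\overline{F}\subset D^{*}\Sigma$; after a $C^{\infty}$-small isotopy of collars the gluing is smooth and Lagrangian, and $\hat{K}$ is connected, exact, $Spin$, and of vanishing Maslov class (the obstructions vanish because $\Lambda\cong S^{n-1}$ with $n\geq 3$, so $H^{1}(\Lambda)=0$). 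A Lagrangian surgery/cobordism argument — in the spirit of Biran--Cornea and of Seidel's exact triangle, built from the three pieces $C$, $F$, $K$ meeting along $\Lambda$ — yields an exact triangle in $\Tw\cF(M_{\Sigma})$ relating $\bar{K}$, $S_{\Lambda}$ and $\hat{K}$: replacing the filling $F$ of $\Lambda$ by $K$ takes the capped object to the mapping cone with the ``difference'' $\hat{K}$. Now $\hat{K}$ is a closed exact Maslov-zero Lagrangian lying in the \emph{cotangent bundle} $D^{*}\Sigma$, so by the generation theorem for cotangent bundles (see \S\ref{sec:context}, \cite{Abouzaid-ExactLag}) it is equivalent to $\Sigma$ twisted by a local system which, since $H_{1}(\Sigma;\bZ)=0$, must be trivial; hence $\hat{K}\simeq\Sigma[j]$ for some shift $j$ (as an object of $\cF(M_{\Sigma})$, using that the cobordism produces this $\hat{K}$-end already inside the compact-core-generated subcategory).

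Feeding this into the triangle, $\bar{K}$ is, up to shift, the mapping cone of a morphism between $S_{\Lambda}$ and $\Sigma$. As $\Sigma$ and $S_{\Lambda}$ meet transversally in a single point, $HF^{*}(\Sigma,S_{\Lambda})$ and $HF^{*}(S_{\Lambda},\Sigma)$ are each one-dimensional, concentrated in a single degree; the connecting morphism of the triangle cannot vanish, for otherwise $\bar{K}$ would be the non-trivial direct sum $S_{\Lambda}\oplus\Sigma[j']$, contradicting that the closed connected exact Lagrangian $\bar{K}$ is indecomposable ($HF^{0}(\bar{K},\bar{K})=k$). So the connecting morphism is the generator and $\bar{K}$ is its cone — which is precisely the object represented by a Lagrange surgery of $S_{\Lambda}$ with the zero-section $\Sigma$ (identifying Lagrange surgery along a transverse point with a mapping cone; equivalently $\bar{K}\simeq\tau_{S_{\Lambda}}^{\pm1}(\Sigma)$ via Seidel's triangle). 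Together with the first case this is the assertion.

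The main obstacle is the construction of the exact triangle $\hat{K}\to\bar{K}\to S_{\Lambda}$ — making precise that, at the level of $\cF(M_{\Sigma})$, replacing the filling $F$ of $\Lambda$ by $K$ is effected by taking the mapping cone with $\hat{K}=K\cup\overline{F}$ — which is a Lagrangian cobordism/surgery statement requiring care near the Legendrian boundary and near infinity, and one must identify the class of the connecting morphism. Secondary technical points are the identification of Lagrange surgery at a transverse point with a mapping cone in the present graded $Spin$ setting (with the precise grading of $HF^{*}(\Sigma,S_{\Lambda})$), and the transport of the identification $\hat{K}\simeq\Sigma[j]$ from $\cF(D^{*}\Sigma)$ to $\cF(M_{\Sigma})$; if one prefers to avoid the cobordism and argue entirely inside $\cF(M_{\Sigma})$, one must instead classify directly the twisted complexes over $\{\Sigma,S_{\Lambda}\}$ realised by cappings, which returns to bounding $\dim HF^{*}(\bar{K},F_{1})=1$ for $F_{1}$ the cocore of the handle and thereby excluding both $\Sigma$ itself and the family $\tau_{S_{\Lambda}}^{m}(\Sigma)$ with $|m|\geq 2$.
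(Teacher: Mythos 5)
Your main route hinges on two things that do not go through. First, the closed Lagrangian $\hat{K}=K\cup_{\Lambda}\overline{F}\subset D^{*}\Sigma$ does not exist as described: both $K$ and the fibre disc $F$ are properly embedded in $D^{*}\Sigma$, each cylindrical over $\Lambda$ in a collar of $\partial D^{*}\Sigma$, so they approach $\Lambda$ from the \emph{same} side; their union, even after a small collar isotopy, is a doubled rather than a smooth closed embedded Lagrangian (taking $K=F$ would produce $F$ counted twice, not a sphere). One could bend $F$ out into the cylindrical end of the completion, but the result then leaves $D^{*}\Sigma$ and the planned reduction to nearby-Lagrangian theory in $T^{*}\Sigma$ evaporates. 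Second, even granting some avatar of $\hat{K}$, the exact triangle $\hat{K}\to\bar{K}\to S_{\Lambda}$ is a surgery statement for three Lagrangian pieces sharing a common Legendrian \emph{boundary} rather than meeting at transverse double points, and none of the usual mechanisms (Biran--Cornea cobordism, the Dehn-twist triangle, FOOO surgery) produces a triangle in that configuration. Also, ``$\hat{K}\simeq\Sigma[j]$ because $H_{1}(\Sigma;\bZ)=0$'' is insufficient even in $T^{*}\Sigma$: vanishing of $H_{1}$ rules out nontrivial rank-one abelian local systems but not higher-rank ones, which is exactly why the paper's proof needs the additional hypothesis that $\pi_{1}(\Sigma)$ admits no non-trivial finite-dimensional representations.

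Your fallback route is closer to what the paper does, but its last step is wrong, and the error hides the real content. The constraint $\dim HF^{*}(\bar{K},F_{1})=1$, with $F_{1}$ the cocore, correctly pins the twisted complex down to a single $S_{\Lambda}$-term and, via (the analogue of) Lemma~\ref{lem:first-step}, to the shape $\cC^{\bullet}\leftarrow S_{\Lambda}$ or $S_{\Lambda}\leftarrow\cC^{\bullet}$. But the family you must then exclude is \emph{not} $\tau_{S_{\Lambda}}^{m}(\Sigma)$ with $|m|\geq 2$ (those do indeed have $F_{1}$-cohomology of rank $|m|$ and are killed by the rank bound); it is $\tau_{\Sigma}^{m}(S_{\Lambda})$, i.e.\ $\cC^{\bullet}$ a chain of arbitrarily many copies of $\Sigma$ linked by multiplication by $[\Sigma]$. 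Every such complex satisfies $HF^{*}(\cdot,F_{1})\cong k$, since $HF^{*}(\Sigma,F_{1})=0$, so the rank bound is silent. To kill these the paper pulls the complex back to the universal cover $\tilde{M}_{\Sigma}$ (Theorem~\ref{thm:category-of-cover}), where $[\Sigma]$ pulls back to zero over a suitable coefficient field, so the lift splits as a direct sum as in Equation~\eqref{eq:direct_sum_from_homological_alg}; it then uses that $\pi^{-1}(\bar{K})$ is a disjoint union of indecomposable objects permuted by deck transformations, together with Lemma~\ref{lem:fibre-not-zerosection}, to force $\cC^{\bullet}$ to have length at most one. That cover-plus-indecomposability step is the heart of the proof and is absent from your sketch.
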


\begin{cor}
Every exact Maslov zero filling of the Legendrian unknot in $ T^{*} \Sigma $   is a homology ball.
\end{cor}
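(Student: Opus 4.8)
My plan is to combine Theorem~\ref{thm:hlgy_sphere} with the standard fact that the self-Floer cohomology of a closed exact Lagrangian recovers its singular cohomology, and then descend from the capped-off Lagrangian to the filling. Write $\bar K = K \cup_{\Lambda} D$ for the capping of an exact, Maslov-zero filling $K \subset D^{*}\Sigma$ of $\Lambda$, where $D \subset M_{\Sigma}$ is the Lagrangian disk with $\partial D = \Lambda$ lying outside $D^{*}\Sigma$. The first thing to verify is that $\bar K$ is a genuine object of $\cF(M_{\Sigma})$: it is closed, connected and (being Spin) orientable; $\lambda|_{\bar K}$ is exact since $\lambda|_{K}$ is and the primitives can be matched along the connected hypersurface $\Lambda\cong S^{n-1}$; and $\mu_{\bar K} = 0$ because Mayer--Vietoris, using that $\Lambda$ is connected and $H^{1}(D) = 0$, produces an injection $H^{1}(\bar K;\mathbb Z)\hookrightarrow H^{1}(K;\mathbb Z)$ carrying $\mu_{\bar K}$ to $\mu_{K} = 0$. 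Thus Theorem~\ref{thm:hlgy_sphere} applies to $\bar K$.

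Next I would recall that for any closed exact Lagrangian $L$ with $\mu_{L} = 0$ in a Liouville manifold there is a ring isomorphism $HF^{*}(L,L)\cong H^{*}(L)$ --- over $\mathbb Z$ using the Spin structure, or over any field --- since exactness forces every holomorphic disk with boundary on $L$ to be constant, so that a sufficiently small Morse-type Hamiltonian makes the Floer differential coincide with the Morse differential. By Theorem~\ref{thm:hlgy_sphere}, $\bar K$ is quasi-isomorphic in $\cF(M_{\Sigma})$, up to shift, either to the Lagrangian sphere $S_{\Lambda}\cong S^{n}$, or to the Lagrange surgery of $S_{\Lambda}$ with the zero section $\Sigma$; the latter is Polterovich surgery at a single transverse intersection point, hence diffeomorphic to the connected sum $S^{n}\#\Sigma\cong\Sigma$, which is an integral homology $n$-sphere. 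In either case the model object $S$ satisfies $HF^{*}(S,S)\cong H^{*}(S^{n})$, of total rank $2$, and since quasi-isomorphic objects have isomorphic endomorphism algebras, $H^{*}(\bar K)\cong HF^{*}(\bar K,\bar K)\cong HF^{*}(S,S)$. Running this over $\mathbb Z$ --- or over every field $k$, where a summand of $p$-torsion would force $\dim_{\mathbb F_{p}}H^{*}(\bar K;\mathbb F_{p}) > 2$ --- shows that $\bar K$ is an integral homology $n$-sphere.

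Finally, to pass from $\bar K$ to $K$ I would feed $\bar K = K\cup_{S^{n-1}}D^{n}$ into the long exact sequence of the pair $(\bar K,K)$, with $H_{*}(\bar K,K)\cong\tilde H_{*}(S^{n})$: this yields $\tilde H_{i}(K;\mathbb Z) = 0$ for $i\le n-2$ and a short exact sequence $0\to\mathbb Z\xrightarrow{\times d}\mathbb Z\to H_{n-1}(K;\mathbb Z)\to 0$. Since $K$ is a compact connected oriented $n$-manifold with connected boundary $S^{n-1}$, Lefschetz duality and universal coefficients give $H_{n-1}(K;\mathbb Z)\cong H^{1}(K,\partial K;\mathbb Z)\cong\Hom(H_{1}(K,\partial K),\mathbb Z)$, which is free; a free cyclic quotient of $\mathbb Z$ forces $d = \pm 1$, so $H_{n-1}(K;\mathbb Z) = 0$ and $K$ is an integral homology ball. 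Given Theorem~\ref{thm:hlgy_sphere} the argument is short; the only steps demanding (elementary) care are confirming that $\bar K$ is really an exact Maslov-zero brane, so that the theorem genuinely applies, and identifying the Lagrange surgery with $\Sigma$ up to diffeomorphism --- I do not anticipate a substantive obstacle beyond these.
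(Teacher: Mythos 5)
Your proof is correct, and after the shared first step (capping $K$ to $\bar K$ and applying Theorem~\ref{thm:hlgy_sphere}) it takes a genuinely different route in the descent from $\bar K$ to $K$. You first show the \emph{closed} Lagrangian $\bar K$ is an integral homology $n$-sphere, via the ring isomorphism $HF^{*}(\bar K,\bar K)\cong H^{*}(\bar K;k)$ (run over $\mathbb Z$ and over every field to kill torsion) together with the identification of both model objects --- $S_{\Lambda}\cong S^{n}$ and the Polterovich surgery $S^{n}\#\Sigma\cong\Sigma$ --- as homology $n$-spheres; then you feed $\bar K = K\cup_{S^{n-1}}D^{n}$ into the long exact sequence of the pair and use Lefschetz duality to force $H_{n-1}(K)=0$. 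The paper instead argues directly at the level of the filling: it observes that $K$ is ``Floer cohomologically indistinguishable from the fibre $T^{*}_{x}\Sigma$ or one of its surgeries with the zero-section'' (implicitly via the Viterbo restriction to $D^{*}\Sigma$) and then quotes the fact that the complement of a disk in a homology sphere is a homology ball. Your version buys explicitness: it works entirely with a closed Lagrangian brane, where $HF^{*}\cong H^{*}$ is unambiguous, and replaces the somewhat compressed ``indistinguishable'' step by a transparent Mayer--Vietoris/Lefschetz argument; you also verify that $\bar K$ is a legitimate exact Maslov-zero brane before applying the theorem, which the paper leaves tacit. The paper's version is shorter but leans on restriction machinery that is only implicit in the text. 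Both are sound.
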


Graded symplectomorphisms act naturally on the Fukaya category.  Since $T^*\Sigma$ admits a unique trivialisation of its bicanonical class up to homotopy, arbitrary compactly supported symplectomorphisms of $T^*\Sigma$ induce autoequivalences of $\cF(M_{\Sigma})$, well-defined up to the group $\langle [1] \rangle  $ generated by the shift functor.

\begin{cor}\label{cor:not-onto}
The image of the natural map $\rho: \pi_0(\Symp_{ct}(T^{*} \Sigma)) \to \Aut(\cF(M_{\Sigma})/\langle [1] \rangle$ intersects the $\bZ$-subgroup defined by the spherical twist in $\Sigma$ only in the identity. 
\end{cor}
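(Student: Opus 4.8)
The plan is to play the structural result Theorem~\ref{thm:hlgy_sphere} off against the fact that iterated spherical twists move $S_{\Lambda}$ through infinitely many shift-equivalence classes. The relevant geometry is that $M_{\Sigma}$ is the plumbing $D^{*}\Sigma\#D^{*}S^{n}$, with core components $\Sigma$ and $S_{\Lambda}\cong S^{n}$; since $\Sigma$ is an integral homology sphere one has $HF^{*}(\Sigma,\Sigma)\cong H^{*}(S^{n};k)$, so $\Sigma$ is a spherical object, and as $\Sigma$ and $S_{\Lambda}$ meet transversally in one point they form an $A_{2}$-chain of spherical objects in $\cF(M_{\Sigma})$. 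I would invoke the faithfulness theorem for braid group actions along $A_{2}$-chains (Seidel--Thomas) on the triangulated subcategory generated by $\Sigma$ and $S_{\Lambda}$: there $\langle T_{\Sigma},T_{S_{\Lambda}}\rangle\cong\mathrm{Br}_{3}$, and the induced homomorphism from $\mathrm{Br}_{3}$ to autoequivalences of that subcategory modulo shift has kernel the centre $\langle(\sigma_{1}\sigma_{2})^{3}\rangle$. Since these twists and their restriction to that subcategory are compatible, $[T_{\Sigma}]$ has infinite order in $\Auteq(\cF(M_{\Sigma}))/\langle[1]\rangle$ --- this is the $\bZ$-subgroup in the statement --- and the claim becomes that $[T_{\Sigma}^{k}]\notin\operatorname{im}\rho$ for every $k\neq0$.

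The first step is the \emph{freeness of the orbit}: if $T_{\Sigma}^{p}(S_{\Lambda})\cong S_{\Lambda}[c]$ for some $c\in\bZ$, then $p=0$. Indeed $T_{\Sigma}(\Sigma)\cong\Sigma[1-n]$, so the hypothesis makes $T_{\Sigma}^{p}$ act by a shift on each of the generators $\Sigma$ and $S_{\Lambda}$; comparing degrees on the one-dimensional space $HF^{*}(\Sigma,S_{\Lambda})$, and using that an $A_{\infty}$-functor preserves degree, forces these two shifts to agree, so $T_{\Sigma}^{p}$ is a global shift on the subcategory generated by $\Sigma$ and $S_{\Lambda}$. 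By faithfulness $\sigma_{1}^{p}$ is then central in $\mathrm{Br}_{3}$, hence $p=0$ (e.g.\ abelianising, $\sigma_{1}^{p}=(\sigma_{1}\sigma_{2})^{3j}$ forces $p=6j$, yet $\sigma_{1}^{p}$ is non-central for $p\neq0$). When $n$ is odd one can avoid Seidel--Thomas: $\Sigma$ is then a Lagrangian sphere, so $[\Sigma]\cdot[\Sigma]=\pm\chi(\Sigma)=0$ and Picard--Lefschetz gives $T_{\Sigma}^{p}[S_{\Lambda}]=[S_{\Lambda}]\mp p[\Sigma]$ in $K_{0}(\cF(M_{\Sigma}))$, which is a shift-multiple of $[S_{\Lambda}]$ only for $p=0$.

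The second step is geometric. Suppose, for contradiction, that $\phi\in\Symp_{ct}(T^{*}\Sigma)$ has $\rho(\phi)=[T_{\Sigma}^{k}]$ with $k\neq0$. Represent $\phi$ by a symplectomorphism supported in $\operatorname{int}(D^{*}\Sigma)\subset M_{\Sigma}$; then each $\phi^{m}$ is the identity near $\partial D^{*}\Sigma$ and over the handle attached to form $M_{\Sigma}$. Since $S_{\Lambda}=\overline{F}$ is the capping of the cotangent fibre disc $F\subset D^{*}\Sigma$, the image $\phi^{m}(S_{\Lambda})=\overline{\phi^{m}(F)}$ is the capping of the Lagrangian filling $\phi^{m}(F)$ of $\Lambda$, which is exact and Maslov-zero (as $H^{1}(T^{*}\Sigma)=0$, $\phi$ is canonically graded up to shift). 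Thus Theorem~\ref{thm:hlgy_sphere} applies to every $\phi^{m}(S_{\Lambda})$: up to shift it is equivalent to $S_{\Lambda}$ or to one of the at most two Lagrange surgeries of $S_{\Lambda}$ with $\Sigma$, so $\{[\phi^{m}(S_{\Lambda})]:m\in\bZ\}$ contains at most three shift-equivalence classes. But $\rho(\phi^{m})=\rho(\phi)^{m}=[T_{\Sigma}^{km}]$ gives $[\phi^{m}(S_{\Lambda})]=[T_{\Sigma}^{km}(S_{\Lambda})]$, and by the first step (with exponent $k(m-m')\neq0$) these are pairwise distinct for distinct $m$ --- an infinite set inside a finite one. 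Hence $k=0$, which is the assertion.

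The main obstacle is the freeness statement of step one. For $n$ odd the $K_{0}$-computation is clean, but for $n$ even the intersection form degenerates it (the twist acts on $K_{0}$ as an involution), so one really needs either the categorical faithfulness of the $\mathrm{Br}_{3}$-action on the $A_{2}$-configuration together with the elementary non-centrality of $\sigma_{1}$, or a grading-enriched version of the Grothendieck-group argument. Two subsidiary checks are: that a Lagrange surgery of $S_{\Lambda}$ with $\Sigma$ is again the capping of a filling --- the surgery is local near the intersection point, which lies in $D^{*}\Sigma$ away from the handle, so only $F$ is altered --- and that the twist $T_{\Sigma}$ of $\cF(M_{\Sigma})$ restricts to the intrinsic twist on the subcategory generated by $\Sigma$ and $S_{\Lambda}$, so that Seidel--Thomas may be invoked there.
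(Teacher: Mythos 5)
Your proof takes a genuinely different route from the paper's, and the overall scheme is clever: you pit Theorem~\ref{thm:hlgy_sphere} (which bounds the possible shift-classes of capped fillings by three) against the infinite orbit $\{T_\Sigma^{km}(S_\Lambda)\}$. The paper's own proof of Proposition~\ref{prop:twist-not-geometric} instead argues directly via the covering space $\pi\colon\tilde M_\Sigma\to M_\Sigma$: assuming $T_\Sigma^k$ is geometric, it writes down the twisted complex $\Sigma[n-1]\leftarrow\Sigma\leftarrow S^n$ (from applying $T_\Sigma^2$ to $S^n$), pulls it back to $\tilde M_\Sigma$ where the $[\Sigma]$-differential dies, and shows the resulting decomposition $\tilde\Sigma\oplus(\tilde\Sigma\leftarrow\pi^{-1}(S^n))$ is inconsistent with being the preimage of a connected Lagrangian because the two summands lie in distinct deck-transformation orbits (Lemma~\ref{lem:fibre-not-zerosection}). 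That argument requires only $\pi_1(\Sigma)\neq 1$.

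The issue with your route is a hypothesis mismatch. As proved in the body of the paper (the corollary following Equation~\eqref{eqn:only-way-out}), Theorem~\ref{thm:hlgy_sphere} requires the \emph{stronger} hypothesis that $\pi_1(\Sigma)$ admits no non-trivial finite-dimensional representations (or has acyclic universal cover): this is what makes the twisted-complex classification underlying Proposition~\ref{prop:braid-orbit} go through, since otherwise the components of the twisted complex representing $\phi^m(S_\Lambda)$ carry unknown local systems on $\Sigma$ and the ``at most three shift-classes'' count fails. Corollary~\ref{cor:not-onto}, however, is asserted under only $\pi_1(\Sigma)\neq 1$, and the paper's direct covering argument achieves exactly that: it never needs to classify arbitrary twisted complexes because it knows precisely which complex $\tau^2(S^n)$ represents. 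So your proof is valid but under a strictly stronger hypothesis than is claimed. If you want to match the stated generality, you would need either to reprove Theorem~\ref{thm:hlgy_sphere} under the weaker hypothesis (which would likely require the covering argument anyway) or to run the covering argument directly as the paper does.

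Two smaller remarks. Your step one (``freeness of the orbit'') is correct but heavier than needed; the argument buried in the paper's Lemma preceding Proposition~\ref{prop:twist-not-geometric} is shorter: the ranks $\dim HF(S_\Lambda, T_\Sigma^i(S_\Lambda))$ grow strictly with $|i|$ for an $A_2$-chain of spherical objects in an $n$-Calabi--Yau category, and since a shift $S_\Lambda[c]$ has $\dim HF(S_\Lambda,S_\Lambda[c])=2$, the freeness follows directly without invoking Seidel--Thomas faithfulness (and without splitting into $n$ odd/even cases). Also, your subsidiary check that a Lagrange surgery of $S_\Lambda$ with $\Sigma$ is the capping of a filling is not actually used in your argument --- you only need that $\phi^m(S_\Lambda)$ is a capping, which you established directly --- so it can be dropped.
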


Our results imply the image of $\rho$ is torsion-free, and presumably the (wrapped) Fukaya category of the cotangent bundle cannot be used to detect torsion classes in $\pi_0 \, \Symp_{ct} (T^*\Sigma)$ (should such exist).

\begin{rem} \label{rem:stabilise}
In \cite{kontsevich-09}, Kontsevich introduced a notion of stabilisation (by dimension) of a symplectic manifold, and asked whether the map from the group of components of the symplectomorphism group to the autoequivalences of the Fukaya category becomes an isomorphism after sufficiently many stabilisations.  In Section \ref{sec:spher-twists-homol} we shall explain that  (the argument underlying) Corollary \ref{cor:not-onto} implies that the answer is negative, assuming that the Fukaya categories of stabilisations -- which have not been defined or studied explicitly in the literature -- behave in the expected manner under passing to covers, cf. Conjecture \ref{conj:covers}.
\end{rem}

Spherical twists are one of the few known sources of interesting autoequivalences of Fukaya categories. The underlying geometric Dehn twist in a Lagrangian sphere exists because the sphere admits a metric with periodic geodesic flow.  Similar metrics exist on other compact rank one symmetric spaces (complex projective spaces, the Cayley plane, etc), and for such a manifold $Q$, the category $\cF(M_Q)$ again has infinite-order autoequivalences that are non-trivial modulo shift.  A classical result of Bott \cite{Bott} states that if $Q$ admits a metric with periodic geodesic flow, then it has, as cohomology algebra, a truncated polynomial ring $H^*(Q) \cong \bZ[u] /  \langle u^k \rangle$.  It seems likely that there is a Fukaya-categorical version of Bott's theorem.

\begin{conj}
Let $Q$ be simply connected. If the map $\rho$ of Corollary \ref{cor:not-onto} is non-zero then $H^{*}(Q)$ is a truncated polynomial ring.  For any $Q$, the image of $\rho$ is trivial or $\bZ$.
\end{conj}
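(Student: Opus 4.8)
The plan is to convert the statement into a rigidity property of a single $A_\infty$-algebra, and then to establish a Fukaya-categorical version of Bott's theorem \cite{Bott}; since the conjecture is open, several of the steps below are themselves substantial. As in Corollary \ref{cor:not-onto}, let $\rho\co \pi_0(\Symp_{ct}(T^{*}Q)) \to \Aut(\cF(M_Q)/\langle [1]\rangle)$ be the map attached to $M_Q$; this makes sense because $H^{1}(T^{*}Q;\bZ)=H^{1}(Q;\bZ)=0$ for simply connected $Q$, so the bicanonical class has a unique homotopy class of trivialisation. The compact core of $M_Q$ is the union of the zero-section $Q$ and the capping Lagrangian sphere $S_\Lambda$, meeting transversally in one point, and $M_Q$ is a plumbing of $T^{*}Q$ with $T^{*}S^{n}$ along a point; since $Q$ is simply connected and $n\geq 3$, Theorem \ref{thm:main} shows that every closed exact Maslov-zero Lagrangian in $M_Q$ is a twisted complex over $Q$ and $S_\Lambda$ with trivial local systems. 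Hence $\cF(M_Q)$ is generated by $\{Q,S_\Lambda\}$ and its derived category is $\mathrm{Perf}(\mathcal A)$ for $\mathcal A=\End_{\cF(M_Q)}(Q\oplus S_\Lambda)$, where $\End(Q)\simeq C^{*}(Q;k)$, $\End(S_\Lambda)\simeq C^{*}(S^{n};k)$, and $\Hom(Q,S_\Lambda)$, $\Hom(S_\Lambda,Q)$ are each one-dimensional in a single degree. In particular $\im\rho$ lands in the comparatively small group of derived autoequivalences of $\mathcal A$.

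Next I would constrain which autoequivalences can arise. Given $\phi\in\Symp_{ct}(T^{*}Q)$, extend it by the identity over the capping handle; it then fixes the Legendrian unknot $\Lambda$ and the Lagrangian disk lying outside $D^{*}Q$, so $\phi(S_\Lambda)$ is the capping of another exact Maslov-zero filling of $\Lambda$ and, by the generation statement together with the analysis underlying Theorem \ref{thm:hlgy_sphere}, is equivalent in $\cF(M_Q)$ either to $S_\Lambda$ or to a cone built from $S_\Lambda$ and $Q$ (a Lagrange surgery). Similarly $\phi(Q)$ is again a generator equivalent to the zero-section. Thus $\rho([\phi])$ preserves the generating pair up to the explicit surgery ambiguity and is recorded by an $A_\infty$-automorphism of $\mathcal A$ modulo inner ones. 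The basic source of such autoequivalences is the generalised Dehn twist $\tau_Q\in\Symp_{ct}(T^{*}Q)$, which exists whenever $Q$ carries a metric all of whose geodesics are closed of one common length $\ell$: the time-$\ell$ fibrewise cogeodesic flow is the identity near infinity and a non-trivial model twist near $Q$ (Seidel's construction for spheres, and its analogues for $\bC P^{n}$, $\bH P^{n}$ and the Cayley plane). One should then check that $\rho([\tau_Q])$ has infinite order modulo shift --- it acts on $\mathcal A$ through the $\bP$-twist of the object $Q$ --- so that $\im\rho\supseteq\bZ$ in precisely these cases.

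It remains to prove (a) that $\im\rho$ is never larger than $\bZ$, and (b) that $\im\rho\neq 0$ forces $H^{*}(Q)$ to be a truncated polynomial ring. For (a) I would aim at a symplectic-mapping-class statement: after composing with a power of $\tau_Q$, every compactly supported symplectomorphism of $T^{*}Q$ acts trivially on $\cF(M_Q)$; this should follow by combining the generation result with the rigidity of $Q$ and $S_\Lambda$ as objects and a count of the outer automorphisms of $\mathcal A$ via $HH^{*}(C^{*}(Q))$. For (b) the cleanest route bypasses geodesics: a non-trivial element of $\im\rho$ should produce a class $u\in H^{d}(Q;k)$ with $d>0$ for which cup product $u\cup(-)\co H^{i}(Q)\to H^{i+d}(Q)$ is an isomorphism in the range $0\le i\le \dim Q-d$, compatibly with the cyclic (Calabi--Yau) structure of $\cF(M_Q)$; by Poincar\'e duality this says exactly that $H^{*}(Q;k)\cong k[u]/\langle u^{m}\rangle$, and running over all coefficient fields recovers the cohomological content of Bott's theorem.

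The crux, and the step I expect to be hardest, is (b): extracting the periodicity operator $u$ from the mere existence of a non-trivial autoequivalence coming from $\Symp_{ct}(T^{*}Q)$. Geometrically one would like to deform such a symplectomorphism to a geodesic-flow twist, hence to equip $Q$ with a metric with periodic geodesic flow, after which Bott's theorem finishes the proof --- but no such deformation mechanism is presently available. A more realistic substitute is to work through symplectic cohomology: compute $SH^{*}(M_Q)$ via the Legendrian surgery formula of \cite{BEE} applied to the capping handle, identify the resulting $S^{1}$-action (equivalently the BV operator), and show that a non-trivial $\im\rho$ forces that action to be free enough that the summand coming from $C^{*}(Q)$ --- a retract of the core --- must be a truncated polynomial ring. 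Making either strategy precise, and in particular controlling the full outer-automorphism group of $\mathcal A$, is where the real difficulty lies; the reduction in the first two paragraphs is essentially formal given Theorems \ref{thm:main} and \ref{thm:hlgy_sphere}.
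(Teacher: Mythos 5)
This statement is a conjecture, not a theorem: the paper does not prove it, and you correctly present a program rather than a proof. Your opening reduction matches what the paper actually establishes and uses --- Theorem~\ref{thm:main} gives generation of $\cF(M_Q)$ by $Q$ and $S_\Lambda$; simple connectivity of $Q$ kills local systems; the FSS/Nadler uniqueness result gives $\Phi(Q)\simeq Q$ up to shift; and the analysis after Equation~\eqref{eqn:rank-one} pins $\Phi(S^n)$ down to a cone $\cC^\bullet\leftarrow S^n$. So the first two paragraphs are faithful to the paper's setup.

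Where you diverge is in the route to the substantive content. The only piece of the conjecture the paper actually proves is Proposition~\ref{prop:evidence} (hence Proposition~\ref{prop:partial_result_truncated}), and its argument is a direct dimension count on the twisted complex: under a formality-type hypothesis that higher products into $H^{<n}(Q)$ vanish, the rank-$2$ constraint $\dim HF(\cC^\bullet,Q)=2$ from Lemma~\ref{Lem:Rank2} forces $\sum_{0<i<n}b^i(Q)\le 1$, via the acyclic complex in Equation~\eqref{eq:decomponse_differential}. No periodicity operator appears anywhere in the paper's argument. Your alternative --- extracting a class $u$ implementing BV/Gysin periodicity, possibly via the Legendrian surgery formula --- is a natural and probably the ``right'' approach to the full conjecture, but it is entirely unexecuted, and the paper offers no mechanism for it; it only says it ``seems likely that there is a Fukaya-categorical version of Bott's theorem.''

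Two smaller issues. First, the implication ``$u\cup(-)\co H^i\to H^{i+d}$ is an isomorphism for $0\le i\le\dim Q-d$, plus Poincar\'e duality, implies $H^*(Q)\cong k[u]/\langle u^m\rangle$'' is false as stated: taking $d=\dim Q$ and $u$ the fundamental class makes the hypothesis vacuously true for any $Q$. You need in addition that $d<\dim Q$ is the minimal positive degree with $H^d\ne 0$ (forcing the gaps to vanish); this is a real gap in the wrap-up, not just an omitted routine step. Second, the assertion that $\tau_Q$ ``acts on $\mathcal A$ through the $\bP$-twist of the object $Q$'' presupposes that $Q$ is a $\bP$-object, i.e.\ already has truncated polynomial cohomology, which is circular if the goal is to prove that conclusion; the paper is careful to treat $\tau_Q$ only in the spherical case and otherwise argues purely with the twisted-complex combinatorics.
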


There is a more natural formulation of the Conjecture, bypassing the auxiliary manifold $M_Q$ but invoking a version of the Fukaya category of the cotangent bundle which incoporates certain non-closed Lagrangian submanifolds Legendrian at infinity, for instance the Nadler-Zaslow category \cite{NZ}. The version above is tailored to our technical machinery. 
We prove two partial results.  We will say that the representation $\rho$ is \emph{categorically trivial} if any autoequivalence in the image of $\rho$ acts trivially on objects of $\cF(M_Q)$.

\begin{prop} \label{prop:partial_result_truncated}
If $Q$ is a product of spheres, or if $Q$ has homology concentrated in $3$ degrees and the middle Betti number is greater than $1$, then $\rho$ is categorically trivial.  
\end{prop}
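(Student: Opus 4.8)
\medskip

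The plan is to transfer the question onto the compact core of $M_{Q}$ by Theorem~\ref{thm:main}, and then to use the hypotheses on $Q$ to rule out the only mechanism --- a Dehn-twist-type autoequivalence --- by which an element of the image of $\rho$ could move an isomorphism class of objects.

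First I would record that $M_{Q}$ is the plumbing $T^{*}Q\#T^{*}S^{n}$, with $S_{\Lambda}$ as the second zero-section, so that Theorem~\ref{thm:main} presents $\cF(M_{Q})$ as a full subcategory of twisted complexes built from $S_{\Lambda}$ and from the full subcategory $\cL$ on objects supported in $D^{*}Q$; here $\cL\simeq\cF(D^{*}Q)$ is equivalent to the category of (finite-dimensional) local systems on $Q$, and $\End(Q)\simeq C^{*}(Q;k)$. Given $\phi\in\Symp_{ct}(T^{*}Q)$, extend it by the identity over the handle; this is a compactly supported symplectomorphism of $M_{Q}$, so $\rho(\phi)$ preserves $\cL$, acts trivially on $HH^{*}(\cF(M_{Q}))=SH^{*}(M_{Q})$, and acts trivially on objects of $\cL$. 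The last point is an input about the cotangent bundle: $\phi|_{D^{*}Q}$ is the identity near infinity, and since $n\geq3$ the inclusion $S^{*}Q\hookrightarrow T^{*}Q$ is a $\pi_{1}$-isomorphism, so $\phi$ acts trivially on $\pi_{1}Q$ and on $H_{*}(Q)$; with \cite{Abouzaid-ExactLag}, which identifies every closed exact Lagrangian in $T^{*}Q$ with a local system on $Q$ in the Fukaya category, this forces $\phi|_{D^{*}Q}$ to fix each object of $\cL$ up to quasi-isomorphism. What remains is the action of $\rho(\phi)$ on objects built using $S_{\Lambda}$.

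The crux is that $S_{\Lambda}$ is the only available source of a spherical object --- or, more generally, of an object $E$ with $\End(E)$ a truncated polynomial ring, supporting a $\bP$-twist --- and hence the only available source of a Dehn-twist-type autoequivalence. An object $X$ of $\cL$ can be of this kind only if $\End(X)$ is quasi-isomorphic to a truncated polynomial ring $k[u]/\langle u^{d}\rangle$, which would force $H^{*}(Q;k)$ itself to be a truncated polynomial ring; this is what the hypotheses forbid. If $Q$ is a product of (at least two) spheres, $H^{*}(Q;k)$ is a tensor product of exterior or truncated-polynomial factors on more than one generator, not a truncated polynomial ring; if $Q$ has homology in three degrees with Poincar\'e polynomial $1+b\,t^{m}+t^{2m}$ and $b\geq2$, this is not of the form $1+t^{m}+\cdots+t^{dm}$, so again $H^{*}(Q;k)$ is not a truncated polynomial ring. (Morally this is the categorical counterpart of Bott's theorem: truncated polynomial cohomology rings are exactly those of manifolds admitting a metric with periodic geodesic flow, which supply the compactly supported symplectomorphisms of $T^{*}Q$ that do act nontrivially on the Fukaya category --- witness $Q=S^{n}$, where $\cF(M_{S^{n}})=\cF(A_{2}^{n})$ and the Dehn twist in the zero-section realises a nontrivial spherical twist moving $S_{\Lambda}$.) Granting an $A_{2}$-type classification, in the spirit of Theorems~\ref{thm:A_2} and~\ref{thm:hlgy_sphere}, that every closed exact Maslov-zero Lagrangian in $M_{Q}$ lies in the orbit of $\cL\cup\{S_{\Lambda}\}$ under the group generated by the spherical twist $T_{S_{\Lambda}}$ and the shift, we are done: $\rho(\phi)$ takes the Lagrangian sphere $S_{\Lambda}$ to the exact Lagrangian sphere $\phi(S_{\Lambda})$, which by the classification is a shift of $S_{\Lambda}$; since spherical twists are natural, $\rho(\phi)$ commutes with $T_{S_{\Lambda}}$; and as $\rho(\phi)$ also fixes every object of $\cL$, it fixes up to shift every object in the orbit, i.e.\ every object of $\cF(M_{Q})$ represented by a closed exact Lagrangian. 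Hence $\rho$ is categorically trivial.

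The hard part will be the $A_{2}$-type classification just invoked: for these particular $Q$ one must show that $T^{*}Q\#T^{*}S^{n}$ carries no exact Lagrangian beyond the local systems on $Q$ and the twist-orbit of $S_{\Lambda}$, and in particular that the failure of $H^{*}(Q;k)$ to be a truncated polynomial ring genuinely rules out every spherical or $\bP$-object --- including subtler candidates from higher-rank local systems, from a favourable choice of the characteristic of $k$, or from iterated cones --- so that the braid-group dynamics seen for $Q=S^{n}$ truly degenerate here. A secondary point requiring care is the cotangent-bundle input of the second paragraph: that compactly supported symplectomorphisms of $T^{*}Q$ act trivially on quasi-isomorphism classes of closed exact Lagrangians.
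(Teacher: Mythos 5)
Your strategy is genuinely different from the paper's, and it has a gap that is not merely a detail to be filled in but is essentially the whole content of the proposition.

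Your plan hinges on an ``$A_2$-type classification'': that every closed exact Maslov-zero Lagrangian in $M_Q = T^*Q\#T^*S^n$ lies in the orbit of $\cL\cup\{S_\Lambda\}$ under $\langle T_{S_\Lambda},[1]\rangle$. You flag this as the hard part, but it is worse than hard: nothing in the paper proves such a classification for general $Q$, and the method that gives Theorem~\ref{thm:A_2} really does use that both plumbing components are $\bZ$-homology spheres (Proposition~\ref{prop:braid-orbit} and the complexity induction of Lemmas~\ref{lem:second-step-1}--\ref{lem:second-step-2} rely on the morphism spaces between the cores being one-dimensional and the differentials being multiples of fundamental classes). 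When $Q$ has richer cohomology, the twisted-complex calculus is more involved, and the braid-orbit picture you want to import simply is not available. Moreover, even granting the classification, your final step -- ``$\phi(S_\Lambda)$, which by the classification is a shift of $S_\Lambda$'' -- is a non sequitur: the orbit also contains objects such as $T_{S_\Lambda}^k(Q)$ and iterated cones, and you would still need a separate argument (comparing, say, $HF$ with $Q$ and with cotangent fibres) to pin $\phi(S_\Lambda)$ down to a shift. Your surrounding heuristics about truncated polynomial rings and $\bP$-twists gesture at the right picture but do not substitute for this.

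The paper avoids a classification entirely. It reduces Proposition~\ref{prop:partial_result_truncated} to Proposition~\ref{prop:evidence}, which argues by contradiction: if $\Phi=\rho(\phi)$ moves some object, then $\Phi(S^n)$ is (after using compact support of $\phi$ and $\Phi(Q)\simeq Q[j]$ to get $HF(\Phi(S^n),Q)\cong k$) a twisted complex of the form $\cC^\bullet\leftarrow S^n$ (or its mirror) with $\cC^\bullet$ built only from copies of $Q$ -- this is where the ``Lemma~\ref{lem:first-step}-style'' analysis enters, but only for this one object, not for all Lagrangians. Then Lemma~\ref{Lem:Rank2} forces $HF(\cC^\bullet,Q)$ to have rank $2$, the extremal terms $V_0,V_N$ to be one-dimensional, and the decomposition \eqref{eq:decomponse_differential} (valid precisely because of the formality-type hypothesis, which you should note you would still need to verify for your two classes of $Q$) produces an acyclic complex whose rank count yields $\dim V\cdot(\sum_{0<i<n}b^i(Q)-2)\le -2$. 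This is impossible once $\sum_{0<i<n}b^i(Q)>1$, which holds in both cases in the statement. That is the proof. So the two ingredients you would really need to supply -- a classification of all objects, and the reduction of $\phi(S_\Lambda)$ to a shift -- are exactly what the paper's numerical argument is designed to bypass.
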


In particular, if $Q$ is a simply-connected 4-manifold and $M_Q$ admits a non-trivial autoequivalence, then $Q$ is homotopy equivalent to one of $S^4$ and $\bC\bP^2$ (this is essentially sharp, in that each of these manifolds does admit a non-trivial Dehn twist).  Wendl \cite{Wendl} used delicate constructions of planar holomorphic foliations to prove that $\Symp_{ct}(T^*(S^1\times S^1))$ is connected, and the first case of Proposition \ref{prop:partial_result_truncated} can be viewed as a partial generalisation of that result.

\vspace{0.5cm}
\noindent \Acknowledgements  The authors are grateful to Tobias Ekholm, Gabriel Paternain and Paul Seidel for helpful conversations, and to the anonymous referee, whose detailed comments located and corrected several small errors.

\section{Proof of the applications}
\label{sec:proof_applications}

In this section, we derive the consequences of Theorem \ref{thm:main} which are mentioned in the introduction.  We shall consider, throughout, a minimal model $H^{*} \cF(M)$ for the $\bZ$-graded ordinary Fukaya category of a Liouville domain $M$.  This is a strictly unital $A_{\infty}$ category, whose objects are \emph{ closed Lagrangian branes}, i.e. closed connected exact Lagrangians, equipped with a $\Spin$ structure and a grading in the sense of \cite{Seidel:graded}, and whose morphism spaces are Floer cohomology groups.  We shall use the notation
\begin{equation}
  \Tw(H^{*}(\cF(M)))
\end{equation}
for the category of twisted complexes, an object of which consists of a sequence of Lagrangian branes $\{ L_{i} \}_{i=1}^{D}$, carrying finite-dimensional local systems $\{ V_{i} \}_{i=1}^{D}$ -- equivalently modules for the group ring $\bZ[\pi_1(L_i)]$ -- concentrated in a single degree, and degree $1$ maps
\begin{equation}
  \delta_{i,j} \in  HF^{*}(V_i,V_j)  
\end{equation}
satisfying $\sum_{r\geq 1} \mu^r (\delta, \ldots, \delta) = 0$, with $\mu^r$ the $A_{\infty}$-operations of the additive enlargement of $H\cF(M)$ and $\delta = (\delta_{ij})$ the upper triangular matrix of differentials. Whenever the local systems are trivial, we may alternatively write $\delta_{i,j}$ as a map
\begin{equation}
  \label{eq:differential}
  \delta_{i,j} \co V_{i} \to HF^{*}(L_i,L_j)  \otimes V_{j}.
\end{equation} This category admits a shift functor
\begin{equation}
  X \mapsto X[1]
\end{equation}
which decreases the  grading of each local system $V_i$ by $1$. For background on the category of twisted complexes, see \cite[Chapter 1, Section 3]{seidel-book}.   Whenever $\Sigma \subset M$ is an exact Lagrangian submanifold of a Liouville manifold $M$,  there is an associated algebraic twist functor
\[
T_{\Sigma} \in \textrm{NuFun}(H\cF^*(M))
\]
defined as usual by the cone of evaluation 
\begin{equation} \label{eqn:twist}
T_{\Sigma} = \ Cone \left ( HF(\Sigma,\bullet) \otimes \Sigma \stackrel{ev}{\longrightarrow} \bullet \right ).
\end{equation}
If $H^*(\Sigma) \cong H^*(S^n)$, the twist functor is invertible, hence defines an autoequivalence of $H^*(\cF(M))$.  Explicitly, the inverse functor is given by the cone on the dual of evaluation
\begin{equation} \label{eqn:inverse-twist}
T_{\Sigma}^{-1} = \ Cone \left ( \bullet \stackrel{ev^{\vee}}{\longrightarrow} HF(\bullet, \Sigma)^{\vee} \otimes \Sigma \right),
\end{equation}
compare to \cite[Remark 5.12]{seidel-book}.

In the body of the paper, most of our Liouville manifolds $M$ will arise as \emph{plumbings}. Define the \emph{model plumbing} to be the open neighbourhood  
\begin{equation} \label{eqn:model-region}
\{|x| |y| \leq 1\} = R^{\infty} \subset \bC^n \ = \ \bR^n \times \i \bR^n
\end{equation}
 of the union of transverse linear Lagrangian planes $\bR^n \cup \i \bR^n \, = \, \{y=0\} \cup \{x=0\}$.  Now given base-points $q_i \in Q_i$,  choose  Riemannian metrics $g_i$ locally flat near $q_i$, and pick isometric embeddings $\eta_i:  (B(1/2), 0) \hookrightarrow (Q_i, q_i)$ of the ball of radius 1/2 in $\bR^n$ to $Q_i$. There are then natural symplectic inclusions 
\[
D\eta_i: \, D^*B(1/2) \hookrightarrow D^*Q_i.
\]
On the other hand, inclusions $\iota_0: B(1/2) \subset \bR^n$ and $\iota_1: B(1/2) \subset \i  \bR^n$ induce symplectic inclusions $D\iota_j : D^*B(1/2) \hookrightarrow R^1 \subset \bC^n$.

\begin{defin} \label{defin:plumbing}
The \emph{plumbing} $M=T^*Q_0 \# T^*Q_1$ is the Liouville manifold obtained by completing a Weinstein domain defined by identifying points in the image of $D\eta_0$ with points in the image of $D\eta_1$ whenever their $D\iota_j$-images in $R^{\infty} \subset \bC^n$ co-incide.  
\end{defin}

Essentially, we are identifying the two copies of $D^*B(1/2) \cong B^n \times B^n$ via the map $(x,y) \mapsto (y,-x)$, which preserves the symplectic form $dx\wedge dy$ (the result of the identification inherits a natural Liouville structure).  

\subsection{Exact Lagrangians in $A_{2}$ Milnor spaces}
\label{sec:exact-lagr-a_2}

\noindent \emph{Throughout this section, fix a dimension $n\geq 3$.  We work over an arbitrary coefficient field $k$.}  

Let $A_2^n$ denote the $n$-dimensional $A_2$-Milnor fibre obtained by plumbing two copies of $T^*S^n$, which can also be described as the affine variety
\[
A_2^n \ = \ \left\{ x_1^2 + \cdots + x_n^2 + t^{3} = 1 \right\} \subset \bC^{n+1}
\]
equipped with the restriction of the standard symplectic form $\omega_{\bC^{n+1}} = \frac{i}{2\pi}\sum_{j=1}^{n+1} dz_j \wedge d\bar{z}_j$. This section will classify exact Lagrangian submanifolds of Maslov class zero inside $A_2^n$ up to quasi-equivalence in the Fukaya category.   The main technical step applies in slightly more generality, so we begin with a Stein manifold $(M,\omega)$ which is the plumbing of two cotangent bundles $T^*Q_i$ for $\bZ$-homology $n$-spheres $Q_0$ and $Q_1$.  We fix gradings up to a global shift by requiring that
\begin{equation} \label{eqn:fix-grading}
HF(Q_0, Q_1) \cong k \quad \textrm{is concentrated in degree} \ 1.
\end{equation}
and consider $Q_i$ as objects of $H^{*}\cF(M)$.  The category $H^{*} \cF(M)$  admits an action of the braid group $Br_3^{\alg}$ on three strings, obtained from the twist functors $T_i$ in the spherical objects $Q_i$.  An easy consequence of the definition of the algebraic Dehn twist is the following result from \cite[Lemma 5.11] {seidel-book}.
\begin{lem} \label{lem:shift}
$T_i(Q_i)$ is quasi-equivalent, in $  \Tw(H^{*}\cF(M))  $, to  $Q_i[1-n]$. \noproof
\end{lem}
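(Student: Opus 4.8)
The plan is to unwind the definition \eqref{eqn:twist} of the twist functor applied to $Q_i$ and to recognise the resulting twisted complex, after one cancellation, as a shifted copy of $Q_i$. First I would record the self-Floer cohomology: since $Q_i$ is a $\bZ$-homology $n$-sphere and we use the trivial local system, $HF^{*}(Q_i,Q_i) \cong H^{*}(Q_i;k) \cong H^{*}(S^{n};k)$, spanned over $k$ by the strict unit $e$ in degree $0$ and a class $p$ in degree $n$. Consequently, in the additive enlargement of $H\cF(M)$ the object $HF^{*}(Q_i,Q_i)\otimes Q_i$ is simply $Q_i \oplus Q_i[-n]$, with no internal differential.

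Next I would make the cone explicit. With the convention $\Cone(f\co A\to B) = A[1]\oplus B$ carrying $f$ (now of degree $1$) as its sole off-diagonal differential component, formula \eqref{eqn:twist} gives
\[
T_i(Q_i) \ = \ \Cone\left( HF^{*}(Q_i,Q_i)\otimes Q_i \xrightarrow{\ ev\ } Q_i \right) \ = \ \bigl( Q_i[1] \oplus Q_i[1-n] \oplus Q_i, \ \delta \bigr),
\]
where $\delta$ is supported on the component landing in the last summand and equals $ev$ there. Decomposing $ev$ along the basis $\{e,p\}$: its restriction to $e\otimes Q_i$ is, by strict unitality, the map $x \mapsto \mu^{2}(x,e) = \pm x$, i.e. the identity $Q_i[1]\to Q_i$ up to sign; its restriction to $p\otimes Q_i$ is the degree-$n$ endomorphism given by (left) multiplication by $p$.

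Finally I would invoke the cancellation (Gauss elimination) lemma for twisted complexes (\cite[Chapter 1, Section 3]{seidel-book}): if a component $\delta_{j_0 i_0}\co X_{i_0}\to X_{j_0}$ of the differential of a twisted complex is an isomorphism, the complex is isomorphic in $\Tw$ to the one obtained by deleting $X_{i_0}$ and $X_{j_0}$ and adjusting the surviving differential by a correction term factoring through the deleted pair. Applying this to the identity component $Q_i[1]\to Q_i$ cancels the summands $Q_i[1]$ and $Q_i$; since $\delta$ has no component mapping \emph{into} $Q_i[1]$, the correction term vanishes and the surviving summand $Q_i[1-n]$ is left with zero differential. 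Hence $T_i(Q_i)\simeq Q_i[1-n]$ in $\Tw(H^{*}\cF(M))$, as claimed.

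The argument is entirely formal given the Floer computation; the only thing to watch is the bookkeeping of shifts and signs in the cone and in the cancellation step, and the substantive input — that exactly one extra copy of $Q_i$, shifted by $[-n]$, survives — is precisely the homology-sphere hypothesis, which forces $HF^{*}(Q_i,Q_i)$ to be two-dimensional with top class in degree $n$. I do not expect any real obstacle; this is the standard effect of a spherical twist on its defining object, as in \cite[Lemma 5.11]{seidel-book}.
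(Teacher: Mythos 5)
Your proof is correct, and it fills in the details that the paper deliberately omits: Lemma \ref{lem:shift} is stated with no proof, the authors simply citing \cite[Lemma 5.11]{seidel-book}. Your argument — compute $HF^*(Q_i,Q_i)\cong H^*(S^n)$, unwind \eqref{eqn:twist} to get the three-term complex $Q_i[1]\oplus Q_i[1-n]\oplus Q_i$, identify the $e$-component of $ev$ as the identity via strict unitality, and apply Gauss elimination with vanishing correction term since nothing maps into $Q_i[1]$ — is precisely the standard computation underlying Seidel's lemma, so you have reconstructed the intended proof rather than found a genuinely different one. The bookkeeping of shifts ($p$ in degree $n$ contributing $Q_i[-n]$, then $[1]$ from the cone, giving $Q_i[1-n]$) and the observation that the homology-sphere hypothesis is exactly what makes $HF^*(Q_i,Q_i)$ two-dimensional with top class in degree $n$ are both correct.
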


\begin{prop} \label{prop:braid-orbit}
Suppose $\pi_1(Q_i)$ admits no non-trivial finite-dimensional representation, for $i=0,1$. Up to quasi-equivalence and shift in $ \Tw(H^{*} \cF(M))$, every closed Lagrangian brane lies in the braid group orbit of  $Q_0$ or $Q_1$. 
\end{prop}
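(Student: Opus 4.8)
The plan is to feed Theorem~\ref{thm:main} into an inductive ``straightening'' of twisted complexes under the braid group action, in the spirit of Ishii--Ueda--Uehara \cite{IUU} (whose surface classification this generalises) and of Seidel \cite{seidel-book}. Let $L$ be a closed Lagrangian brane. Since $\dim_{\bR}M = 2n \geq 6 > 4$, Theorem~\ref{thm:main} presents $L$, up to quasi-isomorphism in $\Tw(H^{*}\cF(M))$, as a twisted complex whose terms are shifts of $Q_0$ and $Q_1$ equipped with finite-dimensional local systems. A local system on $Q_i$ is a finite-dimensional representation of $\pi_1(Q_i)$, hence by hypothesis a direct sum of copies of the trivial one-dimensional representation; regrouping terms, $L$ becomes quasi-isomorphic to a twisted complex $\big(\bigoplus_r Q_{c_r}[d_r],\delta\big)$ over $\{Q_0,Q_1\}$ with trivial local systems, which (passing to a minimal representative) we may take to be minimal. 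Here the only constraint of minimality concerns repeated terms $Q_i[d]$: by \eqref{eqn:fix-grading} one has $HF^0(Q_0,Q_1)=0$, and $HF^0(Q_1,Q_0)=0$ by Poincar\'e duality since $n\geq 3$, so $\delta$ carries no degree-zero component between distinct cores.

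Next, the full subcategory generated by $Q_0$ and $Q_1$ is controlled by the $A_\infty$-algebra $\End_{H\cF(M)}(Q_0\oplus Q_1)$, whose cohomology has the numerical shape of the $A_2$ chain: $HF^*(Q_i,Q_i)\cong H^*(S^n)$ because $Q_i$ is a $k$-homology sphere, $HF^*(Q_0,Q_1)\cong k$ sits in degree $1$ by \eqref{eqn:fix-grading}, and $HF^*(Q_1,Q_0)\cong k$ in degree $n-1$ by Poincar\'e duality. The twist functors $T_0^{\pm1},T_1^{\pm1}$ generating $Br_3^{\alg}$ act on the graded dimensions of $HF^*(Q_0,-)$ and $HF^*(Q_1,-)$ through the exact triangles \eqref{eqn:twist}, \eqref{eqn:inverse-twist}, and the $Br_3^{\alg}$-orbit of $\{Q_0,Q_1\}$ consists of spherical objects by Lemma~\ref{lem:shift}. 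One attaches to a brane a non-negative integer complexity assembled from these dimensions together with the range of cohomological degrees in which they are supported, and shows that if $L$ is not quasi-isomorphic to a shift of $Q_0$ or $Q_1$ then some $T_i^{\pm1}(L)$ has strictly smaller complexity. Since membership in the $Br_3^{\alg}$-orbit of $Q_0$ or $Q_1$ is invariant under the twist functors, this reduces the statement to the base case in which $L$ is a shift of some $Q_i$, where there is nothing to prove.

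I expect the base case of the induction to be the main obstacle: one must exclude objects of $\Tw(H^{*}\cF(M))$ with the same small numerical invariants as $Q_i$ that are nevertheless not shifts of $Q_i$. This is exactly where it matters that $L$ is a genuine closed \emph{orientable} Lagrangian, so that $HF^*(L,L)\cong H^*(L;k)$ is the cohomology ring of a closed manifold --- in particular $HF^0(L,L)=k$ and $HF^*(L,L)$ is a Poincar\'e duality algebra. When the complexity is minimal, analysing the evaluation map $HF^*(Q_i,L)\otimes Q_i\to L$ together with the minimality of the twisted complex, these constraints leave no possibility for $L$ other than a shift of some $Q_i$. The remaining work --- carrying the $A_\infty$-relations $\sum_{r\geq1}\mu^r(\delta,\dots,\delta)=0$ through the twist functors, and verifying that the complexity genuinely decreases at each step --- is routine but laborious.
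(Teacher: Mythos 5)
Your high-level strategy matches the paper's: present $L$ via Theorem~\ref{thm:main} as a twisted complex over $Q_0,Q_1$, trivialise the local systems using the hypothesis on $\pi_1$, and then reduce a complexity invariant by applying braid twists. However, you have misplaced the difficulty, and as written there is a gap in the inductive step. You flag the base case as ``the main obstacle'' and dismiss the reduction step as ``routine but laborious''; in fact the base case is essentially immediate, since once complexity has been driven to its minimum the twisted complex is a single column $\bigoplus Q_j$ concentrated in one degree, and $HF^0(L,L)\cong k$ (connectedness of $L$) forces a single summand. The substantial work --- Lemmas~\ref{lem:first-step}, \ref{lem:second-step-1}, and \ref{lem:second-step-2} in the paper --- is all in verifying that for a twisted complex whose endomorphism algebra is concentrated in non-negative degrees, one can always apply some $T_i^{\pm 1}$ to strictly lower complexity while remaining inside the class of admissible complexes.

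Two aspects of the reduction step would need to be supplied. First, the complexity measure has to be made precise and attached to a chosen twisted complex representative $\cC^\bullet$, not merely to the brane $L$: the paper's invariant $cx(\cC^\bullet)$ is a zig-zag length built from the shape of the complex (not from numerical invariants like $\dim HF^*(Q_i,L)$), and it is this that makes the diagrammatic bookkeeping tractable. Second --- and this is what your ``routine'' dismissal overlooks --- the reduction is driven at every stage by the constraint that $\End_{\Tw}(\cC^\bullet)$ has no negative-degree classes, a fact you invoke only in your base-case discussion. The paper uses it repeatedly: to show that the arrows $U_i\to V_i$ near the left end of the complex are injective, that $U_{N-i}\to V_{N-i}$ near the right end are surjective, and then (via $d_{\cC^\bullet}^2=0$) that certain other arrows vanish. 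These structural facts are precisely what guarantee that applying $T_0^{\pm 1}$ or $T_1^{\pm 1}$ yields a strictly shorter complex of the required form; without establishing them the induction does not close. Knowing that the twist functors act invertibly and spherically is not, by itself, sufficient.
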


\begin{rem} \label{rem:cores-in-same-orbit}
In fact $Q_1$ and $Q_{0}$ lie in the same orbit of the braid group action.
\end{rem}

We will break up the proof  into stages.  Given a Lagrangian brane $L$, generation of $\cF(M)$ by the cores (Theorem \ref{thm:main}) implies $L$, as an object of $  \Tw(H^{*}(\cF(M)))  $ is equivalent to a twisted complex built from local systems on the Lagrangians $Q_i$.
\begin{lem}
  There is a twisted complex which is quasi-isomorphic to $L$, built from the objects $Q_i$, and in which none of the arrows are identity morphisms.
\end{lem}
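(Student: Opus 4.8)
The plan is to run the standard cancellation (``Gaussian elimination'') procedure for twisted complexes and check that it terminates. By the previous lemma we may start from a twisted complex
\[
X \ = \ \bigl( \{ (Q_{c(r)}, V_r)[k_r] \}_{r=1}^{D}, \ \delta \bigr)
\]
quasi-isomorphic to $L$, where each $c(r) \in \{0,1\}$, each $V_r$ is a finite-dimensional $\bZ[\pi_1(Q_{c(r)})]$-module concentrated in a single degree, and $\delta = (\delta_{rs})$ is the strictly upper triangular degree-$1$ differential for the $A_\infty$-operations of the additive enlargement of $H\cF(M)$.

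First I would locate where an identity arrow can occur. A component $\delta_{rs}$ lies in $HF^{*}(Q_{c(r)}, Q_{c(s)}) \otimes \Hom_{k}(V_r, V_s)$; since $HF^{*}(Q_0, Q_1) \cong k$ is concentrated in degree $1$ while $HF^{*}(Q_a, Q_a) \cong H^{*}(S^{n})$ for $a=0,1$, a component that is literally an identity morphism must have $c(r) = c(s) =: a$, $V_r = V_s$, the two summand objects $(Q_a,V_r)[k_r]$ and $(Q_a,V_s)[k_s]$ differing by a single shift, and $\delta_{rs} = \pm\, e_{Q_a} \otimes \id_{V_r}$. Suppose such a $\delta_{r_0 s_0}$ occurs (with $r_0<s_0$, so $\delta_{s_0 r_0}=0$). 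Then these two summand objects span a sub-twisted-complex isomorphic to $\Cone(\id)$ on a shift of $(Q_a, V_{r_0})$, hence to $0$, and the standard change of basis for twisted complexes (see \cite[Chapter 1, Section 3]{seidel-book}) replaces $X$ by a quasi-isomorphic twisted complex $X'$ whose summand objects are those of $X$ with this pair deleted, and whose differential is obtained from $\delta$ by the usual $A_\infty$ cancellation formula. Each summand object of $X'$ is one of $X$, so $X'$ is again built from local systems on $Q_0$ and $Q_1$, concentrated in single degrees.

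Finally, termination is immediate: every such step strictly decreases the number $D$ of summand objects, so after finitely many steps we reach a twisted complex, quasi-isomorphic to $L$ and built from local systems on $Q_0$ and $Q_1$, none of whose arrows is an identity morphism. (The same procedure applied to components of the form $e_{Q_a}\otimes\phi$ with $\phi$ merely \emph{invertible} yields a \emph{reduced} twisted complex, which is the form we shall actually exploit below.) There is no substantive obstacle beyond the bookkeeping behind the $A_\infty$ cancellation formula — in particular checking that the higher corrections to the modified differential are well defined and that $X' \simeq X$ — which is carried out exactly as in \cite{seidel-book}.
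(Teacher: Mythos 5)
Your proof takes a genuinely different route from the paper's. The paper establishes the lemma as a corollary of structural results about modules over non-positively graded $A_\infty$-algebras: it first invokes the hypothesis on fundamental groups to reduce to trivial local systems, then notes that the cores $Q_i$ define rank-one simple modules over the wrapped Floer cohomology algebra of cotangent fibres, which Proposition~\ref{prop:non-positive} shows is non-positively graded; Lemma~\ref{lem:degree-at-least-2} then forces all arrows to arise from elements of degree $\geq 2$, so identity arrows are excluded automatically. Your Gaussian elimination argument is more elementary and purely formal, and, correctly executed, does prove the lemma as literally stated. The paper's route is chosen because the non-positivity input is needed elsewhere in the section anyway, and because the degree control it gives is used implicitly in the subsequent diagrams (where the only arrows are labelled by $HF^1(Q_0,Q_1)$, $HF^{n-1}(Q_1,Q_0)$ and the fundamental classes $[Q_i]$).

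That said, your exposition has two gaps that you should close.  First, you compute the differential components in $HF^{*}(Q_{c(r)}, Q_{c(s)}) \otimes \Hom_{k}(V_r, V_s)$, but this tensor-product description is only valid when the local systems $V_r$ are trivial; for non-trivial $\bZ[\pi_1]$-modules the morphism space is a genuine $\mathrm{Ext}$-group and does not split this way. The paper disposes of this by invoking the standing hypothesis of Proposition~\ref{prop:braid-orbit} that $\pi_1(Q_i)$ admits no non-trivial finite-dimensional representations, so the $V_r$ are just vector spaces; you should do the same before writing down the tensor product. Second, and more substantively, the lemma is used downstream (in the proof of Lemma~\ref{lem:first-step}) in the form ``no component of $\delta$ has a nonzero $e_{Q_a}$-part.'' Your main argument only eliminates the case $\delta_{rs} = \pm e_{Q_a}\otimes\id_{V_r}$, and your parenthetical extends this to $e_{Q_a}\otimes\phi$ with $\phi$ \emph{invertible}; but a component $e_{Q_a}\otimes\phi$ with $\phi$ nonzero and non-invertible is also an ``identity arrow'' in the sense required. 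The fix is the standard splitting: decompose $V_r = V_r' \oplus \ker\phi$ and $V_s = \operatorname{im}\phi \oplus V_s''$ so that $\phi$ restricts to an isomorphism $V_r' \to \operatorname{im}\phi$, cancel that pair, and iterate. Your termination argument (strict decrease of total rank) then applies as before and the process halts precisely when no $e_{Q_a}$-component survives. With these two corrections the proof is complete, though it does not recover the degree bound on arrows that the paper's argument supplies.
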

\begin{proof}
The hypothesis on the fundamental groups implies that the modules $V_i$ appearing in this complex are trivial, i.e. just finite-dimensional $k$-vector spaces.   Moreover, the compact cores $Q_i$ define rank one simple modules over the wrapped Floer cohomology algebra of the cotangent fibres $T_{q_i}^*Q_i$.  Proposition \ref{prop:non-positive} guarantees that this algebra is non-positively graded,  and Lemma \ref{lem:degree-at-least-2} asserts that in a twisted complex of (ordinary) modules over an $A_{\infty}$-algebra graded in non-positive degree, all arrows arise from morphisms of degree at least 2 in the $A_{\infty}$-algebra. 
\end{proof}

 We shall use throughout this section the fact that, in the subcategory of $ H^{*}(\cF(M)) $ with objects $Q_i$, all higher products vanish for degree considerations (recall that our minimal model is assumed strictly unital;  the vanishing result is in fact true whenever $n \geq 2$, but $n>2$ is the easy case).   Proposition \ref{prop:braid-orbit} is a consequence of the following purely algebraic result:
\begin{lem} \label{lem:classify_positive_endo_algebra}
Under the same hypotheses as in Proposition \ref{prop:braid-orbit}, let $\cC^{\bullet}$ be a  twisted complex over $Q_{0}$ and $Q_1$ whose endomorphism algebra is supported in non-negative degrees. Then $\cC^{\bullet}$ is in the braid group orbit of a direct sum of copies of $Q_j$ supported in a single degree, for either $j=0$ or $j=1$.
\end{lem}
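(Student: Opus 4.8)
The plan is to argue by induction on the number $D$ of summands (equivalently, the total rank $\sum \dim V_i$) in the twisted complex $\cC^\bullet$, using the hypothesis that $\End(\cC^\bullet)$ sits in non-negative degrees together with the very rigid structure of the subcategory on $Q_0, Q_1$. Recall that in that subcategory the only nonzero morphism groups are $HF^0(Q_i,Q_i)=k$ (the unit), $HF^1(Q_0,Q_1)=k$, and $HF^{n}(Q_1,Q_0)=k$ (Poincar\'e duality, using \eqref{eqn:fix-grading} and that the $Q_i$ are $\bZ$-homology spheres), and all higher $A_\infty$-products among these vanish for degree reasons. By the preceding Lemma we may assume $\cC^\bullet$ has no identity arrows; after applying a shift we normalise so that at least one summand, say a copy of $Q_0$, sits in degree $0$ and no summand sits in negative degree. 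The differential $\delta$ is strictly upper-triangular with entries of degree $1$, so an arrow out of a degree-$0$ copy of $Q_0$ must land in a degree-$1$ object via a class in $HF^1$, i.e. in a copy of $Q_1$; an arrow into a degree-$0$ $Q_0$ from a copy of $Q_1$ would need $HF^1(Q_1,Q_0)$, which vanishes unless $n=1$, so there are none.

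The heart of the argument is a \emph{cancellation / destabilisation} step: I would show that whenever $\cC^\bullet$ is not already a direct sum of shifted copies of a single $Q_j$, one can find, after a braid group move, a sub-twisted-complex of the form $\left( Q_0 \xrightarrow{\,\id\,} Q_0 \right)$ or more precisely an acyclic two-term piece that can be split off, contradicting the no-identities normalisation unless the remaining complex is strictly smaller. Concretely: look at the lowest degree in which summands appear. If summands of \emph{both} types $Q_0$ and $Q_1$ appear with nonzero differential between consecutive degrees, the composite of two such degree-$1$ arrows $Q_0 \to Q_1 \to Q_0'$ would be a degree-$2$ class, forced to be zero, and the $A_\infty$-relation $\mu^1\mu^1 + \mu^2(\delta,\delta)+\dots = 0$ then constrains $\delta$ so tightly that the top-degree arrows assemble into an isomorphism of vector spaces between the top two "layers". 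This lets us Gaussian-eliminate that layer, reducing $D$. The braid group enters precisely to \emph{create} such a cancellable configuration when the naive differential does not have full rank on a layer: applying $T_0^{\pm 1}$ or $T_1^{\pm 1}$ (using Lemma \ref{lem:shift} to track how $T_i(Q_i) \simeq Q_i[1-n]$ and how $T_i$ acts on $Q_j$, $i \neq j$, by a cone built from $HF^1$) replaces a copy of one core by a two-term complex over the other, which after simplification either shrinks the complex or converts a $Q_0$-layer into a $Q_1$-layer. Iterating, the induction terminates at a complex with summands of only one type and no arrows, which is a direct sum of shifted copies of a single $Q_j$; the non-negativity of $\End(\cC^\bullet)$ forces all these shifts to agree (a nontrivial shift between two summands would produce a negative-degree endomorphism), giving the claim.

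The main obstacle I anticipate is the bookkeeping in the cancellation step: verifying that the $A_\infty$-Maurer--Cartan equation, combined with the vanishing of all products of total degree $\geq 2$ in this subcategory, really does force the relevant block of $\delta$ to be invertible rather than merely nonzero, and that the braid moves can always be arranged to reach such a configuration without increasing complexity along the way. This is essentially a finite-dimensional linear-algebra argument over a filtered complex, but getting the induction quantity (likely the pair (number of summands, number of "mixed" layers) under a lexicographic order) to strictly decrease under the combined elimination-plus-braid move is the delicate point; everything else is dimension counting forced by \eqref{eqn:fix-grading} and Poincar\'e duality.
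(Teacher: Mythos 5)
Your proposed induction quantity is the weak point. You suggest inducting on the number of summands $D = \sum \dim V_i$ (or a lexicographic refinement of it), but the braid moves $T_0^{\pm 1}, T_1^{\pm 1}$ do \emph{not} monotonically decrease this quantity. Indeed, examining what happens to the twisted complex under $T_0^{-1}$: the operation replaces each $U_i'$-piece by a shifted copy $n-1$ places away, and splits some layers $V_i = V_i' \oplus \mathrm{im}(U_i)$ into \emph{two} separate pieces in different positions. In the paper's own analysis, applying $T_0^{-1}$ can cause the complex to ``extend an additional $n-2$ columns to the left,'' so the number of nonzero summands can genuinely grow. What decreases under the braid moves is a different quantity --- the ``complexity'' $cx(\cC^\bullet)$ defined as the maximum spread, roughly $\max |U_i \text{ or } V_i| - \min |U_i \text{ or } V_i|$ under a suitable reindexing --- which records how many degrees the complex straddles, not how many pieces it has. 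Your proposal as stated would therefore fail to terminate, or at least would need an entirely different decreasing invariant to be found; you correctly flag this as ``the delicate point,'' but it is not a bookkeeping issue, it is the whole argument.

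A second, related gap is the cancellation mechanism. You describe the step as Gaussian elimination of an acyclic two-term piece $Q_0 \xrightarrow{\id} Q_0$, with the braid group ``creating'' such identity arrows. But the preceding lemma has already normalised to a complex with \emph{no} identity arrows, and that normalisation is used repeatedly (e.g.\ to show various $\gamma \otimes e_{Q_i}$ are not exact); you cannot reintroduce identities and eliminate them without breaking that invariant. What actually happens in the paper is not a local cancellation: one shows via the non-negativity of $\End(\cC^\bullet)$ that entire blocks of the arrows $U_i \to V_i$ (or $V_{N-i} \to U_{N-i-n+2}$, etc.) are forced to be injective/surjective/zero near the ends of the complex, and then a \emph{global} application of $T_0^{-1}$ or $T_1$ restructures the whole complex so that its spread shrinks. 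Finally, a small slip: by \eqref{eqn:fix-grading} and Poincar\'e duality, $HF(Q_1, Q_0)$ is concentrated in degree $n-1$, not degree $n$; this matters because it is precisely the $(n-1)$-gap in the gradings that makes the complexity argument work.

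So the outline (induction plus braid moves) points in the right direction, but the specific induction quantity and the local-cancellation mechanism are both off in ways that would sink the proof.
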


By applying a suitable shift, we assume that the twisted complex $\cC^\bullet$ is concentrated in degrees $0 \leq i \leq N$ for some $N\geq 0$. The degree $i$ part of the complex is 
\[
U_i \otimes Q_0 \ \oplus \ V_i \otimes Q_1
\]
where $U_i, V_i$ are $k$-vector spaces concentrated in degree $i$; the internal differential (of degree $+1$) in the twisted complex is comprised of arrows $U_i \rightarrow V_i$, $V_j \rightarrow U_{j-n+2}$,  $U_i \rightarrow U_{i-n+1}$ and $V_i \rightarrow V_{i-n+1}$, labelled by the appropriate generators of $HF(Q_i, Q_j)$.  Diagrammatically (taking $n=4$):

\begin{center}
\begin{equation} \label{Eqn:LasTwistedComplex}
\xymatrix{
U_0 \ar[d] & U_1  \ar[d] & U_2 \ar[d]  & U_3  \ar[d] \ar@/_2pc/[lll] &   \ar@/_2pc/[lll] \cdots \\
V_0 & V_1 & V_2  \ar@{-->}[ull]  & V_3 \ar@{-->}[ull]  \ar@/^2pc/[lll] &  \ar@/^2pc/[lll] \cdots
}
\end{equation}
\end{center} \vspace{1cm}

\noindent with vertical arrows labelled by $HF(Q_0, Q_1)$, dotted arrows by $HF(Q_1, Q_0)$ and the curved arrows by the fundamental classes of the $Q_i$ (note that Equation \ref{eqn:fix-grading} implies that the total differential is indeed degree $1$).  

\begin{rem} \label{rem:dimension} 
The curved arrows emanate only from $U_j$ and $V_j$ with $j\geq n-1 \geq 2$.
\end{rem}

The key ingredient is a measure of complexity of $\cC^{\bullet}$ defined as follows.  

\begin{defin}
Let $|U_i| = 2i+1$, $|V_j| = 2j$  and define 
\[
cx(\cC^{\bullet} ) = \max \left( \max_{U_i \neq 0} |U_i|, \, \max_{V_i \neq 0} |V_i| \right) - \min \left( \min_{U_i \neq 0} |U_i|,\,  \min_{V_i \neq 0} |V_i| \right).
\]
\end{defin}

In other words, the complexity is the longest zig-zag path $\downarrow \nwarrow \downarrow \nwarrow \cdots$ that can be drawn in $\cC^{\bullet}$. Note that we are \emph{not} assuming that all the arrows in the zig-zag are represented by non-trivial differentials in the complex (the $\nwarrow$ here is a map $V_i \rightarrow U_{i-1}$ whose degree would not be compatible with a differential), just that the complex stretches out over the requisite number of degrees.  We will prove by induction on $cx(\cC^{\bullet})$ that $\cC^{\bullet}  \simeq \sigma(Q_0)$ for some $\sigma \in Br_3^{\alg} = \langle T_0, T_1\rangle$.

\begin{lem} \label{lem:first-step}
Assume $U_0 \oplus V_0 \neq 0$.  Let $N = max_i \left\{U_i \oplus V_i \neq 0 \right\}$. Then 
\[
V_0 \neq 0 \, \Leftrightarrow \, U_N \neq 0.
\]
\end{lem}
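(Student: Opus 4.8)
The plan is to extract the conclusion from the condition that $\End(\cC^\bullet)$ is supported in non-negative degrees, by testing against the two extreme grading positions of the complex. First I would set up the bookkeeping: the degree-$i$ piece of $\cC^\bullet$ is $U_i \otimes Q_0 \oplus V_i \otimes Q_1$, and the only nonzero morphism groups among the $Q_j$ are $HF^0(Q_j,Q_j) = k$ (the unit), $HF^n(Q_j,Q_j) = k$ (the fundamental class), $HF^1(Q_0,Q_1) = k$, and $HF^{n-1}(Q_1,Q_0) = k$; all higher $A_\infty$-products on this subcategory vanish for degree reasons (as recalled in the text). Consequently a morphism $\cC^\bullet \to \cC^\bullet$ of degree $d$ is just a matrix of entries drawn from these four groups together with the internal differentials, and the differential on the morphism complex is the usual commutator-type expression $\mu^1(\phi) = d_{\cC}\circ\phi \pm \phi\circ d_{\cC}$ (no higher terms contribute).

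The key step is to build an explicit degree $\leq 0$ endomorphism supported at the bottom of the complex whenever $V_0 \neq 0$, and show it is not a coboundary unless $U_N \neq 0$; and symmetrically for the other implication. Concretely, suppose $V_0 \neq 0$. Consider the endomorphism $\psi$ which is the identity on the summand $V_0 \otimes Q_1$ composed with the "top" fundamental-class map $HF^n(Q_1,Q_1)$ landing in $V_N' \otimes Q_1$ — more precisely, I would look at the component of $d_{\cC}$ or of a candidate cocycle that has the lowest possible degree and is forced to be nonzero. The cleanest route: the fundamental-class arrow $V_j \to V_{j-n+1}$ (a $\nwarrow$-type long arrow, the curved arrows in \eqref{Eqn:LasTwistedComplex}) has internal bidegree making it a degree-$1$ piece of the differential only when $|V_j|$ and $|V_{j-n+1}|$ differ appropriately; I would instead consider the \emph{endomorphism} given by the fundamental class $HF^n(Q_1,Q_1)\colon V_0 \to V_0$ placed at the bottom, which has degree $n - ($ shift $) $. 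Tracking the grading convention $|V_j| = 2j$ and the requirement that $\End$ be non-negatively graded, this map has degree $n$ minus the spread of the complex, and positivity of its degree (or the impossibility of killing it by $\mu^1$ without a $U_N$ summand to absorb it) forces $U_N \neq 0$. Running the dual argument with $ev^\vee$ and the class in $HF^{n-1}(Q_1,Q_0)$ gives the converse implication $U_N \neq 0 \Rightarrow V_0 \neq 0$.

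I expect the main obstacle to be the \textbf{sign and coboundary analysis}: showing that the distinguished low-degree morphism genuinely represents a nonzero class in cohomology of the endomorphism complex, rather than being $\mu^1$ of something. This requires a careful look at which internal differentials in $\cC^\bullet$ can produce the offending map as a boundary — and this is exactly where the hypothesis that $\cC^\bullet$ has endomorphism algebra in non-negative degrees, combined with Remark \ref{rem:dimension} (the curved/fundamental-class arrows emanate only from indices $j \geq n-1$), does the work: it pins down that no internal differential of the right degree exists unless the corresponding extreme summand ($U_N$, resp. $V_0$) is present to serve as source or target. Once the cohomological non-triviality is established, the non-negativity hypothesis immediately yields the degree inequality that packages into the stated equivalence. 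A secondary, more routine, point is to confirm that the complex being concentrated in degrees $0 \leq i \leq N$ after the preliminary shift is compatible with $U_0 \oplus V_0 \neq 0$ and with $N$ being attained, so that both extremes are genuinely "visible" to the endomorphism algebra.
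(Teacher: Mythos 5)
Your high-level plan is the right one — use the non-negativity hypothesis on $\End(\cC^\bullet)$ by building a closed, non-exact endomorphism of negative degree and deriving a contradiction — but the concrete morphism you try to build is not the one that works, and the degree arithmetic around it doesn't close up.

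The correct morphism to consider is not the fundamental class $[Q_1] \in HF^n(Q_1,Q_1)$ but the \emph{identity} $e_{Q_1} \in HF^0(Q_1,Q_1)$, placed between the two extreme slots of the complex: for any non-zero linear map $\gamma \co V_N \to V_0$, consider $\gamma \otimes e_{Q_1}$, which sits in the component $\Hom(V_N,V_0) \otimes HF^0(Q_1,Q_1)$ of the endomorphism complex. Its degree is $-N$ (the full homological span of the complex, with no positive contribution from the Floer class since $e_{Q_1}$ has degree $0$), which is strictly negative once $N>0$. By contrast, the class you propose --- $[Q_1]$ acting on $V_0$ as an endomorphism of the bottom slot --- has degree $n > 0$, so it cannot contradict non-negativity; and even if one moves to $[Q_1]\colon V_N \to V_0$, the degree is $n-N$, which fails to be negative when $N \leq n$, so that route is genuinely unusable. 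With $\gamma \otimes e_{Q_1}$ the rest falls into place: it is closed because the twisted differential has no arrows out of $V_0$ (being at the bottom, and $n\geq 3$) and none into $V_N$ \emph{precisely because $U_N = 0$}; and it cannot be exact because no $\mu^1$ of anything can produce a pure identity component, given $\mu^{\geq 3}=0$ and the fact that no differential $\delta_{i,j}$ is itself an identity. So $V_0 \neq 0$ and $U_N=0$ together force a negative-degree cohomology class, contradiction. Likewise, the converse direction uses $\Hom(U_N,U_0)\otimes e_{Q_0}$ — again the identity, not the morphism in $HF^{n-1}(Q_1,Q_0)$ you suggest (that one has positive degree and lives between different objects, so it is unusable for the same reason). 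In short: your hunt for a distinguished low-degree endomorphism is exactly the right instinct, but the winning choice is the identity of $Q_1$ (resp. $Q_0$) spanning the full length of the complex, and the negativity comes entirely from the homological shift $-N$, not from any Floer degree.
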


\begin{proof}
Suppose for contradiction that $V_0 \neq 0$, but $U_N = 0$. Consider the component of $End_{H\cF(M)}(\cC^{\bullet}, \cC^{\bullet})$ containing $Hom(V_N, V_0) \otimes e_{Q_1}$.   Since the twisted complex has no arrows out of $V_0$, and no arrows into $V_N$ by the hypothesis $U_N = 0$, this morphism must be closed.  The fact that $\mu^k = 0$ for $k\geq 3$ and that there are no identity morphisms $e_{Q_1}$ in the differentials of the twisted complex furthermore implies that $Hom(V_N,V_0)\otimes e_{Q_1}$ cannot be exact: if $\mu^2(p,q) = e_{Q_1}$ for differentials $p,q$ then $p=e_{Q_1} = q$.  However, this morphism has negative degree, which is a contradiction to our assumption that
\[
End_{\Tw(H\cF(M))}(\cC^{\bullet}, \cC^{\bullet}) 
\]
is concentrated in non-negative degrees.  Therefore, $V_0 \neq 0 \Rightarrow U_N \neq 0.$ The reverse implication is analogous, looking instead at $Hom(U_N, U_0) \otimes e_{Q_0}$ and recalling that $e_{Q_0}$ does not appear as a differential.
\end{proof}

\begin{rem} \label{rem:first-step-enhanced}
The proof of Lemma \ref{lem:first-step} actually yields a slightly stronger conclusion:
\begin{enumerate}
\item If $U_N = 0$ then $V_j = 0$ for $j<n-2$;
\item If $V_0 = 0$ then $U_{N-j}=0$ for $j<n-2$.
\end{enumerate}
This is because, say in the first case, the proof $U_N = 0 \Rightarrow V_0 = 0$ only used the fact that no arrows come out of $V_0$, which also holds true for $V_j$ with $j<n-2$.
\end{rem}

\begin{lem} \label{lem:second-step-1}
In the situation of Lemma \ref{lem:first-step}, suppose $V_0 \neq 0$.  Then either  $V_{N-i} = 0$ for $i<n-1$ or  $U_j = 0$ for $ j < n-1$.
In the first case $T_0^{-1}( \cC^{\bullet} )$ has lower complexity than $\cC^{\bullet}$ and in the second $T_1( \cC^{\bullet})$ has lower complexity than $\cC^{\bullet}$.
\end{lem}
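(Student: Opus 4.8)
The plan is to establish the dichotomy first, then the two complexity statements.

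\textbf{Dichotomy.} I would argue by contradiction, pushing the technique of Lemma~\ref{lem:first-step} and Remark~\ref{rem:first-step-enhanced} one step further. The recurring device is that a component of the endomorphism algebra $\End(\cC^{\bullet})$ carried by an identity morphism $e_{Q_i}$ between two entries of the twisted complex is $\mu^{1}$-closed as soon as the target entry admits no outgoing differential and the source entry no incoming one (the higher products vanish, so the only terms in its differential have the shape $\mu^{2}(\delta,-)$ and $\mu^{2}(-,\delta)$), and that it is never a boundary, since obtaining $e_{Q_i}$ as a $\mu^{2}$ of differential components would force one of those components to equal $e_{Q_i}$, which cannot happen. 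Now suppose both alternatives fail, so there is an index $j_1$ with $N-n+2\le j_1\le N$ and $V_{j_1}\ne 0$, and an index $j_0$ with $0\le j_0\le n-2$ and $U_{j_0}\ne 0$; recall that $V_0\ne 0$ forces $U_N\ne 0$ by Lemma~\ref{lem:first-step}. Since $j_1>N-n+1$ there is no curved differential into $V_{j_1}$, and never one out of $V_0$, so $\Hom(V_{j_1},V_0)\otimes e_{Q_1}$ is closed as soon as $U_{j_1}=0$; being of negative degree $-j_1$ it cannot occur, so $U_{j_1}\ne 0$. Symmetrically, $j_0<n-1$ kills the curved differential out of $U_{j_0}$, there is none into $U_N$, and the morphism $\Hom(U_N,U_{j_0})\otimes e_{Q_0}$ of degree $j_0-N$ forces $V_{j_0}\ne 0$. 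Thus $Q_0$ and $Q_1$ are both present in degree $j_1$ (near the top) and in degree $j_0$ (near the bottom). Playing these against each other — restricting an $e_{Q_0}$-morphism into $U_{j_0}$ to land in the kernel of the vertical differential $U_{j_0}\to V_{j_0}$, and an $e_{Q_1}$-morphism out of $V_{j_1}$ to factor through a complement of the image of the vertical differential $U_{j_1}\to V_{j_1}$ — forces those vertical maps to be simultaneously injective and surjective, which through the Maurer--Cartan relation $\mu^{2}(\delta,\delta)=0$ kills a string of neighbouring off-diagonal differentials; a final application of the device, e.g.\ to the always-closed morphism $U_N\to V_0$ of degree $1-N$ carried by the generator of $HF^{1}(Q_0,Q_1)$, then has no available primitive and yields the contradiction.

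\textbf{Complexity.} I would compute directly with the twist. By Lemma~\ref{lem:shift}, $T_0(Q_0)\simeq Q_0[1-n]$, hence $T_0^{-1}(Q_0)\simeq Q_0[n-1]$; and \eqref{eqn:inverse-twist}, together with the fact that $HF(Q_1,Q_0)$ is one-dimensional concentrated in degree $n-1$, identifies $T_0^{-1}(Q_1)$ with a two-term twisted complex built from one copy of $Q_1$ and one copy of $Q_0$ offset by $n-1$ degrees. Substituting these into the description \eqref{Eqn:LasTwistedComplex} of $\cC^{\bullet}$ and expanding all cones, $T_0^{-1}(\cC^{\bullet})$ is again a twisted complex over $Q_0,Q_1$ whose $Q_0$-summands (those carried over from the old $U$-row, together with the new ones produced by the cones on the $V_i$) are all pushed down by $n-1$ indices, while its $Q_1$-summands reproduce the old $V$-row shifted down by a single index. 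A short bookkeeping in the weights $|U_i|=2i+1$, $|V_i|=2i$ then shows that the top of $T_0^{-1}(\cC^{\bullet})$ drops by at least one — the old maximum $|U_N|$ is moved down by $2(n-1)$, and the hypothesis $V_{N-i}=0$ for $i<n-1$ is exactly what prevents any of the shifted $V$-terms from overhanging it — while the bottom does not rise, so $cx$ strictly decreases. The second case is the same computation with $T_1$ in place of $T_0^{-1}$, using $T_1(Q_1)\simeq Q_1[1-n]$ and the cone description of $T_1(Q_0)$: now it is the $Q_1$-summands that get pushed up by $n-1$ indices, and the hypothesis $U_j=0$ for $j<n-1$ guarantees that the bottom of the new complex is the shifted-up $V_0$ (no $Q_0$-summand or cone-$Q_1$-summand sinks lower), so the bottom rises by $2(n-1)$ while the top rises by strictly less, and $cx$ drops again.

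\textbf{Main obstacle.} The arithmetic of the complexity estimate is routine once the conventions for the shift $[\,\cdot\,]$ and the mapping cone are fixed. The delicate part is the exactness bookkeeping in the dichotomy, above all the last step: enumerating the pairs of differential components that could assemble the candidate negative-degree endomorphism via $\mu^{2}$, and checking that the constraints already in force (no differential out of $V_0$ or into $U_N$; injectivity and surjectivity of the extreme vertical maps; the vanishing of the off-diagonal differentials this entails; the absence of identity morphisms among the differentials; and strict unitality) exclude every one of them. Degenerate short complexes, where the displayed degrees fail to be negative, have bounded complexity and are treated directly.
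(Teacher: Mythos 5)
Your organisation diverges from the paper's in a way that creates a genuine gap. The paper never argues the dichotomy by contradiction: it first establishes injectivity $U_i \hookrightarrow V_i$ ($i \le n-2$) and surjectivity $U_{N-i} \twoheadrightarrow V_{N-i}$ ($i \le n-2$), uses these to split $U_{N-i} = U'_{N-i}\oplus V_{N-i}$ and $V_i = V'_i\oplus \mathrm{im}(U_i)$, recognises the summands $V_{N-i}\otimes(Q_0 \to Q_1)$ as copies of $T_0(Q_1)$, and so computes $T_0^{-1}(\cC^\bullet)$ in normal form \eqref{eqn:model-complex}. Only \emph{then} does it branch: in Case~1 the normalised complex visibly has complexity $\le 2N$; in Case~2 (some $V_{N-j}\ne 0$, $j\le n-2$) a degree argument \emph{inside} $\End(T_0^{-1}\cC^\bullet)$ shows the diagonal maps $V_k \to V'_k$ are injective, i.e.\ $U_k = 0$ for $k\le n-2$, and one then returns to the original $\cC^\bullet$ and applies $T_1$. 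The dichotomy is thus a by-product of this analysis, not a prior step.

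Your contradiction route stalls at the ``final application.''  The closed degree-$(1-N)$ morphism $\Hom(U_N,V_0)\otimes p$ \emph{can} have a primitive built from $\phi_{U_N,U_0}\otimes e_{Q_0}$ and $\phi_{V_N,V_0}\otimes e_{Q_1}$: one computes $\mu^1(\phi)_{U_N,V_0} = \bigl(\delta_{U_0,V_0}\circ\gamma \pm \beta\circ\delta_{U_N,V_N}\bigr)\otimes p$, and the target is reached for arbitrary $\alpha$ precisely when either $\delta_{U_0,V_0}$ is an isomorphism or $\delta_{U_N,V_N}$ is an isomorphism. In the scenario corresponding to the paper's Case~2 the vertical maps $U_{N-j}\to V_{N-j}$ \emph{are} isomorphisms, so the morphism is exact and no contradiction arises.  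What you actually get from this device is only the weaker statement ``$\delta_{U_0,V_0}$ iso or $\delta_{U_N,V_N}$ iso,'' and turning that into the full disjunction $V_{N-i}=0$-or-$U_j=0$ needs exactly the analysis of the $T_0^{-1}$-twisted complex (in particular the injectivity of the dotted arrows $V_k\to V'_k$, which identifies $V'_k$ with $V_k$ and hence forces $U_k=0$), which your proposal omits.

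Your complexity bookkeeping, once the shift conventions are pinned down, does yield the right inequality $\mathrm{cx}\le 2N < 2N+1$ in both cases, though the statement ``the bottom does not rise'' is not what happens (after $T_0^{-1}$ the bottom in fact \emph{drops}, by $2n-3$, while the top drops by $2n-2$; after $T_1$ both rise, the top by less than the bottom). Note also that the naive expansion of all cones gives a \emph{non-minimal} complex; this is harmless for an upper bound on complexity since cancellation can only shrink the span, but it is worth stating explicitly that this is why the bound transfers. The paper's normalisation via the splittings has the additional virtue that it produces the input data ($U_k=0$ or $V_{N-i}=0$) used downstream in Lemma~\ref{lem:classify_positive_endo_algebra}, which the naive-expansion route does not make visible.

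So: the complexity estimates are essentially fine modulo imprecise prose, but the proof of the dichotomy is where the real content lives, and as written it does not close. To repair it you would either need to prove, under the contradiction hypotheses, that \emph{neither} extreme vertical map can be an isomorphism (which is false), or abandon the contradiction framing and reproduce the paper's analysis of $\End(T_0^{-1}\cC^\bullet)$.
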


\begin{proof}
Suppose first that $N \geq n-1$.  We will prove that one can reduce the complexity by applying one of the moves indicated.  Iteratively, one can then simplify the twisted complex by a braid until $N \leq n-2$. At this point, there are no curved arrows; in this special case, a minor modification of the arguments given below allows one to further reduce complexity directly.   

The start of the analysis applies to both cases.  We claim that
\begin{itemize}
\item $U_i \hookrightarrow V_i$ is injective for $i=0,1,\ldots, n-2$;
\item $V_{i+n-2} \rightarrow U_i$ vanishes for $i=0,1, \ldots, n-2$.
\end{itemize}
For the first statement, suppose the vector space morphism $U_i \rightarrow V_i$ has kernel, with $i<n-1$.  Pick a non-trivial element $\gamma \in Hom(U_N, U_i)$ with image in that kernel, noting that $U_N \neq 0$ by Lemma \ref{lem:first-step}.  Then $\gamma \otimes e_{Q_0}$ is closed (since no other arrows come out of $U_i$; this follows from Remark \ref{rem:dimension}) but not exact, being a multiple of an identity element. It survives to the cohomology of endomorphisms of $\cC^{\bullet}$, but has negative degree, a contradiction.  Vanishing of $V_{i+n-2} \rightarrow U_i$ when $i<n-1$ now follows from the fact that  $d_{\cC^{\bullet}} \circ d_{\cC^{\bullet}} = 0$ in the twisted complex $\cC^{\bullet}$. Indeed, the $Hom(V_{i-n+2}, V_i)$-term in the Maurer-Cartan equation for $d_{\cC^{\bullet}}$ is a tensor product $Hom(V_{i-n+2}, U_i) \otimes Hom(U_i, V_i)$.  Analogously, we claim that
\begin{itemize}
\item $U_{N-i} \twoheadrightarrow V_{N-i}$ is surjective for $i=0,1, \ldots, n-2$;
\item $V_{N-i} \rightarrow U_{N-i-n+2}$ is zero for $i=0,1, \ldots, n-2$.
\end{itemize}
The first assertion follows since if the map had cokernel, one would split $V_{N-i} \cong im(U_{N-i}) \oplus V_{N-i}'$ and consider an element of $Hom(V_{N-i}',V_0)\otimes e_{Q_1}$ to obtain a cohomologically essential endomorphism of negative degree.  Closedness again follows from Remark \ref{rem:dimension}, and the fact that $i < n-1$. The second assertion then follows from $d^2=0$ for the twisted complex $\cC^{\bullet}$, as previously; the $Hom(U_{N-i}, U_{N-i-n+2})$-component in the Maurer-Cartan equation comes from morphisms factoring through $V_{N-i}$.

We now apply the autoequivalence $T_0^{-1}$ to $\cC^{\bullet}$, cf. \eqref{eqn:inverse-twist}.  Since $U_{N-i} \rightarrow V_{N-i}$ is onto, we can split the map to write $U_{N-i} \cong U_{N-i}' \oplus V_{N-i}$. By Lemma \ref{lem:shift}, applying $T_0^{-1}$  has the effect of shifting the $U_{N-i}'$-term of the complex leftwards by $n-1$ places, and replacing the $V_{N-i} \otimes (Q_0 \oplus Q_1)$ term by $V_{N-i} \otimes Q_1$ only. In particular, after applying $T_0^{-1}$, the top right $n-2$ places in the complex $T_0^{-1}(\cC^{\bullet})$ are zero. One can apply a similar analysis at the left hand side of the original complex $\cC^{\bullet}$.  Using the fact that $U_i \subset V_i$ if $i<n-1$, and writing $V_i = V_i' \oplus im(U_i)$, the autoequivalence $T_0^{-1}$ has the effect of replacing $U_i \otimes (Q_0 \oplus Q_1)$ by $U_i \otimes Q_1$ and $V_i' \otimes Q_1$ by $V_i' \otimes Q_1 \rightarrow V_i' \otimes Q_0[2-n]$.  The upshot is that one has a twisted complex of the shape (for the diagram, $n=4$ again, and $\dagger, \ast$ denote vector spaces which can vary from instance to instance, or depend on the value of $N$): \newline

\begin{center}
\begin{equation} \label{eqn:model-complex}
\xymatrix{
V_0'  & V_1' & \dagger \oplus V_2'  \ar[d] & \ast \ar[d]   \ar@/_2pc/[lll] &  \ast \ar[d] \ar@/_2pc/[lll] & \cdots & 0 & 0 \\
0 & 0& V_0 \ar@{-->}[ull] & V_1 \ar@{-->}[ull] & V_2  \ar@{-->}[ull]  &  \ar@/^2pc/[lll] \cdots & V_{N-1} & V_{N}  \ar@{-->}[ull] 
}
\end{equation}
\end{center} \vspace{1cm}
We now consider the two cases of the statement of the Lemma. 

\begin{enumerate}

\item  If $V_{N-i}=0$ for every $i=0,1\ldots, n-2$, then although the complex now extends an additional $n-2$ columns to the left, the overall complexity has been reduced from $2N+1$ to $\leq 2N$, hence by at least $1$ (essentially because of the gap at the bottom left of the diagram above).  

\item If some $V_{N-j} \neq 0$ with $0 \leq j \leq n-2$, then we claim that the diagonal arrows $V_k \dashrightarrow V_k'$ are injective for $0 \leq k \leq n-2$.  Otherwise, in the usual fashion, we can build a negative degree cohomologically essential morphism by taking an element of $Hom(V_{N-j}, V_k) \otimes e_{Q_1}$.  An additional step is needed in case $k=n-2$, since the vector space occuring at that point in the top row of \eqref{eqn:model-complex} is $\dagger \oplus V_{n-2}'$ which may be larger than $V_{n-2}'$.  Namely, we must observe that there is no non-trivial component $V_{n-2} \rightarrow \dagger$:  indeed, $\dagger = \ker(U_{n-1} \rightarrow V_{n-1})$.  Since $V_k'  $ is the quotient of $V_k$ by $U_k$, we therefore conclude that $U_k = 0$ for $0 \leq k \leq n-2$.  In other words, our initial twisted complex is of the shape \newline
 
 \begin{center}
\begin{equation}
\xymatrix{
0  & 0  & 0   & U_3 \ar[d] & U_4 \ar[d]     &    \cdots & U_{N-1} \ar[d] & U_N \ar[d] \\
V_0 & V_1 & V_2  & V_3 \ar@/^2pc/[lll] & V_4    \ar@/^2pc/[lll] &  \ar@/^2pc/[lll] \cdots & V_{N-1} & V_{N}  \ar@{-->}[ull] 
}
\end{equation}
\end{center} \vspace{1cm}

We now claim that the vertical arrows $U_{N-j} \rightarrow V_{N-j}$  must be isomorphisms for $0 \leq j \leq n-2$ at the top right of the diagram (we already know they are surjective).  If this was not true, then take an element in the kernel of one of these maps, and consider $Hom(U_{N-j}, V_0) \otimes \langle p
\rangle$ to obtain a closed and non-exact endomorphism of negative degree, where $p\in HF(Q_0, Q_1)$ is the unique intersection point.  Finally, apply the autoequivalence $T_1$ to the resulting complex.  The terms $V_j$ for $j<n-1$ are shifted rightwards, so the leftmost $n-2$ columns now vanish; whilst the isomorphisms $U_{N-j} \rightarrow V_{N-j}$ in the rightmost columns are replaced by terms $U_{N-j}$ on the upper row, and $V_{N-j-n+1}$ on the lower row.  Thus, we see \newline

 \begin{center}
\begin{equation}
\xymatrix{
0  & 0  & 0   & U_3 \ar[d] & U_4 \ar[d]   &     \cdots & U_{N-1} \ar[d] & U_N \ar[d] \\
0 & 0 & 0   & V_0' & V_1'   &   \cdots   & V_{N-3}  & V_{N-2}  \ar@{-->}[ull] 
}
\end{equation}
\end{center} \vspace{1cm}

\noindent The complexity has again been reduced by at least one, which completes the proof of the second case.
\end{enumerate}
\end{proof}

\begin{lem} \label{lem:second-step-2}
In the situation of Lemma \ref{lem:first-step}, suppose $V_0 =0$.  Then either $V_{i} = 0$ for $i=0,1,\ldots, n-2$ or $U_{N-i} = 0$ for $i=0,1,\ldots, n-2$;  the twist $T_1^{-1}(\cC^{\bullet})$ has lower complexity than $\cC^{\bullet}$ in the first case, and $T_0(\cC^{\bullet})$ has lower complexity in the second case.  \end{lem}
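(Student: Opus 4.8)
\emph{Proof strategy.} The plan is to run the argument of Lemma~\ref{lem:second-step-1} in the mirror situation, interchanging the two ends of the complex and the twists $T_0$ and $T_1$. First I record what comes for free: since $V_0=0$, Lemma~\ref{lem:first-step} forces $U_N=0$, and Remark~\ref{rem:first-step-enhanced} then gives $V_j=0$ for $j<n-2$ and $U_{N-j}=0$ for $j<n-2$; in particular $U_0\neq 0$ and $V_N\neq 0$, so $\cC^{\bullet}$ genuinely occupies all the columns $0,\dots,N$ and $cx(\cC^{\bullet})=2N-1$. As in Lemma~\ref{lem:second-step-1}, if $N\le n-2$ there are no curved arrows and a minor variant of the discussion below reduces the complexity directly, so I may assume $N\ge n-1$. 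The dichotomy to be proved is thus exactly: either $V_{n-2}=0$ (hence $V_i=0$ for all $i\le n-2$) or $U_{N-n+2}=0$ (hence $U_{N-i}=0$ for all $i\le n-2$).

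\emph{Step 1: structural constraints at the two ends.} Here the proof parallels Lemma~\ref{lem:second-step-1}. From a non-trivial kernel, respectively cokernel, of one of the maps $U_i\to V_i$ or $V_j\to U_{j-n+2}$ occurring near an end of the complex, I would produce a closed, non-exact endomorphism $\gamma\otimes e_{Q_i}$ of $\cC^{\bullet}$ of negative degree --- using that $e_{Q_0}$ and $e_{Q_1}$ never occur among the differentials, that $\mu^{\ge 3}=0$, and Remark~\ref{rem:dimension} to check closedness --- contradicting the hypothesis that $\End_{\Tw(H\cF(M))}(\cC^{\bullet},\cC^{\bullet})$ is concentrated in non-negative degrees; the resulting vanishings then propagate via $d_{\cC^{\bullet}}^{2}=0$. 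Carrying this out at the left end, where the $V_i$ already vanish for $i<n-2$, and at the right end, where the $U_i$ vanish for $i>N-n+2$, yields the asserted dichotomy.

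\emph{Step 2: reduce complexity.} In the first case $V$ is supported in the columns $[n-1,N]$; applying $T_1^{-1}$ --- which by Lemma~\ref{lem:shift} shifts the $Q_1$-summands $n-1$ columns to the left, resolves each $Q_0$-summand into a two-term cone involving $Q_1$, and lets the original differentials collapse the redundant pairs --- moves the bottom row into the columns $[0,N-n+1]$ while leaving the top row essentially in place, and one checks, exactly as in the diagram chase following Lemma~\ref{lem:second-step-1}, that $cx$ drops by at least one even though the diagram may lengthen by $n-2$ columns. In the second case $U$ is supported in the columns $[0,N-n+1]$ and $T_0$ does the symmetric thing. This proves the Lemma; together with Lemmas~\ref{lem:first-step} and~\ref{lem:second-step-1} it furnishes the inductive step in the induction on $cx(\cC^{\bullet})$, and hence completes the proofs of Lemma~\ref{lem:classify_positive_endo_algebra} and of Proposition~\ref{prop:braid-orbit}.

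\emph{Main obstacle.} The delicate part is Step~1. Because the hypothesis is now $V_0=0$ rather than $U_N\neq 0$, the vertical arrows no longer supply the injections $U_i\hookrightarrow V_i$ that drove the left-end analysis in Lemma~\ref{lem:second-step-1}; the end constraints have to be extracted instead from the dotted arrows, whose labelling class lies in degree $n-1$, and --- as for the final column in the proof of Lemma~\ref{lem:second-step-1} --- one must compose with the generator $p\in HF(Q_0,Q_1)$ to detect the single additional vanishing, $V_{n-2}=0$ or $U_{N-n+2}=0$, that Remark~\ref{rem:first-step-enhanced} does not already give. Once the correct end-behaviour is pinned down, Step~2 is routine bookkeeping with the shift conventions, identical in spirit to the explicit diagram chase in Lemma~\ref{lem:second-step-1}.
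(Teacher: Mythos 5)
Your high-level plan---mirror the argument of Lemma~\ref{lem:second-step-1}---is the right instinct, and you correctly identify the difficulty: the situation is not symmetric because the generator of $HF(Q_0,Q_1)$ lives in degree~$1$ while that of $HF(Q_1,Q_0)$ lives in degree~$n-1$, so the left-end injectivity mechanism of Lemma~\ref{lem:second-step-1} (which leans on the vertical arrows $U_i\to V_i$ together with $U_N\neq 0$) does not transcribe directly. But you stop at exactly this point: your ``Step~1'' is a sketch in the conditional (``I would produce\dots''), and your closing paragraph explicitly concedes that the single extra vanishing ($V_{n-2}=0$ or $U_{N-n+2}=0$) has not been pinned down. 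So the proposal as written has a genuine gap precisely where you flag one.

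The paper closes this gap not by brute-forcing the mirror argument but with a relabelling trick that restores the symmetry. Replace the ordered pair of objects $(Q_0,Q_1)$ by $(Q_1[n-2],Q_0)$. The point of the shift $[n-2]$ is that $HF(Q_1[n-2],Q_0)\cong HF(Q_1,Q_0)[-(n-2)]$ is now concentrated in degree~$1$, exactly like $HF(Q_0,Q_1)$ was, while $HF(Q_0,Q_1[n-2])$ is concentrated in degree~$n-1$; thus the relabelled pair satisfies the \emph{same} grading convention~\eqref{eqn:fix-grading} as the original. Diagrammatically, the effect on~\eqref{Eqn:LasTwistedComplex} is to slide the lower $V$-row $n-2$ columns to the left, so the dotted arrows become vertical and the previously vertical arrows become the dotted arrows shifted $n-2$ columns left. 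Since $V_0=0\Rightarrow U_N=0\Rightarrow V_N\neq 0$, after swapping the roles of $U$ and $V$ one lands exactly in the hypotheses of Lemma~\ref{lem:second-step-1}, which then applies verbatim. Two small checks remain and both are addressed in the paper: first, that the relabelling does not itself increase complexity---this is where Remark~\ref{rem:first-step-enhanced} is used (the vanishings $V_j=0$ for $j<n-2$ and $U_{N-j}=0$ for $j<n-2$ guarantee the shifted diagram fits in the same number of columns); second, that the resulting twist in the shifted object $Q_1[n-2]$ is the same functor as the twist in $Q_1$, which follows from the invariance of the twist functor under shifting the spherical object. With those observations, the dichotomy and the complexity drop are literally the conclusions of Lemma~\ref{lem:second-step-1} applied to the relabelled complex, translated back.

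In short: your diagnosis of where the difficulty sits is correct, but you do not resolve it; the resolution is the shift-and-relabel device, which you should incorporate in place of your unfinished Step~1.
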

\begin{proof}
The proof is exactly analogous to that of Lemma \ref{lem:second-step-1}, and indeed reduces to that Lemma by suitable relabelling.  More precisely, we replace the ordered pair $(Q_0, Q_1)$ by the pair $(Q_1[n-2], Q_0)$, noting that in both cases the unique morphism from the first brane to the second is concentrated in degree 1.  The effect on the complex \eqref{Eqn:LasTwistedComplex} is to shift the lower row to the left, so the dotted arrows become vertical, and the previously vertical arrows move $n-2$ places to the left.  Finally, since we have assumed throughout that $U_N \oplus V_N \neq 0$, and the hypothesis $V_0 = 0$ implies that $U_N = 0$ by Lemma \ref{lem:first-step}, we know that $V_N \neq 0$. Up to reversing the roles of the $U_i$ and $V_i$, this brings us back into exactly the situation of Lemma \ref{lem:second-step-1}.  Moreover, Remark \ref{rem:first-step-enhanced} implies that the relabelling operation does not itself increase complexity.    The conclusion of Lemma \ref{lem:second-step-1}, given the relabelling, is therefore to say that one can reduce complexity either by applying the inverse twist in $Q_1[n-2]$ or the twist in $Q_0$.  The twist functor in a shifted object is exactly the same as the twist functor in the unshifted object, which completes the proof.
\end{proof}

Applying  Lemmata \ref{lem:second-step-1}, \ref{lem:second-step-2}, we immediately conclude Lemma \ref{lem:classify_positive_endo_algebra}.
\begin{proof}[Proof of Proposition \ref{prop:braid-orbit}]
Since every exact Lagrangian has endomorphism algebra supported in degree $[0,n]$, Lemma \ref{lem:classify_positive_endo_algebra} implies that such a Lagrangian is in the braid group orbit of a direct sum of copies of one of the components of the skeleton.  (A choice of component is immaterial, by Remark \ref{rem:cores-in-same-orbit}.)  Whenever this direct sum has more than one factor, the endomorphism algebra has dimension $>1$ in degree $0$.  We conclude that a connected exact Lagrangian must in fact be in the braid group orbit of (either of) the two components of the skeleton.
\end{proof}
As an immediate Corollary, one has:
\begin{cor} \label{cor:homology}
Let $M$ be the plumbing of two $\bZ$-homology spheres $Q_i$ for which the groups $\pi_1(Q_i)$ admit no non-trivial finite-dimensional representation.  A Maslov zero exact Lagrangian submanifold of $M$ is a $\bZ$-homology sphere, which moreover realises a primitive homology class.
\end{cor}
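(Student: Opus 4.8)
The plan is to feed Proposition \ref{prop:braid-orbit} into two standard invariance principles --- that Floer cohomology, and (up to sign) the ambient homology class, depend only on the quasi-equivalence type of an exact Lagrangian brane --- and then to pass to integral coefficients by letting the ground field vary.

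First I would show that $L$ is a $\bZ$-homology sphere. Fix a field $k$ and apply Proposition \ref{prop:braid-orbit} over $k$: the brane $L$ is quasi-equivalent in $\Tw(H^{*}\cF(M))$ to $\sigma(Q_j)[s]$ for some $j\in\{0,1\}$, some $s\in\bZ$, and some $\sigma$ in $Br_3^{\alg}=\langle T_0,T_1\rangle$. The $T_i$ are autoequivalences (the $Q_i$ are spherical, being homology spheres) and hence preserve morphism spaces, the shift functor changes them only by an overall grading shift, and quasi-equivalent objects have isomorphic morphism spaces; so $HF^{*}(L,L;k)\cong HF^{*}(Q_j,Q_j;k)$ as ungraded $k$-vector spaces. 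As both branes are closed and exact, their self-Floer cohomology computes ordinary cohomology, so (using that $Q_j$ is a $\bZ$-homology sphere) $H^{*}(L;k)\cong H^{*}(S^n;k)$. Since $k$ was arbitrary, the closed connected orientable manifold $L$ has the cohomology of $S^n$ over every field; the universal coefficient theorem then forces $H_{*}(L;\bZ)\cong H_{*}(S^n;\bZ)$.

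Next I would establish primitivity. The plumbing $M$ deformation retracts onto $Q_0\cup Q_1$, two homology $n$-spheres meeting transversally at a point, so $H_n(M;\bZ)\cong\bZ\langle[Q_0]\rangle\oplus\bZ\langle[Q_1]\rangle$ is free of rank $2$. The homology class of an exact Lagrangian brane, viewed in $H_n(M;\bZ)$ modulo sign, is an invariant of its quasi-equivalence class: it is additive along the exact triangles realised by Lagrange surgery, so it factors through the Grothendieck group modulo the relation $[X[1]]=-[X]$, and on that group $T_{Q_i}$ acts by the Picard--Lefschetz-type formula $[X]\mapsto[X]-\chi\big(HF^{*}(Q_i,X)\big)[Q_i]$ --- compare Lemma \ref{lem:shift} --- hence by an element of $GL(H_n(M;\bZ))\cong GL_2(\bZ)$. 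Since every element of $GL_2(\bZ)$ carries primitive vectors to primitive vectors and $[Q_j]$ is a basis vector, the class $[L]=\pm\,\sigma_{*}[Q_j]$ is primitive, and in particular nonzero.

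The first half is essentially formal once Proposition \ref{prop:braid-orbit} is available; the real work will be in justifying the homological input to the second half, namely that the class of a brane in $H_n(M;\bZ)$ is a genuine quasi-equivalence invariant up to sign and that the categorical braid action intertwines with the linear Picard--Lefschetz action on the lattice. This reduces to the additivity of homology classes under the exact triangles produced by the algebraic twists --- the homological shadow of the Lagrange surgery formula --- together with the standard computation of $T_{Q_i}$ on the Grothendieck group, and one should take care that the grading normalisation \eqref{eqn:fix-grading} is maintained throughout so that all signs in the Picard--Lefschetz formula are consistent.
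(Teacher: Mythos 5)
Your argument is correct and is precisely the content the paper treats as ``immediate'' after Proposition \ref{prop:braid-orbit}: passing to arbitrary fields gives the homology-sphere statement (the paper signals this explicitly in Remark \ref{rem:k-homology-sphere}), and primitivity follows because the braid group acts on $H_n(M;\bZ)\cong\bZ^2$ through $GL_2(\bZ)$ via the Picard--Lefschetz formula on Euler characteristics. One small clarification: the invariance of $[L]$ under quasi-equivalence (up to sign) is cleanest to justify directly by noting that $[L]$ is recovered from the integers $\chi(HF^*(L,Q_0))$ and $\chi(HF^*(L,Q_1))$ (the intersection pairing with $[Q_0],[Q_1]$ having nonzero determinant for both parities of $n$), rather than by the slightly vaguer appeal to ``additivity along Lagrange surgery''; but this is the step you in effect invoke when writing $T_{Q_i}$ as $[X]\mapsto[X]-\chi(HF^*(Q_i,X))[Q_i]$, so the proof goes through.
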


\begin{rem}
If $Q_0 \cong S^n \cong Q_1$, then the autoequivalences $T_i$ are realised by the Dehn twists in the $Q_i$.  Later we will prove that if $\pi_1(Q_i) \neq \{1\}$, then $T_i$ is not induced by any compactly supported symplectomorphism, and the  images of the $Q_i$ by general braids are \emph{not} represented by embedded Lagrangians.
\end{rem}

\begin{rem} \label{rem:k-homology-sphere}
If the $Q_i$ are $k$-homology spheres for some field $k$, with the same hypotheses on $\pi_1(Q_i)$, then the conclusion of Corollary \ref{cor:homology} holds in the Fukaya category linear over $k$; in particular, every exact $L\subset M$ of Maslov class zero is then a $k$-homology sphere.
\end{rem}

One can vary the hypotheses on the $\pi_1(Q_i)$ somewhat.  In general, the modules $V_i$ appearing in the twisted complex $\cC^{\bullet}$ may be non-trivial local systems. Morphisms between such are still supported in non-negative degrees.  The proof relies on triviality of the $V_i$ only for this statement, and for the fact that the horizontal arrows in the complex $\cC^{\bullet}$ are all obtained from the fundamental classes of the $Q_i$.  For non-trivial $V_i$, there may in general be additional classes in $HF(V_i \otimes Q_i, V_j \otimes Q_j)$ in intermediate degree.  However, if such classes vanish under pullback to a covering space, one can use the methods of Section \ref{sec:spher-twists-homol}, and in particular Theorem \ref{thm:category-of-cover}, to eliminate the contributions of these classes.  As a special case:

\begin{cor}
If each $Q_i$ either admits no non-trivial finite-dimensional local systems, or has contractible universal cover, then the conclusions of Corollary \ref{cor:homology} hold. \end{cor}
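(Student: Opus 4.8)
If both $Q_0$ and $Q_1$ admit no non-trivial finite-dimensional local systems, then every local system carried by a core component in a twisted complex built from the skeleton is trivial, and the assertion is precisely Corollary \ref{cor:homology}. So assume, relabelling if necessary, that $Q_1$---and possibly also $Q_0$---has contractible universal cover. The plan is to reduce once more to Lemma \ref{lem:classify_positive_endo_algebra}, after using a covering space to dispose of the single new phenomenon that non-trivial local systems introduce.

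Let $L \subset M$ be exact with vanishing Maslov class. By Theorem \ref{thm:main} $L$ is quasi-isomorphic in $\Tw(H^{*}\cF(M))$ to a twisted complex $\cC^{\bullet}$ built from the $Q_i$ carrying finite-dimensional local systems $V_i$, of the general shape recorded before Lemma \ref{lem:first-step}; being a closed exact brane it satisfies $\End_{\Tw}(\cC^{\bullet}) \cong HF^{*}(L,L) \cong H^{*}(L)$, which is supported in degrees $[0,n]$. The components of $d_{\cC^{\bullet}}$ are multiples of morphisms of three kinds: the fundamental classes $[Q_i] \in HF^{n}(Q_i,Q_i)$; the morphisms between a brane on $Q_0$ and one on $Q_1$, which are concentrated in a single degree ($1$ or $n-1$) whatever the local systems, since $Q_0 \cap Q_1$ is a single point; and---this is the new feature---intermediate-degree classes in the twisted self-cohomology $H^{j}(Q_i;\operatorname{End}(V_i))$ with $0<j<n$, which are present only for a $Q_i$ carrying a non-trivial local system $V_i$. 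If this last family of arrows is absent, the arguments of Lemmas \ref{lem:first-step}--\ref{lem:second-step-2}, hence of Lemma \ref{lem:classify_positive_endo_algebra}, apply verbatim: they invoke only $d^{2}=0$, the containment $\End_{\Tw}(\cC^{\bullet})\subset [0,n]$, and the fact that the horizontal arrows are fundamental-class multiples. The conclusion of Corollary \ref{cor:homology} for $L$ then follows, over $k$ when the $Q_i$ are only $k$-homology spheres by Remark \ref{rem:k-homology-sphere}.

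To remove the intermediate-degree arrows I would pass to the cover $\tilde{M} \to M$ unwrapping each core component with contractible universal cover and leaving the other(s) intact: concretely, the cover of $M$ associated to the normal closure in $\pi_1(M)=\pi_1(Q_0)\ast\pi_1(Q_1)$ of those free factors $\pi_1(Q_i)$ for which $Q_i$ does \emph{not} have contractible universal cover. Over $\tilde{M}$ the unwrapped component is replaced by its universal cover $\tilde{Q}_i$; since $\tilde{Q}_i$ is contractible, $H^{*}(\tilde{Q}_i;-)$ sits in degree $0$, so every local system on $Q_i$ pulls back to a trivial one and every class in $H^{j}(Q_i;\operatorname{End}(V_i))$ with $j>0$ pulls back to zero. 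By Theorem \ref{thm:category-of-cover} and the comparison of Fukaya categories under covers in Section \ref{sec:spher-twists-homol}, pullback along $\tilde{M} \to M$ is functorial, sends $L$ to its preimage, and sends each brane $V_i\otimes Q_i$ to a disjoint union of core components of $\tilde{M}$ carrying trivial local systems; as the intermediate-degree components of $d_{\cC^{\bullet}}$ map to zero, one deduces---and this is exactly where the precise strength of Theorem \ref{thm:category-of-cover} is needed---that $\cC^{\bullet}$ is equivalent to a twisted complex in which those arrows do not occur. Lemma \ref{lem:classify_positive_endo_algebra} then applies, and the proof is complete.

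The main obstacle is the appeal to Theorem \ref{thm:category-of-cover}. One needs that pullback along the (in general infinite) covering $\tilde{M}\to M$ is a genuine $A_\infty$-functor on the appropriate enlargements of the Fukaya categories---the lifts $\tilde{Q}_i$ are non-compact, so one must work with a wrapped-type or infinitely-generated model---that it carries a finite twisted complex built from the $Q_i$ to a finite twisted complex built from the core components of $\tilde{M}$, and that it retains enough information about morphisms to licence cancelling the intermediate-degree differentials downstairs. Granting this, the remaining points are routine: the offending Floer classes die on a contractible cover simply because $H^{>0}(\tilde{Q}_i)=0$, and Lemma \ref{lem:classify_positive_endo_algebra} then runs unchanged.
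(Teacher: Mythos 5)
Your overall strategy matches the outline the paper itself gives right before the corollary: pass to a cover of $M$ unwrapping the components with contractible universal cover, observe that the intermediate-degree classes in $H^{j}(Q_i;\operatorname{End}(V_i))$ die after pullback (and the local systems trivialize), invoke Theorem \ref{thm:category-of-cover}, and run the algebraic classification of Lemma \ref{lem:classify_positive_endo_algebra}. That is the right skeleton, and your first reduction (when both $Q_i$ admit only trivial local systems) is correct. Your remark that the arguments of Lemmas \ref{lem:first-step}--\ref{lem:second-step-2} use only $d^{2}=0$, the degree bound $[0,n]$ on endomorphisms, and the fact that horizontal arrows are fundamental-class multiples is also essentially correct.

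The gap is the central deduction: from ``the intermediate-degree components of $d_{\cC^{\bullet}}$ vanish after pullback'' you conclude that ``$\cC^{\bullet}$ is equivalent to a twisted complex in which those arrows do not occur.'' This does not follow from Theorem \ref{thm:category-of-cover}, which only supplies an $A_\infty$-functor $\pi^{*}\colon \cW(M) \to \cW(\tilde{M};\pi)$; such a functor can kill arrows without any consequence for their presence downstairs (indeed, when the universal cover of $Q_i$ is contractible, \emph{every} class in $H^{j>0}(Q_i;\operatorname{End}(V_i))$ dies on pullback, tautologically, regardless of whether it is zero on $Q_i$). What actually carries the weight in Section \ref{sec:spher-twists-homol} is not the vanishing per se but the resulting decomposition of $\pi^{-1}(L)$ into a direct sum, combined with the geometric fact that $\pi^{-1}(L)$ is a disjoint union of connected Lagrangians permuted transitively by deck transformations, and uniqueness of indecomposable summands over a field. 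One should run the argument of Proposition \ref{prop:twist-not-geometric}: assume $\cC^{\bullet}$ has an offending intermediate arrow (or a nontrivial $V_i$), show that the pullback complex in $\cW(\tilde{M};\pi)$ decomposes into summands that cannot all lie in a single deck-transformation orbit, and derive a contradiction. Your proposal acknowledges this obstacle in the final paragraph but then ``grants'' precisely what needs to be proved. As written, the crucial step is asserted rather than established.

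A secondary point: even after eliminating the intermediate arrows, your twisted complex is built from $V_i\otimes Q_i$ with possibly nontrivial $V_i$, whereas Lemma \ref{lem:classify_positive_endo_algebra} is stated and proved for complexes over $Q_0$ and $Q_1$ with trivial coefficients (and under the hypothesis that $\pi_1(Q_i)$ has no nontrivial finite-dimensional representations, which you have dropped for the components with contractible universal cover). You would need to either reduce to trivial $V_i$ by the same covering argument, or re-prove the lemma allowing nontrivial local systems, noting that $HF^{*}(V\otimes Q_i,V'\otimes Q_j)$ for $i\neq j$ is still concentrated in one degree because the $Q_i$ intersect in a point. This is likely manageable but is not addressed.
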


We will leave the interested reader to fill in the details.

\begin{rem}
  Note that the proof of Lemma \ref{lem:classify_positive_endo_algebra} did not use at any stage the fact that the vector spaces $V_{i}$ and $U_{i}$ are finite dimensional.  In particular, if one knows that every local system (of arbitrary rank) on a Lagrangian lies in the category generated by (possibly infinite) direct sums of the components of the skeleton, the same argument would show that any such object in the Fukaya category of a plumbing would be equivalent to an object in the braid group orbit of direct sums of $Q_0$.  Quasi-isomorphism in the wrapped category implies quasi-isomorphism in the usual category of $dg$-local systems, by results in \cite[Appendix B]{Abouzaid-ExactLag}. In particular, we would conclude that every representation of $\pi_1(L)$ would split as a direct sum of trivial ones.  As we note in Remark \ref{rem:generation_infinite_local_systems}, the results of \cite{Abouzaid-ExactLag } imply such a generation statement, from which we conclude the simple connectivity statement asserted in the introduction, Remark \ref{rem:1-connected}.
\end{rem}

\subsection{Spherical twists in homology spheres}
\label{sec:spher-twists-homol}

The invertibility of the spherical twist functor $T_{\Sigma}$ relies only on the fact that $H^*(\Sigma) \cong H^*(S^n)$. If $\Sigma \cong S^n \subset M$ is diffeomorphic to the sphere, then there is also a geometric Dehn twist $\tau_{\Sigma}$. This admits a canonical lift to a graded symplectomorphism, hence acts on $\cF(M)$, and Seidel (following Kontsevich) proved that \cite[Corollary 17.17]{seidel-book}
\[
T_\Sigma \ \cong \  \tau_{\Sigma}
\]
are quasi-isomorphic in the category of $A_{\infty}$-endofunctors of $\cF(M)$.  By contrast, for $\Sigma \not\cong S^n$, there is no candidate geometric Dehn twist, since $\Sigma$ will not in general admit a metric with periodic geodesic flow.  Informally, one would like the non-existence of a Dehn twist to be reflected in the non-surjectivity of a map
\[
\pi_0 \Symp_{ct}(T^*\Sigma) \longrightarrow \Auteq(H\cF(T^*\Sigma))/ \langle [1] \rangle.
\]
As formulated, this is slightly problematic, since the category $\cF(T^*\Sigma)$ has too few objects (all closed objects being isomorphic up to shift).  One solution is to consider an enlarged category associated to a cotangent bundle which incorporates suitable non-compact Lagrangian submanifolds, for instance those co-inciding outside a compact subset with a fixed cotangent fibre. Such a category was invoked by Seidel in \cite{seidel:kronecker}, and related but more general versions have been considered in the work of Nadler and Zaslow \cite{NZ}.  Thus,  in the Nadler-Zaslow category $NZ(T^*S^n)$, the Dehn twist and the shift functor are indeed distinct autoequivalences, in contrast to their actions on $\cF(T^*S^n)$.

To keep technical details to a minimum, we take a slightly more circumspect approach.  Let $M_{\Sigma} = T^*\Sigma \# T^*S^n$  denote the Weinstein manifold obtained by plumbing $T^*\Sigma$ and $T^*S^n$.  Equivalently, $M_{\Sigma}$ is obtained by performing a Weinstein surgery along the Legendrian unknot which is the boundary of a cotangent fibre in $T^*\Sigma$.  Then $M_{\Sigma}$ contains two obvious closed exact Lagrangians, the components $\Sigma$ and $S^n$ of its compact core. Compactly supported symplectomorphisms of $T^*\Sigma$ act on the Fukaya category $\cF(M_{\Sigma})$ provided we quotient out by the shift $[1]$ (alternatively one could consider graded symplectomorphisms).  

\begin{rem}\label{rem:surgery-versus-nadlerzaslow}
The wrapped Fukaya category $\cW(T^*Q)$ is invariant under arbitrary exact symplectomorphisms of $T^*Q$, whilst the Nadler-Zaslow category is invariant only under compactly supported symplectomorphisms of $Q$.  The Fukaya category of the plumbing $\cF(T^*\Sigma \# T^*S^n)$ mediates between the two, being  invariant under symplectomorphisms which co-incide with the identity near the boundary of a single cotangent fibre.  It is therefore more sensitive to ``compactly supported" phenomena than the full wrapped category $\cW(T^*Q)$, even if \emph{a priori} less so than $NZ(T^*Q)$.  \end{rem}

The following is straightforward.

\begin{lem}
The twist functor $T_{\Sigma} \in \Auteq(H\cF(M_{\Sigma})) / \langle [1] \rangle$ has infinite order, in particular this quotient group is non-trivial.
\end{lem}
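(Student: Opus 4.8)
The plan is to show that $T_{\Sigma}^{k}$ is never isomorphic, as an endofunctor of $H\cF(M_{\Sigma})$, to a shift $[m]$ for any $k\geq1$; since the $[m]$ are exactly the elements of $\langle[1]\rangle$, this yields that $T_{\Sigma}$ has infinite order in the quotient, and in particular that the quotient is non-trivial. That $T_{\Sigma}$ is an autoequivalence is automatic: $\Sigma$ is a $\bZ$-homology sphere, so $H^{*}(\Sigma)\cong H^{*}(S^{n})$, the twist is invertible with inverse \eqref{eqn:inverse-twist}, and $T_{\Sigma}(\Sigma)\simeq\Sigma[1-n]$ by Lemma~\ref{lem:shift}. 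If $T_{\Sigma}^{k}\simeq[m]$ then in particular $T_{\Sigma}^{k}(S^{n})\simeq S^{n}[m]$, so it is enough to evaluate on the second component $S^{n}$ of the compact core of $M_{\Sigma}$ and show $T_{\Sigma}^{k}(S^{n})\not\simeq S^{n}[m]$ for all $k\geq1$ and all $m\in\bZ$.

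First I would compute $T_{\Sigma}^{k}(S^{n})$ explicitly as a twisted complex over the pair $\{\Sigma,S^{n}\}$. On this full subcategory one has $HF^{*}(\Sigma,\Sigma)=HF^{*}(S^{n},S^{n})=H^{*}(S^{n})$, $HF^{*}(\Sigma,S^{n})=k$ in degree $1$ (the normalisation \eqref{eqn:fix-grading}), $HF^{*}(S^{n},\Sigma)=k$ in degree $n-1$ by Poincar\'e duality, and all higher products vanish for degree reasons --- precisely the setting of Section~\ref{sec:exact-lagr-a_2}, since $n\geq3$. Iterating the cone formula \eqref{eqn:twist} and feeding in $T_{\Sigma}(\Sigma)\simeq\Sigma[1-n]$, one obtains that $T_{\Sigma}^{k}(S^{n})$ is the zig-zag twisted complex
\[
\bigl\{\; \Sigma[(k-1)(1-n)]\xrightarrow{\;\theta\;}\Sigma[(k-2)(1-n)]\xrightarrow{\;\theta\;}\cdots\xrightarrow{\;\theta\;}\Sigma[1-n]\xrightarrow{\;\theta\;}\Sigma\xrightarrow{\;p\;}S^{n}\;\bigr\},
\]
with $k$ shifted copies of $\Sigma$ and a single $S^{n}$, where $p$ generates $HF^{1}(\Sigma,S^{n})$ and $\theta$ generates $HF^{n}(\Sigma,\Sigma)$; the $\bZ$-grading (with $n\geq3$) forbids any further arrows, any ``curved'' arrows, and any $\mu^{\geq3}$ contributions to the Maurer--Cartan data.

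Next I would apply the cohomological functor $HF^{*}(S^{n},-)$ to this complex. Each summand $HF^{*}(S^{n},\Sigma[j(1-n)])$ is one-dimensional, concentrated in degree $(j+1)(n-1)$; these degrees $n-1,2(n-1),\dots,k(n-1)$ are pairwise distinct and distinct from $0$ and $n$ (using $n\geq3$), and $HF^{*}(S^{n},S^{n})$ contributes $k$ in degrees $0$ and $n$. Post-composition with $\theta$ induces the zero map between consecutive $\Sigma$-summands for degree reasons ($n\geq3$), so the only differential that can survive is $\mu^{2}(p,-)\colon HF^{n-1}(S^{n},\Sigma)\to HF^{n}(S^{n},S^{n})$, a single scalar $c\in k$. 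Hence $HF^{*}(S^{n},T_{\Sigma}^{k}(S^{n}))$, as a graded vector space, either has total dimension $k+2$ supported in $k+2$ pairwise distinct degrees (if $c=0$), or has total dimension $k$ supported in the $k$ pairwise distinct degrees $0,2(n-1),3(n-1),\dots,k(n-1)$ (if $c\neq0$). Comparing with $HF^{*}(S^{n},S^{n}[m])\cong H^{*}(S^{n})$ shifted, which is two-dimensional and supported in exactly two degrees differing by $n$: a dimension count eliminates every $k\geq1$ except $k=1$ (dimension $1$ or $3$) and $k=2$ with $c\neq0$ (dimension $2$), and in the latter case the two occurring degrees are $0$ and $2(n-1)$, which differ by $2(n-1)\neq n$. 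Therefore $T_{\Sigma}^{k}(S^{n})\not\simeq S^{n}[m]$ for every $k\geq1$ and $m\in\bZ$, completing the proof; note that the value of $c$ never has to be determined.

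The only step that requires genuine care is the explicit identification of $T_{\Sigma}^{k}(S^{n})$ with the linear zig-zag complex above: one has to verify that coning $T_{\Sigma}(\Sigma)\simeq\Sigma[1-n]$ against the previous stage introduces exactly one new (shifted) $\Sigma$-term, attached only to its neighbour, with no long arrows $\Sigma[a]\to S^{n}$ or $\Sigma[a]\to\Sigma[b]$ with $|a-b|\geq2$ and no higher-order contributions. All of this is forced by the $\bZ$-grading together with $HF^{*}(\Sigma,S^{n})$ and $HF^{*}(\Sigma,\Sigma)$ being concentrated in the stated degrees --- exactly the bookkeeping already carried out in Section~\ref{sec:exact-lagr-a_2}; beyond that, the argument is elementary linear algebra over the grading.
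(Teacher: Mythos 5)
Your proof is correct and takes essentially the same approach as the paper's, which simply cites ``the usual proof of injectivity of the braid group $Br_3^{\alg}$'' and asserts that the ranks of $HF(S^n, T_{\Sigma}^i(S^n))$ are unbounded; you have unpacked exactly that computation, identifying $T_{\Sigma}^k(S^n)$ with the length-$(k+1)$ zig-zag complex and reading off the rank. The degree argument at $k=2$ with $c\neq 0$ is a nice touch that the paper's ``unbounded rank'' one-liner implicitly covers (since unboundedness of rank along $i\to\infty$ already rules out $T_{\Sigma}^k=[m]$ for any $k\geq1$ by taking powers), but your version is more explicit about why each individual power fails.
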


\begin{proof}
Given our cohomological assumptions, this follows from the usual proof of injectivity of the braid group $Br_3^{\alg}$ into the autoequivalences of the Fukaya category of the $A_2^n$-Milnor fibre. Explicitly, writing $T_{\Sigma}^i$ for the $i$-fold self-composition of $T_{\Sigma}$,  the Floer cohomology groups $HF(S^n, T_{\Sigma}^i(S^n))$ are unbounded in rank as $i\rightarrow \infty$.
\end{proof}

By contrast, the autoequivalence $T_{\Sigma}$ cannot arise from a symplectomorphism unless $\Sigma$ is homeomorphic to $S^n$.

\begin{prop} \label{prop:twist-not-geometric}
If $\pi_1(\Sigma) \neq 1$, the cyclic subgroup $\bZ$ generated by $T_{\Sigma}$ meets the image of the natural map 
\[
\rho: \pi_0\Symp_{ct}(T^*\Sigma) \longrightarrow \Auteq(H\cF(M_{\Sigma}))/\langle [1]\rangle
\] 
only in the identity.
\end{prop}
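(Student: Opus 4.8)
The plan is to argue by contradiction, combining the classification of exact fillings of the Legendrian unknot in Theorem \ref{thm:hlgy_sphere} with the unboundedness of $\dim HF(S^n, T_\Sigma^i(S^n))$ recorded in the preceding Lemma. The key point is that while a single application of the rank bound only constrains $|k|$, feeding powers $\phi^m$ into it forces $k=0$.

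First I would record a geometric observation: every $\psi \in \Symp_{ct}(T^*\Sigma)$ extends to a graded symplectomorphism $\bar\psi$ of $M_\Sigma$. Indeed, choosing the radius of the cotangent disc bundle large enough to contain $\mathrm{supp}(\psi)$, the restriction $\psi|_{D^*\Sigma}$ is the identity near all of $S^*\Sigma=\partial D^*\Sigma$, hence in particular near the Legendrian unknot $\Lambda$ along which the handle producing $M_\Sigma$ is attached, so $\psi$ extends by the identity over the handle. Under this extension, the sphere $S^n\subset M_\Sigma$ — which is the union of the fibre disc $D^*_q\Sigma$ with the core of the handle — is carried to the capping $\overline{K_\psi}$ of $K_\psi:=\psi(D^*_q\Sigma)$, an exact filling of $\Lambda$ whose Maslov class vanishes (exactness and gradedness are inherited from $\psi$, and $\mu_{K_\psi}=0$ since $D^*_q\Sigma$ is contractible). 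By Theorem \ref{thm:hlgy_sphere}, the object $\bar\psi(S^n)$ of $\cF(M_\Sigma)$ is therefore equivalent, up to shift, to one of the fixed objects $S^n$ or $L^{\pm}$, where $L^{\pm}$ are the two Lagrange surgeries of $S^n$ with $\Sigma$. Since each $L^{\pm}$ is a mapping cone built from $S^n$ and $\Sigma$, and since shifts do not change ranks, there is a constant $C$, \emph{independent of $\psi$}, with $\dim HF(S^n,\bar\psi(S^n))\le C$.

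Second, suppose towards a contradiction that $\rho(\phi)$ equals the class of $T_\Sigma^k$ in $\Auteq(H\cF(M_\Sigma))/\langle[1]\rangle$ for some $k\neq 0$; replacing $\phi$ by $\phi^{-1}$ we may take $k>0$. Since $\rho$ is a homomorphism, $\rho(\phi^m)$ is the class of $T_\Sigma^{mk}$, so $\bar{\phi^m}(S^n)$ is equivalent up to shift to $T_\Sigma^{mk}(S^n)$ and hence $\dim HF(S^n,\bar{\phi^m}(S^n))=\dim HF(S^n,T_\Sigma^{mk}(S^n))$. Letting $m\to\infty$, the preceding Lemma makes the right-hand side unbounded, contradicting the uniform bound $\dim HF(S^n,\bar{\phi^m}(S^n))\le C$ from the first step. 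Therefore $k=0$, i.e.\ the cyclic subgroup generated by $T_\Sigma$ meets $\mathrm{im}(\rho)$ only in the identity.

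The main obstacle is the first step, and it is already essentially supplied by Theorem \ref{thm:hlgy_sphere}; what requires care is (i) checking that the extension $\bar\psi$ and the resulting brane structure on $\bar\psi(S^n)$ are well defined independently of auxiliary choices, and (ii) that the classification of Theorem \ref{thm:hlgy_sphere} genuinely yields a \emph{bounded} list of objects up to equivalence and shift, so that $C$ is uniform over all compactly supported $\psi$. It is exactly here that the hypothesis $\pi_1(\Sigma)\neq 1$ is used: when $\Sigma\cong S^n$ the unknot has infinitely many pairwise inequivalent exact Maslov-zero fillings (the iterated Dehn-twist images of the fibre disc), no uniform $C$ exists, and indeed $T_{S^n}$ \emph{is} realised by a compactly supported symplectomorphism — so the argument must, and does, break down in that case.
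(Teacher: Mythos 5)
Your strategy is genuinely different from the paper's: you reduce the statement to a \emph{boundedness} claim on $\dim HF(S^n,\bar\psi(S^n))$, supplied by the classification in Theorem \ref{thm:hlgy_sphere}, and then let it collide with the unboundedness of $\dim HF(S^n, T_\Sigma^{mk}(S^n))$. The collision itself is clean, and the observation that feeding in powers $\phi^m$ converts a mere rank constraint into the sharp conclusion $k=0$ is nice. But there is a real gap in the deduction, concentrated in the dependence on Theorem \ref{thm:hlgy_sphere}.

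The difficulty is that the proof the paper actually supplies for Theorem \ref{thm:hlgy_sphere} (via the two corollaries at the end of Section \ref{sec:spher-twists-homol}) operates under the \emph{stronger} hypothesis that $\pi_1(\Sigma)$ admits no non-trivial finite-dimensional representations (or has acyclic universal cover). This is what lets one invoke Theorem \ref{thm:main}, reduce to twisted complexes with \emph{trivial} local systems, and run Lemma \ref{lem:first-step}/the Maurer--Cartan analysis to pin $\bar K$ down to the three shapes $S^n$, $\Sigma\leftarrow S^n$, $S^n\leftarrow\Sigma$. Under only the hypothesis of Proposition \ref{prop:twist-not-geometric}, namely $\pi_1(\Sigma)\neq 1$, a priori non-trivial local systems could appear in the twisted complex and the bounded classification is not available. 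Your final paragraph claims that $\pi_1(\Sigma)\neq 1$ is ``exactly'' what powers step (ii), but that is not the case: what that condition rules out is the degenerate case $\Sigma\cong S^n$, not the appearance of non-trivial local systems. Moreover, the paper's text for the corollaries explicitly says that equation \eqref{eqn:only-way-out} is obtained ``by following the same steps as in the proof of Proposition \ref{prop:twist-not-geometric},'' so the classification you invoke is logically downstream of, not prior to, the very proposition you want to prove; using it here inverts the paper's order of deduction.

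For contrast, the paper's proof of Proposition \ref{prop:twist-not-geometric} is considerably lighter. It does not call on Theorem \ref{thm:main} or any classification of fillings at all. Starting from the hypothetical $\tau$ with $\rho(\tau)=[T_\Sigma]$, it inspects $\tau\circ\tau$ to produce a genuine Lagrangian $L$ that would have to represent the specific complex $\Sigma[n-1]\leftarrow\Sigma\leftarrow S^n$ with the left arrow given by $[\Sigma]$. Passing to the universal cover via Theorem \ref{thm:category-of-cover} (over a field of suitable characteristic), the $[\Sigma]$-arrow dies, forcing a splitting of $\pi^{-1}(L)$ into two summands; but $\pi^{-1}(L)$ is a disjoint union of connected indecomposables in a single deck-transformation orbit, and Lemma \ref{lem:fibre-not-zerosection} shows the two summands live in different orbits. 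That contradiction is elementary and uses only $\pi_1(\Sigma)\neq 1$. If you want to keep your boundedness idea, you would need to re-derive the $\dim HF(S^n,\bar\psi(S^n))\le C$ bound under the weaker hypothesis of the Proposition, and the only mechanism the paper gives for that is the same universal-cover argument you are trying to bypass.
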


The proof will rely on passing to the universal cover, so we let $M$ be any Liouville manifold with universal cover $\pi: \tilde{M}\rightarrow M$. The following is a summary of the results proved in \cite[Section 6]{Abouzaid-ExactLag}:

\begin{thm} \label{thm:category-of-cover}
There is a wrapped Fukaya category $\cW(\tilde{M};\pi)$ which comes with a pullback functor
\[
\pi^*: \cW(M) \longrightarrow \cW(\tilde{M};\pi)
\]
which acts on objects $L$ of $\cW(M)$ by taking the total inverse image $\pi^{-1}(L) \subset \tilde{M}$ and such that the map on morphisms
\begin{equation}
  HF^{*}(L,L) \to HF^{*}(\pi^{-1}(L), \pi^{-1}(L) )
\end{equation}
agrees with the classical pullback on cohomology whenever $L \subset M$ is closed.  Moreover, deck transformations of $\pi$ act by autoequivalences of $\cW(\tilde{M};\pi)$.  \end{thm}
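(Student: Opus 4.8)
\textbf{Proof of Theorem \ref{thm:category-of-cover} (sketch).}
The plan is to recapitulate the construction of \cite[Section 6]{Abouzaid-ExactLag}, establishing the three assertions — existence of $\cW(\tilde{M};\pi)$ together with the pullback functor $\pi^{*}$, agreement of $\pi^{*}$ on morphisms with the classical cohomological pullback for closed Lagrangians, and the deck action — in sequence. First I would fix a quadratic wrapping Hamiltonian $H$ on $M$ and admissible almost complex structures, and pull them back to a $\pi$-invariant Hamiltonian $\tilde{H}=\pi^{*}H$ and $\tilde{J}=\pi^{*}J$ on $\tilde{M}$; exactness of $M$ is inherited by $\tilde{M}$ via the pulled-back primitive. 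The objects of $\cW(\tilde{M};\pi)$ are taken to be the total preimages $\pi^{-1}(L)$ of objects $L$ of $\cW(M)$ (together, if one wants a more flexible category, with their images under deck transformations). The crucial analytic input is an a priori energy bound: a Floer trajectory for $(\tilde{H},\tilde{J})$ asymptotic to chords on such preimages has topological energy controlled by the action differences of its finitely many asymptotics, exactly as downstairs, so by Gromov compactness and the maximum principle in the Liouville direction the relevant moduli spaces are compact despite the noncompactness of $\pi^{-1}(L)$. Concretely, the wrapped complex $CW^{*}(\pi^{-1}L_{0},\pi^{-1}L_{1})$ splits as a direct sum, indexed by the fibre over a chosen downstairs chord, of translates of $CW^{*}(L_{0},L_{1})$ under deck transformations, with differential assembled from lifts of the downstairs disks; this makes the $A_{\infty}$-structure upstairs well-defined.

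Second, the functor $\pi^{*}$ acts on objects by $L\mapsto \pi^{-1}(L)$, and on structure maps by lifting: every Hamiltonian chord and every pseudoholomorphic disk for $(H,J)$ lifts uniquely once a lift of one boundary puncture is fixed, while projection carries upstairs curves to downstairs ones. Using the $\pi$-invariant choice of consistent Floer and perturbation data obtained by pulling back a universal choice on $M$, the count of lifted disks assembles into $A_{\infty}$-functor maps $\cW(M)\to\cW(\tilde{M};\pi)$; the $A_{\infty}$-relations follow by projecting the codimension-one boundary strata of the lifted moduli spaces, which are precisely the lifts of the downstairs strata. Strict unitality (or a compatible choice of homotopy units) is inherited in the same way.

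Third, to identify $\pi^{*}$ with the classical pullback when $L$ is closed and exact, one computes the wrapped groups with a $C^{2}$-small time-dependent perturbation: $CW^{*}(L,L)$ then reduces to the Morse complex of a function $f$ on $L$ and $CW^{*}(\pi^{-1}L,\pi^{-1}L)$ to the Morse complex of $\pi^{*}f$ on the cover, and the map induced by $\pi^{*}$ is literally the pullback of Morse cochains, which computes the transfer $H^{*}(L)\to H^{*}(\pi^{-1}L)$; a PSS-type comparison matches this with the pullback in singular cohomology. Finally, a deck transformation $g$ of $\pi$ is an exact symplectomorphism of $\tilde{M}$ commuting with $\tilde{H}$ and $\tilde{J}$ and sending the class of objects $\pi^{-1}(L)$ to itself (permuting their lifts), so functoriality of Floer theory under a strict symmetry — requiring no continuation maps — yields a strict automorphism of $\cW(\tilde{M};\pi)$, and $g\mapsto g_{*}$ is a group action.

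The main obstacle, and the only place where genuine work hides behind the phrase ``summary of results,'' is the first step: constructing a version of the wrapped category over the noncompact cover $\tilde{M}$ for which the relevant moduli spaces remain compact and the output is a well-defined $A_{\infty}$-category. One must arrange the wrapping so that its pulled-back flow does not generate escape of trajectories in the ``deck direction,'' and must verify that the energy bound genuinely confines holomorphic curves to a bounded piece of $\pi^{-1}(K)$ once their asymptotics are fixed (so that only finitely many lifts of each downstairs disk can contribute); this is the content of the foundational part of \cite{Abouzaid-ExactLag}, and it is exactly what forces the mild modification $\cW(\tilde{M};\pi)$ of the naive wrapped category of $\tilde{M}$, whence the superscript $\pi$. \qed
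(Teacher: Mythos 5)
The paper does not prove this theorem at all: the sentence immediately preceding the statement reads ``The following is a summary of the results proved in [Abouzaid-ExactLag, Section 6]'', so the construction of $\cW(\tilde{M};\pi)$, the pullback functor, and the deck action are simply cited from that reference, and there is no argument in the present paper to compare yours against.

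Your reconstruction of the cited construction is in the right spirit: pull back $H$ and $J$ to $\Gamma$-invariant data, take objects to be total preimages $\pi^{-1}(L)$, obtain compactness from an a priori energy bound so that only finitely many lifted disks emanate from a fixed asymptotic, build $\pi^{*}$ by lifting strips, reduce to Morse theory for closed $L$, and observe that deck transformations preserve the lifted Floer data and hence act strictly. You have also correctly located the genuine subtlety, namely that $\cW(\tilde{M};\pi)$ is \emph{not} the intrinsic wrapped category of $\tilde{M}$ but depends on $\pi$ (this is exactly what the remark following the theorem in the paper warns about). Two small corrections are in order. First, the complex $CW^{*}(\pi^{-1}L_{0},\pi^{-1}L_{1})$ does not split as a direct sum of translates of $CW^{*}(L_{0},L_{1})$: the set of \emph{generators} is a $\Gamma$-torsor over the downstairs generators, but each lifted strip carries a relative monodromy in $\Gamma$ that twists the target index, so the differential mixes the putative summands; what one obtains is more accurately described as a complex of $k[\Gamma]$-modules (or, if $\pi^{-1}(L_{i})$ disconnects, a direct sum indexed by pairs of \emph{components}, not by lifts of a chord). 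Second, precomposing Morse cochains with the covering projection computes the cohomological \emph{pullback} $H^{*}(L)\to H^{*}(\pi^{-1}L)$, not the transfer, which goes in the opposite direction.
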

\begin{rem}
For infinite coverings, the category $\cW(\tilde{M};\pi)$ is not in general the same as the wrapped category of the underlying Stein manifold as defined without reference to the covering $\pi$.  For instance, if $\pi: T^*\bR \rightarrow T^*S^1$ is the universal covering, then $\cW(T^*\bR; \pi)$ is not empty -- the fibre and zero-section are non-trivial -- whereas the usual wrapped category $\cW(\bC)$ vanishes.
\end{rem}

We proceed to the proof of Proposition \ref{prop:twist-not-geometric}. Suppose $\Sigma$ is a homology sphere with non-trivial fundamental group.  Let $\iota: \tilde{\Sigma} \rightarrow \Sigma$ denote the universal covering. We fix a coefficient field $k$ of characteristic dividing the index of the covering $\iota$, interpreted as meaning that $k$ is arbitrary if this covering is infinite.   Suppose for contradiction that $T_{\Sigma}$ is geometric, induced by a compactly supported symplectomorphism $\tau$.  Let $M_{\Sigma}$ be the Weinstein manifold obtained by capping the Legendrian unknot in $D^*\Sigma$.  Considering $\tau \circ \tau$, or its inverse, and recalling the definition of the twist functor in Equation \ref{eqn:twist}, we infer that there is an exact Lagrangian submanifold $L \subset M_{\Sigma}$ representing the twisted complex 
\begin{equation} \label{Eqn:ImpossibleComplex}
\Sigma[n-1] \leftarrow \Sigma \leftarrow S^n
\end{equation}
with the leftmost arrow given by multiplication by the fundamental class $[\Sigma]$, and where $S^n$ is the core component coming from capping a fibre of $D^*\Sigma$.   (Applying shifts, we have assumed that $S^n$ is placed in degree zero and that the morphism $\Sigma \leftarrow S^n$ is in degree 1.)  Let $\pi: \tilde{M}_{\Sigma} \rightarrow M_{\Sigma} $ denote the (universal) covering induced by $\iota: \tilde{\Sigma} \rightarrow \Sigma$. The inverse image of this complex in $\cW(\tilde{M}_{\Sigma};\pi)$ is represented by
\[
\tilde{\Sigma}[n-1] \leftarrow \tilde{\Sigma} \leftarrow \pi^{-1}(S^n)
\]
in which \emph{the left differential} is obtained by pullback from the fundamental class of $\Sigma$, hence \emph{vanishes}  (trivially if $\iota$ is infinite; by our choice of characteristic for the coefficient field $k$ if $\iota$ is finite).  

By the long exact triangle for the Dehn twist, the complex $\Sigma \leftarrow S^n$ is the image of $\Sigma$ under the twist in $S^n$. Now, $\pi^{-1}(\Sigma)$ is connected, so the corresponding object in the category $\cW(\tilde{M}_{\Sigma};\pi)$ is indecomposable (considering the rank of its $HW^0$). This in turn implies that  $ \tilde{\Sigma}[n-1] \leftarrow \pi^{-1}(S^n)$ does not admit any non-trivial summand.  

\begin{lem} \label{lem:fibre-not-zerosection}
In the category $\cW(\tilde{M}_{\Sigma};\pi)$,  $\tilde{\Sigma}[n-1]$ and  $ \tilde{\Sigma} \leftarrow \pi^{-1}(S^n)$  do not lie in the same orbit under the action by deck transformations.
\end{lem}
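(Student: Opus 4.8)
The plan is to reduce the assertion to the single non-isomorphism $\big(\tilde{\Sigma}\leftarrow\pi^{-1}(S^n)\big)\not\cong\tilde{\Sigma}[n-1]$ in $\cW(\tilde{M}_{\Sigma};\pi)$, and then to detect this non-isomorphism by pairing with one of the spheres lying over $S^n$. For the reduction: any deck transformation $g$ of $\pi$ carries $\pi^{-1}(\Sigma)$ to itself, and it preserves the grading structure and (relative) $\Spin$ datum on $\tilde{\Sigma}$, both of which are pulled back from $\Sigma$; hence $g\cdot\tilde{\Sigma}\cong\tilde{\Sigma}$, and so $g\cdot(\tilde{\Sigma}[n-1])\cong\tilde{\Sigma}[n-1]$, in $\cW(\tilde{M}_{\Sigma};\pi)$. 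Thus the deck orbit of $\tilde{\Sigma}[n-1]$ is a single isomorphism class, and $\tilde{\Sigma}\leftarrow\pi^{-1}(S^n)$ lies in it if and only if $\tilde{\Sigma}\leftarrow\pi^{-1}(S^n)\cong\tilde{\Sigma}[n-1]$.

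To rule that out, I would use that $X:=\big(\tilde{\Sigma}\leftarrow\pi^{-1}(S^n)\big)=\pi^{*}\big(T_{S^n}(\Sigma)\big)$. Applying the exact functor $\pi^{*}$ to the defining triangle of the twist in $S^n$ presents $X$ as the cone of an evaluation map
\[
HF^{*}(S^n,\Sigma)\otimes\pi^{-1}(S^n)\ \xrightarrow{\ \mathrm{ev}\ }\ \tilde{\Sigma}\ \longrightarrow\ X\ \longrightarrow\ \big(HF^{*}(S^n,\Sigma)\otimes\pi^{-1}(S^n)\big)[1],
\]
where $\pi^{-1}(S^n)=\bigsqcup_{g\in\pi_1(\Sigma)}S^n_g$ is a disjoint union of Lagrangian spheres permuted simply transitively by the deck group, each meeting $\tilde{\Sigma}$ transversely in one point, and $\mathrm{ev}$ restricts on the $g$-th summand to the evaluation $S^n_g\to\tilde{\Sigma}$. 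Then I would fix one component $S^n_{g_0}$ and run $HW^{*}(S^n_{g_0},-)$ through this triangle. The inputs are: $HW^{*}(S^n_{g_0},S^n_g)=0$ for $g\neq g_0$ (disjoint compact Lagrangians); $HW^{*}(S^n_{g_0},S^n_{g_0})\cong H^{*}(S^n;k)$; and $HW^{*}(S^n_{g_0},\tilde{\Sigma})\cong k$ concentrated in a single degree $c_0$, with $c_0$ equal to the degree of $HF^{*}(S^n,\Sigma)$ (the Maslov index of the intersection point is a local invariant, unaffected by passing to the cover). The connecting map is post-composition with $\mathrm{ev}$: it sends the unit of $HW^{0}(S^n_{g_0},S^n_{g_0})$ to a generator of $HW^{c_0}(S^n_{g_0},\tilde{\Sigma})$, hence is onto in degree $c_0$, and sends the top class $[\pt]\in HW^{n}(S^n_{g_0},S^n_{g_0})$ into $HW^{c_0+n}(S^n_{g_0},\tilde{\Sigma})=0$. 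Reading off the long exact sequence gives $HW^{*}(S^n_{g_0},X)\cong k$ concentrated in degree $c_0+n-1$, whereas $HW^{*}(S^n_{g_0},\tilde{\Sigma}[n-1])\cong HW^{*+n-1}(S^n_{g_0},\tilde{\Sigma})\cong k$ is concentrated in degree $c_0-n+1$. Since $n\geq3$ these degrees differ, so $X\not\cong\tilde{\Sigma}[n-1]$, and with the first step this gives the Lemma.

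I expect the only points requiring care — all of them routine — to be the following. First, the wrapped Floer groups above should behave exactly as in the closed case inside $\cW(\tilde{M}_{\Sigma};\pi)$ even when $\pi_1(\Sigma)$ is infinite: the one possibly non-compact object, $\tilde{\Sigma}$, is only ever paired against the compact spheres $S^n_{g_0}$, with which it has a single transverse (hence compact) intersection, so no wrapping subtlety arises; the finite-versus-infinite dichotomy for $\pi_1(\Sigma)$ is therefore immaterial, affecting only the number of components of $\pi^{-1}(S^n)$, while the argument inspects one component at a time. Second, the claim that $\mathrm{ev}$ kills $[\pt]$ is the statement that right multiplication by the generator of $HF^{*}(S^n,\Sigma)$ sends $[\pt]$ into $HF^{c_0+n}(S^n,\Sigma)$, which vanishes since $HF^{*}(S^n,\Sigma)$ sits in a single degree — the same low-degree vanishing used repeatedly in this section. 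The one genuinely structural step, which I would set up carefully, is the identification $X=\pi^{*}(T_{S^n}(\Sigma))$ together with the resulting pulled-back triangle; everything after that is formal homological algebra.
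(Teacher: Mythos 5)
Your proof is correct, but it takes a slightly different route from the one in the paper. The paper's own argument is a one-liner: it pairs both objects against a \emph{cotangent fibre} $T_x^{*}S^n_{g_0}$ to one of the sphere components, and observes that the resulting wrapped Floer group has rank $0$ for $\tilde{\Sigma}[n-1]$ (since $\tilde{\Sigma}$ and the fibre are disjoint, and stay so under wrapping) but rank $1$ for $\tilde{\Sigma}\leftarrow\pi^{-1}(S^n)$. Rank is manifestly deck-invariant, so the orbit statement is immediate. You instead pair against the compact sphere $S^n_{g_0}$ itself, which forces both groups to have rank $1$, and distinguish them by the \emph{degree} in which that rank lives ($c_0+n-1$ versus $c_0-n+1$); you then separately check that the orbit of $\tilde{\Sigma}[n-1]$ is a single isomorphism class to make the reduction legitimate. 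Your version is a bit more computational, but it has a genuine advantage: it sidesteps entirely the vanishing $HW^{*}(T^{*}_x S^n_{g_0},\tilde{\Sigma})=0$ that the paper's rank argument tacitly invokes, replacing it with a finite, transverse intersection that requires no discussion of wrapping at all. Conversely, the paper's approach avoids your degree bookkeeping and the explicit deck-invariance step. Both are sound; the key identification $X=\pi^{*}(T_{S^n}\Sigma)$ and the resulting exact triangle, which you flag as the one structural step, are exactly as set up in the discussion preceding the Lemma.
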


This follows immediately from taking Floer cohomology with a cotangent fibre to one of the components of $  \pi^{-1}(S^n)$: the Floer group with  have rank $0$ in the first case, and $1$ in the other.

\begin{proof}[Proof of Proposition \ref{prop:twist-not-geometric}]
Being the preimage of a connected Lagrangian submanifold $L$ under a covering, all the components of $\pi^{-1}(L)$ are diffeomorphic and moreover related by deck transformations.  These lift to autoequivalences of  $\cW(\tilde{M}_{\Sigma};\pi)$ by Theorem \ref{thm:category-of-cover}. The analysis above showed that, up to shifts of the pieces,
\begin{equation} \label{Eqn:SplitPreimage}
\pi^{-1}(L) \cong \tilde{\Sigma} \oplus \left( \tilde{\Sigma} \leftarrow \pi^{-1}(S^n) \right)
\end{equation}
in the category $\cW(\tilde{M}_{\Sigma})$.  Each connected component $\tilde{L}_{\alpha} \subset \pi^{-1}(L)$ is itself an indecomposable object of the category, since $HW^0(\tilde{L}_{\alpha}, \tilde{L}_{\alpha}) $ has rank 1.  For the same reason, $\tilde{\Sigma}$ is indecomposable, and Lemma \ref{lem:fibre-not-zerosection} implies that $ \tilde{\Sigma} \leftarrow \pi^{-1}(S^n)$ is not isomorphic to a deck transformation image of $\tilde{\Sigma}$.  It follows that the RHS of \eqref{Eqn:SplitPreimage} is not quasi-isomorphic to a direct sum of indecomposables lying in a single orbit of the covering group, which is a  contradiction (recall that working over a field, the decomposition of an object into a sum of indecomposables is unique).  The conclusion is that the twisted complex of \eqref{Eqn:ImpossibleComplex} cannot represent an exact Lagrangian $L$, hence $\tau$ did not exist.  \end{proof}

This argument is actually a bit more general.  Suppose $\Sigma$ is a homology sphere to which the analysis underlying Proposition \ref{prop:braid-orbit} applies, for instance $\pi_1(\Sigma)$ has no non-trivial finite-dimensional representations (or the universal cover of $\Sigma$ is acyclic).  Let $\phi: T^*\Sigma \rightarrow T^*\Sigma$ be a compactly supported symplectomorphism, and write $\Phi$ for the associated element of $\Auteq(H\cF(M_{\Sigma}))$, defined with respect to an arbitrary choice of grading of $\phi$.   By the classification of exact Lagrangian submanifolds of Maslov class zero in $T^*\Sigma$, the symplectomorphism $\phi$ preserves the object $\Sigma$ up to shift.     The image $L$ of the core component $S^n \subset M_{\Sigma}$ under $\phi$ co-incides with $S^n$ outside a compact subset of $T^*\Sigma$, hence 
\begin{equation} \label{Eqn:Rank1}
HF(L, T_x^*S^n) \cong k \ \textrm{has rank} \  1.
\end{equation} Theorem \ref{thm:main} implies that $L$ is a twisted complex on $S^n$ and $\Sigma$, and \eqref{Eqn:Rank1} (together with the fact that multiplication by $[S^n]$ on $HF(S^n, T_x^*S^n)$ is trivial) implies that this complex contains a unique copy of $S^n$.  Lemma \ref{lem:first-step}, and its analogue with the roles of $Q_0$ and $Q_1$ reversed, then implies that $L$ may be represented by a complex of the shape
\begin{equation} \label{Eqn:TwoCases}
\cC^{\bullet} \leftarrow S^n \quad \textrm{or} \quad S^n \leftarrow \cC^{\bullet}
\end{equation}
with $\cC^{\bullet} \in Tw(\Sigma)$ the twisted complex $\Sigma \rightarrow \cdots \rightarrow \Sigma$ of arbitrary length and with all differentials the fundamental class $[\Sigma]$ (more explicitly, Lemma \ref{lem:first-step} implies that $V_{i\neq 0} = 0$ and that $V_0$ has dimension 1).  

The two cases of \eqref{Eqn:TwoCases} are again analogous, and we treat only the first. Place $S^n$ in degree zero, and consider the inverse image in $\cW(\tilde{M}_{\Sigma};\pi)$, represented by a complex
\begin{equation} \label{eqn:lifted-complex}
\tilde{\cC}^{\bullet} \leftarrow \pi^{-1}(S^n)
\end{equation}
where all differentials in $\tilde{\cC}^{\bullet}$ vanish, being obtained by pullback from the fundamental class of $\Sigma$. This preimage  is again quasi-isomorphic to a direct sum 
\begin{equation}
  \label{eq:direct_sum_from_homological_alg}
\pi^{-1}(L) \simeq  \tilde{\Sigma}[k(n-1)] \oplus \cdots \oplus \tilde{\Sigma}[2(n-1)] \oplus \left(  \tilde{\Sigma}[n-1] \leftarrow \pi^{-1}(S^n) \right)  
\end{equation}
Lemma \ref{lem:fibre-not-zerosection} yields a contradiction unless 
\begin{equation} \label{eqn:only-way-out}
L \simeq (\Sigma \leftarrow S^n) \quad \textrm{or} \quad L \simeq S^n
\end{equation}
by following the same steps as in the proof of Proposition  \ref{prop:twist-not-geometric}.
In the first case, replacing $\phi$ by an iterate if necessary, we would obtain a new Lagrangian submanifold representing the complex $\Sigma \leftarrow \Sigma \leftarrow S^n$, which contradicts \eqref{eqn:only-way-out}. Therefore $L\simeq S^n$, and moreover one can easily check  that $\phi$ acts trivially on the Floer cohomology algebra generated by $\Sigma$ and $S^n$.   In other words, we conclude:

\begin{cor}
Let $\Sigma$ be a homology sphere with $\pi_1(\Sigma) \neq 1$. Suppose $\pi_1(\Sigma)$ admits no non-trivial finite-dimensional representation.  Any $\phi \in \Symp_{ct}(T^*\Sigma)$ acts trivially modulo shift on $H^*(\cF(M_{\Sigma}))$. 
\end{cor}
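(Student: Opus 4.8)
The plan is to assemble the ingredients developed in this subsection into a single argument. Fix $\phi\in\Symp_{ct}(T^*\Sigma)$, choose a grading for it, and let $\Phi\in\Auteq(H\cF(M_{\Sigma}))$ be the induced autoequivalence. First I would record the two ``boundary conditions'' on $\Phi$. Since the zero-section generates $\cW(T^*\Sigma)$, every closed exact Maslov-zero Lagrangian in $T^*\Sigma$ is quasi-isomorphic to $\Sigma$ up to shift, so $\phi$ preserves the object $\Sigma$ modulo shift. And since $\phi$ is compactly supported, its image $L:=\Phi(S^n)$ of the core component $S^n\subset M_{\Sigma}$ (capping a cotangent fibre of $D^*\Sigma$) coincides with $S^n$ outside a compact subset of $T^*\Sigma$, whence $HF(L,T_x^*S^n)\cong k$ has rank $1$.

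Next I would pin down the shape of $L$. By Theorem \ref{thm:main}, $L$ is a twisted complex over $\Sigma$ and $S^n$, with trivial local systems since $\pi_1(\Sigma)$ has no non-trivial finite-dimensional representation and $\pi_1(S^n)=1$. The rank computation above, together with the vanishing of multiplication by $[S^n]$ on $HF(S^n,T_x^*S^n)$, forces exactly one copy of $S^n$ to appear; Lemma \ref{lem:first-step} and its mirror (with the roles of the two cores exchanged) then force $L$ to be represented by $\cC^{\bullet}\leftarrow S^n$ or $S^n\leftarrow\cC^{\bullet}$, where $\cC^{\bullet}=(\Sigma\rightarrow\cdots\rightarrow\Sigma)$ has length $k\geq 0$ and every differential equal to the fundamental class $[\Sigma]$.

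The crux is the passage to the universal cover $\iota\colon\tilde{\Sigma}\to\Sigma$, mirroring the proof of Proposition \ref{prop:twist-not-geometric}. Choose the characteristic of $k$ to divide the index of $\iota$ (taking $k$ arbitrary if $\iota$ is infinite), so that $\iota^*[\Sigma]=0$. Pulling back to $\cW(\tM_{\Sigma};\pi)$ via Theorem \ref{thm:category-of-cover}, all differentials in $\tilde{\cC}^{\bullet}$ vanish, and homological algebra splits $\pi^{-1}(L)$, up to internal shifts, as $\tilde{\Sigma}[k(n-1)]\oplus\cdots\oplus\tilde{\Sigma}[2(n-1)]\oplus(\tilde{\Sigma}[n-1]\leftarrow\pi^{-1}(S^n))$, or the mirror expression. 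On the other hand $\pi^{-1}(L)$ is the disjoint union of its connected components, which lie in a single deck-transformation orbit and are each indecomposable (their $HW^0$ has rank $1$); since decomposition into indecomposables over a field is unique, and Lemma \ref{lem:fibre-not-zerosection} shows $\tilde{\Sigma}$ and $\tilde{\Sigma}\leftarrow\pi^{-1}(S^n)$ are not related by a deck transformation, we conclude $k\leq 1$, i.e. $L\simeq S^n$ or $L\simeq(\Sigma\leftarrow S^n)$.

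Finally, the case $L\simeq(\Sigma\leftarrow S^n)$ is eliminated by iteration: applying $\phi$ again, $\phi^2(S^n)$ would be represented by $\Sigma\leftarrow\Sigma\leftarrow S^n$, a complex with $k=2$, contradicting the bound $k\leq 1$ applied to $\phi^2\in\Symp_{ct}(T^*\Sigma)$. Hence $\Phi$ fixes $S^n$ up to shift, and it also fixes $\Sigma$ up to shift; a degree count on the rank-one group $HF(\Sigma,S^n)$ forces the two shifts to agree, so modulo $\langle[1]\rangle$ we may take $\Phi$ to fix both objects. As every graded piece of $HF^*(\Sigma,\Sigma)$, $HF^*(S^n,S^n)$, $HF^*(\Sigma,S^n)$ and $HF^*(S^n,\Sigma)$ has rank at most $1$, an automorphism of this algebra fixing the units is the identity; since $\Sigma$ and $S^n$ generate $H\cF(M_{\Sigma})$ by Theorem \ref{thm:main}, $\Phi$ is trivial modulo shift. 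I expect the cover step to be the main obstacle: one must carefully arrange the vanishing of the pulled-back differentials (hence the constraint on the characteristic in the finite case) and then extract the rigidity bound $k\leq 1$ from uniqueness of decomposition together with Lemma \ref{lem:fibre-not-zerosection}; the iteration trick disposing of the remaining case is short but easy to overlook.
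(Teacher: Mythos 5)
Your argument reproduces the paper's own proof essentially step for step: the same two ``boundary conditions'' on $\Phi$ (preservation of $\Sigma$ up to shift via the nearby Lagrangian theorem, and the rank-one intersection of $L$ with a cotangent fibre), the same appeal to Theorem \ref{thm:main} and Lemma \ref{lem:first-step} to pin down the shape $\cC^{\bullet}\leftarrow S^n$, the same passage to the universal cover with characteristic chosen to kill $\iota^*[\Sigma]$, the same uniqueness-of-decomposition argument together with Lemma \ref{lem:fibre-not-zerosection} to force $k\leq 1$, and the same iteration trick ruling out $L\simeq(\Sigma\leftarrow S^n)$. The only point I would lightly flag is your last clause --- an algebra automorphism fixing units and preserving each rank-one graded piece need not be the literal identity (it could rescale generators), but such a functor is naturally isomorphic to the identity, which is what ``acts trivially modulo shift'' means and is what the paper's brief ``one can easily check'' is gesturing at.
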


The symplectomorphism $\phi$ plays a rather minor role above, which leads to a slightly more general formulation of the conclusion.  Inside $T^*\Sigma$, consider an exact Lagrangian submanifold $K$ which is a \emph{filling} of the Legendrian unknot $\Lambda$. That is, inside the unit disk cotangent bundle, we consider a properly embedded Lagrangian submanifold with $\Lambda$ as Legendrian boundary.  

\begin{cor}
If $\Sigma$ is a homology sphere with $\pi_1(\Sigma) \neq 1$, and if $\pi_1(\Sigma)$ admits no non-trivial finite-dimensional representations, then any exact Maslov zero Lagrangian filling $K$ of $\Lambda$  is a homology ball.
\end{cor}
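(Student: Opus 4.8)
The plan is to cap off the filling $K$ inside $M_{\Sigma}$, invoke the homological classification already established for plumbings of homology spheres, and then recover the homology of $K$ by Mayer--Vietoris. Concretely, I would first replace $K$ by the closed Lagrangian $\bar K = K \cup_{\Lambda} D \subset M_{\Sigma}$, where $D$ is the standard Lagrangian disk lying outside $D^{*}\Sigma$ that bounds the Legendrian unknot $\Lambda$ (the same disk used to build $S_{\Lambda}$), and check that $\bar K$ is a genuine object of $\cF(M_{\Sigma})$. It is connected, being a union along $\Lambda\cong S^{n-1}$ ($n\geq 3$) of connected pieces; it is exact, since a primitive for the restriction of the Liouville form to $K$ extends across $D$ as $H^{1}(D)=0$, after matching the additive constant along the connected collar of $\Lambda$; orientations and $\Spin$ structures can be arranged to agree along $\Lambda$; and its Maslov class vanishes, because the Mayer--Vietoris sequence gives a surjection $H^{0}(K)\oplus H^{0}(D)\twoheadrightarrow H^{0}(\Lambda)$, hence an injection $H^{1}(\bar K)\hookrightarrow H^{1}(K)\oplus H^{1}(D)=H^{1}(K)$ carrying $\mu_{\bar K}$ to $\mu_{K}=0$.

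Next I would apply Corollary \ref{cor:homology} to $\bar K$: the space $M_{\Sigma}=T^{*}\Sigma \# T^{*}S^{n}$ is the plumbing of the $\bZ$-homology $n$-spheres $\Sigma$ and $S^{n}$, $\pi_{1}(\Sigma)$ has no non-trivial finite-dimensional representation by hypothesis, and $\pi_{1}(S^{n})=1$ trivially has none, so the Corollary applies and the closed exact Maslov-zero Lagrangian $\bar K \subset M_{\Sigma}$ is a $\bZ$-homology $n$-sphere. (Alternatively, and more in the spirit of this section, one repeats the argument just given for $\phi$ with $\bar K$ in place of $\phi(S^{n})$: its only inputs are that $\bar K$ agrees with $S^{n}$ outside the compact domain $D^{*}\Sigma$, giving the rank-one computation of \eqref{Eqn:Rank1}, together with Theorem \ref{thm:main}, Lemma \ref{lem:first-step} and Lemma \ref{lem:classify_positive_endo_algebra}; this puts $\bar K$ in the braid orbit of $S^{n}$, and the covering-space analysis of Proposition \ref{prop:twist-not-geometric} identifies it with $S_{\Lambda}$ or a Lagrange surgery of $S_{\Lambda}$ with $\Sigma$ -- the filling case of Theorem \ref{thm:hlgy_sphere} -- whence $\bar K$ is again a homology sphere.)

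Finally I would run Mayer--Vietoris for $\bar K = K \cup_{S^{n-1}} D^{n}$, using $H_{*}(\bar K)\cong H_{*}(S^{n})$, $H_{*}(D^{n})\cong H_{*}(\pt)$ and $H_{n}(K)=0$ (a connected compact $n$-manifold with non-empty boundary). One reads off $H_{k}(K)=0$ for $0<k<n-1$, and in top degrees the exact fragment $0\to H_{n}(\bar K)\xrightarrow{\partial}H_{n-1}(S^{n-1})\to H_{n-1}(K)\to H_{n-1}(\bar K)=0$; here $\partial$ is an isomorphism because it sends $[\bar K]$ to a generator (the image of $[\bar K]$ under $H_{n}(\bar K)\to H_{n}(\bar K,K)\cong H_{n}(D^{n},\partial D^{n})$ is $[D^{n},\partial D^{n}]$, which maps to a generator of $H_{n-1}(\partial D^{n})$), so $H_{n-1}(K)=0$ as well. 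With $H_{0}(K)=\bZ$ this shows $K$ is a $\bZ$-homology ball.

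I expect no genuine obstacle here: the only substantive points are the verification that $\mu_{\bar K}=0$ (which is what lets the earlier results apply) and the final homology computation, and Corollary \ref{cor:homology} does the real work. The one place that truly needs the machinery of this section is the sharper identification of $\bar K$ as in Theorem \ref{thm:hlgy_sphere}, namely excluding the Lagrange-surgery complex $\Sigma\leftarrow S^{n}$ -- handled for a symplectomorphism by passing to iterates, and for a general filling by the covering-space argument -- but this refinement is not needed for the homology-ball statement.
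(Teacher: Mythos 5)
Your proposal is correct, and both routes you sketch work. The parenthetical alternative matches the paper's own proof essentially verbatim: the paper caps $K$ to $\bar K$, invokes the three-way classification around \eqref{eqn:only-way-out} (the filling case of Theorem \ref{thm:hlgy_sphere}), and then observes that $K$ is the complement of a disk in a homology sphere.

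Your main route, however, takes a slightly different and arguably more economical path. You apply Corollary \ref{cor:homology} directly to the plumbing $M_{\Sigma}=T^{*}\Sigma\#T^{*}S^{n}$ -- noting that both $\Sigma$ and $S^{n}$ are $\bZ$-homology spheres whose fundamental groups admit no non-trivial finite-dimensional representation -- to conclude that $\bar K$ is a $\bZ$-homology sphere, and then run Mayer--Vietoris. This is weaker than the refined classification of Theorem \ref{thm:hlgy_sphere}, but it is all that the homology-ball conclusion requires, and it has the pleasant feature of bypassing the covering-space argument entirely. In particular your route sidesteps the delicate point flagged in the remark immediately following this Corollary: the covering argument in the proof of Theorem \ref{thm:hlgy_sphere} requires the coefficient field's characteristic to divide the covering index, whereas Corollary \ref{cor:homology} holds over every field and is stated as a $\bZ$-statement. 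The price is only that you do not obtain the sharper identification of $\bar K$ with $S_{\Lambda}$ or a Lagrange surgery, which the paper had in hand anyway from the preceding argument about $\phi$. Your preliminary checks (connectedness, exactness by matching the constant primitive along the Legendrian $\Lambda$, vanishing of $\mu_{\bar K}$ via the injection $H^{1}(\bar K)\hookrightarrow H^{1}(K)$, and the Mayer--Vietoris computation including the connecting-map isomorphism in top degree) are all correctly argued.
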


\begin{proof}
Adding a Weinstein handle along the Legendrian unknot yields an associated closed Lagrangian submanifold $\bar{L} \subset M_{\Sigma}$, with $M_{\Sigma} = T^*\Sigma \# T^*S^n$ the plumbing considered previously.  The analysis surrounding Equation \ref{eqn:only-way-out} implies that
\[
\bar{K} \simeq S^n \quad \textrm{or} \quad \bar{K} \simeq (\Sigma \leftarrow S^n) \quad \textrm{or} \quad \bar{K} \simeq (S^n \leftarrow \Sigma) \ \in \ H\cF(M_{\Sigma}).
\]
These three objects are quasi-represented by the core component $S^n$ and its two possible Lagrange surgeries with the other component $\Sigma$, respectively.  It follows that $K$ is Floer cohomologically indistiguishable from at least one of the fibre $T_x^*\Sigma$ or one of its surgeries with the $0$-section, in particular has the same cohomology groups as one of these spaces. Since the complement of a disk in a homology sphere is a homology ball, the result follows.
\end{proof}

\begin{rem} 
Some care should be exercised in extending the results here from $\bZ$-homology spheres to $k$-homology spheres. Although the basic classification of twisted complexes carries over, cf. Remark \ref{rem:k-homology-sphere}, there is potentially conflict between the characteristic of $k$ for which one obtains a homology sphere and the characteristic required in the vanishing argument for differentials in the twisted complex in the universal cover. For instance, $\bR\bP^3$ is a homology sphere except in characteristic 2; however, the vanishing argument after Equation \ref{eqn:lifted-complex} would precisely require characteristic 2, i.e. a divisor of $|\pi_1(\bR\bP^3)|$, and $T^*\bR\bP^3$ \emph{does} admit a non-trivial autoequivalence (Dehn twist).
\end{rem}

\subsubsection{Stablisations and symplectomorphisms}
\label{sec:Kontsevich_stab}

Consider the operation which assigns to a Liouville manifold $M$ its product with the unit disc $D^{2}$ together with the function
\begin{align}
 W \co M \times D^{2} & \to D^{2} \\
(m,z) & \mapsto z^{2}.
\end{align}

Using the techniques developed in \cite{seidel-book}, one may construct  a well defined Fukaya category $\sF(M \times D^{2}, W)$ with objects being exact Lagrangian branes with boundary contained in $M \times \{1\}$.    The study of these categories is not fully developed, but one expects the following conjecture to hold:
\begin{conj} \label{conj:covers}
 For each cover $\tilde{M}$ of $M$ there is a commutative diagram
 \begin{equation}
   \xymatrix{  \sF(\tilde{M}) \ar[r] & \sF(  \tilde{M} \times D^{2}, W)    \\
 \sF(M) \ar[r] \ar[u] & \sF(  M \times D^{2}, W). \ar[u]  }
 \end{equation}
where the horizontal arrows are equivalences, and the vertical arrows are pullback functors which assign to a Lagrangian its inverse image.
\end{conj}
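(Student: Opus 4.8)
\emph{Strategy.} The plan is threefold: (i) pin down the definition of the stabilised category and the horizontal functor; (ii) prove this functor is a quasi-equivalence --- a stabilisation-invariance statement; and (iii) observe that the square commutes essentially on the nose, since the covering $\pi$ acts only on the $M$-factor while the stabilisation construction lives in (a product with) the $D^{2}$-factor, which $\pi$ leaves untouched.

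\emph{The functor.} Following the Lefschetz-fibration formalism of \cite{seidel-book}, an object of $\sF(M\times D^{2},W)$ with $W(m,z)=z^{2}$ is an exact Lagrangian brane which near infinity fibres over an embedded arc in $D^{2}$ issuing from $1$, with $A_{\infty}$-operations counting pseudoholomorphic sections of $W$ with matching boundary. As $z\mapsto z^{2}$ is an honest covering away from $0$ and the sole degenerate fibre is $W^{-1}(0)=M\times\{0\}$, the distinguished (``thimble-type'') objects are the products $\sigma(L):=L\times\gamma$, one for every exact brane $L\subset M$, where $\gamma$ is an embedded arc in $D^{2}$ from $1$ to $0$ (for distinct objects the arcs are chosen mutually transverse). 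Using product almost complex structures on $M\times D^{2}$ one organises these into an $A_{\infty}$-functor $\sigma\colon\sF(M)\to\sF(M\times D^{2},W)$, and analogously $\tilde\sigma\colon\sF(\tilde M)\to\sF(\tilde M\times D^{2},W)$.

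\emph{The equivalences.} To see $\sigma$ is a quasi-equivalence: for fullness and faithfulness one arranges $\gamma_{0},\gamma_{1}$ to meet transversally in a single point $p$, so that $\sigma(L_{0})\cap\sigma(L_{1})=(L_{0}\cap L_{1})\times\{p\}$, and invokes a K\"unneth-type splitting of the relevant holomorphic sections (a strip in $M$ between $L_{0}$ and $L_{1}$ paired with the essentially unique holomorphic input in the $z^{2}$-fibration) to identify $HF^{*}(\sigma(L_{0}),\sigma(L_{1}))$ with $HF^{*}(L_{0},L_{1})$ compatibly with all higher products; for essential surjectivity one shows the $\sigma(L)$ generate by isotoping an arbitrary object towards the critical fibre $M\times\{0\}$ and decomposing it, using that $W$ restricts to a trivial fibration over $D^{2}\setminus\{0\}$. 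The same argument over $\tilde M$, with the covering decorations of Theorem \ref{thm:category-of-cover} in place, shows $\tilde\sigma$ is a quasi-equivalence.

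\emph{Commutativity and the obstacle.} The covering $\tilde M\times D^{2}\to M\times D^{2}$ is $\pi\times\mathrm{id}$, and $W$, depending only on $z$, is preserved; the vertical arrows are the pullback functors of Theorem \ref{thm:category-of-cover} (adapted to these partially-fibred categories), sending $\Lambda\mapsto(\pi\times\mathrm{id})^{-1}(\Lambda)$. On objects commutativity is immediate, $(\pi\times\mathrm{id})^{-1}(L\times\gamma)=\pi^{-1}(L)\times\gamma=\tilde\sigma(\pi^{-1}(L))$, and on morphisms the natural pullback maps on Floer complexes respect the K\"unneth splittings above, yielding a canonical $A_{\infty}$-natural quasi-isomorphism of the two composites. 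The main obstacle is entirely foundational --- which is why the assertion is stated as a conjecture, cf.\ Remark \ref{rem:stabilise}: one must make $\sF(M\times D^{2},W)$ and its $\pi$-decorated version precise, and in particular establish compactness of the moduli of sections of $W$ via a maximum principle or energy bound adapted to the degenerate fibration $z^{2}$ near its critical fibre $M\times\{0\}$, combined --- for infinite coverings --- with the winding-number and monotonicity estimates of \cite[Section 6]{Abouzaid-ExactLag} needed to control holomorphic curves bounding the non-compact inverse images $\pi^{-1}(L)$. Granting this, the stabilisation-invariance equivalence and the naturality of pullback are of a by-now-standard type.
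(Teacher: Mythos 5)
The statement is Conjecture \ref{conj:covers}, not a theorem: the surrounding text says explicitly that ``the study of these categories is not fully developed, but one expects the following conjecture to hold,'' and Remark \ref{rem:stabilise} makes the downstream conclusion conditional on it. There is therefore no proof in the paper against which to compare your argument.

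Your sketch is the natural strategy --- the expected Kn\"orrer/suspension equivalence $\sF(M) \simeq \sF(M \times D^{2}, W)$ via $L \mapsto L \times \gamma$, compatibility with $\pi \times \mathrm{id}$, and the observation that the covering never touches the $D^{2}$-factor --- and you are correct, and admirably explicit, that the obstacle is foundational rather than computational. To underline why this is not merely a matter of filling in routine details: the critical locus of $W = z^{2}$ is the entire fibre $M \times \{0\}$, not a Morse critical point, so $\sF(M \times D^{2}, W)$ is not literally a Lefschetz-fibration Fukaya category in the sense of \cite{seidel-book} and must be built afresh; its covering-decorated analogue, needed for the vertical arrows (and, for infinite covers, requiring the monotonicity estimates of \cite[Section 6]{Abouzaid-ExactLag} to control noncompact inverse images), has not been constructed at all; and your essential-surjectivity step --- decomposing an arbitrary object by pushing it towards $M \times \{0\}$ --- is precisely the hard part of Kn\"orrer periodicity, not an afterthought. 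The K\"unneth identification of morphisms should also be treated with more care than the phrasing suggests, since the $A_{\infty}$-operations in a Fukaya--Seidel-type category count sections subject to constraints that go beyond those of an honest product Fukaya category. In short, your proposal is a plausibility argument rather than a proof; that is exactly the status the paper assigns to the statement, and your own closing paragraph says as much.
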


Assuming this conjecture, we observe that whenever $M$ is the plumbing of $T^{*} \Sigma$ and $T^{*} S^{n}$, with $\Sigma$ a homology sphere with non-trivial fundamental group, there can be no geometric automorphism of $  \tilde{M} \times D^{2} $ realising the Dehn twist in $ \sF(  \tilde{M} \times D^{2}, W) $, cf. Remark \ref{rem:stabilise}.    Indeed, most arguments given in the proof of Proposition \ref{prop:twist-not-geometric} use only algebra, and hence would apply just as well after stabilisation.  The only geometric part of the argument appears in our analysis of Equation \eqref{Eqn:SplitPreimage}, where we use the fact that the inverse images of a Lagrangian under a covering map are disjoint, hence Floer cohomologically orthogonal, and related by deck transformations.  While we are not proposing a precise definition of the class of symplectomorphisms of $  \tilde{M} \times D^2 $ which give rise to automorphisms of the category $  \sF(  \tilde{M} \times D^2, W)$, any sufficiently natural definition will still have the property that the deck transformations act, and that is all that was needed.

\subsection{Truncated polynomial rings}
\label{sec:trunc-polyn-rings}

The construction of the geometric Dehn twist $\tau_{L}$ in a Lagrangian sphere $L\subset M$ relies on the existence of a metric on $S^n$ with periodic geodesic flow.  Similar metrics exist on all the real, complex and quaternionic projective spaces, and the Cayley projective plane. In general, if a manifold $Z$ admits a metric for which all geodesics are closed, they are necessarily of the same length, and the geodesic flow is periodic.  Such manifolds are called \emph{Zoll} manifolds, and have been the subject of much classical investigation.  Bott \cite{Bott} proved, using Morse theoretic analysis of spaces of geodesics, that if a manifold $Z$ admits a Zoll metric then $H^*(Z)$ is a truncated polynomial ring (and $\pi_1(Z)$ is either trivial or of order 2).  

\begin{conj} \label{conj:truncated}
Given a simply-connected $n$-manifold $Q$, let $M_Q$ denote the plumbing $T^*Q \# T^*S^n$ given by adding a Weinstein handle to the Legendrian unknot in a fibre of $T^*Q$.  If the map
\[
\rho: \ \pi_0\Symp_{ct}(T^*Q) \longrightarrow \Auteq(H\cF(M_Q))/\langle [1] \rangle
\]
is non-trivial, then $H^*(Q;\bZ)$ is a truncated polynomial ring.
\end{conj}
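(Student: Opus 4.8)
The plan is to establish a Fukaya-categorical analogue of Bott's theorem, using Theorem \ref{thm:main} to convert the hypothesis ``$\rho$ is non-trivial'' into a rigidity statement for the ring $H^{*}(Q;\bZ)$. First I would reduce to understanding how a symplectomorphism acts on the cocore sphere. Since $Q$ is simply connected and $\dim M_Q = 2n > 4$, Theorem \ref{thm:main} presents $\cF(M_Q)$ as generated by the two compact cores $Q$ and $S^{n}$ with trivial local systems, and any $\phi\in\Symp_{ct}(T^{*}Q)$ fixes the object $Q$ up to shift by the classification of closed exact Lagrangians in $T^{*}Q$ (the nearby Lagrangian results, \cite{Abouzaid-ExactLag}). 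After composing with a shift we may assume $\phi$ fixes $Q$ on the nose; a degree comparison through the rank-one space $HF(Q,S^{n})$ then shows that if moreover $L:=\phi(S^{n})$ is isomorphic to $S^{n}[j]$ one must have $j=0$. Hence $\rho$ is non-trivial only if either (i) some such $\phi$ has $L\not\simeq S^{n}$, or (ii) every such $\phi$ fixes both $Q$ and $S^{n}$ as objects but some acts non-trivially on $HF^{*}(Q,Q)=H^{*}(Q)$. Case (ii) is the natural place to pass to a Nadler--Zaslow-type enlargement of $\cF(T^{*}Q)$ as in the reformulation mentioned after the statement, and I treat only case (i) below.

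Next I would pin down the shape of $L$. Taking the support of $\phi$ inside $D^{*}Q$, the Lagrangian $L$ agrees with $S^{n}$ in the surgery handle, so $HF(L,T^{*}_{x}S^{n})\cong k$ has rank one; arguing exactly as around \eqref{Eqn:Rank1}, the twisted complex representing $L$ contains a single copy of $S^{n}$, the remaining generators being copies of $Q$ with shifts, with horizontal arrows $Q[b]\to Q[c]$ labelled by classes in $H^{c-b+1}(Q)$. Because $\Phi:=\rho(\phi)$ is a quasi-equivalence, $HF^{*}(L,L)\cong H^{*}(S^{n})$ and the full $A_{\infty}$-subcategory on $\{Q,L\}$ is equivalent, by an equivalence fixing $Q$, to the one on $\{Q,S^{n}\}$ --- the latter being determined by the graded ring $H^{*}(Q)$ together with the perfect pairing $HF(Q,S^{n})\otimes HF(S^{n},Q)\to H^{n}(Q)$. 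Absorbing the $S^{n}$ into one end of the complex using a power of the invertible twist $T_{S^{n}}^{\pm 1}$ (compare Lemma \ref{lem:first-step} and the discussion around \eqref{Eqn:TwoCases}), the object $L$ is governed by a string $Q\xrightarrow{x_{1}}Q[d_{1}]\xrightarrow{x_{2}}\cdots\xrightarrow{x_{r}}Q[d_{r}]$ of multiplications by cohomology classes $x_{i}\in H^{*}(Q)$.

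The heart of the argument --- and the step I expect to be hardest --- is to show that $HF^{*}(L,L)\cong H^{*}(S^{n})$ together with invertibility of $\Phi$ forces this string to be $Q\xrightarrow{u}Q[d]\xrightarrow{u}Q[2d]\to\cdots\to Q[(\ell-1)d]$ for a \emph{single} class $u\in H^{d}(Q)$ with $u^{\ell}=0$ and $(\ell-1)d=n$, and moreover forces $u$ to generate $H^{*}(Q;\bZ)$ as a ring; equivalently, $Q$ must be a generalized $\bP^{\ell-1}$-object in the sense of Huybrechts and Thomas, and $\phi$ must induce, up to shift, the corresponding $\bP$-twist. This is the categorical shadow of Bott's Morse-theoretic count, the length $\ell$ of the string playing the role of the number of critical submanifolds of the energy functional. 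The mechanism I would use is the one already exploited in the partial results: if $H^{*}(Q;\bZ)$ contained a class $v$ that is not a power of a single top-degree generator, one feeds $v$ into a component of $\mathrm{End}(L)$ which is closed but not exact for lack of identity arrows --- exactly as in the proof of Lemma \ref{lem:first-step} --- to manufacture a cohomologically essential endomorphism of $L$ of negative degree, contradicting $HF^{*}(L,L)\cong H^{*}(S^{n})$; the difficulty is to systematize this bookkeeping for an arbitrary Poincaré-duality ring rather than only for products of spheres or for homology concentrated in three degrees, as handled in Proposition \ref{prop:partial_result_truncated}. The converse, which shows the hypothesis is not vacuous, is comparatively easy: when $H^{*}(Q;\bZ)\cong\bZ[u]/(u^{\ell})$ the $\bP$-twist in $Q$ is an autoequivalence of $\cF(M_{Q})$, realized geometrically by the family Dehn twist whenever $Q$ carries a Zoll metric. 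The two principal obstacles, then, are (a) the general-$H^{*}(Q)$ version of this structural analysis, and (b) the excluded case (ii) above, where I expect one genuinely needs to leave the surrogate $M_{Q}$ for a Nadler--Zaslow-style category of $T^{*}Q$.
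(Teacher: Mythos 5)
The statement you are attempting (Conjecture~\ref{conj:truncated}) is explicitly labelled a \emph{conjecture}; the paper provides no proof, only partial evidence, and you have correctly recognized this by flagging your ``heart of the argument'' as open. What the paper proves is Proposition~\ref{prop:evidence}: under extra hypotheses on vanishing of higher products landing in $H^{<n}(Q;k)$, \emph{and} under the extra hypothesis that $\Phi=\rho(\phi)$ acts non-trivially on \emph{objects}, the Betti numbers satisfy $\sum_{0<i<n}b^i(Q)\leq 1$. This is a weaker, dimension-count-level conclusion, not that $H^*(Q;\bZ)$ is a truncated polynomial ring. Your initial reduction tracks the paper's faithfully: Theorem~\ref{thm:main} together with compact support of $\phi$, the nearby Lagrangian classification, and the rank-one Floer group with a cotangent fibre of $S^n$ (cf.\ \eqref{Eqn:Rank1}) yield a presentation $\Phi(S^n)\simeq(\cC^\bullet\leftarrow S^n)$ with $\cC^\bullet$ built from shifted copies of $Q$. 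Where you diverge is the endgame: you propose to show $\cC^\bullet$ is a $\bP^{\ell-1}$-object string driven by a single class $u$, which would yield the full conclusion; the paper instead extracts only a rank inequality from Lemma~\ref{Lem:Rank2} (rank two for $HF(\cC^\bullet,Q)$) fed into the acyclic complex~\eqref{eq:decomponse_differential}. So the paper's route is a genuine, if partial, proof, and yours is a roadmap to the full statement whose central step remains unestablished.

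Two of your gaps deserve emphasis as genuine, not cosmetic. First, your case (ii) --- $\Phi$ trivial on objects but non-trivial on morphisms --- is also untouched by the paper; Proposition~\ref{prop:evidence} hypothesizes non-trivial action on objects, and your feeling that resolving (ii) likely requires a finer probe than $M_Q$ (a Nadler--Zaslow-type category) is consistent with Remark~\ref{rem:surgery-versus-nadlerzaslow}. Second, even granting the shape of the string, your claim that $\phi$ must induce (up to shift) the $\bP$-twist is not free: the $A_\infty$-structure on the full subcategory generated by $Q$ and $S^n$ is not determined by the graded ring $H^*(Q)$ and the Poincar\'e pairing alone, and one would need to control non-formality --- which is exactly what makes the paper restrict to cases where the relevant higher products vanish. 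It would strengthen the sketch to isolate, as a precise algebraic statement, the claim that an endomorphism of $\cC^\bullet$ built from a class $v\in H^*(Q)$ that is not a power of a fixed generator produces a cohomologically essential negative-degree endomorphism of $L$; that is where the Lemma~\ref{lem:first-step}-style bookkeeping would have to be systematized, and where I'd expect the argument to stall without additional input.
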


A cleaner formulation, avoiding the auxiliary manifold $M_Q$, invokes the Nadler-Zaslow category $NZ(T^*Q)$ in place of $\cF(M_Q)$, cf. Remark \ref{rem:surgery-versus-nadlerzaslow}.  We note that classical work of Adem gives strong constraints on the manifolds whose cohomology is a truncated polynomial algebra; writing $H^*(Q) \cong \bZ[x]/\langle x^d\rangle$, necessarily the degree $|x|$ of $x$ is a power of 2, and if $|x| > 4$ then $x^3=0$. The only known simply-connected manifolds with truncated polynomial cohomology are homeomorphic to the compact rank one symmetric spaces.  The rest of this section is devoted to providing evidence for Conjecture \ref{conj:truncated}, proving a partial analogue in two non-trivial cases.

\begin{prop} \label{prop:evidence}
In the situation of Conjecture \ref{conj:truncated}, suppose moreover that all higher products
\[
\mu^p: H^{i_1}(Q;k) \otimes \cdots \otimes H^{i_p}(Q;k) \rightarrow H^{i_1 + \cdots + i_p + 2-p}(Q;k)
\] 
with
\[ p \geq 2 \quad \textrm{and} \  0 < i_j < \sum i_j +2-p < n=dim_{\bR}(Q) 
\]
vanish, i.e. vanishing holds whenever the output lies in cohomological degree $< n$. If for $\phi \in \Symp_{ct}(T^*Q)$ the autoequivalence $\Phi = \rho(\phi)$ acts non-trivially on objects of $\cF(M_Q)$, then 
\[
\sum_{0<i<n} \, rk_k \,H^i(Q;k) \in \{0,1\}.
\]
\end{prop}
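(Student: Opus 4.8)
The plan is to reduce the statement to a structural analysis of the single object $L := \phi(S^{n}) \in \cF(M_{Q})$, and then to run an elaboration of the argument of the homology--sphere case (Lemmas \ref{lem:first-step}--\ref{lem:second-step-2} and the discussion surrounding \eqref{eqn:only-way-out}), in which the hypothesis on higher products takes the place of the vanishing of \emph{all} products $\mu^{\geq 3}$ that was available there. First I would record the constraints on $L$. Since $\phi$ is compactly supported in $T^{*}Q$, the image $\phi(Q) \subset T^{*}Q$ is a closed exact Lagrangian of vanishing Maslov class, hence quasi-isomorphic to $Q$ up to shift by the classification of such submanifolds (\cite{Abouzaid-ExactLag}, applied inside $T^{*}Q \subset M_{Q}$); working modulo $\langle [1] \rangle$ we take $\Phi(Q) = Q$. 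As $Q$ and $S^{n}$ are simply connected, Theorem \ref{thm:main} shows they generate $\cF(M_{Q})$ with trivial local systems, and since $HF^{*}(Q,S^{n})$ and $HF^{*}(S^{n},Q)$ are each one--dimensional, an autoequivalence fixing both $Q$ and $S^{n}$ acts trivially on objects (up to a routine check on its action on these one--dimensional spaces); thus the hypothesis that $\Phi$ is non-trivial on objects forces $L \not\simeq S^{n}$. Moreover $L$ is diffeomorphic to $S^{n}$ (as $\phi$ is a diffeomorphism), so $HF^{*}(L,L) \cong H^{*}(S^{n})$ is concentrated in degrees $0$ and $n$; $L$ agrees with $S^{n}$ outside a compact subset of $T^{*}Q$, so exactly as for \eqref{Eqn:Rank1} we get $HF^{*}(L, T^{*}_{x}S^{n}) \cong k$, and combining this with $HF^{*}(Q, T^{*}_{x}S^{n}) = 0$ and the triviality of multiplication by $[S^{n}]$ on $HF^{*}(S^{n}, T^{*}_{x}S^{n})$, the twisted complex representing $L$ contains \emph{exactly one} copy of $S^{n}$; and finally $HF^{*}(L,Q) \cong HF^{*}(S^{n}, \phi^{-1}Q) \cong HF^{*}(S^{n},Q) \cong k$, and likewise $HF^{*}(Q,L) \cong k$.

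Next I would bring the ``$Q$--part'' $\cC$ of this twisted complex --- the complex on copies $Q[b_{1}], \dots, Q[b_{m}]$ obtained by deleting the unique $S^{n}$ --- into a normal form. By the ``no identity arrows'' argument used after Proposition \ref{prop:braid-orbit} (non-positivity of the wrapped Floer algebra of a cotangent fibre), the differentials of $\cC$ are labelled by classes in $HF^{\geq 1}(Q,Q) = H^{\geq 1}(Q)$. The vanishing of $\mu^{p}$ whenever its output lies in degree $< n$ says that the $A_{\infty}$-structure on $H^{*}(Q)$ is \emph{formal below the top degree}: every composition of such arrows which remains in degree $< n$ is computed by the cup product alone. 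Using this I would imitate Lemmas \ref{lem:first-step} and \ref{lem:second-step-1}: from a zig-zag in $\cC$ which is too long, or from an arrow whose label lies in an intermediate degree $0 < d < n$ and which cannot be cancelled by cup product, one manufactures a closed, non-exact endomorphism of $L$ whose degree is negative or lies in $(0,n)$ --- contradicting the fact that $HF^{*}(L,L)$ is supported only in degrees $0$ and $n$. The outcome should be that, up to the higher arrows forced by $\delta^{2}=0$ and up to the relabelling of Lemma \ref{lem:second-step-2}, $\cC$ is a single ``Jordan block'' $Q[b_{1}] \to \cdots \to Q[b_{m}]$ all of whose short arrows are proportional to a fixed class $h \in H^{\geq 1}(Q)$ (with $b_{i+1} = b_{i} + |h| - 1$), joined to the unique $S^{n}$ at one end by a multiple of the generator of $HF^{*}(Q,S^{n})$ or $HF^{*}(S^{n},Q)$.

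Finally I would extract the numerical conclusion. In this normal form, below--top--degree formality makes $HF^{*}(L,Q)$, $HF^{*}(Q,L)$ and $HF^{*}(L,L)$ the cohomologies of small explicit complexes built from the cup--product operator $\cdot h \colon H^{*}(Q) \to H^{*}(Q)$ and the attaching datum. The identity $HF^{*}(L,Q) \cong k$ forces $\dim_{k} \ker(\cdot h) = 1$; since the fundamental class $[Q]$ always lies in $\ker(\cdot h)$ this means $\ker(\cdot h) = k \cdot [Q]$, which on comparing degrees forces either $h = [Q]$ --- whence $\bigoplus_{0<i<n} H^{i}(Q) = 0$ --- or $H^{*}(Q) = k[h]/(h^{\,n/|h|+1})$, a truncated polynomial ring with $|h|$ dividing $n$. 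In the latter case one computes $HF^{*}(L,L)$ and finds it equal to $H^{*}(S^{n})$ only when $2|h| \geq n$, i.e.\ only when $\sum_{0<i<n} rk_{k} H^{i}(Q) \leq 1$ (the case $2|h| = n$, of middle Betti number exactly $1$, being the sharp one): when $2|h| < n$ the object $L$ would carry a class of degree strictly between $0$ and $n$ in its self--Floer cohomology, which is impossible. Either way $\sum_{0<i<n} rk_{k} H^{i}(Q) \in \{0,1\}$; the standing hypothesis $L \not\simeq S^{n}$ ensures we never fall vacuously into the trivial case $\cC = 0$.

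The main obstacle is the combination of the second and third steps: pinning down the normal form of $\cC$, and then squeezing out precisely the bound ``$\leq 1$'' rather than a weaker one. Unlike in the homology--sphere case, $HF^{*}(Q,Q) = H^{*}(Q)$ carries intermediate--degree classes, so the differentials of $\cC$ need not be multiples of the fundamental class, and bookkeeping which configurations survive the ``negative--degree endomorphism'' obstruction is substantially more delicate; in particular, verifying that a Jordan block whose differential $h$ satisfies $2|h| < n$ necessarily contributes a class of degree in $(0,n)$ to $HF^{*}(L,L)$ is exactly the point where the below--top--degree vanishing of the higher products is used in an essential way.
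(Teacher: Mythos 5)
Your overall skeleton — reduce to $L=\Phi(S^n)$, observe $HF^*(L,Q)\cong k$, show the $S^n$-slot of the twisted complex is unique, and exploit the vanishing of higher products into $H^{<n}(Q)$ — is the same as the paper's. But the middle step, where you propose to reduce the $Q$-part $\cC$ to a single ``Jordan block'' $Q[b_1]\to\cdots\to Q[b_m]$ with all short arrows proportional to a fixed class $h$, is a genuinely different (and much stronger) structural claim than anything the paper proves, and you yourself flag it as the main obstacle. The gap is real: the paper only establishes that the \emph{outermost} vector spaces $V_0$ and $V_N$ in $\cC^\bullet = (V_0\otimes Q\leftarrow\cdots\leftarrow V_N\otimes Q)$ have rank one (this is exactly what Lemma~\ref{Lem:Rank2} and the rank-2 constraint on $HF(\cC^\bullet,Q)$ buy you). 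The intermediate $V_i$ can be of arbitrary rank, and the various differentials $\delta_{ij}$ can be labelled by different classes in $H^{>0}(Q)$ and need not be proportional to a single $h$; nothing in $HF^*(L,L)\cong H^*(S^n)$ or $HF^*(L,Q)\cong k$ forces the collapse to a rank-one chain. The ``negative-degree endomorphism'' technique from Lemma~\ref{lem:first-step} controls what happens at the ends of the complex, but it does not control the interior. Consequently, the computation of $\dim_k \ker(\cdot h)$ in your final step has no complex to compute with, and the argument cannot close.

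The paper avoids the normal form entirely. After pinning down $\dim V_0=\dim V_N=1$, it considers the chain complex computing $HF(\cC^\bullet,Q)$, observes that the two cohomology classes live precisely in $Hom(V_0,H^n(Q))$ and $Hom(V_N,H^0(Q))$, and hence that the subquotient obtained by deleting those two terms — displayed in \eqref{eq:decomponse_differential} — is acyclic. The vanishing hypothesis on higher products is used only to guarantee that the differential on this subquotient has the indicated block-triangular form. Acyclicity then becomes a purely linear-algebraic rank bound: letting $V=\oplus_i V_i$, exactness in the middle of $U^0\to Hom(\oplus_{i=1}^{N-1}V_i, H^{0<\bullet<n}(Q))\to U^N$ yields $\dim(V)\cdot\bigl(\sum_{0<i<n}b^i(Q)-2\bigr)\le -2$, which is contradictory as soon as $\sum_{0<i<n}b^i(Q)\ge 2$. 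This is both shorter and more robust than your proposal precisely because it makes no claim about the internal structure of $\cC^\bullet$; if you want to salvage your approach, you would need to prove the Jordan-block reduction, which I do not believe is possible in the stated generality (and even if it were, the paper's rank-counting would still be the more economical route).
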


Note that the hypothesis applies to an $(n-1)$-connected $2n$-manifold $Q$, implying that $H^n(Q;k)$ has rank $\leq 1$. The hypothesis is also satisfied if $Q$ has \emph{formal} cohomology ring, and if the only non-trivial cup-products are those forced by Poincar\'e duality (i.e. those into the top cohomological degree); for instance, if $Q$ is a product of spheres, or a connect sum of such. Thus, Proposition \ref{prop:evidence} implies Proposition \ref{prop:partial_result_truncated} from the Introduction.

We establish some preliminary lemmas relevant for Proposition \ref{prop:evidence}. Suppose an exotic autoequivalence $\Phi$ arising from a compactly supported symplectomorphism $\phi$ does exist, and apply it to the core component $S^n$.  By Theorem \ref{thm:main}, the image is a twisted complex on $S^n$ and $Q$.  Compact support of $\phi$ implies that $\Phi(Q) \simeq Q[j]$ is quasi-isomorphic to some shift of $Q$, by uniqueness of exact Lagrangian submanifolds of Maslov class zero in simply-connected cotangent bundles \cite{FSS, nadler}.  In particular, the group 
\begin{equation} \label{eqn:rank-one}
HF(\Phi(S^n), Q) \ \cong \ HF(S^n, Q[j]) \ \cong \ k
\end{equation}
has rank 1 over the field.   Equation \eqref{eqn:rank-one} implies that the twisted complex representing $\Phi(S^n)$ contains a unique copy of $S^n$.  Although $Q$ is not a sphere, the conclusion of Lemma \ref{lem:first-step} still applies to twisted complexes over $Q$ and $S^n$, since that argument only used the fact that no identity morphisms appeared in the differentials of the twisted complex.  As in the previous subsection, (the analogue of) Lemma \ref{lem:first-step} then implies that $\Phi(S^n) $ is equivalent to a twisted complex of the form
\[
\left( \cC^{\bullet}  \leftarrow S^n \right) \quad \textrm{or} \quad  \ \left( S^n \leftarrow \cC^{\bullet} \right)
\]
with $\cC^{\bullet}$ only involving the component $Q$. Since we are assuming that $\Phi$ acts non-trivially on objects modulo shift, and since $\Phi$ fixes $Q$ modulo shift, the complex $\cC^{\bullet}$ is not empty.   Replacing $\phi$ by $\phi^{-1}$ and $\Phi$ by $\Phi^{-1}$ if necessary, we will henceforth concentrate on the case $\Phi(S^n) \simeq (\cC^{\bullet}\leftarrow S^n)$. 

\begin{lem} \label{Lem:Rank2}
$HF(\cC^{\bullet}, Q)$ has rank 2.
\end{lem}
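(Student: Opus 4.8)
The plan is to realise $\Phi(S^{n})$ as a mapping cone involving $\cC^{\bullet}$ and $S^{n}$, and to feed the resulting exact triangle into Floer cohomology with $Q$. Since $\Phi$ is an autoequivalence and $\phi$ has compact support, uniqueness of exact Lagrangians of Maslov class zero in a simply-connected cotangent bundle gives $\Phi(Q)\simeq Q[j]$ for some $j$; re-grading $\phi$ changes $\cC^{\bullet}$ only by an overall shift and hence does not affect $\operatorname{rk}HF(\cC^{\bullet},Q)$, so we may assume $j=0$, i.e.\ $\Phi(Q)\simeq Q$. By definition of the twisted complex $(\cC^{\bullet}\leftarrow S^{n})$, the brane $\Phi(S^{n})$ then sits in an exact triangle
\[
S^{n}[-1]\ \stackrel{f}{\longrightarrow}\ \cC^{\bullet}\ \longrightarrow\ \Phi(S^{n})\ \longrightarrow\ S^{n}
\]
in $\Tw(H\cF(M_{Q}))$, with $f$ the attaching morphism (the exact internal shift placed on $S^{n}$ is immaterial below). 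Its two outer terms have rank-one Floer cohomology against $Q$: the cores $S^{n}$ and $Q$ of $M_{Q}=T^{*}Q\# T^{*}S^{n}$ meet transversally in one point, so $HF(S^{n},Q)$, and therefore $HF(S^{n}[-1],Q)$, has total rank $1$ concentrated in a single degree; and $HF(\Phi(S^{n}),Q)\cong HF(\Phi(S^{n}),\Phi(Q))\cong HF(S^{n},Q)$, again rank $1$ in one degree.

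Applying the contravariant functor $HF(-,Q)$ to the triangle places $HF(\cC^{\bullet},Q)$ in a long exact sequence whose other terms are $HF(\Phi(S^{n}),Q)$ and $HF(S^{n}[-1],Q)$. Counting ranks, $\operatorname{rk}HF(\cC^{\bullet},Q)=2-2\operatorname{rk}\delta$, where $\delta\colon HF(S^{n}[-1],Q)\to HF(\Phi(S^{n}),Q)[1]$ is the connecting homomorphism; its source and target are each of total rank $1$ and concentrated in one degree, so at most one graded component of $\delta$ is nonzero and $\operatorname{rk}\delta\in\{0,1\}$. Hence $\operatorname{rk}HF(\cC^{\bullet},Q)\in\{0,2\}$, and everything comes down to excluding the value $0$.

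This exclusion is the substantive step. First, $\cC^{\bullet}\not\simeq 0$: otherwise the triangle collapses to $\Phi(S^{n})\simeq\Cone(S^{n}[-1]\to 0)\simeq S^{n}$, forcing $\Phi$ to fix both generators $S^{n},Q$ of $\cF(M_{Q})$ (Theorem \ref{thm:main}) and hence act trivially on objects --- exactly the contradiction behind the non-emptiness of $\cC^{\bullet}$ asserted just before the Lemma. Next, $\cC^{\bullet}$ is assembled purely from copies of $Q$, hence is a perfect module over the $A_{\infty}$-algebra $\End(Q)=H^{*}(Q)$ under the standard identification of the subcategory $\langle Q\rangle\subset H\cF(M_{Q})$ with $\operatorname{Perf}(\End(Q))$; for such a module $HF(\cC^{\bullet},Q)$ is its $\End(Q)$-linear dual, which vanishes only on the zero object. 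Therefore $HF(\cC^{\bullet},Q)\neq 0$, and combined with the dichotomy above, $\operatorname{rk}HF(\cC^{\bullet},Q)=2$. The one point demanding care beyond bookkeeping is this faithfulness statement for $HF(-,Q)$ on $\langle Q\rangle$ used to deduce $HF(\cC^{\bullet},Q)\neq 0$ from $\cC^{\bullet}\not\simeq 0$; the remainder is the long exact sequence of a mapping cone.
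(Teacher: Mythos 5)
Your proof is correct and follows the same route as the paper's: realise $\Phi(S^n)$ as the cone on a map from $S^n$ to $\cC^{\bullet}$, deduce from the resulting long exact sequence (using the rank-one computations for $HF(S^n,Q)$ and $HF(\Phi(S^n),Q)$) that $HF(\cC^{\bullet},Q)$ has rank $0$ or $2$, and then rule out rank $0$ via $\cC^{\bullet}\neq 0$. You supply detail the paper leaves implicit --- the precise rank-counting with the connecting map, and the identification of $\langle Q\rangle$ with perfect $\End(Q)$-modules to justify that $HF(-,Q)$ is conservative on that subcategory --- but the underlying argument is identical.
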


\begin{proof} $\cC^{\bullet}$ lies in an exact triangle with $S^n$ and $\Phi(S^n)$, which have  Floer homology of rank 1 with $Q$, in the latter case by \eqref{eqn:rank-one}. Exactness implies that $HF(\cC^{\bullet}, Q)$ has rank $0$ or $2$. However, since $\cC^{\bullet}$ is a twisted complex on $Q$, it can only be orthogonal to $Q$ if it is identically zero, which we have excluded by the hypothesis that $\Phi$ acts non-trivially on objects.
\end{proof}

Let $V_i$ denote the vector spaces underlying the twisted complex $\cC^{\bullet}$, so 
\[
\Phi(S^n)\  \cong \quad V_0 \otimes Q \leftarrow V_1 \otimes Q \leftarrow \cdots \leftarrow V_N\otimes Q \leftarrow S^n
\]
where by assumption $V_0 \neq 0$ and $V_N \neq 0$. 

\begin{lem}
Either $Q$ is a $k$-homology sphere, or  $N>0$.
\end{lem}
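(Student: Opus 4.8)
The plan is to argue by contradiction: assume $N=0$, so that the twisted complex $\cC^{\bullet}$ reduces to $V_{0}\otimes Q$ concentrated in a single degree, with $V_{0}\neq 0$, and show that this already forces $Q$ to be a $k$-homology sphere.

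First I would compute $HF(\cC^{\bullet},Q)$ by hand. When $N=0$ there are no internal arrows, so $\cC^{\bullet}=V_{0}\otimes Q$ carries the zero differential and, working in the additive enlargement of the minimal model, $HF(\cC^{\bullet},Q)\cong V_{0}^{\vee}\otimes HF(Q,Q)$. Since $Q$ is simply connected there are no local-system subtleties, and the standard identification $HF(Q,Q)\cong H^{*}(Q;k)$ for the exact Lagrangian $Q\subset M_{Q}$ then gives $\dim_{k}HF(\cC^{\bullet},Q)=\dim_{k}V_{0}\cdot\dim_{k}H^{*}(Q;k)$.

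Next I would invoke Lemma~\ref{Lem:Rank2}, which asserts that $HF(\cC^{\bullet},Q)$ has rank (equivalently, dimension) $2$. Combining with the previous step, $\dim_{k}V_{0}\cdot\dim_{k}H^{*}(Q;k)=2$. But $Q$ is a closed connected orientable $n$-manifold with $n\geq 3$ (orientability is our standing convention), so $H^{0}(Q;k)$ and $H^{n}(Q;k)$ are each one-dimensional, whence $\dim_{k}H^{*}(Q;k)\geq 2$. Therefore $\dim_{k}V_{0}=1$ and $\dim_{k}H^{*}(Q;k)=2$; the classes in degrees $0$ and $n$ already account for this, so $H^{i}(Q;k)=0$ for $0<i<n$, i.e. $Q$ is a $k$-homology sphere. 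Contrapositively, if $Q$ is not a $k$-homology sphere, then $N>0$.

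The only points that need care are the identification $HF(Q,Q)\cong H^{*}(Q;k)$ and the description of morphism spaces in the category of twisted complexes; both are standard (see \cite{seidel-book}), and the remainder is elementary linear algebra. I do not anticipate a genuine obstacle here: all the real content has been packaged into Lemma~\ref{Lem:Rank2}, which is already in hand.
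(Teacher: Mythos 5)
Your argument is correct and is essentially the paper's own proof: set $N=0$, identify $HF(\cC^{\bullet},Q)\cong V_0^{\vee}\otimes H^*(Q;k)$, invoke Lemma~\ref{Lem:Rank2} to force $\dim V_0\cdot\dim H^*(Q;k)=2$, and conclude. The paper states this in one line; you merely spell out the (correct) observation that a closed connected orientable $n$-manifold already has $\dim H^*\geq 2$, so the only solution is $\dim V_0=1$ and $Q$ a $k$-homology sphere.
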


\begin{proof}
If $\cC^{\bullet} = V \otimes Q$, then $HF(\cC^{\bullet},Q) \cong V^{\vee} \otimes H^*(Q)$. This can only have rank 2 if $V\cong k$ and $Q$ is a homology sphere.
\end{proof}

Since we know that homology spheres admit categorical Dehn twists, we will henceforth exclude this case, and suppose that $N>0$.

\begin{lem}
The outermost vector spaces in $\cC^{\bullet}$ are each of rank 1, i.e.
\begin{equation} \label{Eqn:RankOneAtEnds}
dim(V_0) = 1 = dim(V_N),
\end{equation}
and we have an isomorphism
\begin{equation}
  Hom(V_N, H^0(Q)) \oplus Hom(V_0, H^n(Q)) \cong  HF(\cC^{\bullet},Q) 
\end{equation}
induced by the inclusion of the left hand side in the chain complex of morphisms from $\cC^{\bullet} $ to $Q$.
\end{lem}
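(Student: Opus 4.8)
The plan is to work directly with the cochain complex $C^{\bullet}:=\Hom_{\Tw}(\cC^{\bullet},Q)$ computing $HF(\cC^{\bullet},Q)$, exhibit the two displayed $\Hom$-spaces as honest cocycles via the tautological inclusion, show this inclusion is injective on cohomology, and then read off both assertions from the equality $\operatorname{rk}HF(\cC^{\bullet},Q)=2$ of Lemma \ref{Lem:Rank2}. Recall that a cochain of $C^{\bullet}$ is a tuple $\phi=(\phi_{i})_{i=0}^{N}$ with $\phi_{i}\in\Hom_{k}(V_{i},H^{*}(Q))$, that the term $V_{i}\otimes Q$ of $\cC^{\bullet}$ carries a positional degree $d_{i}$, and — this is the structural input I extract from the shape $V_{0}\otimes Q\leftarrow\cdots\leftarrow V_{N}\otimes Q$ produced earlier — that the arrows of $\cC^{\bullet}$ strictly decrease the column index (so $V_{N}$ receives no arrow internal to $\cC^{\bullet}$, its only incoming arrow inside $\Phi(S^{n})$ coming from $S^{n}$, while $V_{0}$ emits no arrow), and that, since every such arrow is labelled by a class of internal degree between $1$ and $n$, the $d_{i}$ are non-decreasing in $i$. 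The differential of $C^{\bullet}$ is $d_{C}\phi=\sum_{t\ge 0}\mu^{t+1}(\phi,\delta_{\cC^{\bullet}},\dots,\delta_{\cC^{\bullet}})$, whose column-$j$ component is assembled from $\delta$-paths \emph{out of} column $j$ followed by a component of $\phi$.

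Using $H^{0}(Q)\cong k\cong H^{n}(Q)$, a functional $\lambda\colon V_{N}\to k$ gives the cochain supported at column $N$ with value $\lambda$ landing in $H^{0}(Q)\subset HF^{*}(Q,Q)$, and $\mu\colon V_{0}\to k$ gives the cochain supported at column $0$ valued in $H^{n}(Q)$; these assemble into the map $\iota$ of the statement, which lands in the two degrees $-d_{N}$ and $n-d_{0}$ — distinct, since $d_{0}\le d_{N}<n+d_{0}$. I then run four short verifications. First, the column-$N$ cochain is closed: $d_{C}$ of a cochain supported at column $N$ only involves precomposition with arrows of $\cC^{\bullet}$ whose target is $V_{N}$, and there are none. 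Second, the column-$0$ cochain valued in $H^{n}(Q)$ is closed: each term of its $d_{C}$ has output in cohomological degree at least $n+1$ (each arrow of $\cC^{\bullet}$ contributes at least $1$, by strict unitality there are no unit-labelled arrows), hence lies in $H^{>n}(Q)=0$. Third, if a column-$N$, $H^{0}$-valued cocycle is exact, say $\lambda=d_{C}\eta'$, then $\eta'$ has degree $-d_{N}-1$, so $\eta'_{k}\colon V_{k}\to H^{d_{k}-d_{N}-1}(Q)$ with $d_{k}\le d_{N}$, a negative cohomological degree; thus $\eta'=0$ and $\lambda=0$. Fourth, if a column-$0$, $H^{n}$-valued cocycle is exact, say $\mu=d_{C}\eta''$, then the column-$0$ component of \emph{any} coboundary vanishes (no arrow leaves $V_{0}$, and $\mu^{1}=0$), so $\mu=0$. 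Since $\lambda$ and $\mu$ sit in distinct degrees, exactness of $\iota(\lambda,\mu)$ splits into the third and fourth points, so $\iota$ is injective on cohomology.

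To conclude: injectivity of $\iota$ together with $\operatorname{rk}HF(\cC^{\bullet},Q)=2$ forces $\dim V_{0}+\dim V_{N}=\dim(\mathrm{source\ of}\ \iota)\le 2$; as $V_{0},V_{N}\ne 0$ this gives $\dim V_{0}=\dim V_{N}=1$, which is \eqref{Eqn:RankOneAtEnds}. Then the source of $\iota$ has rank $2$, equal to the rank of the target, so the injection $\iota$ is an isomorphism, which is exactly the stated identification of $HF(\cC^{\bullet},Q)$. The only genuinely delicate point is the first paragraph's structural input: making precise (from the way $\cC^{\bullet}$ was constructed, via the analogue of Lemma \ref{lem:first-step} applied to $\Phi(S^{n})$) that the columns of $\cC^{\bullet}$ are linearly ordered so that all arrows strictly decrease the index — so $V_{N}$ is a source-only column internally and $V_{0}$ a sink-only column — and that the positional degrees are comonotone with the index; once this is nailed down, together with the attendant sign bookkeeping in $d_{C}$, each of the four verifications is one line.
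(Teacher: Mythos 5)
Your proof is correct and follows essentially the same route as the paper's: exhibit both $\Hom$-spaces as cocycles in the two extremal columns, show neither is a coboundary (your explicit split into degree reasons versus the absence of incoming/outgoing arrows is a cleaner bookkeeping than the paper's compressed phrasing), and conclude from $\operatorname{rk}HF(\cC^{\bullet},Q)=2$ that $\dim V_0=\dim V_N=1$ and that the inclusion is an isomorphism. One small inaccuracy: the bound $d_N < n + d_0$ that you invoke for degree separation need not hold when the complex is long, but it is also not needed — $d_0\le d_N$ already gives $d_0-d_N\le 0<n$, hence $-d_N\neq n-d_0$.
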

\begin{proof}
Any element of $Hom(V_0, H^n(Q))$ defines a morphism  $\cC^{\bullet} \rightarrow Q$ as follows:
 \begin{equation}
\xymatrix{
V_0 \otimes Q \ar[d]_{[Q]} & \ar[l] \cdots & V_N \otimes Q \ar[l] \\ Q && 
}
\end{equation}
which is closed, since $V_0$ is terminal in $\cC^{\bullet}$, but cannot be exact, from the usual assumption that the twisted complex $\cC^{\bullet}$ contains no differentials represented by $e_Q$ (and using strict unitality to exclude higher-order $A_{\infty}$-products).  Analogously, $Hom(V_N, H^0(Q))$ embeds into $HF(\cC^{\bullet},Q)$: the morphism
 \begin{equation}
\xymatrix{
V_0 \otimes Q & \ar[l] \cdots & V_N \otimes Q \ar[l] \ar[d]_{e_Q} \\ && Q 
}
\end{equation}
is exact, since the $V_N$-copy of $Q$ is initial, and cannot be closed since the differential $d_{\cC^{\bullet}}$  contains no $e_Q$-terms.  It is moreover easy to see that distinct morphisms arising in this way cannot be cohomologous if the corresponding elements of $V_0$ respectively $V_N$ are not linearly dependent, so 
\[
Hom(V_0, H^{n}(Q)) \oplus Hom(V_N, H^0(Q)) \subset HF(\cC^{\bullet}, Q). 
\]  Lemma \ref{Lem:Rank2} now implies that $dim(V_0) + dim(V_N) \leq 2$, which in turn implies \eqref{Eqn:RankOneAtEnds}.
\end{proof}

\begin{proof}[Proof of Proposition \ref{prop:evidence}]
Let $V = \oplus_i V_i$ denote the sum of the vector spaces in the twisted complex defining $\cC^{\bullet}$.  Lemma \ref{Lem:Rank2} implies that under the given assumptions on $Q$, namely vanishing of non-trivial cup-products and higher products into $H^{<n}(Q;k)$, the complex computing $HF(\cC^{\bullet},Q)$ includes an acylic complex
\begin{equation} \label{eq:decomponse_differential}
\xymatrix{
& Hom( V_{0}, H^{0<i<n}(Q))   &  \\
 Hom( \oplus_{i=0}^{N-1}V, H^0(Q)) \ar[r] \ar[ur] \ar@/^1.5pc/[rr] & Hom(\oplus_{i=1}^{N-1} V_{i}, H^{0<i<n}(Q)) \ar[r] & Hom( \oplus_{i=1}^{N}V, H^n(Q))  \\
& Hom( V_{N}, H^{0<i<n}(Q)) \ar[ur] & 
}
\end{equation}
with an additional differential from the first to the last term, as indicated.  Acyclicity implies that the differential surjects onto the top term in the middle column. Let $U^0 $ denote the kernel of the composition of the differential with the projection onto this summand; this is a proper subspace of $Hom( \oplus_{i=0}^{N-1}V, H^0(Q))$ whose rank is equal to $\dim(V)-1 - \sum_{0<i<n} b^i(Q)$, with $b^i(Q) = rk_k H^i(Q;k)$ the $i$-th Betti number.  Dually, let $U^{N}$ denote the quotient of  $Hom( \oplus_{i=1}^{N}V, H^n(Q))$ by the image of the restriction of the differential to the last term in the middle column; this vector space also has rank $\dim(V)-1 - \sum_{0<i<n} b^i(Q)  $. 

Acyclicity of Equation (\ref{eq:decomponse_differential}) is now equivalent to the fact that the complex
\vspace{.1in}
\begin{equation}
  \xymatrix{ U^0 \ar[r] \ar@/^1.5pc/[rr] &  Hom(\oplus_{i=1}^{N-1} V_{i} , H^{0<i<n}(Q))  \ar[r] &  U^{N}}
\end{equation}
is exact.  Exactness in the middle now implies that
\begin{align}
  (dim(V)-2) \cdot \sum_{0<i<n} b^i(Q) & \leq  2 \left(dim(V) - 1 -  \sum_{0<i<n} b^i(Q) \right) \\
  dim(V) \cdot \left(\sum_{0<i<n} b^i(Q)-2 \right) & \leq  -2
\end{align}
This is  impossible whenever $\sum_{0<i<n} b^i(Q) > 1$, because the left hand side is then non-negative, which implies the required conclusion.
\end{proof}

It seems reasonable to call a compactly supported symplectomorphism of $T^*Q$ yielding an autoequivalence of $T^*Q$ which acts non-trivially on objects of $\cF(M_Q)$ a ``categorical Dehn twist".  

\begin{cor}
Let $Q$ be a simply-connected closed 4-manifold. If $T^*Q$ admits a categorical Dehn twist then $Q$ is homotopy equivalent to $S^4$ or $\bC\bP^2$.
\end{cor}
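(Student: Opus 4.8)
The plan is to apply Proposition~\ref{prop:evidence} with $Q$ the given simply-connected closed $4$-manifold, so that $n=\dim_\bR Q=4$, and then read off the homotopy type from the resulting Betti-number constraint. First I would record the cohomological input: since $\pi_1(Q)=1$ we have $H^1(Q;\bZ)=0$, and Poincar\'e duality together with universal coefficients then gives $H^3(Q;\bZ)=0$ and shows $H^*(Q;\bZ)$ is torsion-free, concentrated in degrees $0,2,4$ with $H^0\cong H^4\cong\bZ$ and $H^2\cong\bZ^{b_2}$. In particular $\mathrm{rk}_k H^i(Q;k)=b^i(Q)$ is independent of the coefficient field $k$, and the hypothesis ``$T^*Q$ admits a categorical Dehn twist'' is precisely the hypothesis of Proposition~\ref{prop:evidence}, namely that some $\phi\in\Symp_{ct}(T^*Q)$ induces an autoequivalence acting non-trivially on objects of $\cF(M_Q)$.

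Next I would verify that the remaining hypothesis of Proposition~\ref{prop:evidence}, the vanishing of the relevant higher products, holds vacuously. Indeed, any product $\mu^p$ with $p\ge 2$ whose inputs lie in degrees $0<i_j<\sum_j i_j+2-p<n=4$ has every $i_j\in\{1,2,3\}$; since $H^1(Q)=H^3(Q)=0$, a nonzero such product would require every $i_j=2$, and then its output sits in degree $\sum_j i_j+2-p=p+2\ge 4$, contradicting that the output degree is $<4$. Hence Proposition~\ref{prop:evidence} applies and yields
\[
b_2(Q)\;=\;\sum_{0<i<4}\mathrm{rk}_k H^i(Q;k)\;\in\;\{0,1\}.
\]

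It then remains to pin down the homotopy type in the two cases. If $b_2(Q)=0$ then $Q$ is a simply-connected integral homology $4$-sphere, hence $3$-connected with $\pi_4(Q)\cong H_4(Q)\cong\bZ$ by Hurewicz; a generator is represented by a map $S^4\to Q$ inducing an isomorphism on integral homology, so it is a homotopy equivalence by Whitehead's theorem. If $b_2(Q)=1$ then Poincar\'e duality forces the cup-product pairing on $H^2(Q;\bZ)\cong\bZ$ to be unimodular, hence $\langle\pm1\rangle$, so the cohomology ring is $\bZ[x]/(x^3)$ with $|x|=2$ and the intersection form coincides with that of $\bC\bP^2$ for one or the other orientation; the classical homotopy classification of simply-connected closed $4$-manifolds by their intersection forms then gives $Q\simeq\bC\bP^2$ up to orientation. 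The only step I expect to require any care is checking that the higher-product hypothesis of Proposition~\ref{prop:evidence} is automatic in this range of degrees; everything else is a direct quotation of that Proposition followed by standard $4$-manifold topology. The conclusion is moreover sharp, since $S^4$ and $\bC\bP^2$ both admit Zoll metrics and hence genuine Dehn twists.
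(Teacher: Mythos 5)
Your proposal is correct and follows essentially the same route as the paper: apply Proposition~\ref{prop:evidence} to deduce $b_2(Q)\le 1$, then invoke Milnor's refinement of Whitehead's theorem for simply-connected closed $4$-manifolds. The only cosmetic differences are that you verify the higher-product hypothesis of Proposition~\ref{prop:evidence} by an explicit degree count (the paper instead cites the preceding remark that the hypothesis holds for $(n-1)$-connected $2n$-manifolds) and you derive torsion-freeness of $H^*(Q;\bZ)$ from the standard structure of simply-connected $4$-manifolds, whereas the paper extracts it from the field-independence of the rank bound; both are valid.
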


\begin{proof}
Proposition \ref{prop:evidence} implies that $H^2(Q;k)$ is trivial or of rank 1. Since this holds for any co-efficient field $k$, the result is true integrally, and then the universal coefficient theorem implies that $H^*(Q;\bZ)$ is torsion-free.  $Q$ is therefore a homotopy sphere or a homotopy complex projective plane, by (Milnor's refinement of) Whitehead's theorem \cite{Milnor}.  \end{proof}

\section{Fibres generate the wrapped Fukaya category}

\label{sec:generation_wrapped_category}
Let $Q_0$ and $Q_1$ be two smooth closed oriented manifolds of the same dimension $n$.  From \cite{fibres-generate}, we know that a cotangent fibre generates the wrapped Fukaya category of each of $T^{*}Q_{0}$ and $T^{*}Q_1$.  The goal of this section is to prove the analogous result for the plumbing of the two cotangent bundles.  To state the result, note that one can always arrange the plumbing parameters so that the plumbing region is disjoint from a pair of cotangent fibres  $\Tq[i]$.  In particular, these give rise to exact Lagrangians in $T^*Q_0 \# T^*Q_1  $ which we denote $\L[i]$, and hence to objects of its wrapped Fukaya category.
\begin{thm} \label{thm:wrapped_category_generation}
Any pair of cotangent fibres $\L[0]$ and $\L[1]$ which are disjoint from the plumbing region,  generate the subcategory of the wrapped Fukaya category of $T^*Q_0 \# T^*Q_1$ consisting of closed Lagrangians.
\end{thm}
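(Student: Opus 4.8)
Set $M:=T^{*}Q_{0}\#T^{*}Q_{1}$. The plan is to imitate, for the plumbing, the strategy that establishes the corresponding statement for a single cotangent bundle in \cite{fibres-generate}. It suffices to prove two facts: \emph{(i)} the pair $\L[0]\oplus\L[1]$ \emph{detects acyclics} in $\cW(M)$, in the sense that an object $X$ with $HW^{*}(\L[i],X)=0$ for $i=0,1$ is quasi-isomorphic to zero; and \emph{(ii)} for every closed exact Lagrangian brane $L\subset M$ the wrapped Floer complex $CW^{*}(\L[0]\oplus\L[1],L)$ is a perfect module over the $A_{\infty}$-algebra $\cA:=CW^{*}(\L[0]\oplus\L[1],\L[0]\oplus\L[1])$. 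Granting \emph{(i)} and \emph{(ii)}, the tautological construction turns the perfect module of \emph{(ii)} into a \emph{genuine} twisted complex on the objects $\L[0]$ and $\L[1]$ --- not merely a summand of one --- together with a canonical evaluation map to $L$ whose cone $C$ satisfies $HW^{*}(\L[i],C)=0$ by construction; then \emph{(i)} forces $C\simeq 0$, so $L$ lies in the triangulated subcategory generated by the two fibres, and feeding this through the fully faithful embedding of Theorem \ref{thm:main} is immediate. I expect \emph{(ii)} to be the main obstacle.

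For \emph{(i)} I would invoke Abouzaid's generation criterion \cite{generate}: it is enough to show that the open--closed map
\[
\OC \co HH_{*}\bigl(\cW_{\{\L[0],\L[1]\}}\bigr)\longrightarrow SH^{*}(M)
\]
hits the unit $1\in SH^{0}(M)$. After a Liouville deformation supported near the plumbing ball, $M$ is covered by the two Liouville subdomains $D^{*}Q_{0}$ and $D^{*}Q_{1}$, with overlap the disc cotangent bundle $D^{*}B^{n}$ of the plumbing ball (Definition \ref{defin:plumbing}). The Viterbo restriction functors $\cW(M)\to\cW(T^{*}Q_{i})$ carry $\L[i]$ to a cotangent fibre of $Q_{i}$ and $\L[1-i]$ to a zero object, and by \cite{AS} they intertwine $\OC$ with the restriction map $SH^{*}(M)\to SH^{*}(T^{*}Q_{i})$. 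Since, by \cite{AS} and \cite{fibres-generate}, $\OC$ for the cotangent fibre of $Q_{i}$ hits $1\in SH^{0}(T^{*}Q_{i})$, the class $\OC(\mathrm{id}_{\L[0]}+\mathrm{id}_{\L[1]})$ restricts to the unit in each piece; since $SH^{0}$ of each completed cotangent bundle is spanned by its unit and a class in $SH^{0}(M)$ restricting to the unit under both restriction maps is necessarily the unit (a Mayer--Vietoris comparison, or a direct low-action computation, for the symplectic cohomology of the plumbing), this gives $\OC(\mathrm{id}_{\L[0]}+\mathrm{id}_{\L[1]})=1$ up to the usual low-energy correction handled by a PSS argument.

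The crux is \emph{(ii)}, where the issue is finiteness. Because $L$ is compact one may choose the wrapping Hamiltonian to vanish on a neighbourhood of $L$, so that the only noncompactness entering $CW^{*}(\L[i],L)$ is along the conical end of the fibre $\L[i]$; the contribution of that end is governed, exactly as in \cite{fibres-generate}, by the based loop space, via the identification $CW^{*}(\L[i],\L[i])\simeq C_{-*}(\Omega Q_{i})$. Concretely, $CW^{*}(\L[i],L)$ should be quasi-isomorphic to the chains of a space of paths ending on the compact Lagrangian $L$, which fibres over a finite complex with fibre a based loop space; such a module is perfect over $C_{-*}(\Omega Q_{i})$ precisely because the base is finite-dimensional, and assembling the two fibres makes $CW^{*}(\L[0]\oplus\L[1],L)$ a perfect $\cA$-module. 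Making this precise --- in particular extracting a finite model uniformly in the wrapping parameter, so that the tautological resolution of $L$ is an honest twisted complex rather than an infinite or summand-only one --- is the technical heart of the section; it is also the step that uses the plumbing being modelled on a ball (rather than on a submanifold of higher codimension) and, for multi-component plumbings, the plumbing graph being a tree, cf.\ the remark following Theorem \ref{thm:main}.
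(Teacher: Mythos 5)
Your proposal takes a fundamentally different route from the paper, and it contains a genuine gap that the paper explicitly goes out of its way to avoid.

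The paper's proof does not invoke the generation criterion of \cite{generate} at all. Instead it embeds the plumbing $D^*Q_0\#D^*Q_1$ as a Liouville subdomain of the total space $E$ of a Lefschetz fibration whose vanishing thimbles $\Delta_1,\ldots,\Delta_m$ restrict, under the Viterbo functor, to cotangent fibres of $Q_0$, cotangent fibres of $Q_1$, and one copy of the diagonal in the local model. Seidel's double-cover trick (Lemma 18.15 of \cite{seidel-book}) produces, for any closed brane $L\subset E$, an \emph{honest} cone $\Cone(C\to L)\cong L_-$ in $\cF(\tilde E)$ with $C$ generated by the $\tilde\Delta_i$ and $L_-$ disjoint from $E$; applying the Viterbo restriction functor sends $L_-$ to zero, yielding generation (not split-generation) of $L$ by the thimbles in $\cW(E)$, hence of the restricted $L$ by the restricted thimbles in $\cW(D^*Q_0\#D^*Q_1)$. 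The diagonal thimble is then eliminated by a separate geometric argument (Polterovich surgery, a Lagrangian isotopy, and the restriction functor once more). The whole point of this detour is that Lemma 18.15 produces honest cones, so no perfectness-versus-finitely-built issue ever arises.

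Your approach hinges exactly on that issue, and you leave it unresolved. Even granting step~\emph{(i)}, the generation criterion of \cite{generate} establishes only that the open--closed image of the unit forces \emph{split}-generation; to conclude generation you need $CW^*(\L[0]\oplus\L[1],L)$ to be quasi-isomorphic to an honest finite iterated extension of shifted free $\cA$-modules, not merely a retract of one. ``Perfect'' in the sense you invoke (compact object, or retract of finitely built) is strictly weaker, and the tautological resolution applied to a module that is merely perfect produces a twisted complex whose cone with $L$ need not be acyclic. Your sketch of step~\emph{(ii)} --- a path space fibring over a finite complex with fibre $\Omega Q_i$ --- is the right intuition for finitely-built-ness, but you acknowledge you are not making it precise, and this is exactly where the paper writes that ``the bridge \ldots is not fully in place, and the technicalities of building such a bridge would go beyond the scope of this paper.'' In other words, you have reproduced the obstacle the authors chose to route around, not resolved it.

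A secondary gap concerns step~\emph{(i)}. The assertion that a class in $SH^0(M)$ restricting to the unit under both Viterbo maps $SH^*(M)\to SH^*(T^*Q_i)$ must itself be the unit is not justified: symplectic cohomology has restriction maps to subdomains but no Mayer--Vietoris sequence in general, and the overlap $D^*B^n$ has vanishing $SH^*$, so the naive comparison argument gives no control. One would at minimum need to show that the restriction map $SH^0(M)\to SH^0(T^*Q_0)\oplus SH^0(T^*Q_1)$ is injective, which you neither prove nor reduce to a citation. The paper avoids the need for any such computation of $SH^*$ of the plumbing.

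In short: the two approaches are genuinely different; yours would, if completed, give a more intrinsic proof and perhaps remove the tree hypothesis from the remark after Theorem~\ref{thm:main}, but both of its two halves contain unproven claims, and the more serious one (finitely-built versus perfect) is precisely the technical difficulty the authors declare out of scope and circumvent via Lefschetz fibrations and Lemma~\ref{lem:seidel-generation}.
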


The weaker statement that these Lagrangians split-generate the wrapped Fukaya category should follow from work of Bourgeois, Ekholm, and Eliashberg \cite{BEE}, together with the generation criterion of \cite{generate}.  Unfortunately, the bridge between the Symplectic Field theoretic and Hamiltonian approaches to studying the Fukaya category is not fully in place, and the technicalities of building such a bridge would go beyond the scope of this paper.

Instead, we give an alternative approach in two parts.  First, we consider an additional Lagrangian in $ T^*Q_0 \# T^*Q_1 $, coming from the diagonal Lagrangian in a copy of $\bC^{n} = \bR^n \oplus \i  \bR^n$ containing the model plumbing region. Using Seidel's double covering trick \cite[Section 18]{seidel-book}, and the Viterbo restriction functor \cite{AS}, we show that this Lagrangian, together with the two cotangent fibres appearing in the statement of Theorem \ref{thm:wrapped_category_generation}, generates the Fukaya category.  Then we use an explicit geometric argument, together with the restriction functor again, to show that the diagonal can be removed from our generating collection.

 \subsection{Embedding the plumbing in a Lefschetz fibration}

\label{sec:embedd-plumb-lefsch}
The starting point is to consider the manifold $Q$ obtained by removing a small ball from each of $Q_0$ and $Q_1$, and gluing the resulting boundary spheres via their natural identifications with $S^{n-1}$. Clearly $Q$ is diffeomorphic to the usual connect sum of the $Q_i$, and contains a separating sphere $V$.

By work of \cite[Section 3]{FSS}, we know that any Morse function on $Q$ extends to a Lefschetz fibration $\pi^{0}$ on an exact manifold with corners $E^{0}$, over the disc of radius $1/2$, containing $Q$ as an exact Lagrangian.  We shall choose a Morse function which maps the region coming from $Q_{0}$ to the negative reals,  the region coming from $Q_{1}$ to the positive reals, and which has no critical values near the origin.  The inverse image of $0$ is therefore the separating $n-1$ dimensional sphere $V \subset Q$, which gives rise to a Lagrangian sphere in the fibre of $\pi^0$.  Note that this is a \emph{framed} Lagrangian in the sense of Seidel, carrying a canonical identification with the sphere up to the action of $O(n)$.

By perturbing  the Lefschetz fibration, we may arrange that there are no critical points along the $y$-axis, that the fibration is trivial in a neighbourhood thereof, and that the only critical points along the $x$-axis lie on $Q$ (see Figure \ref{fig:lefschetz-fibration}).  We may construct a new exact manifold $E$ equipped with a Lefschetz fibration $\pi$ over the disc of radius $1$, with the same fibre as $\pi^0$, by adding exactly one new critical point to $\pi^0$, whose value lies on the segment between $(0,1/2)$ and $(0,1)$, and whose vanishing cycle agrees with $V$ by parallel transport along the $y$-axis from this new critical point to the origin.  Let $T$ denote the Lagrangian disc (thimble) obtained by parallel transport of $V$.  Note that $Q \cap T = V$.

\begin{lem}
  There are exact Lagrangian embeddings of $Q_0$ and $Q_1$ in $E$ which intersect transversely at one point, whose image is contained in a neighbourhood of $Q \cup T$, and which agree with $Q$ away from $T$.
\end{lem}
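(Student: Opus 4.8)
The plan is to build $Q_0$ and $Q_1$ \emph{not} by capping the two halves of $Q$ with the single thimble $T$ --- which would produce two Lagrangians coinciding along all of $T$ --- but by capping them with two \emph{different} thimbles of the new critical point, chosen so as to leave that critical point in distinct directions, so that they are forced to meet only at the critical point itself. Write $\hat c\in E$ for the new critical point and $c\in D^2$ for its value, which lies on the positive imaginary axis; recall that $T$ is the thimble over the segment $[0,c]$, with $\partial T=V\subset\pi^{-1}(0)$, and that $Q\cap\pi^{-1}(0)=V$ as well, so that the Morse function splits $Q=Q_{<0}\cup_V Q_{>0}$ into the parts lying over the negative and positive real axes. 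The geometric input is the local normal form near $\hat c$: there $\pi$ is modelled on the quadratic map $w\mapsto\sum_j w_j^2$ on $\bC^n$, in which the thimble over the ray of argument $\theta$ is the linear Lagrangian plane $e^{i\theta/2}\bR^n$, and two such planes are transverse --- meeting only at the origin $\hat c$ --- as soon as their rays have distinct arguments modulo $2\pi$.

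Concretely, first I would choose two embedded arcs $\gamma_0,\gamma_1$ in the base, running from $c$ to $-\epsilon$ and to $+\epsilon$ respectively ($\epsilon$ small), with $\gamma_0$ in the closed second quadrant and $\gamma_1$ in the closed first quadrant, each meeting the axes only at its two endpoints, so that $\gamma_0\cap\gamma_1=\{c\}$ and the arcs emanate from $c$ in distinct directions; ask moreover that each $\gamma_i$ stay within a small neighbourhood of $[0,c]$ together with the real segment joining $0$ to its endpoint, avoid all critical values other than $c$, and approach the real axis tangentially at its endpoint from the side of the origin. Let $D_0,D_1$ be the corresponding thimbles of $\hat c$. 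Since $\gamma_i$ is homotopic, rel endpoints and through arcs missing the critical values, to the concatenation of $[0,c]$ with a real segment, the vanishing sphere $\partial D_0$ (resp.\ $\partial D_1$) is Hamiltonian isotopic inside its fibre to $Q\cap\pi^{-1}(-\epsilon)$ (resp.\ $Q\cap\pi^{-1}(\epsilon)$); after a Hamiltonian isotopy we may assume equality. Letting $Q_{<0}^\epsilon$ (resp.\ $Q_{>0}^\epsilon$) denote the part of $Q_{<0}$ (resp.\ $Q_{>0}$) lying over the real points at distance $\ge\epsilon$ from $0$, set $Q_0:=Q_{<0}^\epsilon\cup D_0$ and $Q_1:=Q_{>0}^\epsilon\cup D_1$. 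Each is diffeomorphic to the corresponding $Q_i$ (a once--punctured $Q_i$ capped by an $n$-disc); each is a smooth submanifold, because near the gluing sphere both pieces have the shape (vanishing cycle)\,$\times$\,(base arc) with base arcs joining tangentially; and each is exact, since $Q$ and Lefschetz thimbles are exact and the two Liouville primitives agree on the connected gluing sphere after adjusting one by a constant. By construction $Q_i$ lies in a neighbourhood of $Q\cup T$ and coincides with $Q$ outside a neighbourhood of $T$ (taking $\epsilon$ so small that $\pi^{-1}$ of the $2\epsilon$-disc lies inside that neighbourhood).

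It then remains to verify that $Q_0$ and $Q_1$ meet transversely in one point. The pieces $Q_{<0}^\epsilon$ and $Q_{>0}^\epsilon$ are disjoint, lying over disjoint parts of the base; $D_0$ lies over the second quadrant and $D_1$ over the first, so $D_0$ is disjoint from $Q_{>0}^\epsilon$ and $D_1$ from $Q_{<0}^\epsilon$. Finally $D_0$ and $D_1$ are thimbles of $\hat c$ over arcs meeting only at $c$, and a thimble meets the singular fibre $\pi^{-1}(c)$ only at its tip $\hat c$; hence $D_0\cap D_1=\{\hat c\}$, and transversality there is precisely the linear statement recalled above. Thus $Q_0\cap Q_1=\{\hat c\}$, a single transverse intersection --- and $\hat c$ is the point that will play the role of the intersection point of the two cores of the plumbing.

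The main difficulty is not any individual step but doing everything at once: pinning down the identification of a neighbourhood of $\hat c$ with the quadratic model in a way compatible with the rest of the fibration (so that the transversality claim genuinely applies), and choosing the two arcs $\gamma_0,\gamma_1$ simultaneously with the matching of Liouville primitives along the two gluing spheres. Each of these is routine within the Lefschetz--fibration formalism of \cite{seidel-book} and the construction of \cite{FSS}, but one must check the choices can be made compatibly.
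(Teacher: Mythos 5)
This is essentially the paper's construction: truncate the two halves of $Q$ over the real axis near the origin and cap each with a thimble of the new critical point, the two thimbles being taken over arcs on opposite sides of the $y$-axis which emanate from the critical value in distinct directions, so that the two caps meet only at the critical point and are transverse there by the local quadratic model. You obtain the matching of vanishing spheres via homotopy invariance of parallel transport, whereas the paper simply arranges for $\pi^0$ to be trivial near the $y$-axis; the two are equivalent for this purpose. The one point you elide is why the glued manifold is \emph{diffeomorphic}, and not merely homeomorphic, to $Q_i$: capping a punctured $Q_i$ by an $n$-disc along a sphere can a priori change the smooth structure, and the paper's proof explicitly invokes the fact that the framing on $V$ is induced by its identification with the unit sphere in a tangent space of $Q_i$, which fixes the gluing and rules out an exotic attachment.
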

\begin{proof}
 We construct $Q_0$ as the union of two pieces:  First, consider the Lagrangian obtained by taking the intersection of $Q$ with the inverse image of $(-1/2,-\epsilon]$.  This is a Lagrangian diffeomorphic to the complement of a ball in $Q_0$, with the boundary sphere mapping to $-\epsilon$.  Next, choose any smooth path contained in the neighbourhood of the $y$-axis connecting $(-\epsilon,0)$ to the new critical value of $\pi$, which only intersects the $y$-axis at this critical value and agrees with a horizontal line in a neighbourhood thereof (see Figure \ref{fig:lefschetz-fibration}).   Having assumed that the fibration $\pi^{0}$ is trivial in a neighbourhood of the $y$-axis, this is also a thimble with the correct vanishing cycle (and hence topologically a ball).  The union of these two Lagrangians is obviously homeomorphic to $Q_0$, but it is actually diffeomorphic because the framing on $V$ was obtained by using its identification with the unit sphere in the tangent space of a point on $Q_0$.
 \begin{figure}
   \centering
   \includegraphics{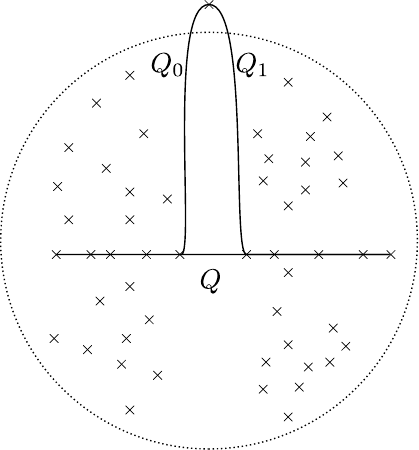}
   \caption{ }
   \label{fig:lefschetz-fibration}
 \end{figure}
We construct $Q_1$ symmetrically so that it lies in the positive $x$ half-plane.  Note that these two Lagrangians only meet at the critical point of $\pi$ which lies on the $y$-axis, proving the first part of the Lemma.  The remaining claims are obvious from the construction.
\end{proof}

Any neighbourhood of a pair of exact Lagrangians which meet transversely at one point is symplectomorphic to a neighbourhood in their plumbing.  Choosing this neighbourhood to be small enough, we may assume that the only critical points of the Lefschetz fibration that it includes are those which lie on either $Q_0$ or $Q_1$.

\subsection{General results}

\label{sec:dehn-twists-branched}

The results of this section apply beyond the special case we are considering.  Recall that a thimble in the total space of a Lefschetz fibration is a Lagrangian ball obtained by transporting a vanishing cycle from a distinguished boundary point (say $(0,1)$ in our case)  to a critical point along an embedded arc in the disc.  A basis of thimbles is a collection of such balls, one for each critical point, which are obtained by arcs which do not intersect in the interior.  We shall fix such a basis $\Delta_{1}, \ldots, \Delta_{m}$.

The double covering trick of Seidel embeds $E$ as a Liouville submanifold of a manifold $\tilde{E}$,  such that there are Lagrangian spheres $\tilde{\Delta}_{1}, \ldots, \tilde{\Delta}_{m}$ in $\tilde{E}$ whose intersections with $E$ agree with $\Delta_{1}, \ldots, \Delta_{m}$.    The construction is arranged in such a way that the composition of the Dehn twists along the spheres $ \tilde{\Delta}_{i} $ is Hamiltonian isotopic to a symplectomorphism which maps every closed Lagrangian in $E$ to a region which is disjoint from $E$.

Using the correspondence between algebraic and geometric twists, Seidel concludes:
\begin{lem}[Lemma 18.15 \cite{seidel-book}] \label{lem:seidel-generation}
Let $L$ be a closed Lagrangian brane in $E$.   There exists (i) an object $C$ in the subcategory of $\cF(\tilde{E})$ generated by $ \tilde{\Delta}_{i} $ (ii) a morphism from $C$ to $L$ and (iii) a Lagrangian $L_{-}$ which is disjoint from $E$, such that we have a quasi-isomorphism
\begin{equation}
  \Cone(C \to L) \cong L_{-}
\end{equation}
in the derived Fukaya category of $\tilde{E}$.
\end{lem}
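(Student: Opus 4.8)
This is Seidel's lemma, and the proof I would reconstruct runs as follows. The key geometric input has already been recorded above: the composition $\tau = \tau_{\tilde\Delta_1}\circ\cdots\circ\tau_{\tilde\Delta_m}$ of Dehn twists in the spheres $\tilde\Delta_i$ (taken in the order used in the double covering construction) is Hamiltonian isotopic to a symplectomorphism $\psi$ of $\tilde E$ which carries every closed Lagrangian of $E$ into the complement of $E$. So I would set $L_- := \psi(L)$; since $\psi$ is Hamiltonian isotopic to $\tau$ it induces the same functor on $\cF(\tilde E)$, whence $\tau(L)\cong L_-$ in the derived Fukaya category, and $L_-$ is disjoint from $E$ by construction. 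It therefore remains to exhibit $\tau(L)$ as $\Cone(C\to L)$ with $C$ in the triangulated subcategory $\cA$ generated by $\tilde\Delta_1,\dots,\tilde\Delta_m$.

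For that I would feed in the correspondence between algebraic and geometric spherical twists \cite[Corollary 17.17]{seidel-book}, which presents each geometric twist as the cone on an evaluation map: for every object $X$ there is an exact triangle
\[
HF(\tilde\Delta_i,X)\otimes\tilde\Delta_i \xrightarrow{\ ev\ } X \longrightarrow \tau_{\tilde\Delta_i}(X) \longrightarrow \bigl(HF(\tilde\Delta_i,X)\otimes\tilde\Delta_i\bigr)[1].
\]
Applying the twists to $L$ one at a time, write $L^{(0)}=L$ and $L^{(j)}=\tau_{\tilde\Delta_{m-j+1}}(L^{(j-1)})$, so that $L^{(m)}=\tau(L)$; each $L^{(j)}$ is the image of the closed brane $L$ under a compactly supported symplectomorphism, hence again a closed brane with $HF$ against the $\tilde\Delta_i$ finite dimensional, so the triangle applies literally at every stage. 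I would then prove by induction on $j$ that $L^{(j)}\cong\Cone(C^{(j)}\to L)$ with $C^{(j)}\in\cA$: the case $j=0$ is vacuous ($C^{(0)}=0$), and for the step one applies the octahedral axiom to the composite
\[
HF(\tilde\Delta_{m-j},L^{(j)})\otimes\tilde\Delta_{m-j} \xrightarrow{\ ev\ } L^{(j)} \longrightarrow C^{(j)}[1]
\]
of the twist evaluation with the connecting map of the triangle $C^{(j)}\to L\to L^{(j)}\to C^{(j)}[1]$; its first cone is $L^{(j+1)}$, its last cone is $L[1]$, and the octahedron produces a triangle $C^{(j+1)}\to L\to L^{(j+1)}\to C^{(j+1)}[1]$ in which $C^{(j+1)}=\Cone\bigl(HF(\tilde\Delta_{m-j},L^{(j)})\otimes\tilde\Delta_{m-j}\to C^{(j)}[1]\bigr)[-1]$ again lies in $\cA$, being assembled from $C^{(j)}$ and a finite sum of shifts of $\tilde\Delta_{m-j}$ by finitely many cones. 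Taking $j=m$ and $C:=C^{(m)}$ gives $\Cone(C\to L)\cong\tau(L)\cong L_-$, as required.

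The argument is almost entirely formal once the algebraic–geometric twist correspondence is in hand, so the real work is bookkeeping: I would need to verify that the order of composition of the $\tau_{\tilde\Delta_i}$ matches the one used in the construction that sweeps $E$ off itself, that the intermediate objects $L^{(j)}$ really are closed exact branes so the twist triangles apply on the nose, and that the $C^{(j)}$ land in the subcategory \emph{generated} by the $\tilde\Delta_i$ rather than in its idempotent completion — the latter holding because each $C^{(j)}$ is built by finitely many direct sums, shifts and cones. The one genuinely non-formal ingredient, the identification of the algebraic twist functor with the geometric Dehn twist, requires precisely that the $\tilde\Delta_i$ be Lagrangian \emph{spheres}, which is exactly what Seidel's double-covering construction arranges.
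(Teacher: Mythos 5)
Your argument is correct and is exactly the expansion of the one‑line sketch the paper gives before citing Seidel's Lemma 18.15 (``Using the correspondence between algebraic and geometric twists, Seidel concludes\dots''): you set $L_-=\psi(L)$ for the sweeping symplectomorphism, replace $\psi$ by the composite of geometric twists up to Hamiltonian isotopy, and then use the twist exact triangle from \cite[Corollary 17.17]{seidel-book} together with the octahedral axiom to peel off a cone $C^{(j)}\to L\to L^{(j)}$ with $C^{(j)}$ built from shifts, sums and cones of the $\tilde\Delta_i$. The paper itself supplies no independent proof, so there is nothing further to compare; your bookkeeping (order of the twists, each $L^{(j)}$ remaining a closed brane, and $C$ lying in the generated subcategory rather than its idempotent completion) is the right list of points to check.
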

\begin{rem}
The reader familiar with Seidel's book may benefit from being reminded that this result, because it appeals to the Fukaya category of $\tilde{E}$ rather than to that of a Lefschetz fibration, does not require the imposition of any assumption on the characteristic of the ground field.
\end{rem}

In order to derive a conclusion about wrapped Fukaya categories, we shall invoke a general result about such categories.  Let $W$ be an exact symplectic manifold whose Liouville form $\lambda$ defines a contact structure on the boundary, and assume that $W^{in} \subset W$ is a codimension $0$ submanifold which is also Liouville with respect to the restriction of $\lambda$.  The wrapped Fukaya category $\cW(W)$ of $W$ has, as objects, exact Lagrangians whose primitive is locally constant near the boundary, equipped with additional data to obtain integral gradings and oriented moduli spaces of holomorphic discs.  Consider a subcategory $\sL(W)$ of $\cW(W)$ consisting of Lagrangians $L$ satisfying the following additional property
\begin{equation}
  \parbox{30em}{$L$ admits a primitive for $\lambda$ which is constant on $\partial W^{in} \cap L$.}
\end{equation}
The following result was proved in Section 5 of \cite{AS}:
\begin{prop}[Viterbo restriction functor] \label{prop:restriction-functor}
There exists an $A_{\infty}$ functor $\sL(W) \to \Wrap(W^{in}) $ which assigns to each Lagrangian its intersection with $W^{in}$.  \noproof
\end{prop}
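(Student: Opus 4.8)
The plan is to realise this as the open-string Viterbo functoriality of \cite{AS}, building the functor directly from Floer data in the spirit of Viterbo's transfer map on symplectic cohomology. First I would fix a collar of $\partial W^{in}$ inside $W$, with radial coordinate $\rho$ on which $\lambda = e^{\rho}\lambda|_{\partial W^{in}}$, and choose a cofinal sequence of Hamiltonians $H_w$ on $W$ which are $C^{2}$-small on $W^{in}$, depend only on $\rho$ on the collar, where they are engineered so that every time-one chord entering the collar or the exterior has action far more negative than any chord contained in $W^{in}$, and are admissible at infinity of $W$; after rescaling, these restrict to a cofinal sequence $H^{in}_w$ on $W^{in}$. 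For $L\in\sL(W)$, which is cylindrical near $\partial W^{in}$ with locally constant primitive there, the time-one $H_w$-chords between two such Lagrangians split into an ``interior'' set, in natural bijection with the $H^{in}_w$-chords of the restrictions, and an ``escaping'' set of very negative action.

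The first key point is an action estimate: since the moduli counts defining $\mu^{d}$ are action-decreasing and action is additive, the escaping chords span a subcomplex of $CW^{*}_{H_w}(L_0,L_1)$ preserved by all operations, so the interior chords form a quotient. The second, geometric, key point is a confinement lemma: for almost complex structures of contact type along $\partial W^{in}$, any solution of the Cauchy--Riemann equations defining $\mu^{d}$ whose boundary lies on Lagrangians from $\sL(W)$ and whose asymptotics are all interior is contained in $W^{in}$. This should follow from the integrated maximum principle of \cite{AS}, applied to $\rho$ composed with the solution over the portion of the domain mapping into the collar: since $H_w$ there is a monotone function of $\rho$, the Hamiltonian term enters the key inequality with a favourable sign, so $\rho$ attains no interior or boundary maximum. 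Combining the two points, the operations $\mu^{d}_{W}$ descend to the quotient and there agree with $\mu^{d}_{W^{in}}$, so that for a suitably adapted choice of Floer data the restriction is literally the projection onto interior generators.

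It then remains to reconcile this adapted choice with the standard Floer data defining $\Wrap(W^{in})$, and with the telescope (homotopy colimit over rescalings, glued by continuation maps) presenting both $\Wrap(W)$ and $\Wrap(W^{in})$. The continuation maps for $\{H_w\}$ and $\{H^{in}_w\}$ are compatible with the projections up to coherent homotopy --- again by the confinement lemma applied to the parametrised moduli spaces underlying the continuation data --- and interpolating between adapted and standard Floer data produces the higher components of the $A_{\infty}$-functor. In the colimit one obtains $\sL(W)\to\Wrap(W^{in})$, which on objects sends $L$ to $L\cap W^{in}$.

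The main obstacle is the confinement lemma together with all its parametrised and degenerate versions: the Hamiltonians and almost complex structures must be chosen so that the maximum principle applies uniformly over every compactified moduli space, including strata where bubbling occurs in the collar or where the interpolation between adapted and admissible data degenerates the collar slope, and the Floer data for all operations and all continuation maps must be organised coherently. Once that is in place, assembling the functor through the telescope so that it is well defined up to contractible choice is routine but lengthy bookkeeping.
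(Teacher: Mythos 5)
The statement is a black box the paper imports from Section~5 of \cite{AS}, so there is no proof in this paper to compare against; the question is whether you have reconstructed the cited construction. You have, and your outline contains the correct key ingredients in the correct order: admissible Hamiltonians that are small on $W^{in}$, linear in the collar and admissible at infinity; the observation that the escaping chords are separated from the interior ones by a large action gap; the integrated maximum principle of \cite{AS} keeping moduli spaces with interior asymptotics inside $W^{in}$; and the bookkeeping needed to reconcile the projection with the telescope and the continuation maps. This is the same route as \cite{AS}.

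One point that deserves sharper treatment. You argue that the escaping chords span a subcomplex ``since the moduli counts defining $\mu^{d}$ are action-decreasing and action is additive.'' For the escaping chords to be an $A_\infty$-ideal (so that the interior generators form a quotient $A_\infty$-category), one needs that $\mu^{d}$ with at least one escaping input has no interior output; the naive action estimate $\mathcal{A}(x_0)\leq \sum_i\mathcal{A}(x_i)$ gives this only if the action gap $C$ exceeds roughly $(d-1)\cdot(\text{max interior action})$, which for fixed $H_w$ fails once $d$ is large. The uniform statement one actually needs is geometric, not purely action-theoretic: a disc with an interior output but at least one asymptotic in the collar or exterior would have to cross the hypersurface $\{\rho=\text{const}\}$, and the Stokes/monotonicity estimate underlying the integrated maximum principle forbids this, with a bound independent of the number of inputs. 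Since you do invoke the integrated maximum principle as the ``second, geometric, key point,'' this is a presentational slip rather than a genuine gap, but the maximum principle is doing more work than your phrasing of the first key point suggests. Other than that, your proposal matches the approach the paper is citing; the delicate issues you flag (parametrised/degenerate versions, coherent choices of Floer data across the telescope) are indeed where the technical content of \cite{AS} lies.
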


This result has a fairly straightforward consequence which forms the starting point of investigations of the wrapped Fukaya categories of Lefschetz fibrations undertaken by the first author with Seidel:
\begin{prop} \label{prop:wrapped_fukaya_Lefschtez}
The Fukaya category of closed Lagrangians in $E$ lies in the subcategory of $\Wrap(E)$ generated by  ${\Delta}_{1}, \ldots, {\Delta}_{m}$.
\end{prop}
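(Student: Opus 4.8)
The plan is to deduce this from Lemma~\ref{lem:seidel-generation} by feeding its output into the Viterbo restriction functor of Proposition~\ref{prop:restriction-functor}. Concretely, I would take $W = \tE$ and $W^{in} = E$, using the Liouville embedding $E \subset \tE$ furnished by Seidel's double covering construction, so that the functor in question reads $r \co \sL(\tE) \to \Wrap(E)$ and sends a Lagrangian to its intersection with $E$. The three facts that make the argument work are: $r$ carries each sphere $\tilde{\Delta}_{i}$ to the thimble $\Delta_{i} = \tilde{\Delta}_{i} \cap E$; $r$ carries a closed Lagrangian $L \subset E$ to itself; and $r$ carries any Lagrangian disjoint from $E$ — in particular the $L_{-}$ produced by Lemma~\ref{lem:seidel-generation} — to the zero object.

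In detail: given a closed Lagrangian brane $L$ in $E$, Lemma~\ref{lem:seidel-generation} supplies an object $C$ of the subcategory of $\cF(\tE)$ generated by $\tilde{\Delta}_{1},\ldots,\tilde{\Delta}_{m}$ (i.e.\ a twisted complex on the $\tilde{\Delta}_{i}$), a morphism $C \to L$, and a Lagrangian $L_{-}$ disjoint from $E$ with $\Cone(C \to L) \cong L_{-}$ in the derived Fukaya category of $\tE$. First I would check that, after a Hamiltonian isotopy (which does not change anything in the wrapped category), the objects $\tilde{\Delta}_{i}$, $L$, and $L_{-}$ all lie in the subcategory $\sL(\tE)$ on which $r$ is defined: for a closed connected $L$ the primitive of $\lambda$ is locally constant hence constant, so the condition on $\partial E \cap L$ is automatic, while for $L_{-}$ it is vacuous since $L_{-} \cap E = \emptyset$. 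Extending $r$ to twisted complexes and using that an $A_{\infty}$ functor preserves cones and the zero object, I would then apply $r$ to the quasi-isomorphism above to obtain $\Cone(r(C) \to L) \simeq r(L_{-}) \simeq 0$, hence $L \simeq r(C)$, in the derived wrapped category of $E$. Finally, since $C$ is a twisted complex on the $\tilde{\Delta}_{i}$ and $r(\tilde{\Delta}_{i}) = \Delta_{i}$, the object $r(C)$ — and therefore $L$ — lies in the subcategory of $\Wrap(E)$ generated by $\Delta_{1},\ldots,\Delta_{m}$. As $L$ was arbitrary, this is precisely the assertion.

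I expect the logical skeleton above to be essentially all there is, so that the only real work is in the compatibility bookkeeping: verifying that $\tilde{\Delta}_{i}$, $L$, $L_{-}$ (and hence the twisted complex $C$) genuinely fall in the domain $\sL(\tE)$, perhaps after an innocuous perturbation, and that Lemma~\ref{lem:seidel-generation}, phrased for the derived closed Fukaya category of $\tE$, interacts correctly with the wrapped-category--valued functor $r$ — which amounts to recalling the inclusion of the closed Fukaya category into the wrapped one and the fact that $r$ is triangulated. No genuinely new geometric input beyond Lemma~\ref{lem:seidel-generation} and Proposition~\ref{prop:restriction-functor} should be needed.
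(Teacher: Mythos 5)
Your proposal is correct and is essentially identical to the paper's own proof: it applies the Viterbo restriction functor of Proposition~\ref{prop:restriction-functor} with $W = \tE$, $W^{in} = E$ to the output of Lemma~\ref{lem:seidel-generation}, using $r(\tilde{\Delta}_i) = \Delta_i$, $r(L)=L$, $r(L_-)=0$. One small slip in your bookkeeping: for closed $L$ the condition defining $\sL(\tE)$ holds not because the primitive of $\lambda|_L$ is constant (it generally is not), but simply because $\partial E \cap L = \emptyset$, exactly as for $L_-$; and for the $\tilde\Delta_i$ the paper's justification is that $\tilde\Delta_i \cap \partial E$ is connected and Legendrian, so the primitive is constant there.
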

\begin{proof} Let $L$ be a closed exact Lagrangian brane in $E$. 
We shall apply the restriction functor, Proposition \ref{prop:restriction-functor}, from the subcategory of $ \cF(\tilde{E}) $ whose objects are $ \tilde{\Delta}_{1}, \ldots, \tilde{\Delta}_{m} $, $L$ and $L_{-}$ to the wrapped Fukaya category of $E$.  The main subtlety  is that such a functor is only defined under the assumption that each Lagrangian is  exact and admits a primitive which is constant on its intersection with $\partial E$.  This is obvious for $L$ and $L_{-}$, and follows for $\tilde{ \Delta}_{i} $  because the boundary is connected and Legendrian for an appropriate choice of contact form on the boundary of $E$.

At the level of objects, the restriction functor assigns to each Lagrangian its intersection with $E$, so we obtain that $L_{-}$ goes to $0$, $L$ to itself, and $  \tilde{\Delta}_{i} $  to $\Delta_{i}  $.  In particular, Lemma \ref{lem:seidel-generation} implies that the image of $ \Cone(C \to L)  $ vanishes in the wrapped Fukaya category of $E$.  Since the image of $C$ lies in the category generated by the basis of thimbles, we conclude that so does $L$. 
\end{proof}

\subsection{Generators for the wrapped Fukaya category of the plumbing}

\label{sec:accel-funct-gener}

We now return to the specific Lefschetz fibration we were studying, taking the origin as basepoint.  We choose thimbles with the following property:
\begin{equation}
  \parbox{30em}{for $j=0,1$, either $\Delta_{i}$ is disjoint from $Q_j$, or it intersects $Q_j$ transversely at one point.}
\end{equation}
The thimbles disjoint from $Q_0 \cup Q_1$ correspond to critical points which do not lie on $ Q_0 \cup Q_1 $. These may essentially be discarded, by the discussion at the end of Section \ref{sec:embedd-plumb-lefsch} (they may be connected to the origin via arcs which follow the boundary of the disk of Figure \ref{fig:lefschetz-fibration} into the lower half of that picture, and which therefore do not intersect the projections of $Q_0$ and $Q_1$).  The other thimbles are those arising from critical points which lie on $Q_0$ and $Q_1$, so we cannot avoid the existence of an intersection.    In this second situation, there are three different possibilities:  First, we have the  critical points which lie on $Q_0$.  As the associated thimbles intersect $Q_0$ in one point, we may find a Weinstein neighbourhood of $Q_0$ which intersects the thimble in a (disc) cotangent fibre.  Next, there is the symmetric case replacing $Q_0$ by $Q_1$.  Finally, there is the distinguished critical point which lies on the $y$-axis.  After translation in the base, we consider a local model Lefschetz fibration
\begin{align}
  \bC^{n} & \to \bC \\
  (z_1, \ldots, z_n) & \mapsto \sum z_{i}^{2}.
\end{align}
Recall that we have arranged for $Q_0$ and $Q_1$ to project to paths which are horizontal near the critical point on the $y$-axis, so that after translation they agree with the $x$-axis.  In particular, in this local model, they must come from the Lagrangians $\bR^{n}$ and $\i  \bR^{n}$.  Moreover, we may choose the thimble associated to this critical point to project to the positive $y$-axis in this model, which then forces it to agree with $(1+\i  ) \bR^{n}$.  Using the fact that a Weinstein neighbourhood of the union of $Q_0$ and $Q_1$ is modelled after the plumbing, we conclude:
\begin{lem}
  There exists an exact Liouville embedding of $D^*Q_0 \# D^*Q_1 $ into $E$ which intersects all thimbles either in a cotangent fibre to $Q_i$ or in the diagonal in the local model of Equation \ref{eqn:model-region}. \noproof
\end{lem}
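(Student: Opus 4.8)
The plan is to organise the three local pictures just described into a single Weinstein neighbourhood, using the Lagrangian neighbourhood theorem relative to the (finite) set of points where the thimbles meet the core. First I would dispose of the thimbles disjoint from $Q_0 \cup Q_1$: as noted at the end of Section~\ref{sec:embedd-plumb-lefsch}, their defining arcs can be pushed into the lower half-plane, so that after shrinking they avoid an entire neighbourhood of $Q_0 \cup Q_1$ and thus play no role; it remains to treat the thimbles over critical points lying on $Q_0 \cup Q_1$.

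Next I would fix the ambient neighbourhood. Since $Q_0$ and $Q_1$ are exact Lagrangians in $E$ meeting transversely at a single point $p$, the Weinstein neighbourhood theorem (as recalled above) identifies a neighbourhood of $Q_0 \cup Q_1$ with a neighbourhood of the union of the two zero-sections in $D^*Q_0 \# D^*Q_1$. Exactness of both $Q_i$ forces this identification to carry $\lambda_E$ to the Liouville form of the plumbing up to an exact one-form: the difference is closed on $D^*Q_0 \# D^*Q_1 \simeq Q_0 \vee Q_1$ and restricts to an exact form on each $Q_i$, hence is exact since $H^1(Q_0 \vee Q_1) \to H^1(Q_0) \oplus H^1(Q_1)$ is injective. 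Rescaling by the Liouville flow then yields an exact Liouville embedding $\iota \co D^*Q_0 \# D^*Q_1 \hookrightarrow E$. The remaining freedom in $\iota$ is exactly the freedom in the relative Weinstein theorem, which I would use to prescribe the germ of $\iota$ near each point where a thimble meets the core.

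These points are finite and pairwise distinct, and the only thimble through $p$ is the distinguished one $T$ (a thimble contains exactly one critical point of $\pi$, namely its tip). Near $p$, the local Lefschetz model $\sum z_j^2$ already displays $Q_0$, $Q_1$ and $T$ as $\bR^n$, $\i \bR^n$ and $(1+\i)\bR^n = \{x = y\}$ inside $\bC^n$; thus $T$ meets the model region $R^\infty$ of \eqref{eqn:model-region} in the diagonal Lagrangian disc, and no adjustment is needed there. Near a critical point $q \in Q_0$ with $q \neq p$, the germ of the corresponding thimble is a Lagrangian transverse to the zero-section at $q$, so it is the image of the disc cotangent fibre $T^*_q Q_0$ under a symplectomorphism of a neighbourhood of $q$ fixing the zero-section (a ``$p$-shear'' $(q,p)\mapsto (q+\nabla\psi(p),p)$); composing $\iota$ with its inverse makes the thimble an honest disc fibre near $q$. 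The case $q \in Q_1$ is symmetric, and since all these modifications are supported in pairwise disjoint balls and fix the core, they can be performed simultaneously, producing the required $\iota$.

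The main obstacle is the bookkeeping in this last step: verifying that the relative Weinstein theorem can be invoked with the Liouville and boundary constraints kept in force (so that $\iota$ stays an \emph{exact} Liouville embedding) and that the normalisation of the thimble germs to genuine cotangent fibres is compatible both with the global identification and with the standard form already present near $p$. None of this is difficult, but it is where the substance of the phrases ``intersects the thimble in a (disc) cotangent fibre'' and ``intersects all thimbles \dots\ in the diagonal'' actually resides; everything else is a transcription of the three local models recorded above.
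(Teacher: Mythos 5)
Your proposal is correct and takes essentially the same approach the paper has in mind: the Lemma is stated with \verb|\noproof| precisely because it is meant to follow by assembling the three local pictures just recorded (cotangent-fibre near a critical point on $Q_0$ or $Q_1$, diagonal $(1+\i)\bR^n$ near the distinguished critical point) via the relative Weinstein neighbourhood theorem, after discarding the remaining thimbles. Your expansion — including the $H^1(Q_0 \vee Q_1)$ argument to upgrade the symplectic identification to an exact Liouville embedding and the fibrewise ``shear'' to straighten a thimble transverse to the zero-section into an honest cotangent fibre — fills in exactly those details.
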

\begin{cor} \label{cor:generation_with_diagonal}
The  Fukaya category of closed Lagrangians in  $T^*Q_0 \# T^*Q_1 $  lies in the subcategory of the wrapped category which is generated by a collection of three objects:  A cotangent fibre to each Lagrangian $Q_i$, and the intersection of the diagonal with the local model of the plumbing region.
\end{cor}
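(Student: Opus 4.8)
The plan is to run the Viterbo restriction machinery one further time: Proposition~\ref{prop:wrapped_fukaya_Lefschtez} already writes every closed Lagrangian brane of $E$ as a twisted complex over the basis of thimbles $\Delta_1,\dots,\Delta_m$ in $\Wrap(E)$, so it suffices to push that description down along the Liouville embedding $W^{in}:=D^*Q_0 \# D^*Q_1 \hookrightarrow E$ furnished by the preceding Lemma (noting that $\Wrap(W^{in})$ is the wrapped Fukaya category of the plumbing $T^*Q_0 \# T^*Q_1$).

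First I would take a closed exact Lagrangian brane $L$ in $T^*Q_0 \# T^*Q_1$; being compact, $L$ may be taken to lie in the finite subdomain $W^{in}$, and hence, through the embedding of the Lemma, in $E$. Proposition~\ref{prop:wrapped_fukaya_Lefschtez} then exhibits a quasi-isomorphism in $\Wrap(E)$ between $L$ and a twisted complex whose underlying objects are shifts and multiples of the $\Delta_i$ and whose structure maps are morphisms among the $\Delta_i$. I would then apply the Viterbo restriction functor $\sL(E)\to\Wrap(W^{in})$ of Proposition~\ref{prop:restriction-functor}. Since this is an $A_\infty$ functor, and since the whole twisted complex (objects and structure maps alike) involves only the $\Delta_i$, the complex lives in $\Tw(\sL(E))$ and is transported, together with its quasi-isomorphism to $L$, to $\Wrap(W^{in})$: the complex becomes a twisted complex over $\{\Delta_i\cap W^{in}\}$, while $L$ goes to $L\cap W^{in}=L$. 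By the Lemma and the discussion closing Section~\ref{sec:embedd-plumb-lefsch}, we may arrange the thimbles and $W^{in}$ so that each $\Delta_i\cap W^{in}$ is either a cotangent disc fibre of $Q_0$, a cotangent disc fibre of $Q_1$, the diagonal disc of the model region~\eqref{eqn:model-region}, or empty; the last possibility covers the thimbles whose critical point avoids $Q_0\cup Q_1$, once their defining arcs are routed through the lower half-disc of Figure~\ref{fig:lefschetz-fibration} and $W^{in}$ is taken small. In $\Wrap(W^{in})$ these intersections represent, respectively, the cotangent fibres $\L[0]$ and $\L[1]$, the intersection of the diagonal with the model plumbing region, and the zero object. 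Discarding the zero object gives the asserted generation of $L$ by the three named objects.

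The one hypothesis that needs genuine checking is the condition defining the domain $\sL(E)$ of the restriction functor: every object fed in must admit a primitive for the Liouville form which is constant on its intersection with $\partial W^{in}$. For $L$ this is automatic, as $L$ lies in the interior; for a thimble disjoint from $W^{in}$ it is vacuous; and for the remaining thimbles $\Delta_i\cap\partial W^{in}=\partial(\Delta_i\cap W^{in})$ equals, in each of the two model cases, a single sphere $S^{n-1}$, hence is connected (recall $n\geq 3$), so the locally constant primitive of $\Delta_i$ is automatically constant there. I expect this point, together with the simultaneous choice of thimbles and Weinstein neighbourhood so that every thimble meets $\partial W^{in}$ either not at all or in a connected Legendrian sphere, to be the only real content; everything else is formal functoriality of Viterbo restriction, essentially already carried out in the proof of Proposition~\ref{prop:wrapped_fukaya_Lefschtez}.
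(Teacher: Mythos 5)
Your proof is correct and is essentially the same as the paper's, which says simply ``Apply the restriction functor from $E$ to $D^*Q_0 \# D^*Q_1$, together with Proposition \ref{prop:wrapped_fukaya_Lefschtez}.'' You fill in the one non-automatic checkpoint---that each thimble admits a primitive constant on $\Delta_i \cap \partial W^{in}$, using connectedness of the resulting $S^{n-1}$---which parallels the analogous check the paper makes for $\tilde\Delta_i$ in the proof of Proposition \ref{prop:wrapped_fukaya_Lefschtez}.
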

\begin{proof}
  Apply the restriction functor from $E$ to $ D^*Q_0 \# D^*Q_1  $, together with Proposition \ref{prop:wrapped_fukaya_Lefschtez}.
\end{proof}

\subsection{Expressing the diagonal in terms of cotangent fibres}
In order to prove Theorem \ref{thm:wrapped_category_generation}, we shall show that the additional generator given in Corollary \ref{cor:generation_with_diagonal} is in fact redundant.  The proof will use a geometric argument, together with the restriction functor a few more times.

We shall start by considering a point $q \in \bR^{n}$  and the Lagrangians $q + \i  \bR^{n}$ and $\i  q +  \bR^{n}$.  Assuming that the model region in Equation \eqref{eqn:model-region} is sufficiently large so as to contain the point $ q +  \i q$, we respectively obtain Lagrangians  $L_{q}$ and $L_{p}$ in the plumbing, which intersect.  Using Polterovich's surgery construction \cite{Polterovich}, we may construct two other Lagrangians which agree with $L_{q} \cup L_{p}$  away from the intersection point. We recall a convenient model for these Lagrange surgeries.  Working in a suitable Darboux ball, it is sufficient to define the surgeries at the origin of the union $\bR^n \cup \i \bR^n \subset \bC^n$.   Let $\gamma: \bR \rightarrow \bC$ be a smooth embedded path satisfying the following conditions:
\begin{enumerate}
\item for $t \ll 0$, $\gamma$ is a linear parametrization of the negative imaginary axis;
\item for $t \gg 0$, $\gamma$ is a linear parametrization of the positive real axis;
\item im$(\gamma) \subset \bC$ lies in the upper left quadrant.
\end{enumerate}
Associated to such a path $\gamma$ we have the Lagrangian submanifold
\begin{equation} \label{Eqn:LagSurgery}
V_{\gamma} \ = \ \bigcup_{t\in \im(\gamma)} t.S^{n-1} \ \subset \ \bC^n
\end{equation}
where $S^{n-1} \subset \bR^n \subset \bC^n$ is the unit sphere; this co-incides outside a compact set with $\bR^n \cup\i  \bR^n$.  Any two such submanifolds are Lagrangian isotopic via a Lagrangian isotopy with compact support.  $V_{\gamma}$ defines one of the two surgeries. The other surgery is obtained similarly, but starting with a path $\gamma'$ with the same asymptotic conditions, but now passing below the origin, so im$(\gamma')$ is contained in the union of the three quadrants in $\bC$ whose interiors do not meet $\gamma$.

\begin{lem}
In the wrapped  Fukaya category of the plumbing, the surgeries are quasi-isomorphic to cones
\begin{equation} \label{eq:cones_discs}
 \Cone(  L_{p} \to L_{q}) \textrm{ and } \, \Cone(L_{q} \to  L_{p}).
\end{equation}
\end{lem}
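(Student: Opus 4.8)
The plan is to recognise the two Lagrange surgeries as instances of the Lagrange surgery formula of Fukaya--Oh--Ohta--Ono, applied at the single transverse intersection point of $L_q$ and $L_p$. First I would record that the statement is local: by \eqref{Eqn:LagSurgery}, both $V_\gamma$ and $V_{\gamma'}$ coincide with $L_q\cup L_p$ outside a Darboux ball around the intersection point $x_0$, so each is a bona fide object of $\cW(T^*Q_0\# T^*Q_1)$, its grading and $\Spin$ structure inherited from those of $L_q$ and $L_p$ together with an essentially canonical choice on the surgery handle $S^{n-1}\times(-1,1)$. The point $x_0$ is the unique generator of the Floer complex $CF^*(L_p,L_q)$; with the grading conventions under which the cones in \eqref{eq:cones_discs} are formed it has degree $1$. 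Write $c$ for the corresponding class and $c'$ for the same point viewed in $CF^*(L_q,L_p)$, also of degree $1$. The goal is then to identify $V_\gamma\simeq \Cone(c\co L_p\to L_q)$ and $V_{\gamma'}\simeq \Cone(c'\co L_q\to L_p)$ in the derived wrapped category, the two surgeries being distinguished by which of the two complementary regions of $\bC$ contains the image of the surgery arc; which surgery yields which cone is determined by the model computation below and is arranged to match the ordering in \eqref{eq:cones_discs}.

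Next I would invoke the surgery formula, whose mechanism is as follows. For a small surgery parameter there is a degree-zero closed comparison morphism $\kappa\co \Cone(c)\to V_\gamma$, defined by counting the holomorphic triangles with boundary on $L_p$, $L_q$ and $V_\gamma$ that are supported near $x_0$, the unique small triangle living in the surgery neck contributing the identity. That $\kappa$ is a quasi-isomorphism follows by degenerating the neck: for any test object $K$, as the surgery parameter tends to zero every rigid holomorphic strip with one boundary on $V_\gamma$ and the other on $K$ breaks into a strip on $L_q$ and a strip on $L_p$ glued at $x_0$, so the Floer differential of $(V_\gamma,K)$ acquires precisely the extra term $\mu^2(-,c)$ and one recovers the exact triangle
\[
HF(K,L_q)\longrightarrow HF(K,V_\gamma)\longrightarrow HF(K,L_p)\xrightarrow{\ \mu^2(-,c)\ }HF(K,L_q)[1],
\]
i.e. the defining property of the mapping cone, with which $\kappa$ is compatible; this forces $\kappa$ to be an isomorphism on Floer cohomology with every test object, hence a quasi-isomorphism. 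Running the same analysis with $\gamma'$ in place of $\gamma$ interchanges the roles of source and target and gives $V_{\gamma'}\simeq \Cone(c')$. Equivalently, the surgery trace of \cite{Polterovich} is an elementary Lagrangian cobordism, which likewise yields a cone decomposition, and in the exact setting at hand the whole discussion is a variant of Seidel's exact triangle \cite{seidel-book}.

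Every moduli space appearing above that is not already dictated by the ambient geometry is confined to the Darboux ball around $x_0$, so by the Viterbo restriction functor (Proposition \ref{prop:restriction-functor}), applied to the exact Liouville embedding of a neighbourhood of $x_0$ into $\bC^n=\bR^n\oplus\i\bR^n$, these counts may be carried out in the flat model, with $L_p$, $L_q$ replaced by the linear Lagrangians $\i q+\bR^n$, $q+\i\bR^n$ and $V_\gamma$ by the explicit model \eqref{Eqn:LagSurgery}; there the neck triangle and the stretched limits are transparent and, by exactness, there is neither disc nor sphere bubbling. I expect the main obstacle to be the gluing analysis that underlies any surgery formula: establishing transversality for the stretched configurations and a bijective gluing correspondence between broken strips on $L_p\cup L_q$ through $x_0$ and honest strips on $V_\gamma$ for small surgery parameter. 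This is precisely what the references \cite{FO3} and \cite{seidel-book} supply; once it is in hand, the chain-level identification of $V_\gamma$ and $V_{\gamma'}$ with the two mapping cones, and the bookkeeping distinguishing the two cases, is formal.
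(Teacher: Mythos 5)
Your proposal is correct in spirit but takes a genuinely different route from the paper, and the difference matters for how much analysis one has to supply. You argue the Lagrange surgery exact triangle directly, by constructing a comparison morphism from triangle counts near the intersection point and then degenerating the surgery neck so that strips on $V_\gamma$ break into broken strips on $L_p\cup L_q$ through $x_0$. As you rightly flag, the crux is the gluing/transversality analysis establishing a bijection between stretched configurations and honest strips for small surgery parameter. That analysis is not in fact supplied, in the exact wrapped setting you need, by either of the references you lean on: \cite{FO3} treats the closed/monotone situation and the Lagrangian-surgery exact sequence appears in later FOOO work rather than in the foundational volume, while \cite{seidel-book} proves the Dehn twist exact sequence, not a surgery formula. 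So your route leaves a real technical gap, which you yourself identify but underestimate.

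The paper takes a different and more economical path precisely to avoid that gap. Rather than prove a surgery formula for the Lagrangian \emph{disks} $L_p, L_q$, it attaches two Weinstein handles to the plumbing along $\partial L_p$ and $\partial L_q$, so that $L_p$ and $L_q$ cap off to closed Lagrangian \emph{spheres} $V_p$ and $V_q$ meeting transversely at one point. In that enlarged Liouville domain Seidel's long exact sequence \cite{Seidel:LES} gives, with no new gluing analysis,
\[
\tau_{V_p}(V_q)\cong \Cone(V_p\to V_q),\qquad \tau_{V_q}(V_p)\cong \Cone(V_q\to V_p).
\]
Applying the Viterbo restriction functor (Proposition~\ref{prop:restriction-functor}) back to the plumbing sends $V_p\mapsto L_p$, $V_q\mapsto L_q$, and sends the twisted spheres to the two Polterovich surgeries, since the Dehn twist modifies $V_q$ (resp.\ $V_p$) only in a neighbourhood of the intersection point, where it coincides with the local surgery model. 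Functoriality of the restriction then transports the exact triangles to the cones in~\eqref{eq:cones_discs}. What this buys over your approach is that all holomorphic-curve input is outsourced to two already-proved theorems — Seidel's exact sequence and Viterbo restriction — so no surgery-formula gluing theorem is ever invoked. Your approach is conceptually natural and would give more information (an explicit chain-level comparison map, and the identification of the connecting morphism with $\mu^2(\,\cdot\,,c)$), but within the toolkit of this paper it is the harder road, and the citations you give do not actually close the gap you name.
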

\begin{proof}
Consider the Liouville domain obtained by attaching two Lagrangian handles to the plumbing along $\partial L_{p}$ and $\partial L_{q}$.  The Lagrangian discs $L_p$ and $L_q$ extend to Lagrangian spheres which we denote $V_p$ and $V_q$.  Let $\tau_{V_{p}} (V_{q}) $ and $\tau_{V_{q}} (V_{p}) $ denote the Lagrangian spheres obtained by applying the indicated Dehn twists.  Seidel's long exact sequence \cite{Seidel:LES} asserts that we have quasi-isomorphisms
\begin{equation}
  \tau_{V_{p}} (V_{q})  \cong \Cone(  V_{p} \to V_{q} ) \textrm{ and }  \tau_{V_{q}} (V_{p})  \cong \Cone(  V_{q} \to V_{p} ).
\end{equation}
Applying the restriction functor, we see that $V_{p}$ and $V_{q}$ respectively map to $L_{p}$ and $L_{q}$, while the two twists map to the two possible surgeries.
\end{proof}
\begin{rem}
For the purposes of this paper, we do not need to analyse the morphism appearing in Equation \eqref{eq:cones_discs}.  However, it follows easily from the construction of \cite{AS} that in both cases, the morphism corresponds to the generator of Floer cohomology arising  from the intersection point in the local model.  By increasing the norm of $q$, we may isotope the Lagrangians $L_{p}$ and $L_{q}$ to Lagrangian balls which are disjoint.  By the analysis in Section \ref{sec:non-posit-masl}, there is a ``short"  Reeb chord (one entirely contained in the local model) between these Lagrangians in only one direction, cf. Figure \ref{fig:dark_side_moon}.  Using a continuation homomorphism in Floer cohomology, we conclude that the connecting homomorphism in the cone defined by $L_{p} \to L_{q}$ in fact vanishes.  From that, we conclude that the surgery is equivalent in the wrapped Fukaya category to the disjoint union of $L_p$ and $L_q$, hence one obtains an  easy example of pairs of Lagrangians which are isomorphic non-zero objects in the Fukaya category but not Lagrangian isotopic, indeed which have different topology.  More generally, one can start with any pair of exact Lagrangians which do not intersect, and consider the shortest Reeb chord between them (there is a unique one generically).  There is a corresponding Hamiltonian isotopy which makes the pair intersect at exactly one point.  Using the surgery formula of \cite{FO3}, one can prove again that  one of the two surgeries is equivalent to the disjoint union in the wrapped Fukaya category, so this phenomenon is rather general.\end{rem}

We shall now show that the cone of $ L_{q} \to L_{p}$ is equivalent, in the wrapped Fukaya category of the plumbing, to the diagonal Lagrangian.  The proof will use, yet again, the restriction functor, as well as the invariance of the quasi-isomorphism classes of objects under Hamiltonian isotopies.  The key geometric observation is the following:

\begin{lem} \label{lem:surgery__almost_isotopic_to_diagonal}
There exists a compactly supported isotopy of Lagrangians in $\bC^{n}$ between the Polterovich surgery of $-(1,\ldots,1) +\i  \bR^{n}$ and $(\i , \ldots,\i ) +  \bR^{n}$,  and a Lagrangian $\Delta_{-1,\i }$ which agrees with the diagonal in a neighbourhood of $\bR^{n} \cup \i  \bR^{n}$.
\end{lem}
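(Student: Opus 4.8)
The statement lives entirely inside $\bC^{n}$, so the plan is a hands-on construction of the isotopy, with no Floer theory entering. The first thing I would observe is that the three affine planes $\{x=-(1,\dots,1)\}=-(1,\dots,1)+\i\bR^{n}$, $\{y=(1,\dots,1)\}=(\i,\dots,\i)+\bR^{n}$, and the diagonal $\{x=y\}=(1+\i)\bR^{n}$ form a ``Lagrangian triangle'': they meet pairwise in single points $p_{0}=(-(1,\dots,1),(1,\dots,1))$, $p_{1}=(-(1,\dots,1),-(1,\dots,1))$, $p_{2}=((1,\dots,1),(1,\dots,1))$, and the origin lies on the diagonal side $[p_{1},p_{2}]$. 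In a Darboux chart centred at $p_{0}$ the first two planes become the standard transverse pair $\bR^{n}\cup\i\bR^{n}$, so the Polterovich surgery becomes the model $V_{\gamma}=\bigcup_{t\in\im\gamma}t\cdot S^{n-1}$ of \eqref{Eqn:LagSurgery}; I would choose the path $\gamma$ which opens into the interior of this triangle, i.e. towards the origin, so that we are looking at the surgery realising $\Cone(L_{q}\to L_{p})$ from the previous Lemma. Away from a small ball about $p_{0}$ this surgery coincides with $\{x=-(1,\dots,1)\}\cup\{y=(1,\dots,1)\}$.

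The isotopy I would build is ``slide the neck across the triangle,'' carried out as a composite of three compactly supported Lagrangian moves, all supported inside a fixed large ball $B$ containing $p_{0}$, the origin, and the relevant part of $\bR^{n}\cup\i\bR^{n}$: (i) drag the neck $V_{\gamma}$ — keeping it a sphere-bundle over a moving path — from $p_{0}$ along an arc inside the triangle towards $[p_{1},p_{2}]$, spreading it so that it opens up to cover a long segment of the diagonal and re-attaches to the plane $\{x=-(1,\dots,1)\}$ near $p_{1}$ and to $\{y=(1,\dots,1)\}$ near $p_{2}$; (ii) on a chosen open neighbourhood $U$ of $\bR^{n}\cup\i\bR^{n}$, interpolate the two competing Lagrangians — each a graph of an exact $1$-form there — so that the result coincides exactly with $(1+\i)\bR^{n}$ throughout $U$; (iii) push the leftover sheets of the two planes off $U$ — for instance the piece of $\{x=-(1,\dots,1)\}$ originally passing through the point $(-(1,\dots,1),0)\in\bR^{n}$ must be bent so as to leave $U$ — routing them through $\{|x|\geq\epsilon\}\cap\{|y|\geq\epsilon\}$, which is disjoint from $U$. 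The endpoint is the required Lagrangian $\Delta_{-1,\i}$: it equals the diagonal $(1+\i)\bR^{n}$ on $U$ and equals the original surgery outside $B$.

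The main obstacle is keeping both the embeddedness and the Lagrangian condition under control through steps (i) and (iii). The planes $\{x=-(1,\dots,1)\}$ and $\{y=(1,\dots,1)\}$ share no product decomposition adapted to the diagonal, so there is no reduction to $n=1$ by taking products; and removing a full graphical sheet of an affine plane from a tubular neighbourhood of the cores genuinely requires folding it past a caustic, which must be done compatibly at the two reattachment points and without creating self-intersections. The way to control this is to perform every move inside a Weinstein neighbourhood of the current Lagrangian, where each move is a path of exact $1$-forms / generating functions and the Lagrangian and embedded conditions are automatic, and to perform step (i) first precisely so that afterwards the neck and both sheets are in a standard position in which the overlaps glue cleanly. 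The case $n=1$, in which the Lagrangian constraint is vacuous, degenerates to an evident compactly supported isotopy of embedded arcs in $\bR^{2}$, and I would dispose of it at the outset.
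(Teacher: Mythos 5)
Your construction differs fundamentally from the paper's, and I believe step~(i) contains a genuine gap.

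The paper never attempts to drag the neck across the triangle. It observes instead that the surgery model $V_{\gamma}=\bigcup_{t}\gamma(t)\cdot S^{n-1}$ has a free parameter, the arc $\gamma$ itself, and in particular which point of $\bC$ the arc passes through and with what slope. After a coordinate change putting the surgery at the origin (analogous to your Darboux chart at $p_{0}$), the two affine planes meet at $(-1+\i,\dots,-1+\i)$, and the paper then \emph{chooses} $\gamma$ so that $\gamma(0)=\sqrt{n}(-1+\i)$ and $\gamma'(0)\parallel(1+\i)$. With that choice $V_{\gamma}$ already passes through $(-1+\i,\dots,-1+\i)$ with the tangent plane dictated by those two pieces of data, and the monotonicity hypothesis on $\operatorname{Im}(\gamma)$ guarantees that this is the \emph{only} point of intersection with the two affine planes, transversely. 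The compactly supported isotopy is then nothing but a linearisation of $V_{\gamma}$ in a small ball around that single point. Everything is local: nothing travels, nothing folds, no leftover sheet has to be pushed anywhere.

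Your step~(i) is where I lose the thread. A one-parameter family of ``sphere bundles over a moving arc'' is not a family of Lagrangians once it leaves the Darboux chart centred at $p_{0}$ in which the $V_{\gamma}$ model is valid, and you give no generating-function description of the motion. The intermediate configuration you describe --- a tube hugging the diagonal side of the triangle and joined to $\{x=-(1,\dots,1)\}$ near $p_{1}$ and to $\{y=(1,\dots,1)\}$ near $p_{2}$ --- is attached to the two affine planes in a qualitatively different pattern than the original surgery at $p_{0}$, and you exhibit no continuous Lagrangian interpolation between the two. The control you propose, working in a Weinstein neighbourhood so that moves are paths of exact $1$-forms, is precisely what breaks down at the obstacle you yourself name: folding a graphical sheet past a caustic takes the sheet out of any fixed Weinstein neighbourhood, and your step~(ii) presumes both competing Lagrangians are graphs over the diagonal on $U$, which the leftover sheets of the affine planes are not. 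The lesson of the paper's proof is that none of this has to be done: tuning the free parameter in Polterovich's model makes the surgery pass through the correct point with the correct tangency from the start, so that only a local linearisation remains.
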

\begin{proof}
Translating co-ordinates, it suffices to prove that there is a Lagrangian submanifold, Lagrangian isotopic via a compactly supported isotopy, to a  Lagrange surgery of $\bR^n \cup\i  \bR^n$, and agreeing in some open neighbourhood of 
\[
-(1,\ldots,1) + \i \bR^n \, \cup \, (\i ,\ldots,\i ) + \bR^n
\] with the linear Lagrangian submanifold obtained by translating the diagonal in $\bR^n \oplus \i \bR^n$ by $(-1+\i , \ldots, -1+\i )$.

Recall the model of Equation \ref{Eqn:LagSurgery} for the surgery.  We choose the path $\gamma$, in the upper left quadrant, to satisfy the conditions:
\begin{enumerate}
\item $\gamma(0) = \sqrt{n}(-1+\i )$;
\item the tangent line to $\gamma$ at $\gamma(0)$ is $\bR\langle (1+\i )\rangle \subset \bC$;
\item the imaginary part im$(\gamma(t))$ is non-decreasing and monotonically increasing near $0$.
\end{enumerate}
Consider the associated submanifold $V_{\gamma} \subset \bC^n$.  It is straightforward to check that the first and third conditions imply
\[
V_{\gamma} \ \cap \ \left(  (-1,\ldots, -1)+\i \bR^n \cup (\i ,\ldots,\i ) + \bR^n \right)\ = \ \{ (-1+\i , \ldots, -1+\i ) \} 
\]
Hence this intersection is exactly $z\gamma(0)$, where $z = (1,\ldots, 1)/ \sqrt{n} \in S^{n-1}$. Moreover, this isolated intersection point  is transverse, and the tangent space to $V_{\gamma}$ at the intersection point agrees with the translated diagonal (by our prescription of the tangent space to $\gamma$ at $\gamma(0)$).  We now define $\Delta_{-1,\i}$ by linearising $V_{\gamma}$ in a small neighbourhood of this point.
\end{proof}

\begin{rem}
In dimension $n=2$, there is a Lagrange surgery of $(1+\i \bR) \cup (\i +\bR)$ which agrees with the \emph{antidiagonal} near the origin, and of $(-1+\i \bR) \cup (\i  +\bR)$ which agrees with the diagonal near the origin, cf. Figure \ref{fig:surgery_lagrangians}.  This accounts for the choice of translated axes above. (For any $n$, the space of linear Lagrangians transverse to both $\bR^n$ and $\i \bR^n$ has two components, distinguished by the Maslov index.)
\end{rem}

\begin{figure}
  \centering
  \includegraphics{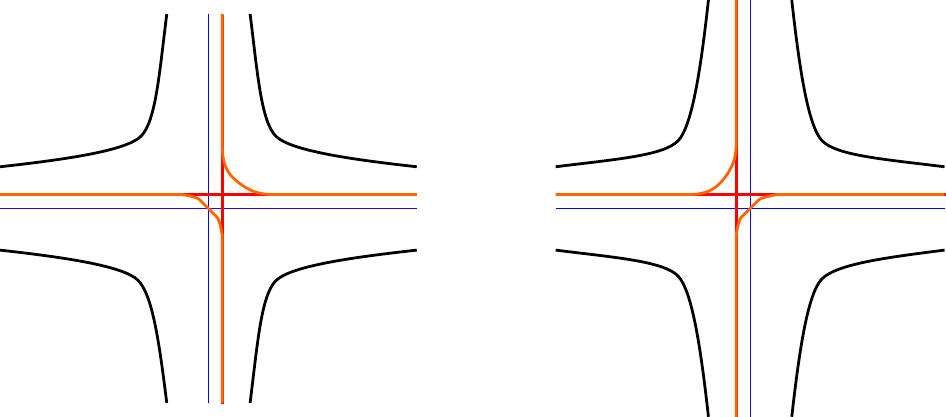}
  \caption{ }
  \label{fig:surgery_lagrangians}
\end{figure}

At this stage, we are ready to complete the proof of the main result of this section:
\begin{proof}[Proof of Theorem \ref{thm:wrapped_category_generation}]
Comparing the statement of the Theorem with that of Corollary \ref{cor:generation_with_diagonal}, we see that it suffices to prove that the diagonal gives rise to a Lagrangian in the plumbing which is quasi-isomorphic to an iterated cone on cotangent fibres of $Q_0$ and $Q_1$.  In fact, we shall show that it is isomorphic to a single cone $ L_{q} \to L_{p} $.

In order to prove this, we first choose $q$ so that the compactly supported isotopy in Lemma \ref{lem:surgery__almost_isotopic_to_diagonal} is contained in the model region, and abusively write $\Delta_{p,q}$ for the resulting Lagrangian in the plumbing.  Then we choose a smaller model region which intersects $\Delta_{p,q} $ in the diagonal, and attach this region to disc cotangent bundles of small radius to obtain a Liouville subdomain
\begin{equation}
   D^{*}_{\leq \epsilon}Q_0 \# D^{*}_{\leq \epsilon}Q_1  \subset    D^{*}Q_0 \# D^{*}Q_1.
\end{equation}
We now apply the restriction functor to the subcategory of $\Wrap(  D^{*}Q_0 \# D^{*}Q_1)$ with objects $L_{p}$, $L_{q}$, and $\Delta_{p,q}$.  Since $\Delta_{p,q}$ is quasi-isomorphic to the cone $L_{q} \to L_{p}$, the restriction of $\Delta_{p,q}$  is also quasi-isomorphic to such a cone.  We conclude that $\Delta^{\epsilon} = \Delta_{p,q} \cap    D^{*}_{\leq \epsilon}Q_0 \# D^{*}_{\leq \epsilon}Q_1 $ lies in the category generated by $L^{\epsilon}_{q} =  L_{q} \cap    D^{*}_{\leq \epsilon}Q_0 \# D^{*}_{\leq \epsilon}Q_1 $ and $L^{\epsilon}_{p} =  L_{p} \cap    D^{*}_{\leq \epsilon}Q_0 \# D^{*}_{\leq \epsilon}Q_1 $.  By construction, $ \Delta^{\epsilon}  $ is the diagonal Lagrangian, while $L^{\epsilon}_{q}   $ and $L^{\epsilon}_{p}   $ are respectively cotangent fibres to $Q_0$ and $Q_1$.  We complete the proof by noting that the wrapped Fukaya category of the plumbing is independent of $\epsilon$ because an appropriate Liouville flow rescales $  D^{*}Q_0 \# D^{*}Q_1$ into $D^{*}_{\leq \epsilon}Q_0 \# D^{*}_{\leq \epsilon}Q_1   $.
\end{proof}

\section{Generation by compact cores}
\label{sec:gener-comp-cores}

Let $M$ be the Liouville manifold obtained by plumbing  $M=T^*Q_0 \# T^*Q_1$.  From the previous section, we know that the Fukaya category of closed exact Lagrangian submanifolds of Maslov class zero embeds in the category generated by the cotangent fibres.  In this section, we shall prove

\begin{prop} \label{prop:generated-by-local-systems}
Let $L \subset M$ be a closed exact Lagrangian of Maslov class zero. Every finite rank local system over $L$  lies in the category generated by  finite rank local systems over $Q_0$ and $Q_1$.
\end{prop}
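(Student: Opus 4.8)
The plan is to translate the proposition into module theory over the wrapped endomorphism algebra $\mathcal{A} := CW^{*}(\L[0]\oplus\L[1])$ of the two cotangent fibres, and then to reduce it to a piece of homological algebra that exploits the compactness of $L$. Sending an object $N$ of $\cW(M)$ to the $\mathcal{A}$-module $CW^{*}(\L[0]\oplus\L[1],N)$ is, by the previous section, fully faithful on closed Lagrangians and lands in perfect modules; the same holds once objects are equipped with finite-rank local systems, because the geometric arguments of Section \ref{sec:generation_wrapped_category} (the Lefschetz fibration construction and the Lagrange surgeries) are insensitive to carrying a local system along. Moreover, since $L$ is closed and exact, $HW^{*}(\L[i],(L,\mathcal{E})) = HF^{*}(\L[i],(L,\mathcal{E}))$ is finite-dimensional, so the module of $(L,\mathcal{E})$ is not merely perfect but \emph{proper}. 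Under the same dictionary the compact core $Q_{i}$ corresponds to the simple $\mathcal{A}$-module $S_{i}$ supported at the $i$-th idempotent, and $(Q_{i},V_{i})$ to the twisted simple $S_{i}\otimes V_{i}$ determined by the $\pi_{1}(Q_{i})$-representation $V_{i}$. So the proposition becomes: every proper perfect $\mathcal{A}$-module lies in the triangulated subcategory generated by the twisted simples $S_{i}\otimes V_{i}$.

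Here Proposition \ref{prop:non-positive} is the key input. Since $\mathcal{A}$ is non-positively graded, the cohomological truncation functors are available, and every proper $\mathcal{A}$-module acquires a finite filtration whose subquotients are shifts of finite-dimensional modules over $H^{0}(\mathcal{A})$. It therefore suffices to build each such finite-dimensional module from the $S_{i}\otimes V_{i}$. Because the plumbing deformation retracts onto the wedge $Q_{0}\cup_{pt}Q_{1}$, so that $\pi_{1}(M)\cong\pi_{1}(Q_{0})*\pi_{1}(Q_{1})$, a van Kampen argument for this skeleton -- equivalently, the contractibility of the Bass--Serre tree of the free product -- supplies a two-term resolution expressing any finite-dimensional representation as a cone on modules assembled from finite-rank local systems over $Q_{0}$ and $Q_{1}$, with the properness of the module we began with keeping the whole construction inside the perfect subcategory. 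The passage between these $\mathcal{A}$-modules and honest categories of $dg$-local systems on $L$ -- which is what lets one read the conclusion off geometrically -- is carried out in \cite[Appendix B]{Abouzaid-ExactLag}; alternatively the entire reduction can be performed by passing to the cover of $M$ on which $\mathcal{E}$ becomes trivial over $L$, invoking the trivial-coefficient case and then descending along the deck transformations via Theorem \ref{thm:category-of-cover}.

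The main obstacle is precisely this final reduction, and it is genuinely not formal: one cannot simply substitute the compact cores for the cotangent fibres of Theorem \ref{thm:wrapped_category_generation}, since the subcategory generated by the cores together with all of their local systems is strictly smaller than the one generated by the fibres -- already in $T^{*}S^{1}$ the cotangent fibre is not built from local systems on the zero-section. The argument must use that $L$ is closed, which is what forces its $\mathcal{A}$-module to be proper and hence detected by the simple modules; the technical work lies in organising the truncation coming from the connectivity of $\mathcal{A}$ so that it converges, and in carrying the local system on $L$ through the reduction compatibly with the $A_{\infty}$-structures.
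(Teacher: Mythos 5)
Your first reduction --- passing to $A^{*}$-modules via Theorem~\ref{thm:wrapped_category_generation}, exploiting the non-positive grading of Proposition~\ref{prop:non-positive} to filter by cohomological degree, and then cutting the problem down to modules over the degree-zero part --- is exactly the paper's strategy, and your observation that properness of the module associated to a \emph{closed} $L$ is what rules out the $T^{*}S^{1}$-type failure is correct and well taken. However, the crucial final identification is argued incorrectly. You treat $H^{0}(\mathcal{A})$ as though it were the group ring $k[\pi_{1}(M)]\cong k[\pi_{1}(Q_{0})*\pi_{1}(Q_{1})]$ and invoke a van~Kampen / Bass--Serre two-term resolution to build arbitrary finite-dimensional representations from the two factors. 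But that is not the algebra in play. Propositions~\ref{prop:non-positive} and~\ref{prop:compute_HW_0_cotangents} show that for $n>3$
\begin{equation*}
A^{0}\;\cong\; k[\pi_{1}(Q_{0})]\,\oplus\, k[\pi_{1}(Q_{1})],
\end{equation*}
a \emph{direct sum} of group rings with orthogonal idempotents, because $HW^{0}(\L[0],\L[1])$ and $HW^{0}(\L[1],\L[0])$ both vanish. A module over this algebra concentrated in a single degree is literally a pair of $k[\pi_{1}(Q_{i})]$-modules, i.e.\ a direct sum of finite-rank local systems on the $Q_{i}$ --- no resolution is needed. Conversely, a Bass--Serre resolution over the free-product group ring would involve induced modules that are not of finite rank, so it would leave the proper subcategory you correctly identified as the right home for $(L,\mathcal{E})$. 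Your reliance on the retraction of $M$ onto the wedge $Q_{0}\vee Q_{1}$ substitutes $\pi_{1}(M)$ for the actually relevant algebra, and this step would not survive being written out.

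You also do not address the case $n=3$. There $HW^{0}(\L[0],\L[1])$ can be non-zero, and the paper's proof handles it by noting that $A^{0}_{0,1}$ is a square-zero ideal in $A^{0}$ (since $HW^{0}(\L[1],\L[0])=0$) which acts trivially on both $A^{0}_{0,1}\cdot P$ and $P/(A^{0}_{0,1}\cdot P)$; after one further extension one is again reduced to modules over $\oplus_{i} k[\pi_{1}(Q_{i})]$. Without that extra step the argument does not close in dimension three.
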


The proof of Proposition \ref{prop:generated-by-local-systems}, relies on two computations of wrapped Floer homology, respectively given in Sections \ref{sec:non-posit-masl} and \ref{sec:computation-degree-0}

\begin{prop} \label{prop:non-positive}
If $n \geq 3$,  there is a choice of gradings on the cotangent fibres $\L[i]$  such that the wrapped Floer cohomology algebra
\begin{equation}
  A^* \ = \ \oplus_{i,j} HW^{*}(\L[i] ,\L[j])
\end{equation}
is supported in  non-positive degree.  Moreover: 
\begin{enumerate}
\item if $n>3$, $A^0 = \oplus_i HW^0( \L[i], \L[i])$; 
\item if $n=3$, the subspace $A_{0,1}^{0} = HW^{0}(\L[0],  \L[1] )$ is an ideal on which all higher products vanish, whilst $A_{1,0}^0 = HW^{0}(\L[1],  \L[0] ) = 0$.  The quotient $A^* / A_{0,1}^0$ is the direct sum of $\oplus_i HW^0(\L[i], \L[i])$.
\end{enumerate}
\end{prop}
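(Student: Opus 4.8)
The plan is to compute $A^*$ directly from a Hamiltonian which is quadratic at infinity and adapted to the decomposition of $M$ into the two lobes $T^*Q_0$, $T^*Q_1$ and the model region $R^{\infty}$ of \eqref{eqn:model-region}. On each lobe I would use (a small perturbation of) the cogeodesic Hamiltonian of the metric $g_i$, and on $R^{\infty}$ a Hamiltonian of the form $|x|^{2}|y|^{2}$, smoothed so as to be invariant under the gluing $(x,y)\mapsto(y,-x)$. With respect to such a Hamiltonian the generators of $HW^{*}(\L[i],\L[j])$ are time-one chords from $\L[i]$ to $\L[j]$, and each such chord is a concatenation of geodesic arcs in the lobes $Q_0$ and $Q_1$ with arcs crossing $R^{\infty}$; I would record each chord by the finite word in $\{0,1\}$ listing the lobes it visits, together with the homotopy classes of its geodesic arcs. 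Since $\L[0]$ and $\L[1]$ lie in different lobes and are disjoint, a chord between them crosses $R^{\infty}$ an odd number of times, while a chord from $\L[i]$ to itself crosses an even number of times (possibly zero, if it stays in lobe $i$).

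The heart of the matter is an index formula. I would show that the degree of a chord is additive over this decomposition: each geodesic arc contributes minus its Morse index, hence $\le 0$, and each crossing of $R^{\infty}$ contributes a fixed integer $s_{01}$ or $s_{10}$, depending only on $n$ and on the direction of the crossing. This is the analogue for the plumbing of the index computation underlying \cite{fibres-generate} for a single cotangent bundle, and I would establish it in the same way, by concatenating index paths and controlling the corners at which a geodesic arc meets $R^{\infty}$. The two constants are then computed in the local linear model of \eqref{eqn:model-region}: wrapping rotates the fibre direction of $Q_0$ and the fibre direction of $Q_1$ towards one another inside $\bC^n = \bR^n \oplus \i\bR^n$, and $s_{ij}$ is the Maslov-type index of the resulting transverse intersection, read through the gluing $(x,y)\mapsto(y,-x)$. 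After normalising the gradings of $\L[0]$ and $\L[1]$ by a global shift, I expect to find $s_{01} = 3-n$ and $s_{10} \le -1$ for all $n \ge 3$; in particular $s_{01} \le 0$, with equality precisely when $n = 3$, and $s_{01} + s_{10} \le -1$ always. This is the precise form of the statement that a ``short'' chord joining the two cotangent fibres exists in only one of the two directions.

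Granting the index formula, the Proposition is bookkeeping. The degree of any chord is $-\sum_{k}(\text{Morse index of the }k\text{-th geodesic arc}) + (\text{sum of crossing shifts})$, and for $n \ge 3$ both summands are $\le 0$, so $A^{*}$ is non-positively graded. A degree-zero chord must then have every geodesic arc of Morse index $0$ and may only use crossings of vanishing shift. For $n > 3$ there are no such crossings, so a degree-zero chord remains in a single lobe and lies in $HW^{0}(\L[i],\L[i])$; hence $A^{0}_{0,1} = A^{0}_{1,0} = 0$ and $A^{0} = \bigoplus_{i} HW^{0}(\L[i],\L[i])$. For $n = 3$ the only crossings available to a degree-zero chord are those of type $0 \to 1$, each of shift $0$, and at most one can occur, since after a $0 \to 1$ crossing the chord is in lobe $1$ and a further crossing would be of type $1 \to 0$, of shift $s_{10} < 0$. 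Consequently a degree-zero chord either stays in one lobe, contributing to $HW^{0}(\L[i],\L[i])$, or makes exactly one $0 \to 1$ crossing, contributing to $A^{0}_{0,1}$; in particular $A^{0}_{1,0} = 0$ (a chord from $\L[1]$ to $\L[0]$ must begin with a $1 \to 0$ crossing) and $A^{0} = HW^{0}(\L[0],\L[0]) \oplus A^{0}_{0,1} \oplus HW^{0}(\L[1],\L[1])$, which is the asserted description of $A^{0}$ modulo $A^{0}_{0,1}$. That $A^{0}_{0,1}$ is a two-sided ideal on which all higher products vanish is then formal: the word-counting above shows that a (higher) product involving a factor in $A^{0}_{0,1}$ that did not vanish for composability reasons would be forced to contain a $1 \to 0$ crossing, hence to land in negative degree, exactly the kind of degree-and-type argument that makes the higher products among the $Q_{i}$ vanish earlier in the paper.

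The main obstacle is the explicit index computation in the local model, that is, pinning down the constants $s_{01}$ and $s_{10}$. This requires tracking a grading --- a lift of the Lagrangian phase --- through the wrapping isotopy and, crucially, through the gluing $(x,y)\mapsto(y,-x)$, which is orientation-sensitive and is exactly what produces the asymmetry $s_{01}\neq s_{10}$; and it requires verifying that the index really is additive over the broken-geodesic decomposition, with no hidden contribution from the corners where geodesic arcs enter or leave $R^{\infty}$. Once the value $3-n$ is established --- in particular once the threshold $n = 3$ is located --- everything else is formal.
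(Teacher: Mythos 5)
Your proposal follows the same architecture as the paper's proof in Section \ref{sec:non-posit-masl}: decompose a Hamiltonian chord between the fibres into geodesic arcs in the two lobes and crossings of the model region, establish additivity of the Maslov index over this decomposition (the paper's Lemma \ref{lem:Maslov_index_additive}, proved by $\overline{\partial}$-index additivity over a pair of pants with constant boundary conditions in a chosen Lagrangian foliation $\sL$), and compute the index of a single crossing in the local model. The genuine difference sits exactly at the step you yourself flag as the main obstacle: you propose to pin down the crossing shifts $s_{01}$, $s_{10}$ by tracking a Lagrangian phase through the wrapping isotopy and the gluing $(x,y)\mapsto(y,-x)$. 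The paper circumvents this bookkeeping entirely: it deforms the fibres inward until they meet once inside $R^{\infty}$, exhibits the rigid holomorphic quadrilateral of Lemma \ref{lem:quadrilateral} with boundary on $Q_0$, $Q_1$, and the deformed fibres, and reads the degree of $HF(\L[0],\L[1])$ off the degree constraint that a nonzero $\mu^3$ imposes, given the gradings of $HF(Q_i,\L[i])$ and $HF(Q_0,Q_1)$ already fixed by the maximally symmetric normalisation. That is the idea your sketch is missing, and it is what makes the local computation painless. Your asserted values $s_{01}=3-n$, $s_{10}=-1$ are one grading shift away from the paper's $(2-n)/2$ (resp.\ $(1-n)/2$); the shift-invariant sum $s_{01}+s_{10}=2-n$ agrees in both, so your numbers are consistent with a legitimate (if non-symmetric) grading choice, but you give no derivation of them. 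The remaining bookkeeping for $n>3$ versus $n=3$, including that $A^{0}_{0,1}$ is a square-zero ideal on which higher products vanish for degree reasons, is correct and matches the paper's argument.
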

\begin{example}
Proposition \ref{prop:non-positive} is false when $dim(Q_i) = 1$.  Let $M$ be the plumbing of two copies of $T^*S^1$, so equivalently $M = T^2 \backslash \{pt\}$ is a punctured torus.  The Reeb flow on $\partial M \cong S^1$ is periodic, and there is a spectral sequence converging to $SH^*(M)$ for which the columns of the $E_1$-page are given by one copy of $H^*(M)$ and infinitely many copies of $H^*(\partial M)$, in suitably shifted degrees. From here Seidel computed \cite[Example 3.3]{Seidel:bias} that $SH^*(M)$ is non-trivial in infinitely many positive degrees. Analogously, one can directly compute the indices of Reeb chords to see that in this case $A^*$ is also non-trivial in infinitely many positive degrees.
\end{example}

For our application, we shall need to compute the degree $0$ part of the wrapped Floer cohomology of a cotangent fibre, and relate its category of (ordinary) modules to the category of representations of the fundamental group.    Recall that every local system on a Lagrangian gives rise to an object of the Fukaya category.   Since $Q_i$ intersects $\L[i] $  at one point,  a module $E_i$ over the group ring of $\pi_{1}(Q_i,q_i)  $  gives rise to a module over $ HW^{0}(\L[i] ,\L[i])  $ by considering $HW^*(\L[i], E_i)$; since $Q_i$ is disjoint from $ \L[j] $  if $i \neq j$, we obtain a representation of $A^*$ by projecting to $ HW^{0}(\L[i] ,\L[i]) $. 

\begin{prop} \label{prop:compute_HW_0_cotangents}
If $n\geq 3$, there is an isomorphism from  $HW^{0}( \L[i], \L[i] )$ to the group ring of $\pi_{1}(Q_i,q_i)$ such that the pullback of $E_i$ is isomorphic to $ HW^*(\L[i], E_i) $.  
\end{prop}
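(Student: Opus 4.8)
The plan is to deduce this from Abouzaid's computation of the wrapped Floer cohomology of a cotangent fibre in a cotangent bundle. Recall from \cite{fibres-generate} that inside $T^{*}Q_i$ there is an $A_\infty$-quasi-isomorphism between the endomorphism algebra of the cotangent fibre $\Tq[i]$ and the chain algebra $C_{-*}(\Omega_{q_i}Q_i)$ on the based loop space with its Pontryagin product; in degree $0$ this restricts to an isomorphism onto $H_0(\Omega_{q_i}Q_i) = \bZ[\pi_1(Q_i,q_i)]$. The same analysis identifies, for a local system $E_i$ on $Q_i$ regarded as an object of the wrapped category, the group $HW^{*}(\Tq[i], E_i)$ with the homology of the (contractible) space of paths in $Q_i$ emanating from $q_i$, twisted by the pullback of $E_i$ along evaluation at the free endpoint; this is concentrated in a single degree and equal to the fibre $E_i|_{q_i}$, and the resulting $C_{-*}(\Omega_{q_i}Q_i)$-module structure restricts in degree $0$ to the tautological $\bZ[\pi_1(Q_i,q_i)]$-action. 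Granting these inputs, the proposition reduces to showing that replacing $T^{*}Q_i$ by the plumbing $M$ changes neither $HW^{0}(\L[i],\L[i])$ as an algebra nor the module $HW^{*}(\L[i],E_i)$.

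For that reduction I would sort the generators of the wrapped complex $CW^{*}(\L[i],\L[i])$ (for a Hamiltonian quadratic at infinity) into \emph{short} Reeb chords, lying in a fixed neighbourhood of $T^{*}Q_i$ that is disjoint from the plumbing region, and \emph{long} chords, which enter the plumbing region or run near $Q_j$ for $j \neq i$; the short chords, together with the strips between them, reproduce the complex $CW^{*}(\Tq[i],\Tq[i])$ computed in $T^{*}Q_i$. The index estimates of Section \ref{sec:non-posit-masl} that prove Proposition \ref{prop:non-positive} show that, for $n>3$, every long chord has strictly negative degree, so none contribute to $HW^{0}$; combined with an action filtration and a maximum-principle or neck-stretching argument that confines strips with both asymptotics short to the neighbourhood of $T^{*}Q_i$, this gives the isomorphism $HW^{0}(\L[i],\L[i]) \cong \bZ[\pi_1(Q_i,q_i)]$ of unital algebras. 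For the module statement one uses in addition that $E_i$ is supported near $Q_i$ while $\L[i]$ is disjoint from the plumbing region and from $Q_j$; consequently the generators of $CW^{*}(\L[i],E_i)$, the strips computing its differential, and the strips defining the $HW^{0}(\L[i],\L[i])$-action are precisely those already present and already counted in $T^{*}Q_i$, which identifies $HW^{*}(\L[i],E_i)$ with the cotangent-bundle module compatibly with the algebra identification, i.e. with the pullback of $E_i$.

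The crux, and the main obstacle, is exactly this geometric localisation: one must exclude, or push into negative degree, contributions of holomorphic curves on $\L[i]$ (or on $E_i$) that wander into the plumbing region, and this leans on the index computations of Section \ref{sec:non-posit-masl} together with the no-escape argument just mentioned. The case $n=3$ requires a minor adjustment, since by Proposition \ref{prop:non-positive}(2) the degree-$0$ part of the full algebra $A^{*}$ then also contains the off-diagonal ideal $A_{0,1}^{0} = HW^{0}(\L[0],\L[1])$, on which all higher products vanish and which is killed on the left by $A_{1,0}^{0}=0$; one runs the argument after passing to the quotient $A^{*}/A_{0,1}^{0}$, whose degree-$0$ part is the direct sum $\bigoplus_i HW^{0}(\L[i],\L[i])$, and observes that this ideal is irrelevant to $HW^{0}(\L[i],\L[i])$ and to $HW^{*}(\L[i],E_i)$ because $\L[i]$ is disjoint from $Q_j$.
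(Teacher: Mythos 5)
The route you propose is genuinely different from the paper's, and its central step has a gap. Your plan is to reduce to Abouzaid's computation in $T^{*}Q_i$ by sorting Reeb chords into short and long, observing via the index additivity of Section~\ref{sec:non-posit-masl} that long chords from $\L[i]$ to itself have strictly negative degree, and then asserting that the subcomplex spanned by short chords, ``together with the strips between them, reproduce the complex $CW^{*}(\Tq[i],\Tq[i])$.'' The index claim is essentially right (it follows from Lemma~\ref{lem:Maslov_index_additive} and the computation culminating in~\eqref{eqn:final-degrees}, not directly from the statement of Proposition~\ref{prop:non-positive}, and it already holds for $n=3$ since a round trip costs one degree-$0$ crossing and one strictly negative one -- your quotient by $A^0_{0,1}$ is beside the point). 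But the assertion that the strips among short chords localise to $T^{*}Q_i$ is exactly what is missing. No maximum principle forces a strip with two short asymptotics to stay inside $D^{*}Q_i$, since the relevant hypersurface is not of contact type ``seen from inside $D^{*}Q_i$'' relative to the ambient almost complex structure, and an SFT neck-stretching argument that would give such confinement via breaking is precisely the technology the paper declines to use (see the remarks after Theorem~\ref{thm:wrapped_category_generation} on the BEE bridge being incomplete). Moreover, since $HW^{0}$ is the cokernel of $d\colon CW^{-1}\to CW^{0}$, a long chord in degree $-1$ could in principle kill a short degree-$0$ class; so knowing the degree-$0$ generators coincide does not by itself identify the cohomologies.

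The paper avoids this issue entirely by never comparing $HW^{*}(\L[i])$ in $M$ with $HW^{*}(\Tq[i])$ in $T^{*}Q_i$. Instead, Section~\ref{sec:computation-degree-0} relates $HW^{*}(\L[i])$ in the plumbing \emph{directly} to the Morse homology of the energy functional on $\Omega_{q_i}Q_i$: it constructs the Abouzaid map $CW^{*}(\L[i])\to C_{-*}(\Omega_{q_i}Q_i)$ from \cite{wrapped-based}, constructs an Abbondandolo--Schwarz map in the other direction by counting half-cylinders in $\hat M$, and shows the composite is the identity in degree $0$. Crucially, the confinement used there is of \emph{geodesics and Reeb chords} of bounded action (Lemma~\ref{lem:short_loops_independent_of_T}), which follows from a metric neck-stretching argument, not of holomorphic curves: the paper explicitly notes that elements of $\Moduli(x; W^{u}(y))$ need not stay inside $T^{*}Q_i^{T}$, and instead deduces the required isomorphism from the action filtration, upper-triangularity, and the fact that the diagonal entries come from stationary half-cylinders. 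If you want to salvage your reduction to $T^{*}Q_i$, you would need to supply the localisation argument (likely SFT-style), or switch to the paper's strategy of comparing directly with the loop-space Morse complex where only the soft, metric version of neck-stretching is needed.
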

\begin{rem}
In the case where one of the Lagrangians $Q_i $ is a sphere, this Proposition can be derived from results of Bourgeois-Ekholm-Eliashberg \cite{BEE}. \end{rem}

Proposition \ref{prop:generated-by-local-systems} follows from Propositions \ref{prop:non-positive} and \ref{prop:compute_HW_0_cotangents} by application of the general theory of modules over non-positively graded $A_{\infty}$ algebras, which we review briefly.   The starting point is the Homological Perturbation Lemma of Kadeishvili (see \cite{kad}), which implies that any $A_{\infty}$ algebra $A$ is equivalent to one for which the differential vanishes.  Recall that an $A_{\infty}$ module is a  graded vector space $P$ together with a collection of maps 
\begin{equation}
\mu^{1|d} \co  P \otimes A^{d}\to P  
\end{equation}
of degree $1-d$ satisfying a homotopy associativity equation; Homological Perturbation similarly implies any such is equivalent to one with $\mu^{1|d} \neq 0$ only for $d>0$.  We shall call such algebras and modules \emph{minimal}.

The main structure result for a minimal $A_{\infty}$ algebra $A$ supported in non-positive degrees is the following, cf. related ideas in \cite{DGI}:
\begin{lem} \label{lem:filtration_degree}
The ascending degree filtration on any module is a filtration by $A_{\infty}$-submodules.  The subquotients, which are supported in a single degree, are  determined up to $A_{\infty}$ equivalence by the corresponding representation of $A^{0}$.
\end{lem}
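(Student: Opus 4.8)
The plan is to prove both statements by one degree count, using only that $A$ is minimal (so $\mu^{1}=0$ and $\mu^{1|0}=0$) and supported in degrees $\le 0$; we work, as the discussion above permits, with minimal representatives of $A$ and of the module $P$, and we write $F_{k}P=\bigoplus_{j\le k}P^{j}$ for the ascending filtration. First I would record the elementary facts about $A^{0}$: the arity-three $A_{\infty}$-relation reduces, once the $\mu^{1}$-terms are dropped, to $\mu^{2}(\mu^{2}(a,b),c)=\pm\,\mu^{2}(a,\mu^{2}(b,c))$, and $\mu^{2}$ visibly preserves $A^{0}$, so $(A^{0},\mu^{2})$ is an ordinary associative algebra (unital, by strict unitality of $A$). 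Moreover the projection $A\to A^{0}$, extended by zero in higher order, is a strict $A_{\infty}$-algebra morphism: for $d\ge 3$ the element $\mu^{d}(a_{1},\ldots,a_{d})$ lies in degree $\sum_{i}\deg a_{i}+2-d<0$ whenever all $\deg a_{i}\le 0$, and therefore projects to zero. Pullback along this morphism turns an ordinary $A^{0}$-module concentrated in a single degree into an $A_{\infty}$-module over $A$.

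Next I would verify that $F_{k}P$ is a sub-$A_{\infty}$-module. Let $p\in P^{j}$ with $j\le k$ and let $a_{i}\in A^{e_{i}}$ be homogeneous with $e_{i}\le 0$. Since $\mu^{1|d}$ has degree $1-d$ and $\mu^{1|0}=0$ by minimality, for every $d\ge 1$ the element $\mu^{1|d}(p,a_{1},\ldots,a_{d})$ is homogeneous of degree $j+\sum_{i}e_{i}+1-d\le j\le k$, hence belongs to $F_{k}P$. Thus all module operations preserve the filtration — the first assertion — and each quotient $F_{k}P/F_{k-1}P$ inherits an $A_{\infty}$-module structure over $A$; as a graded vector space it is $P^{k}$, concentrated in degree $k$.

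Finally I would identify that induced structure. Running the same count, for $p$ representing a class in $P^{k}$ the degree-$k$ component of $\mu^{1|d}(p,a_{1},\ldots,a_{d})$ can be nonzero only if $\sum_{i}e_{i}+1-d=0$; since each $e_{i}\le 0$ this forces $d\le 1$, and $d=0$ gives $\mu^{1|0}=0$ while $d=1$ forces $e_{1}=0$. Hence on $F_{k}P/F_{k-1}P$ every operation vanishes except $\mu^{1|1}\colon P^{k}\otimes A^{0}\to P^{k}$. In the $A_{\infty}$-module equations restricted to this subquotient, every term involving a $\mu^{1|d}$ with $d\ge 2$, a $\mu^{d}$ with $d\ge 3$, or an input from $A^{<0}$ strictly lowers degree and so drops out, and what remains is exactly $\mu^{1|1}(\mu^{1|1}(p,a),b)=\pm\,\mu^{1|1}(p,\mu^{2}(a,b))$, i.e. associativity. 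So $P^{k}$ with this operation is an $A^{0}$-module, and $F_{k}P/F_{k-1}P$ is precisely its pullback along $A\to A^{0}$; in particular it is determined — even up to strict isomorphism, not merely $A_{\infty}$-equivalence — by this representation of $A^{0}$.

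I do not anticipate a genuine obstacle: the whole argument is bookkeeping with the degrees $1-d$ of $\mu^{1|d}$ and $2-d$ of $\mu^{d}$ against the non-positivity of $A$, with minimality invoked only to kill $\mu^{1}$ and $\mu^{1|0}$. The one point that wants care is writing out the $A_{\infty}$-module equations far enough to be sure nothing beyond plain associativity survives on the subquotients, together with the usual attention to left/right conventions and Koszul signs — neither of which affects the structural conclusion.
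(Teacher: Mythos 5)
Your proof is correct and is essentially the paper's argument spelled out in full: the paper's own proof just records that minimality plus non-positivity of $A$ force $\mu^k$ ($k\ge 2$) and $\mu^{1|d}$ ($d\ge 1$) to be non-positive with only $\mu^2$ and $\mu^{1|1}$ of degree zero, and declares the rest straightforward, which is exactly the degree bookkeeping you carry out. Your added observation that the degree-zero projection $A\to A^0$ is a strict $A_\infty$-morphism, and the resulting upgrade from \textquotedblleft up to $A_\infty$-equivalence\textquotedblright\ to \textquotedblleft up to strict isomorphism,\textquotedblright\ are correct sharpenings but not a different method.
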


\begin{proof}
Under the assumption of minimality, all the $A_{\infty}$-operations $\mu^k$ have degree $2-k \leq 0$, and only the product has degree $0$. Similarly, all the $\mu^{1|d}$ have degree $<0$ except for the multiplication $\mu^{1|1}$. The rest is straightforward.
\end{proof}

\begin{proof}[Proof of Proposition \ref{prop:generated-by-local-systems}]
By Theorem \ref{thm:wrapped_category_generation}, it suffices to prove that every module over $A^*$ which is supported in finitely many cohomological degrees has a filtration by such modules induced by local systems on $Q_0$ and $Q_1$.  It follows from the discussion above that every such module has a filtration by modules supported in a single cohomological degree, and that such a module is completely determined by the representation of $A^0 = \oplus_{i,j} HW^{0}( \L[i], \L[j]) $.  If $n>3$, this group is a isomorphic to the sum of group rings $\oplus_i \pi_{1}(Q_i,q_i)$, and the module action is the obvious one, by Proposition \ref{prop:compute_HW_0_cotangents}.  Hence, every such module arises from a local system on $Q_i$.

When $n=3$ there is an additional step. Recall that $A_{0,1}^{0} = HW^{0}(\L[0],  \L[1])$ is an ideal in $A^{0}$.  Given any module $P$ over $A^{0}$, $A_{0,1}^{0} \cdot P$ is a submodule on which $A_{0,1}^{0}$ acts trivially because the product vanishes on it.  Since $A_{0,1}^{0}$ also acts trivially on $P / \left( A_{0,1}^{0} \cdot P   \right)$, we may write $P$ as an extension
\begin{equation}
  P / \left(A_{0,1}^{0} \cdot P\right) \to A_{0,1}^{0} \cdot P
\end{equation}
of two modules on which $A_{0,1}^{0}$ acts trivially.  Each of these is determined by its structure as a module over $ \oplus_{i} HW^{*}( \L[i] ,\L[i]) $.  By the same argument as above, such a module corresponds to a local system on $Q_i$.
\end{proof}

We continue to assume that $A^*$ is graded in non-positive degrees.
The following gives some control on the morphisms in a twisted complex representing a suitable shift of a closed exact Lagrangian, and was used in the discussion after the statement of Proposition \ref{prop:braid-orbit}.

\begin{lem}\label{lem:degree-at-least-2}
Any finite rank module over $A^*$ is equivalent to a twisted complex  $\cC^{\bullet} =(\oplus_{i} \bZ[i] \otimes P^{-i}, \delta_{ij})$, with each $P^{-i}$ an ordinary $A^0$-module and $\bZ[i]$ a free graded abelian group concentrated in degree $-i$.   The morphisms $\delta_{ij}$ are given by elements of $Hom(P^{-i}, P^{-j})$ of degree $\geq 2$.
\end{lem}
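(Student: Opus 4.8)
The plan is to read this off from the degree filtration of Lemma~\ref{lem:filtration_degree} together with the standard dictionary between finite filtrations and twisted complexes, the only genuinely new input being an elementary count of degrees. First I would make the routine reductions: by the Homological Perturbation Lemma of Kadeishvili \cite{kad} the given module is quasi-isomorphic to a minimal one, so assume $\mu^{1|0}=0$, and finite rank forces the module to be supported in finitely many degrees. Write $P_{k}$ for its degree $k$ piece and $F_{m}=\oplus_{k\leq m}P_{k}$. Since $A^{*}$ lives in non-positive degrees, every structure map $\mu^{1|d}$ with $d\geq 1$ carries $P_{k}\otimes (A^{*})^{\otimes d}$ into $\oplus_{l\leq k}P_{l}$, landing in degree exactly $k$ only for $d=1$ with a degree-zero algebra input. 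Hence $F_{\bullet}$ is a finite exhaustive filtration by $A_{\infty}$-submodules whose subquotients, placed in their respective degrees and written $\bZ[i]\otimes P^{-i}$, retain only the $d=1$, degree-zero part of the structure, i.e.\ are ordinary $A^{0}$-modules --- this is exactly the content of Lemma~\ref{lem:filtration_degree}.

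Second I would repackage this filtration as a twisted complex. As the module is minimal it has no internal differential, so as a graded group it already equals $\oplus_{i}\bZ[i]\otimes P^{-i}$, and its structure maps split into the degree-preserving part, which is the collection of $A^{0}$-module structures on the $P^{-i}$, and the remaining off-diagonal part, which by the previous paragraph strictly lowers the internal degree. Assembling the off-diagonal components gives a datum $\delta=(\delta_{ij})$, and the fact that $\delta$ satisfies the Maurer--Cartan equation is nothing but a transcription of the $A_{\infty}$-module relations; thus $\cC^{\bullet}=(\oplus_{i}\bZ[i]\otimes P^{-i},\delta)$ is a twisted complex quasi-isomorphic to the original module. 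Equivalently one may invoke the extension-versus-twisted-complex correspondence of \cite[Chapter~1]{seidel-book} applied to the finite filtration $F_{\bullet}$.

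Third comes the degree count, which is the actual point. Because each off-diagonal structure map strictly decreases the internal degree, and since the piece in slot $i$ sits in degree $-i$, we have $\delta_{ij}=0$ unless $j>i$. When $\delta_{ij}\neq 0$ it is, by definition of a twisted complex, a degree $1$ morphism $\bZ[i]\otimes P^{-i}\to \bZ[j]\otimes P^{-j}$ in the category of $A_{\infty}$-modules over $A^{*}$; absorbing the two shifts identifies it with an element of $\Hom(P^{-i},P^{-j})$ of degree $1+(j-i)$, which is $\geq 2$ since $j-i\geq 1$. That is the assertion.

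The step needing the most care is the second one: giving a clean, sign-correct description of $\delta$ directly from the operations $\mu^{1|d}$ of the module and verifying the Maurer--Cartan equation. In the minimal setting, however, this is a bookkeeping translation of the $A_{\infty}$-module equations rather than a real difficulty, so the substantive content of the lemma is confined to the elementary degree estimate of the last step, which uses nothing beyond the non-positivity of $A^{*}$ and the shift conventions.
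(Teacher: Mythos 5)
Your proposal is correct and follows essentially the same route as the paper: reduce to a minimal module via homological perturbation, observe that the ascending degree filtration is a filtration by $A_{\infty}$-submodules precisely because $A^{*}$ is non-positively graded, and read off the degree estimate $1+(j-i)\geq 2$ from the shift convention for twisted complexes. The only cosmetic difference is that the paper assembles the twisted complex inductively via exact triangles on the filtration steps, whereas you repackage the off-diagonal structure maps directly; for a minimal module these are the same thing.
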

\begin{proof}
Applying the homological perturbation lemma, we assume that such a module $P$ is minimal, and define $P^{k}$ to be its component in degree $k$. The ascending degree filtration
\begin{equation}
  P^{(l)} = \bigoplus_{k \leq l} P^{k}
\end{equation}
 is a filtration by submodules. Note that we have an exact triangle
\begin{equation}
  \xymatrix{P^{(l-1)} \ar[rr] & &  P^{(l)}  \ar[ld] \\
& \bZ[-l] \otimes P^{l} \ar[lu] .&  }
\end{equation}
In particular, $ P^{(l)}  $  is quasi-isomorphic to the cone of a morphism
\begin{equation}
  \bZ[-l] \otimes P^{l} \to P^{(l-1)}.
\end{equation}
Since $P$ is of finite rank, the degree filtration terminates, and we obtain, by induction, a description of $P$ as a twisted complex.   Note that, in our convention for a twisted complex, the differential $\delta_{i,j}$ is non-vanishing only if $i<j$.  We therefore define the $i$\th part of this complex to be $   \bZ[i] \otimes P^{-i} $.   Since the morphisms $\delta_{ij}$ have total degree $1$, and the homological shift is strictly negative, the corresponding morphism in $ Hom(P^{-i}, P^{-j}) $ has degree strictly greater than $1$.
\end{proof}

\begin{rem} \label{rem:generation_infinite_local_systems}
Lemma \ref{lem:filtration_degree} was implicitly concerned with finite rank modules, in particular finite rank local systems over exact Lagrangians, but there is a version which holds for arbitary rank systems. The relevant result is essentially \cite[Proposition 3.3]{Abouzaid-ExactLag}.  We fix a Liouville manifold $M$, a collection of (not necessarily closed) exact  Lagrangian submanifolds $\{L_i\}$,  and a further collection of closed exact Lagrangians $\{Q_j\}$, satisfying:
  \begin{enumerate}
  \item the $L_i$ split-generate the Fukaya category;
  \item the algebra $A^* =\oplus_{i,j} HW^*(L_i, L_j)$ is supported in non-positive degrees;
  \item every indecomposable $A^0$-module is isomorphic to that defined by some (not necessarily finite  rank) local system $V_j \rightarrow Q_j$ over one of the $Q_j$.
  \end{enumerate}
 Then \cite[Proposition 3.3]{Abouzaid-ExactLag} asserts that every (arbitrary rank) local system over a closed exact Lagrangian submanifold of $M$ can be written as a finite twisted complex over (arbitrary rank) local systems on the $Q_j$. It is this stronger result which is needed to prove that exact Lagrangians in the $A_2^n$-spaces are actually simply connected.
\end{rem}

\subsection{Neck stretching} \label{sec:neck-stretching}

The final three sections are devoted to the proofs of Proposition \ref{prop:non-positive} and Proposition \ref{prop:compute_HW_0_cotangents}. 
For a single manifold $Q$, the fact that $HW^*(T_q^*Q, T_q^*Q)$ is supported in non-positive degree arises from the relation between the Maslov index of binormal chords in a cotangent bundle, and the Morse index of the corresponding critical points of the path space action functional.  This relation reduces to the fact that a curve in the Lagrangian Grassmannian of $T^*Q$ defined by parallel transport of the vertical distribution along a curve in $T^*Q$ of the form $(q(t), \dot{q}(t))$ meets the Maslov cycle always with the same co-orientation \cite{Mane}.  To obtain an analogue for plumbings, we begin by equipping $Q_i$ with metrics $g_i$ which can be written as
\begin{equation}
f(|x|) g_{eucl}
\end{equation}
with $f$ a smooth function which is identically $1$ near the origin, in some charts $U_{i}$ centered at $q_i$, whose domains are balls of radius $e^{2}$ in $\bR^{n}$.  Moreover, we require that
\begin{equation} \label{eqn:charts-for-plumbing}
  \parbox{30em}{$f(|x|) = \frac{1}{|x|^2}$ whenever $1<|x|$.}
\end{equation}
This condition on $f$ shall be used to facilitate ``stretching the neck:''
\begin{lem}
  The complement of the origin in $\bR^{n}$, equipped with the metric
  \begin{equation}
     \label{eq:rescaled_metric}
 \frac{g_{eucl}}{ |x|^2}. 
  \end{equation}
is isometric to the product metric on $\bR \times S^{n-1}$.
\end{lem}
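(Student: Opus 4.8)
The plan is to write down an explicit isometry, after which the statement reduces to a short computation in polar coordinates.

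First I would express the Euclidean metric on $\bR^{n} \setminus \{0\}$ in polar form. Writing $r = |x|$ and letting $g_{S^{n-1}}$ denote the round metric on the unit sphere $S^{n-1} \subset \bR^{n}$ (pulled back via $x \mapsto x/|x|$), one has the standard identity $g_{eucl} = dr^{2} + r^{2}\, g_{S^{n-1}}$, valid on the punctured space. Dividing by $|x|^{2} = r^{2}$ then gives
\[
\frac{g_{eucl}}{|x|^{2}} \ = \ \frac{dr^{2}}{r^{2}} + g_{S^{n-1}}.
\]

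Next I would change variables on the radial factor by setting $t = \log r$. Since $r$ ranges over $(0,\infty)$, this identifies $(0,\infty)$ diffeomorphically with $\bR$, and $dt = dr/r$ gives $dt^{2} = dr^{2}/r^{2}$. Under this substitution the metric above becomes $dt^{2} + g_{S^{n-1}}$, which is exactly the product metric on $\bR \times S^{n-1}$. Concretely, the map
\[
\Phi \co \bR^{n} \setminus \{0\} \longrightarrow \bR \times S^{n-1}, \qquad \Phi(x) = \bigl(\log|x|,\, x/|x|\bigr),
\]
is a diffeomorphism with inverse $(t,\theta) \mapsto e^{t}\theta$, and the computation above shows $\Phi^{*}\bigl(dt^{2} + g_{S^{n-1}}\bigr) = g_{eucl}/|x|^{2}$, so $\Phi$ is the desired isometry.

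There is no serious obstacle here: the only points requiring care are the standard polar-coordinate splitting of $g_{eucl}$ away from the origin and the fact that $r \mapsto \log r$ is a diffeomorphism of $(0,\infty)$ onto $\bR$, which is what ensures the cylinder factor is the full line $\bR$ as claimed.
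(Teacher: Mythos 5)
Your proof is correct and follows exactly the same route as the paper: the paper also writes points as $r\theta$ in polar coordinates and observes that $(s,\theta)\mapsto e^{s}\theta$ is the desired isometry, which is precisely the inverse of your map $\Phi$. You have simply spelled out the polar-coordinate computation $g_{eucl}/r^{2} = dr^{2}/r^{2} + g_{S^{n-1}} = dt^{2} + g_{S^{n-1}}$ that the paper leaves implicit.
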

\begin{proof}
Using radial coordinates, we may write every point in the complement of the origin as $r \theta$, with $\theta \in S^{n-1}$ and $ r \in (0,+\infty)$.  The map
\begin{equation}
  (s,\theta) \mapsto e^{s} \theta
\end{equation}
provides the desired isometry between the product metric and Equation \eqref{eq:rescaled_metric}.
\end{proof}

\begin{figure}[ht]
  \centering
   \includegraphics{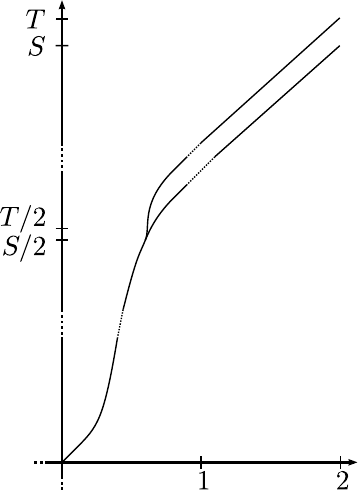}
  \caption{Graphs of the functions $\chi_{T}$ and $\chi_{S}$}
  \label{fig:chi_function}
\end{figure}

With this in mind, we find that the Riemannian manifold $(Q_i,g_{i})$ admits an isometric embedding of $[0, 2] \times S^{n-1}$.  We define a family of metrics $g_{i}^{T}$  which agrees with $g_i$ away from this region, and which, in this region is stretched, so that it admits an isometric embedding of $[0, T+2] \times S^{n-1}$.  More precisely, let us fix the coordinates 
\[
(2-\log(|x|), \theta) \quad \textrm{on the region} \ \{1 < |x| < e^{2}\} \subset U_i.
\] 
Note that in these co-ordinates, $\{0\} \times S^{n-1} \subset [0,2] \times S^{n-1}$ corresponds to the sphere of radius $e^2$ around $q_i$, whilst $\{2\}\times S^{n-1}$ corresponds to the sphere of radius 1.  In particular, a neighbourhood of $0$ in the collar $[0,2] \times S^{n-1}$ extends the \emph{complement} of $U_i$, at the \emph{outer} boundary of the chart  centred on $q_i$. 
In these coordinates, the metrics $g_{i}^{T}$ are defined as
\begin{equation}
\frac{d \chi_{T}}{ds}  g_{[0,2]} \oplus g_{S^{n-1}}
\end{equation}
where $\chi_{T} \co [0,2] \to \bR$ is a family of strictly monotone functions satisfying the following conditions (see Figure \ref{fig:chi_function}):
\begin{enumerate}
\item $\chi_{T} \equiv  s $ in a neighbourhood of $0$.  This condition ensures that $g_i$ extends this metric smoothly to the complement of $U_i$.
\item $\chi_{T} \equiv T+s $ on a neighbourhood of $[1,2]$.  This condition implies that a path in the radial direction between the two boundary spheres has length $T+2$, and that $g_i$ extends this metric smoothly to the rest of $U_i$.
\item $\chi_{S}^{-1}(0,S/2) = \chi_{T}^{-1}(0,S/2)$ whenever $S < T$.  Let $ Q_i^{T}$ denote the union of $ \chi_{T}^{-1}(0,T/2)$ with the complement of $U_i$.   From this condition, we conclude that we have an inclusion $  Q_i^{S} \subset  Q_i^{T} $ whenever $S < T$, and that the restriction of $g_{i}^{T}$ and $g_{i}^{S}$ to $ Q_i^{S}  $  agree.
\item If $s < 1 $, $\lim_{T \to +\infty}\chi_{T}(s)$ is well defined and finite.  Let $Q_{i}^{op}$ denote the complement of the set of points in $U_i$ which are the images of points with norm $ |x|  \leq e $.  This final condition implies
  \begin{equation} \label{eq:increasing_union}
    \cup_{T} Q_i^{T} = Q_{i}^{op}.
  \end{equation}
\end{enumerate}

We now consider the family of Liouville domains associated to these metrics.  First, we set the ``plumbing region"
\begin{equation} \label{eq:model_region}
  R^0 = \{ |x|^2 |y|^2 \leq 1 , |x| < e, |y|< e\} \subset \bR^n \times \i \bR^n,
\end{equation}
and we define 
\begin{equation}
  M = D^*Q_0^{op} \cup R^0 \cup D^*Q_1^{op}
\end{equation}
where the unit cotangent bundle is taken with respect to the metric $g_i$.  The reader may check that this definition makes sense, by computing that the unit disc cotangent bundle of the product $[0,1] \times S^{n-1}$, isometrically embedded in $Q_i $ with respect to the metric $g_i$, is symplectomorphic to the set of points in $R^0$ for which the norm of the $x$ variable lies in the interval $[1, e]$.   

Since $\partial M$ is a smooth contact manifold, we have an associated complete Liouville manifold $\hat{M} = M \cup \partial M \times [1,+\infty)$.   Note that, by construction, we have (disjoint) embeddings $T^{*}  Q_i^{op} \subset  \hat{M} $.  With this in mind, we may construct a Liouville subdomain
\begin{equation}
  M^{T} \subset \hat{M}
\end{equation}
by taking the union
\begin{equation} 
  M^{T} = D^*_{T}Q_0^{op} \cup R^0 \cup D^*_{T}Q_1^{op}
\end{equation}
where $ D^*_{T}Q_i^{op}  $ is the unit cotangent bundle with respect to the metric $g_{i}^{T}$.  Note that $\hat{M}$ is also symplectomorphic to the completion of $M^{T}$.   In particular, all Floer homological invariants are independent of $T$.

There is a slightly different description of the manifolds $M^{T}$ which is related to the idea of \emph{stretching the neck}.  For $T \in \bR_{>0}$, we may define a plumbing region 
\begin{equation} \label{eq:model_region}
  R^T = \{ |x|^2 |y|^2 \leq 1 , |x| < e^{T+1} , |y|< e^{T+1}\} \subset \bR^n \times \i \bR^n.
\end{equation}
The set of points whose $x$ coordinate lies in the interval $[1, e^{T+1}]$ can be identified with the unit disc cotangent bundle of the product $[0,T+1] \times S^{n-1}$, which, by construction, isometrically embeds in $Q_i$ with respect to the metric $g_i^{T}$.  In particular, $M^{T}$ has an alternative description as
\begin{equation} \label{eq:stretched_plumbing}
 D^*Q_0^{T} \cup R^{T} \cup D^*Q_1^{T}.
\end{equation}

\begin{rem} \label{rem:moon}
The presentation $R^{\infty} = \{ |x|^2 |y|^2 \leq 1\}$ for the plumbing region suggests a misleading symmetry exchanging $x$ and $y$.  However, the symplectic form $dx \wedge dy$ is negated under this symmetry, and the corresponding Hamiltonian flow is therefore reversed to give negative geodesic flow if one views $Q_0$ as the local zero-section. For instance, consider two cotangent fibres $\L[i]$. We claim there is a Hamiltonian chord contained in $\partial R^{\infty}$ between the boundary Legendrians of these fibres in at most \emph{one} of the two possible directions. The result is particularly vivid in the flat case, $n=2$, illustrated in Figure \ref{fig:dark_side_moon}; from the viewpoint of the $Q_0$-plane, one Legendrian boundary projects to a point $q_0$, whilst the other projects to a sphere $\{|q| = 1/|q_1|\}$, and the possible chords are constrained by the associated tangent data.
\end{rem}
 \begin{figure}
  \centering
  \includegraphics{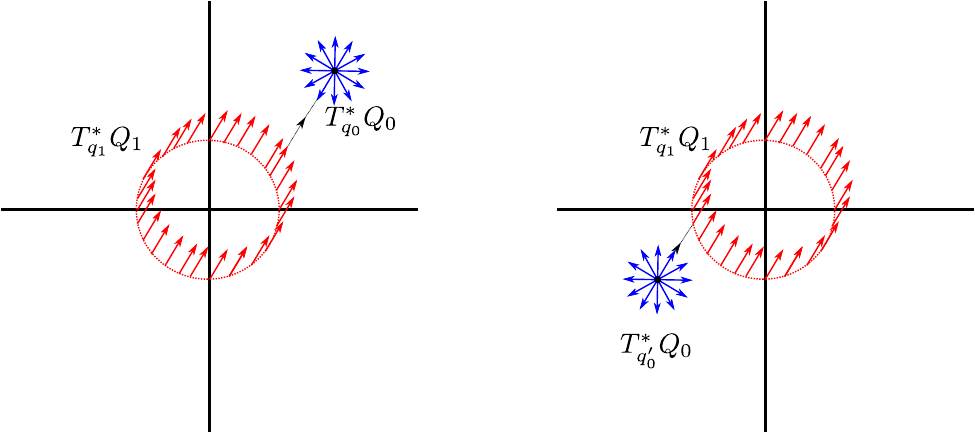}
  \caption{Indeed, the moon has a dark side.}
  \label{fig:dark_side_moon}
\end{figure}

\subsection{Non-positivity of Maslov indices in plumbings} \label{sec:non-posit-masl}

In the plumbing, the Maslov index measures twisting along a Hamiltonian chord of a Lagrangian foliation which interpolates between the usual vertical foliations in the two pieces,  see Figure \ref{fig:foliation_dim_1} for a picture in the lowest-dimensional case.  Since it is hard to make these Lagrangian foliations completely explicit in dimension $>2$, we will reduce the computation of indices to a problem involving holomorphic disks.

By applying a suitable conformal dilation to the symplectic form, we now work with the Liouville domain $M$ which contains a standard plumbing region $R^{\infty} \cap B_0(2) \subset \bC^n$.
Fix a Reeb chord $\gamma$  from $\L[0]$ to $\L[1]$ inside the boundary of $M$, where by standard convention these cotangent fibres are disjoint from $R^{\infty} \subset M$.  We will decompose $\gamma$ into a sequence of chords $\gamma = \gamma_1 \ast \cdots \ast \gamma_n$ where each $\gamma_i$ is
\begin{itemize}
\item either a segment of a geodesic inside the (vertical) boundary of the open region $D^*Q_i^{op}$ for $i=0,1$;
\item or a Reeb chord for the Hamiltonian flow on $\partial R^{\infty}$ for the model region described above.
\end{itemize}

In particular, once $\gamma$ is given, one can choose a level set $|x|=1+\varepsilon$ of the radial function in the local model which $\gamma$ crosses transversely.  In particular, we may find $0< \delta < \varepsilon$ such that $\gamma$ crosses every level set between $1+\varepsilon  $ and $ 1+\delta $ transversely.  Under this assumption, every time the geodesic crosses the level set $|x|=1+\varepsilon$, it reaches $|x|=1+\delta$, and vice versa depending on whether $\gamma$ is leaving or entering $D^{*} Q_{0}$.  Following \cite{plumbings}, we fix a Lagrangian foliation $\sL \subset TM$, depending on $\gamma$, with the following properties:

\begin{itemize}
\item  $\sL$ co-incides with the vertical foliations near $S^*Q_i$ except in the collar region $|x| \in (1+\delta, 1+\varepsilon)$; 
\item $\sL$ interpolates smoothly between the vertical foliations in the collar.
\end{itemize}  

$\sL$ will be tangent to the boundary at some points inside the collar, cf. Figure \ref{fig:foliation_dim_1}.
\begin{figure}
  \centering
  \includegraphics{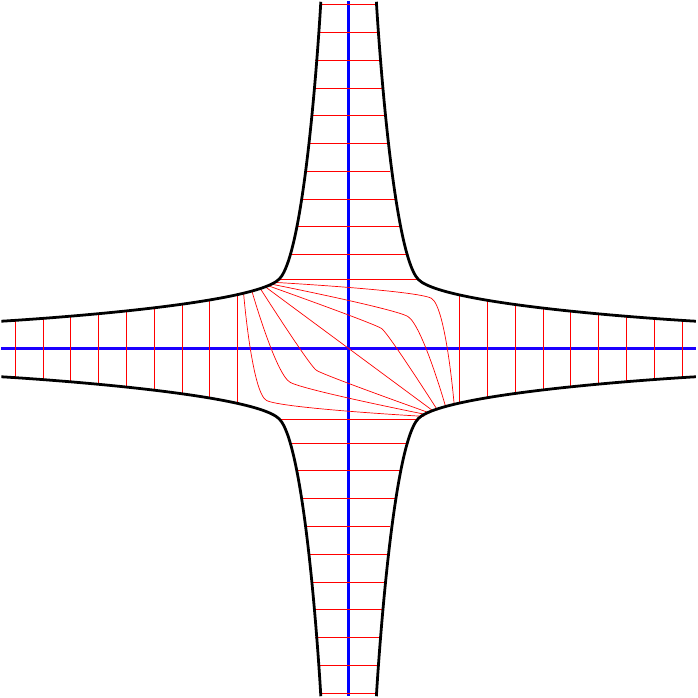}
  \caption{ }
  \label{fig:foliation_dim_1}
\end{figure}
All the foliations (for different chords or slicings) can be taken to be homotopic, hence all define the same Maslov indices.  For any choice of gradings of the cotangent fibres $\L[i]$, the foliation $\sL$ defines a canonical Maslov index $\mu(\gamma) \in \bZ$.  Moreover, each $\gamma_i$ is a Reeb chord between Legendrian boundaries of disk cotangent fibres, hence again has a uniquely defined Maslov index.  The first claim is that the index of the whole chord is governed by the indices of the pieces (this is presumably well-known, compare to \cite[Theorem 2.3]{Robbin-Salamon} for a corresponding local statement).

\begin{lem} \label{lem:Maslov_index_additive}
$\mu(\gamma) = \sum_j \mu(\gamma_j)$.
\end{lem}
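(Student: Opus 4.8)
The plan is to reinterpret $\mu(\gamma)$ as the Fredholm index of a linear Cauchy--Riemann operator and then prove additivity by linear gluing of such operators, i.e.\ by the concatenation axiom for the Maslov index. Concretely, the foliation $\sL$ assigns to each point of $M$ a Lagrangian subspace of the tangent space; linearising the Hamiltonian flow along $\gamma$ and transporting the tangent space to $\L[0]$ produces a path $\Lambda(t)$ in the Lagrangian Grassmannian, and, for the chosen gradings, $\mu(\gamma)$ is the Maslov index of $\Lambda(t)$ relative to the Maslov cycle determined by the leaves of $\sL$ --- equivalently the index of the model $\bar\partial$-operator on a half-plane whose boundary condition is the pair consisting of $\L[0]$ and the $\sL$-rotation of $\L[1]$. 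I would first record this reformulation, so that the Lemma becomes a concatenation statement about indices of Cauchy--Riemann operators with varying Lagrangian boundary conditions.

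The key step is to cut $\gamma$ at the endpoints of the pieces $\gamma_j$, in particular at the points where $\gamma$ crosses the level sets $\{|x|=1+\varepsilon\}$ and $\{|x|=1+\delta\}$. By the choice $0<\delta<\varepsilon$ arranged just above, $\gamma$ meets every level set in the collar transversely, and at each such cut point $p$ the leaf $\sL(p)$ is a genuine vertical (cotangent-fibre) Lagrangian; hence $\Lambda(t)$ is transverse to the Maslov cycle of $\sL$ there. Taking $\sL(p)$ as reference Lagrangian at each cut point, the catenation property for the Robbin--Salamon index (the global counterpart of \cite[Theorem 2.3]{Robbin-Salamon}, equivalently linear gluing of Cauchy--Riemann operators along matching boundary data) yields $\mu(\gamma)=\sum_j \mu_{\sL}(\gamma_j)$, where $\mu_{\sL}(\gamma_j)$ is the index of the restricted construction on $\gamma_j$ with the vertical leaves of $\sL$ at its two endpoints as reference.

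It then remains to identify $\mu_{\sL}(\gamma_j)$ with the intrinsic index $\mu(\gamma_j)$. If $\gamma_j$ is a geodesic segment in the vertical boundary of $D^*Q_i^{op}$, then near $\gamma_j$ the foliation $\sL$ \emph{is} the vertical foliation of $T^*Q_i$ and the reference Lagrangians are the cotangent fibres, so $\mu_{\sL}(\gamma_j)$ is by definition the Maslov index of the corresponding binormal chord between disk cotangent fibres --- the classical relation between this index and the Morse index of the geodesic that holds in any cotangent bundle (cf.\ \cite{Mane}). If $\gamma_j$ lies in $\partial R^{\infty}$, then near each of its endpoints $\sL$ coincides with the vertical foliation of one of the two cotangent-bundle pictures forming the model region, and the intrinsic Maslov index of $\gamma_j$ as a Reeb chord between Legendrian boundaries of disk cotangent fibres is by construction computed with respect to exactly this boundary data; fixing the grading conventions once and for all makes the two numbers agree. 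Summing over $j$ gives the Lemma.

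The main obstacle is the bookkeeping at the transitions rather than any single estimate: one must ensure that the homotopy class of foliation used to define $\mu(\gamma)$ restricts, on each collar, to the one implicitly used to define $\mu(\gamma_j)$ --- which is exactly why it matters (as remarked in the text) that all the interpolating foliations, for all choices of chord and slicing, are mutually homotopic rel the vertical foliations --- and that the reference Lagrangian picked at a cut point is literally the same for the two adjacent pieces, so that no index is double-counted or lost (the transversality arranged via $\delta<\varepsilon$ is precisely what prevents a hidden half-integer contribution at a break). A subsidiary point, easily dispatched, is the well-definedness of the intrinsic index $\mu(\gamma_j)$ of a model-region chord independently of the auxiliary capping used to define it; this is the standard well-definedness of the grading of a Reeb chord in a fixed contact boundary, given fixed gradings of the Lagrangians.
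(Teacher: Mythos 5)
Your proof is correct and follows essentially the same route as the paper: reinterpret $\mu(\gamma)$ as the Fredholm index of a $\overline{\partial}$-operator on a half-plane in the $\sL$-trivialisation, then use linear gluing/concatenation together with the key observation that the Legendrian tangent spaces at the cut points lie in $\sL$ so that no index correction appears at the seams. The only difference is presentational: where you invoke the Robbin--Salamon catenation axiom directly, the paper writes out the gluing correction as the index of a pair-of-pants $\overline{\partial}$-operator and observes that it vanishes because its boundary condition is constant in the $\sL$-trivialisation.
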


\begin{proof}
Fix a Hamiltonian chord $\hat{\gamma}: [0,1]\rightarrow \partial M$ between Legendrian submanifolds $\Lambda_i \ni \gamma(i)$.  There is a unique trivialisation of $\hat{\gamma}^*TM$ which takes the Lagrangian foliation $\sL$ to the constant real subbundle $\hat{\gamma} \times \bR^n \subset \hat{\gamma} \times \bC^n$.   The Maslov index of this Reeb chord is given by the index of a $\overline{\partial}$-operator on the half-plane (disk with one boundary puncture), with Lagrangian boundary condition given by the image under parallel transport by the Hamiltonian flow of the tangent plane to $\Lambda_0$. 

Now consider a pair of Reeb chords $\hat{\gamma}_+$ and $\hat{\gamma}_-$ with the property that $\hat{\gamma}_+(1) = \hat{\gamma}_-(0)$ and with all 3 endpoints of the chords lying on Legendrian submanifolds whose tangent spaces are contained in the Lagrangian foliation $\sL$. Let $\hat{\gamma}_{\circ}$ denote the obvious concatenation of $\hat{\gamma}_{\pm}$.  The usual additivity of the index for $\overline{\partial}$-operators implies that 
\[
\mu(\hat{\gamma}_{\circ}) - (\mu(\hat{\gamma}_+) + \mu(\hat{\gamma}_-)) \ = \ index(\mathcal{P})
\]
where $\mathcal{P}$ is the operator given by counting holomorphic sections over a three-punctured disk with boundary conditions given by the end-points of the three chords appearing on the left side of the equation. Since all these boundary conditions lie in the foliation $\sL$, in the given trivialisation the Lagrangian boundary condition on this pair of pants is constant, hence it has index 0.
\end{proof}

To prove Proposition \ref{prop:non-positive}, it suffices to show that the indices of the $\gamma_j$ are all non-positive when $dim(Q_i) \geq 2$.  
By our choice of foliation and slicing of $\gamma$, for those Reeb chords contained in the $D^*Q_i^{op}$ this follows from the usual non-positivity (in our convention) of indices for geodesic chords in any Riemannian metric.

 The remaining pieces $\gamma_i$ of chord are those which traverse the plumbing region; in other words, which crosses the region separated by the hypersurfaces $|x| = 1 + \varepsilon$ and $ |x| = 1 + \delta$.   Choosing $\epsilon$ and $\delta$ sufficiently close, these chords correspond to short geodesics on the product of an interval with the sphere, which start and end on different boundary components.

A Hamiltonian chord between $L_0$ and $L_1$ for the time-1 flow $\phi_H^1$ is also an intersection point of $L_0$ and $\phi_H^1(L_1)$, so the problem of determining the index of a chord is equivalent to the problem of determining the index of a Lagrangian intersection. In the wrapped category, indices for such intersection points are invariant under arbitrary (not necessarily compactly supported) Lagrangian isotopies which do not change the configuration of intersection points of the two Lagrangians.  
 In particular, we can deform the cotangent fibres near the boundary of the plumbing inwards towards the origin until they intersect, and by continuity the index of the unique Reeb chord will then be given by the index of the unique Lagrangian intersection point of the deformed fibres.  Take these for definiteness to be the fibres through $(1/2,0,\ldots,0)$ and $(\i/2, 0, \ldots, 0)$ in the local model $R^{\infty}$.
 
\begin{lem} \label{lem:quadrilateral}
There is a rigid holomorphic quadrilateral with boundary edges on the linear Lagrangian subspaces
\[
\bR^n, \ \i \bR^n, \ \bR^n + (\i /2,0,\ldots,0), \ \i \bR^n + (1/2,0,\ldots,0).
\]
\end{lem}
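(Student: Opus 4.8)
The plan is to exhibit the quadrilateral by hand, exploiting that all four corners lie on a single complex line and that the whole configuration is a product along it. First I would locate the corners. Writing $\bC^n=\bigoplus_{j=1}^n\bC_{z_j}$ with $z_j=x_j+\i y_j$, the four planes are $L_1=\{y=0\}$, $L_2=\{x=0\}$, $L_3=\{y=\tfrac12 e_1\}$, $L_4=\{x=\tfrac12 e_1\}$, and the cyclic intersections $L_1\cap L_2=\{0\}$, $L_2\cap L_3=\{(\i/2,0,\dots,0)\}$, $L_3\cap L_4=\{((1+\i)/2,0,\dots,0)\}$, $L_4\cap L_1=\{(1/2,0,\dots,0)\}$ are all transverse and all lie on the complex line $\Pi:=\bC e_1$; there they form the four vertices of the square $S:=[0,\tfrac12]\times[0,\tfrac12]$, whose four sides are the lines $\Pi\cap L_1,\dots,\Pi\cap L_4$. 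Moreover each $L_k$ is a product $(\Pi\cap L_k)\times L_k'$, with $L_k'\subset\bigoplus_{j\ge 2}\bC_{z_j}$ the linear Lagrangian equal to $\bR^{n-1}$ for $k$ odd and to $\i\bR^{n-1}$ for $k$ even; in particular $0\in L_k'$ for every $k$, and $L_k'\cap L_{k+1}'=\{0\}$.

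Next I would build the map as the uniformizing map of the square, constant in the remaining directions. Let $\psi\colon D\to\operatorname{int}(S)$ be a biholomorphism; by Carath\'eodory it extends to a homeomorphism $\overline D\to S$, hence carries four boundary points $\zeta_1,\dots,\zeta_4$ (in cyclic order) to the four vertices of $S$, the cross-ratio of the $\zeta_k$ being the modulus of $S$ as a conformal quadrilateral --- which is $1$, by the $\tfrac\pi2$-rotational symmetry. Put $u:=(\psi,0,\dots,0)\colon D\to\bC^n$. Then $u$ is holomorphic; the boundary arc of $D$ between $\zeta_k$ and $\zeta_{k+1}$ is carried into $(\Pi\cap L_k)\times\{0\}\subset L_k$ (as $0\in L_k'$); and at $\zeta_k$ the map $u$ limits onto the corresponding vertex of $S$, which is exactly the corner $L_{k-1}\cap L_k$ (as $L_{k-1}'\cap L_k'=\{0\}$). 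So $u$ is a holomorphic quadrilateral with its four boundary arcs on $L_1,\dots,L_4$, whose only nonconstant component is the standard uniformizing map of a square, the usual local model for a rigid quadrilateral, cf.\ \cite{seidel-book}.

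Finally I would check rigidity, which I expect to be the crux. The linearized operator respects the splitting $\bC^n=\Pi\oplus\bigoplus_{j\ge2}\bC_{z_j}$, so $D_u=D_\psi\oplus D_0^{\oplus(n-1)}$, where $D_0$ linearizes at the constant map $0$ with boundary condition cyclically $\bR,\i\bR,\bR,\i\bR$, and $D_\psi$ is the analogous rank-one Cauchy--Riemann operator with boundary on the tangent lines to the sides of $S$ --- which are again, cyclically, $\bR,\i\bR,\bR,\i\bR$. An element of $\ker D_0$ (and likewise of $\ker D_\psi$) is a bounded holomorphic function $D\to\bC$ which is real on two opposite arcs and imaginary on the other two; Schwarz reflection across the four arcs together with Liouville's theorem forces it to be constant, and the boundary conditions then force it to vanish, so both kernels are zero; the same argument applies to the adjoints (whose boundary conditions are the orthogonal complements, again cyclically $\i\bR,\bR,\i\bR,\bR$), so $D_0$ is an isomorphism of index $0$. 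For $D_\psi$ the relevant input is the standard square model: its index is $-1$ (surjectivity over the moduli of domains coming from convexity of $\operatorname{int}(S)$), so its one-dimensional cokernel is filled by the unique infinitesimal deformation of the conformal structure of the $4$-pointed disc. Since the constant components are holomorphic for every complex structure on $D$, that deformation enters only the $\Pi$-factor; hence the full linearized operator is surjective with trivial kernel, the moduli space of holomorphic quadrilaterals is cut out transversally and is zero-dimensional near $u$, and $u$ is rigid. The delicate points are exactly (i) the grading bookkeeping behind $\operatorname{ind}D_u=-1$, which is where passing to the square is essential --- the uniformizing map of a square being the standard index-$(-1)$ quadrilateral relative to the gradings carried by the two cores and the two cotangent fibres --- and (ii) transversality of the $n-1$ constant factors, handled by the self-adjointness of the alternating boundary condition and the Schwarz-reflection argument above; everything else (existence and boundary regularity of $\psi$, the product decomposition of $D_u$, surjectivity of $\dbar$ over a convex region) is routine.
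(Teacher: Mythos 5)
Your proof takes essentially the same route as the paper's: project to the coordinate axes, note that the $z_2,\dots,z_n$ components of any such quadrilateral are forced to be constant, identify the $z_1$-component with the square $[0,\tfrac12]^2\subset\bC$ (which fixes the modulus of the domain), and deduce rigidity from the splitting of the linearized operator together with transversality at the convex corners. You merely spell out what the paper leaves implicit --- the Schwarz-reflection/Liouville argument for constancy of the constant factors and the index bookkeeping $\operatorname{ind}D_u=\operatorname{ind}D_\psi+(n-1)\operatorname{ind}D_0=-1+0$ --- so there is nothing new in the method, only in the level of detail.
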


\begin{proof}
Projecting to the co-ordinate axes $\bC \subset \bC^n$, the boundary conditions define the four distinct lines $\bR, \i \bR, \bR+\i /2,\i \bR+1/2 \subset \bC$ in the first factor, and the real and imaginary axes (each repeated) in the remaining $n-1$ factors.  Any holomorphic quadrilateral with these boundary conditions is therefore constant under projection to the co-ordinates $z_2,\ldots, z_n$, whilst there is a unique rigid quadrilateral under projection to the $z_1$-plane.  This fixes the modulus of the quadrilateral, and the corners are all convex, so the curve is isolated and transversely cut out.
\end{proof}

We now grade the compact cores and cotangent fibres of the plumbing as follows:

\[
HF(Q_i, \L[i]) \cong k[0]; \quad HF(Q_0, Q_1) \cong \begin{cases} k[n/2] \quad n \ \textrm{even} \\ k[(n-1)/2] \quad n \ \textrm{odd} \end{cases}
\]
The other gradings are determined by Poincar\'e duality, so
\[
HF(\L[i], Q_i) \cong k[n]; \quad HF(Q_1, Q_0) \cong \begin{cases} k[n/2] \quad n \ \textrm{even} \\ k[(n+1)/2] \quad n \ \textrm{odd} \end{cases}
\]

Via construction, cf. Definition \ref{defin:plumbing}, open subsets of these cores and fibres are identified with open disks in the linear Lagrangian subspaces of Lemma \ref{lem:quadrilateral}.
Rigidity of the quadrilateral described in Lemma \ref{lem:quadrilateral} means that it contributes to a non-trivial higher multiplication
\[
\mu^3: HF(\L[1], Q_1) \otimes HF(\L[0], \L[1]) \otimes  HF(Q_0, \L[0]) \ \longrightarrow \ HF(Q_0, Q_1)[-1]
\]
which in turn forces the group $HF(\L[0], \L[1])$ to be concentrated in degree 
\begin{equation} \label{eqn:final-degrees}
HF(\L[0], \L[1]) \cong \begin{cases} k[(2-n)/2] \quad n \ \textrm{even} \\ k[(1-n)/2] \quad n \ \textrm{odd} \end{cases}
\end{equation}
If $n>3$, it follows that for this (maximally symmetric) choice of grading, the chord from $\L[0]$ to $\L[1]$, or the shortest chord in the reverse direction as appropriate, cf. Remark \ref{rem:moon}, has strictly negative degree.  If $n=3$, one chord has negative degree whilst the other has degree $0$, whilst if $n=2$ the chords both have degree $0$.

At this stage, we conclude that the algebra $A^* = \oplus_{i,j} HW^*(\L[i], \L[j])$ is graded in degrees $\leq 0$ whenever $n\geq 2$, and its degree $0$ part is a quotient of $\oplus_i HW^0(\L[i], \L[i])$ whenever $n\geq 4$.  There is a final delicacy when $n=3$, which is that there may still be additional chords of degree $0$ not coming from loops in any component; however, (\ref{eqn:final-degrees}) implies these chords can arise only in one direction, i.e. from $Q_1$ to $Q_0$ but not vice-versa.  Therefore $A^0$ differs from a quotient of $\oplus_i HW^0(\L[i], \L[i])$ by a nilpotent ideal, as required.  (An alternative would be to restore symmetry to the grading of $A^*$ in \eqref{eqn:final-degrees} by taking gradings in $(1/2)\bZ$, in which case the argument when $n=3$ becomes formally identical to the higher-dimensional case.)

\subsection{Computation of the degree $0$ part of wrapped Floer cohomology}
\label{sec:computation-degree-0}
We shall prove Proposition \ref{prop:compute_HW_0_cotangents} in this Section.  The main point is to adapt to our setting the results of Abbondandolo and Schwarz \cite{ASchwarz} and of \cite{wrapped-based}, proving an isomorphism between wrapped Floer cohomology of cotangent fibres and the homology of the based loop space.

\subsubsection{From Floer theory to the homology of the based loop space}
Let $M$ be a Liouville manifold, and $Q \subset M$ a closed exact Lagrangian, which is an object of its Fukaya category.  The main result of \cite{wrapped-based} (see the discussion preceding Corollary 1.5) is that there exists a functor from the Fukaya category of $M$ to the category of modules over the chains on the based loop space of $Q$.  Let us consider the special case when a Lagrangian $L$ meets $Q$ at one point $q$.   Given a time-$1$ Hamiltonian chord $x$ with endpoints on $L$, define
\begin{equation}
  \cH(x)
\end{equation}
to be the space of pseudo-holomorphic maps
\begin{equation} \label{eq:positive_half_cylinder}
  u \co [0,1] \times [0,+\infty) \to M
\end{equation}
which asymptotically converge to $x$ as the second coordinate goes to $+\infty$, and with boundary conditions
\begin{equation}
  \parbox{30em}{$u(0,s) \in L$, $u(t,0) \in Q$, and $u(1,s) \in L$.}
\end{equation}
Note that $u(0,0) = u(1,0) = q$, since this is the only point lying both on $Q$ and $L$.  In particular, the path $t \mapsto u(t,0) $ is a based loop on $Q$.  In this special case, Lemma 4.5 \cite{wrapped-based}, asserts that the evaluation map
 \begin{equation}
    \ev \co  \cH(x)  \to \Omega_{q}(Q)
 \end{equation}
defines, at the level of complexes, a chain map
\begin{equation} \label{eq:map_CW_C_based_loops}
  CW^{*}( L) \rightarrow C_{-*}( \Omega_{q}Q). 
\end{equation}
Now, let $E$ be a local system on $Q$, with fibre $E_{q}$ at $q$.  Every local system defines an object of the Fukaya category (see \cite{Abouzaid-ExactLag} for a detailed description), and, in this situation, the fact that $L$ and $Q$ intersect at one point implies that we have a natural isomorphism
\begin{equation} \label{eq:iso_HW_fibre}
    HW^{*}(L,E) \cong E_{q}.
\end{equation}
The product in the Fukaya category
\begin{equation}
  HW^{*}(L,E) \otimes HW^{*}(L) \to HW^{*}(L,E) 
\end{equation}
therefore makes $E_{q}$ into a module over $ HW^{*}(L)  $; for degree reasons only $ HW^{0}(L) $ acts non-trivially.  This product counts maps \eqref{eq:positive_half_cylinder} which are rigid.  Therefore, given a generator $x \in CW^{0}(L)$, the associated map
\begin{equation}
  E_{q} \to E_{q}
\end{equation}
is obtained by taking the sum of the monodromy of the local system $E$ over all based loops on $Q$ associated to elements of $ \cH(x)  $.    Note that these monodromy maps make $E_{q}$ into a module over $H_{-*}( \Omega_{q}Q)$; again, for degree reasons, only $H_{0}( \Omega_{q}Q)$, which is the group ring of $\pi_1(Q,q)$, acts non-trivially.  We conclude that the $HW^{0}(L)$-module structure on $E_{q}$ is obtained by pulling back the $\bZ[\pi_1(Q,q)] $-module structure by the map induced by Equation \eqref{eq:map_CW_C_based_loops}  on homology. More precisely:
\begin{lem}
 The product in Floer theory, the isomorphism in Equation \eqref{eq:iso_HW_fibre}, and the map in Equation \eqref{eq:map_CW_C_based_loops} fit into a commutative diagram:
  \begin{equation}
    \xymatrix{ HW^{0}(L)  \otimes   HW^{0}(L,E) \ar[r] \ar[d] &  HW^{0}(L,E) \ar[d]^{\cong} \\
\bZ[\pi_1(Q,q)] \otimes  E_{q} \ar[r] &  E_{q}.}
  \end{equation}
  \qed
\end{lem}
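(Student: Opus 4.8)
The plan is to unwind both composites in the square on a generator $x \in HW^{0}(L)$ and an element $e \in HW^{0}(L,E)$, and to observe that the two resulting maps $E_{q}\to E_{q}$ are given by the same sum of monodromy operators; all the geometric input has in effect been assembled in the discussion preceding the statement, so the argument is a matter of bookkeeping once the moduli spaces are correctly identified. Throughout one uses that, for degree reasons, only $HW^{0}(L)$ and only $H_{0}(\Omega_{q}Q) \cong \bZ[\pi_{1}(Q,q)]$ act nontrivially, so it suffices to work with the zero-dimensional (hence, by Gromov compactness, finite) moduli spaces $\cH(x)$ attached to $x$ of degree $0$.

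First I would recall how the top arrow is computed. Since $L$ carries the trivial local system (as is the case for the cotangent fibres to which the result is applied) and meets $Q$ transversely in the single point $q$, the $A_{\infty}$-product $HW^{0}(L)\otimes HW^{0}(L,E)\to HW^{0}(L,E)$ is computed by counting rigid $\mu^{2}$-discs with two boundary arcs on $L$, one boundary arc on $Q$, and with the second input and the output both at $q$. Such a disc has one puncture and two corners, so it is biholomorphic to a half-strip $[0,1]\times[0,+\infty)$ converging to $x$, with $u(0,\cdot),u(1,\cdot)\in L$, $u(\cdot,0)\in Q$, and necessarily $u(0,0)=u(1,0)=q$; this is precisely the moduli space $\cH(x)$ of \eqref{eq:positive_half_cylinder}, and the identification of the product moduli problem with $\cH(x)$ is the content of Lemma 4.5 of \cite{wrapped-based}. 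The weight of a rigid $u\in\cH(x)$ is the monodromy of $E$ around the based loop $\ev(u)=(t\mapsto u(t,0))\in\Omega_{q}Q$, so under the isomorphism \eqref{eq:iso_HW_fibre} the class $x\cdot e$ is carried to $\sum_{u\in\cH(x)}\epsilon(u)\,\mathrm{mon}_{E}(\ev(u))(e)$, the sum over rigid $u$ with signs $\epsilon(u)$ coming from the orientation data.

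Next I would describe the other route. By definition the chain map \eqref{eq:map_CW_C_based_loops} sends $x$ to the zero-chain $\ev_{*}(\cH(x))=\sum_{u\in\cH(x)}\epsilon(u)\,[\ev(u)]$, whose homology class in $H_{0}(\Omega_{q}Q)=\bZ[\pi_{0}(\Omega_{q}Q)]=\bZ[\pi_{1}(Q,q)]$ is $\sum_{u}\epsilon(u)\,[\ev(u)]$ with each $[\ev(u)]$ now read as the homotopy class of the loop. The bottom arrow is the action of $\bZ[\pi_{1}(Q,q)]$ on $E_{q}$ through the monodromy representation of $E$, so a group element $g$ acts by $\mathrm{mon}_{E}(g)$, and hence the image of $x$ acts on $e$ by $\sum_{u}\epsilon(u)\,\mathrm{mon}_{E}([\ev(u)])(e)$. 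Since monodromy is invariant under homotopy of loops, $\mathrm{mon}_{E}([\ev(u)])=\mathrm{mon}_{E}(\ev(u))$, so this coincides with the expression obtained along the first route; that is exactly the asserted commutativity.

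The only real content, and the step where care is needed, is the identification of the moduli space $\cH(x)$ used to define \eqref{eq:map_CW_C_based_loops} with the one defining the $HW^{0}(L)$-action on $E_{q}$, together with the compatibility of the evaluation map $\ev$ with the product and the matching of orientations; for this one cites \cite[Lemma 4.5]{wrapped-based}, now read with a nontrivial local system switched on along the $Q$-arc. Everything else — homotopy-invariance of monodromy, the identification $H_{0}(\Omega_{q}Q)\cong\bZ[\pi_{1}(Q,q)]$ as (group) rings, and the reduction to degree $0$ — is formal, and assembling these pieces yields the commutative diagram.
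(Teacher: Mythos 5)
Your proposal is correct and follows the same route the paper takes: the ``proof'' in the paper is precisely the discussion preceding the lemma, which observes that the $\mu^2$-product and the map of Equation \eqref{eq:map_CW_C_based_loops} are both defined by the same moduli space $\cH(x)$ of half-strips, so that the two composites around the square both come out to the sum of monodromies over the loops $\ev(u)$, $u\in\cH(x)$. You have unwound this with appropriate care, including the correct observation that homotopy-invariance of monodromy supplies the last equality, and the caveat that the citation to \cite[Lemma 4.5]{wrapped-based} must be read with a local system turned on along the $Q$-arc.
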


Returning to the manifolds of interest, note that $\L[i]$ and $Q_{i}$ do intersect at one point.  In order to prove Proposition \ref{prop:compute_HW_0_cotangents}, it remains to show that the map induced by Equation \eqref{eq:map_CW_C_based_loops} yields an isomorphism
\begin{equation} \label{eq:map_to_group_ring}
  HW^{0}(\L[i]) \cong  H_{0}( \Omega_{q_i}Q_i) \cong \bZ[\pi_1(Q_i,q_i)].
\end{equation}
We shall prove this by exhibiting an inverse.

\subsubsection{Action functionals on based paths}
Let $\Omega_{q_i}(Q_i)$ denote the space of loops on $Q_i$, based at a point $q_i$, of class $W^{1,2}$.  Abbondandolo and Schwarz have related the Morse homology of the action functional on the spaces of such based loops to the wrapped Floer homology of cotangent fibres.   In the discussion below, we shall essentially follow \cite[Section 2]{ASchwarz}, modifying certain points to account for the fact that we need to compute the Floer homology in the plumbing.

We define the action of a path to be the integral
\begin{equation}
  \Energy^{T}(\gamma) = \int_{0}^{1} g^{T}\left( \frac{d \gamma}{dt} ,\frac{d \gamma}{dt}  \right) dt.
\end{equation}

\begin{lem} \label{lem:short_loops_independent_of_T}
If $S < T$, the set of based loops of action equal to $S^{2}$ is independent of $T$.
\end{lem}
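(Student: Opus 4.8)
The plan is to reduce everything to the observation, built into the construction of the metrics, that the $g_i^{T}$ all restrict to one and the same metric on a fixed subdomain. Concretely, property (3) of the functions $\chi_{T}$ gives $g_i^{T}|_{Q_i^{S}} = g_i^{S}|_{Q_i^{S}}$ whenever $S<T$, so all the $g_i^{T}$ with $T>S$ agree on $Q_i^{S}$. The lemma will follow once we show that every based loop $\gamma$ on $Q_i$ with $\Energy^{T}(\gamma)=S^{2}$ (for some, hence any, $T>S$) is entirely contained in $Q_i^{S}$: for such a $\gamma$ the energies $\Energy^{T'}(\gamma)$ coincide for all $T'>S$ (being computed from the single metric $g_i^{S}|_{Q_i^{S}}$), so the locus $\{\Energy(\gamma)=S^{2}\}$ does not depend on $T$.

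To establish the confinement, first apply the Cauchy--Schwarz inequality: for any smooth path $\gamma\co[0,1]\to Q_i$ one has $\ell_{g_i^{T}}(\gamma)\le \Energy^{T}(\gamma)^{1/2}$, so a loop of action $S^{2}$ has $g_i^{T}$-length at most $S$. Since $\gamma$ is a \emph{closed} loop based at $q_i$, each point $\gamma(t)$ is joined to $q_i$ along one of the two complementary sub-arcs $\gamma|_{[0,t]}$, $\gamma|_{[t,1]}$, and the shorter of these has length at most $\tfrac12\ell_{g_i^{T}}(\gamma)\le S/2$. Hence $\gamma$ lies in the closed $g_i^{T}$-ball $\overline{B}_{g_i^{T}}(q_i,S/2)$.

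It remains to check that $\overline{B}_{g_i^{T}}(q_i,S/2)\subset Q_i^{S}$. The complement $Q_i\setminus Q_i^{S}$ is the union of the inner flat region $\{|x|\le 1\}$ of the chart $U_i$ and the ``deeply stretched'' part of the neck, i.e.\ the collar points with $\chi_{T}$-coordinate $\ge S/2$; in the coordinates $(2-\log|x|,\theta)$ this latter set is the same for all $T>S$, because $\chi_{T}=\chi_{S}$ on $\{\chi_{T}<S/2\}$. We may assume the basepoint $q_i$ lies outside $U_i$, so it sits at a fixed $T$-independent $g_i$-distance $\rho_i>0$ from the outer boundary $\{s=0\}$ of the collar (the metric being $g_i$, independently of $T$, away from the neck). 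Since by construction the neck, equipped with $g_i^{T}$, is isometric to the product $[0,T+2]\times S^{n-1}$, the $g_i^{T}$-distance from $\{s=0\}$ to the level $\{\chi_{T}=S/2\}$ equals $S/2$, and the $g_i^{T}$-distance from $\{s=0\}$ across the whole collar to $\{|x|\le 1\}$ equals $T+2$. Therefore any path in $Q_i$ from $q_i$ into $Q_i\setminus Q_i^{S}$ has $g_i^{T}$-length at least $\rho_i+S/2>S/2$, so $\overline{B}_{g_i^{T}}(q_i,S/2)$ is disjoint from $Q_i\setminus Q_i^{S}$. This yields $\gamma\subset Q_i^{S}$, as needed.

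No step here is deep; the point that needs a little care is the last one, the distance bookkeeping inside the neck, which rests on the defining properties of $\chi_{T}$ --- in particular that $\chi_{T}$ realises the collar as the \emph{product} $[0,T+2]\times S^{n-1}$ and that $\chi_{T}$ and $\chi_{S}$ coincide on the region where they are $<S/2$. Run with an arbitrary $T'>S$ in place of $T$, the same argument shows more generally that the whole sublevel set $\{\gamma:\Energy^{T}(\gamma)\le S^{2}\}$ of based loops is independent of $T$ for $T>S$, which is the form in which the statement is used below.
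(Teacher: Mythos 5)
Your proof is correct and follows the same route as the paper: Cauchy--Schwarz bounds the length by $S$, the closedness of the loop then bounds its penetration into the neck by $S/2$, and property (3) of the $\chi_T$ makes the metrics $g_i^T$ coincide on $Q_i^S$. You spell out the confinement step via the closed ball $\overline{B}_{g_i^T}(q_i,S/2)$ where the paper phrases it as the loop being unable to escape a cylinder of length $S/2$, but these are the same observation; the only cosmetic blemish is the unjustified strict inequality $\rho_i>0$, which is harmless since $\rho_i\geq 0$ already suffices and the paper is equally casual about $<$ versus $\leq$.
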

\begin{proof}
The Cauchy-Schwartz inequality implies that
\begin{equation}
 \textrm{Length}(\gamma)^{2} \leq   \Energy^{T}(\gamma),
\end{equation}
hence a loop of action bounded by $S^2$ has length bounded by $S$.  Since the basepoint lies in the complement of the cylindrical region, every path which has length less than $S$ cannot escape a cylinder of length $S/2$.  From our choice of metric in Section \ref{sec:neck-stretching}, this implies that such a loop must be contained in $Q_i^{S}$, where the metrics  $g_{i}^{T}$ are independent of $T$ whenever $S < T$.
\end{proof}

Let us write $\Omega_{q_i}^{S}(Q_i; g_{i}^{T}) $ for the set of based loops of action bounded by $S^2$ with respect to the metric $g_{i}^{T}$.  By the above Lemma, we have an inclusion
\begin{equation}
  \Omega_{q_i}^{S}(Q_i; g_{i}^{S}) \subset \Omega_{q_i}^{T}(Q_i; g_{i}^{T})
\end{equation}
whenever $S < T$.  In particular, we can take the increasing union
\begin{equation}
  \cup_{T}  \Omega_{q_i}^{T}(Q_i; g_{i}^{T}) \subset  \Omega_{q_i}(Q_i).
\end{equation}
Note that all the loops in the above increasing union lie in $Q_{i}^{op}$.  Moreover, every loop in $Q_{i}^{op}$ has uniformly bounded length with respect to the metrics $g_{i}^{T}$ by Equation \eqref{eq:increasing_union}.  We conclude that
\begin{equation} \label{eq:increasing_union_loops_complement}
  \cup_{T}  \Omega_{q_i}^{T}(Q_i; g_{i}^{T}) = \Omega_{q_i}(Q_i^{op}) \subset \Omega_{q_i}(Q_i).
\end{equation}

\subsubsection{Morse homology of based paths}
The critical points of $\Energy^{T}$ are based geodesics, and the gradient flow of $\Energy$ is known to satisfy the Palais-Smale condition (see \cite[Proposition 2.5]{ASchwarz}) and the Morse homology of $\Energy^{T}$ computes the homology of the space of loops based at $q_i$.   More precisely, the homology of $  \Omega_{q_i}^{S}(Q_i; g_{i}^{T}) $  is computed by the homology of the subcomplex
\begin{equation}
  CM_{*}^{S}( \Energy^{T} ) \subset CM_{*}( \Energy^{T} )
 \end{equation}
generated by geodesics whose action is bounded by $S^2$. 

Lemma \ref{lem:short_loops_independent_of_T}, together with the fact that the negative gradient flow of $\Energy^{T}$ preserves $ \Omega_{q_i}^{S}(Q_i; g_{i}^{T})   $, imply that we have a canonical identification
\begin{equation}
  CM_{*}^{S}( \Energy^{T} ) = CM_{*}^{S}( \Energy^{T'} )
\end{equation}
whenever $T$ and $T'$ are both greater than $S$.   By Equation \eqref{eq:increasing_union_loops_complement}, the direct limit of the homology groups $HM^{T}_{*}( \Energy^{T} )  $ is the homology of the based loops in $Q_{i} - U_{i}$, with $U_i$ the chart of \eqref{eqn:charts-for-plumbing}:
\begin{equation}
 \lim_{T} HM_{*}^{T}( \Energy^{T} ) = H_{*}\left( \Omega_{q_i}(Q_i - U_{i}) \right).
\end{equation}

\subsubsection{Surjection in degree $0$}
Let $Q$ be a compact smooth manifold, $H$ a Hamiltonian on $\TQ$, and $J$ an almost complex structure on $\TQ$.  Given a chain $\sigma $ of loops on $Q$ based at $q \in Q$, and a time-$1$ Hamiltonian chord $x$ of $H$ with both endpoints on $T^{*}_{q}Q$, define
\begin{equation}
  \Moduli(x; \sigma)
\end{equation}
to be the moduli space of pseudo-holomorphic half-cylinders
\begin{equation} \label{eq:negative_half_cylinder}
  u \co [0,1] \times (-\infty,0] \to \TQ
\end{equation}
such that the projection of $u(t,0)$ to $Q$ is an element of $\sigma$, and $u$ converges, at $-\infty$ to $x$.  If $y$ is a generator of $ CM^{*}(\Energy) $, and $W^{u}(y )$ is the descending manifold with respect to the gradient flow of $\Energy$, Abbondandolo and Schwarz prove that the count of rigid elements of $ \Moduli(x; W^{u}(y))  $ defines a chain map
\begin{equation} \label{eq:map_AS}
  CM_{-*}(\Energy) \to CW^{*}(T^{*}_{q}Q).
\end{equation}
 For the Hamiltonian $ H= |p|^{2} $, the non-constant Hamiltonian chords with endpoints on $T^{*}_{q}Q$ are in bijective correspondence with Reeb chords with endpoints on the unit conormal, and with non-constant geodesics based at $q$.  Theorem 3.1 of \cite{ASchwarz} states that Equation \eqref{eq:map_AS} is a chain equivalence, which respects the action filtration; on the right hand side, the action of a chord $x$ is
 \begin{equation}
\Laction(x) =   \int x^{*}(\lambda) - H \circ x\, dt
 \end{equation}
where $\lambda$ is the Liouville form.    More precisely, \cite{ASchwarz} asserts that, after passing to the associated graded with respect to the action filtration, the map from Morse homology to Floer homology assigns to each geodesic the corresponding Reeb chord.

Let us now consider the situation of a plumbing:  From Equation \eqref{eq:stretched_plumbing}, we conclude that we have a codimension $0$ embedding 
\begin{equation}
  T^*Q_i^{T} \subset  \hat{M}.
\end{equation}
Let us write $\L[i]$ for the cotangent fibre at $q_i$, considered as a Lagrangian in $ \hat{M}$. Since all based geodesics for $g_{i}^{T}$ of action less than $T^2$ are contained in $ Q_i^{T} $ (see Lemma \ref{lem:short_loops_independent_of_T}), we may use the previous machinery to define a moduli space $  \Moduli(x; W^{u}(y)) $ whenever $x$ is a chord in $\hat{M}$ with endpoints on $L_i$, and $y$ a geodesic for $g_{i}^{T}$ of action less than $T$.  Note that, even in the special case when  $x$  lies in $ T^*Q_i^{T}  $, elements of this moduli space do not, a priori, have image contained in $  T^*Q_i^{T}  $.

From this moduli space, we obtain a map 
\begin{equation} \label{eq:map_to_CW}
    CM_{-*}^{T}( \Energy^{T} ) \to CW^{*}(\L[i]; r_{T}^{2}).
\end{equation}
Here $r_{T}^{2} \co \hat{M} \to [0+\infty)$ is the square of the cylindrical coordinate with respect to the decomposition $\hat{M} = M^{T} \cup \partial M^{T} \times [1,+\infty)$.  Note that this function extends the squared norm (with respect to $g_{i}^{T}$) of a cotangent vector in  $T^*Q_i^{T} $.  Let $CW^{*}_{T}(\L[i]; r_{T}^{2})  $ denote the subcomplex of the right hand side generated by chords whose action is bounded by $ T^2 $.
\begin{lem}
  The induced map on homology
\begin{equation} \label{eq:map_to_HW-bounded-action}
    HM_{0}^{T}( \Energy^{T} ) \to HW^{0}_{T}(\L[i]; r_{T}^{2}).
\end{equation}
is surjective.
\end{lem}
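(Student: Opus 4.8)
The plan is to identify the map \eqref{eq:map_to_HW-bounded-action} in degree $0$ with the degree-$0$ part of the Abbondandolo--Schwarz chain equivalence for the honest cotangent bundle $(T^{*}Q_i,g_i^{T})$, truncated at action $T^{2}$, and then to read off surjectivity from elementary homological algebra. Both complexes in play are filtered by action: $CM_{-*}^{T}(\Energy^{T})$ is generated by based geodesics of $(Q_i,g_i^{T})$ of squared length $\leq T^{2}$, the target is generated by Hamiltonian chords of $r_{T}^{2}$ with endpoints on $\L[i]$ of action $\leq T^{2}$, and the half-cylinder count defining \eqref{eq:map_to_CW} lowers action (the action of the asymptotic chord is bounded by that of the incoming geodesic), so it actually lands in $CW^{*}_{T}(\L[i];r_{T}^{2})$ and respects the filtration, exactly as in \cite{ASchwarz}.

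The first genuine step is a \emph{confinement} statement for the generators: every Hamiltonian chord of $r_{T}^{2}$ with endpoints on $\L[i]$ and action $\leq T^{2}$ lies in the region $T^{*}Q_i^{T}\subset\hat{M}$, where $r_{T}^{2}=|p|^{2}_{g_i^{T}}$. Indeed a chord that left $T^{*}Q_i^{T}$ would have to cross the collar separating $\L[i]$ from the model region $R^{T}$ and return, and that collar has $g_i^{T}$-length of order $T$, so its underlying geodesic segment would be longer than $T$ and its action larger than $T^{2}$; since all relevant invariants are $T$-independent, we may take $T$ large enough to rule this out. Once confined, the chord is an ordinary geodesic chord of $(Q_i,g_i^{T})$, and its action is the squared length of the geodesic, which by the Cauchy--Schwarz estimate of Lemma \ref{lem:short_loops_independent_of_T} is contained in $Q_i^{T}$ --- so it is genuinely a generator of $CM_{-*}^{T}(\Energy^{T})$, in degree equal to its Morse index. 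Hence in degree $0$ the generators of source and target are in bijection, via "geodesic $\leftrightarrow$ corresponding chord."

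Granting this, I would invoke Theorem 3.1 of \cite{ASchwarz} (and the based-loop refinement of \cite{wrapped-based}): after passing to the associated graded for the action filtration, \eqref{eq:map_to_CW} sends each geodesic to the chord it determines, with coefficient $\pm1$ --- a local statement near the (confined) chord, so it transfers verbatim from $T^{*}Q_i$ to $\hat{M}$. Ordering generators by action, the chain-level map
\[
f_{0}\co CM_{0}^{T}(\Energy^{T})\longrightarrow CW^{0}_{T}(\L[i];r_{T}^{2})
\]
is then upper triangular with invertible diagonal, hence an isomorphism; possible escaping holomorphic curves can only affect off-diagonal entries and are irrelevant here. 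Now $CM_{-*}^{T}(\Energy^{T})$ has nothing in degree $1$, since there are no geodesics of Morse index $-1$, so the Morse differential out of $CM_{0}^{T}$ vanishes; as \eqref{eq:map_to_CW} is a chain map, $d_{CW}\circ f_{0}=f_{1}\circ d_{CM}=0$. Thus $f_{0}(CM_{0}^{T})$, which is all of $CW^{0}_{T}(\L[i];r_{T}^{2})$, lies in $\ker(d_{CW}\colon CW^{0}_{T}\to CW^{1}_{T})$, so $HW^{0}_{T}(\L[i];r_{T}^{2})$ is the quotient of $CW^{0}_{T}(\L[i];r_{T}^{2})=f_{0}(CM_{0}^{T})$ by $d_{CW}(CW^{-1}_{T})$. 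Since $f_{0}$ also carries $d_{CM}(CM_{-1}^{T})$ into $d_{CW}(CW^{-1}_{T})$ (again because $f$ is a chain map), it descends to the asserted surjection $HM_{0}^{T}(\Energy^{T})\twoheadrightarrow HW^{0}_{T}(\L[i];r_{T}^{2})$.

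I expect the main obstacle to be making the confinement of the second paragraph precise --- both the length estimate for chords of action $\leq T^{2}$ and, for the diagonal part of the map, the statement that the rigid half-cylinders asymptotic to such chords remain in $T^{*}Q_i^{T}$ --- and in checking that the Abbondandolo--Schwarz associated-graded description is genuinely unaffected by the ambient plumbing. This is exactly the role the neck-stretching parameter $T$ was introduced to play, and it is the analogue for chords and Floer strips of the length estimate already carried out for loops in Lemma \ref{lem:short_loops_independent_of_T}; everything downstream is either quoted from \cite{ASchwarz,wrapped-based} or the short homological-algebra argument above.
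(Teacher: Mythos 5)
Your proof is correct and follows essentially the same path as the paper's: identify the degree-$0$ generators on the two sides with based geodesics of $g_i^T$ of vanishing Jacobi index in $Q_i^T$, invoke the action filtration (exactness plus Equation (3.1) of \cite{AS}) to get an upper-triangular chain map with $\pm1$ diagonal entries coming from stationary solutions, and then pass to cohomology. The one place you go beyond the published argument is the explicit confinement step for Hamiltonian chords of action $\leq T^2$ --- the paper absorbs this into ``from the discussion above'' and the analogous loop-length estimate of Lemma \ref{lem:short_loops_independent_of_T}, whereas you spell out that a chord escaping $T^*Q_i^{T}$ would have to traverse the collar of $g_i^{T}$-length $\sim T$ twice, driving its action past $T^2$. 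That is the intended argument and it is worth making explicit. A second, cosmetic difference: to see that $HW^0_T$ is a quotient of $CW^0_T$ the paper observes directly that the Floer complex sits in non-positive degrees so $CW^1_T=0$; you instead derive $d_{CW}\circ f_0 = 0$ from the chain-map identity and the vanishing of the Morse differential out of degree $0$. The two are equivalent, and your version has the minor virtue of not invoking Proposition \ref{prop:non-positive} at this point.
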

\begin{proof}
From the discussion above, the natural bases of  $     CM_{0}^{T}( \Energy^{T} ) $  and $CW^{0}_{T}(\L[i]; r_{T}^{2}) $ can be identified, since they both correspond to geodesics in $Q_i^{T}  $ with vanishing Jacobi index.  The exactness of $\hat{M}$ and Equation (3.1) of \cite{AS} imply that Equation (\ref{eq:map_to_CW}) preserves the action filtration: in the natural bases above, the corresponding matrix is therefore upper triangular.  Moreover, as in the case of cotangent bundles,  the diagonal entries of this matrix are $\pm 1$, corresponding to stationary solutions of the pseudo-holomorphic curve equation satisfied by elements of $  \Moduli(x; W^{u}(y)) $.

We conclude that the map
\begin{equation} \label{eq:map_to_CW-bounded-action}
    CM_{0}^{T}( \Energy^{T} ) \to CW^{0}_{T}(\L[i]; r_{T}^{2}).
\end{equation}
is an isomorphism.  Since the Morse complex is supported in non-negative degrees, whilst the Floer complex is supported in non-positive degrees, passing to cohomology in degree $0$ is obtained by taking the quotients of the two sides in Equation (\ref{eq:map_to_CW-bounded-action}) by the image of degree $1$ elements on the left, and degree $-1$ elements on the right.  We conclude that the induced map on cohomology is surjective.
\end{proof}

Since wrapped Floer homology is a symplectic invariant, the homology of the right hand side of Equation \eqref{eq:map_to_CW} is independent of $T$.  In particular, we obtain a map
\begin{equation} \label{eq:map_to_HW}
  \lim_{T} HM_{-*}^{T}( \Energy^{T} ) \to   \lim_{T} HW^{*}_{T}( \Energy^{T} ) \equiv HW^{*}(\L[i]).
\end{equation}
\begin{cor}
 Equation \eqref{eq:map_to_HW} is surjective in degree $0$. \qed
\end{cor}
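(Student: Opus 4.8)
The plan is to deduce the statement by passing to the direct limit over the neck length $T$ in the surjection established in the previous Lemma. For each $T$ that Lemma produces a surjection
\[
HM_{0}^{T}(\Energy^{T}) \longrightarrow HW^{0}_{T}(\L[i]; r_{T}^{2}),
\]
so it suffices to check that these maps form a morphism of the two direct systems indexed by $T$, and then invoke exactness of the direct limit. On the Morse side, the structure map for $S<T$ is the inclusion of subcomplexes $CM_{*}^{S}(\Energy^{S}) \hookrightarrow CM_{*}^{T}(\Energy^{T})$, which makes sense by Lemma \ref{lem:short_loops_independent_of_T} because the generators on the left are geodesics contained in $Q_{i}^{S} \subset Q_{i}^{T}$, a region on which the two metrics agree. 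On the Floer side the structure map is the canonical comparison map induced by the inclusion $M^{S} \subset M^{T}$ together with the monotone relation $r_{S}^{2} \le r_{T}^{2}$ of Hamiltonians --- the same maps whose colimit realises the identification $\lim_{T} HW^{*}_{T}(\L[i]; r_{T}^{2}) \cong HW^{*}(\L[i])$.

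First I would verify that the resulting square commutes, i.e.\ that the Abbondandolo--Schwarz comparison map \eqref{eq:map_to_CW} is natural in $T$. The key point is that for $S<T$ the rigid pseudo-holomorphic half-cylinders of action at most $S^{2}$ counted for the metric $g_{i}^{T}$ coincide with those counted for $g_{i}^{S}$: the relevant chords and the descending manifolds $W^{u}(y)$ lie over $Q_{i}^{S}$, where nothing has changed, so the matrix of \eqref{eq:map_to_CW} on the action-$\le S^{2}$ window is unaffected when the neck is stretched further. The requisite compatibility with the action filtration is Equation (3.1) of \cite{AS}, already invoked in the proof of the Lemma. Granting commutativity, a compatible family of surjections of abelian groups has surjective direct limit, since the direct limit is an exact functor. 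By Equation \eqref{eq:increasing_union_loops_complement}, and because $H_{0}$ commutes with direct limits of spaces, the colimit of the left-hand terms is $H_{0}(\Omega_{q_{i}}(Q_{i}^{op}))$; the colimit of the right-hand terms is $HW^{0}(\L[i])$ by invariance of wrapped Floer cohomology. Hence \eqref{eq:map_to_HW} is surjective in degree $0$.

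The only step requiring genuine attention is precisely this naturality in $T$; everything else is a formal manipulation of direct limits. I therefore expect the verification that enlarging the neck leaves the count defining \eqref{eq:map_to_CW} unchanged on a fixed action window --- and that the Floer-side continuation maps are compatible with the Morse-side inclusions of subcomplexes --- to be the main (though minor) obstacle, requiring no analytic input beyond what is already assembled in Section \ref{sec:computation-degree-0}.
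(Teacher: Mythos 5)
Your argument is correct and is essentially the paper's own: the paper states the Corollary with a bare \(\qed\), relying implicitly on the fact that each \(HM_{0}^{T}(\Energy^{T}) \to HW^{0}_{T}(\L[i];r_{T}^{2})\) is surjective and that surjectivity passes to the direct limit. You merely make explicit the (genuinely needed but routine) naturality of the Abbondandolo--Schwarz map in \(T\), which the paper leaves implicit.
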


From the fact that $  CM_{*}^{T}( \Energy^{T} )  $ computes the ordinary homology of the space of based loops on $Q_i^{T}   $ of bounded energy, and Equation \eqref{eq:increasing_union_loops_complement}, we compute that the homology of the limit in Equation \eqref{eq:map_to_HW} is the homology of the space of based loops in $ Q_{i}^{op} $.  Note that, since the dimension of $Q_{i}$ is greater than $1$, every loop in $Q_i$ is homotopic to a loop lying in $  Q_{i}^{op} $, and, since the dimension is greater than $2$, every homotopy between loops in $Q_i$ can be assumed to lie in $Q_i^{op}$.  We conclude
\begin{cor}
There is a surjective homomorphism
\begin{equation} \label{eq:surjection}
 \bZ[\pi_{1}(Q_i)] \to HW^{0}(\L[i]).  
\end{equation} \qed
\end{cor}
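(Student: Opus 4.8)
The plan is to assemble what the preceding subsections have already put in place: the source of the degree-zero part of the comparison map \eqref{eq:map_to_HW} is the zeroth homology of a based loop space, which is a group ring, and the Corollary immediately preceding the statement records that this map is surjective in degree $0$.

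First, by the identification spelled out just before the statement, the source $\lim_{T} HM_{-*}^{T}(\Energy^{T})$ of \eqref{eq:map_to_HW} computes the singular homology $H_{-*}(\Omega_{q_i}(Q_i^{op}))$ of the based loop space of $Q_i^{op}$; this uses only that $CM_{*}^{T}(\Energy^{T})$ computes the homology of the sublevel set of loops of action at most $T^{2}$ (Palais--Smale for the energy functional, cf. \cite[Proposition 2.5]{ASchwarz}) together with the exhaustion \eqref{eq:increasing_union_loops_complement}. In degree zero, $H_{0}$ of a based loop space is the free abelian group on its set of path components, so $H_{0}(\Omega_{q_i}(Q_i^{op})) = \bZ[\pi_{0}\Omega_{q_i}(Q_i^{op})] = \bZ[\pi_{1}(Q_i^{op},q_i)]$, with the Pontryagin product of loops corresponding to the group multiplication. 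Then one checks that the inclusion $Q_i^{op}\hookrightarrow Q_i$ induces an isomorphism on $\pi_{1}$: it is surjective because a loop can be put in general position off the excised ball once $\dim Q_i > 1$, and injective because a homotopy of loops can likewise be pushed off that ball once $\dim Q_i > 2$ -- precisely the standing hypothesis $n\geq 3$. Hence $H_{0}(\Omega_{q_i}(Q_i^{op})) \cong \bZ[\pi_{1}(Q_i,q_i)]$.

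Finally, composing this chain of isomorphisms with the previous Corollary, which asserts that \eqref{eq:map_to_HW} is surjective in degree $0$, yields the desired surjection $\bZ[\pi_{1}(Q_i)] \twoheadrightarrow HW^{0}(\L[i])$; one should also note that it is a ring homomorphism, because the Abbondandolo--Schwarz-type moduli spaces underlying \eqref{eq:map_to_HW} intertwine the Pontryagin product on $H_{*}(\Omega_{q_i}Q_i)$ with the product on wrapped Floer cohomology, exactly as in \cite{ASchwarz, wrapped-based}. The only genuinely dimension-sensitive input is the $\pi_{1}$-identification, and there is no serious obstacle beyond keeping track of basepoints; it is worth noting that this step would fail for $\dim Q_i \leq 2$ (removing a ball from a surface enlarges the fundamental group), in line with the failure of non-positivity recorded in the example following Proposition \ref{prop:non-positive} for $\dim Q_i = 1$.
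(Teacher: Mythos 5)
Your argument is correct and is the same as the paper's: you identify the degree-zero part of the source of \eqref{eq:map_to_HW} with $H_0(\Omega_{q_i}(Q_{i}^{op}))\cong\bZ[\pi_1(Q_{i}^{op},q_i)]$, invoke $n\geq 3$ to conclude that the inclusion $Q_{i}^{op}\hookrightarrow Q_i$ induces a $\pi_1$-isomorphism (surjectivity needing $n>1$, injectivity $n>2$), and compose with the degree-zero surjectivity of \eqref{eq:map_to_HW} established in the immediately preceding Corollary. Your extra remarks about compatibility with the Pontryagin and wrapped products, and about the failure of the $\pi_1$-comparison when $\dim Q_i\leq 2$, are correct glosses that the paper leaves implicit but does not need for this statement.
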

\subsubsection{Isomorphism in degree $0$}
In this section, we complete the proof of Proposition \ref{prop:compute_HW_0_cotangents}, by showing that the composition of Equations \eqref{eq:map_to_group_ring} and \eqref{eq:surjection} is the identity.

The proof is a straightforward cobordism argument, along the same lines as the case of cotangent bundles, considered in  Section 5 of \cite{wrapped-based}.  The key idea is that one can glue the domains of the half-cylinders in Equations \eqref{eq:negative_half_cylinder} and \eqref{eq:positive_half_cylinder}, to obtain a pseudo-holomorphic map whose source is an annulus of very large modular parameter.   By considering the moduli space of pseudo-holomorphic maps whose source is an annulus of arbitrary modular parameter, one obtains a homotopy between the composition of  \eqref{eq:map_to_CW} with \eqref{eq:map_CW_C_based_loops} and the natural map
\begin{equation}
   CM_{-*}^{T}( \Energy^{T} ) \to C_{-*}( \Omega_{q}(Q) )
\end{equation}
which assigns to every critical point of $\Energy^{T}   $ the chain represented by a triangulation of its descending manifold.

Passing to the direct limit over $T$, and to homology, we conclude that the composition 
\begin{equation}
  H_{-*}( \Omega_{q_i}(Q^{op}_{i}) ) = \lim_{T} HM_{-*}^{T}( \Energy^{T} ) \to HW^{*}(\L[i]) \to H_{-*}( \Omega_{q_i}(Q_i) )
\end{equation}
is homotopic to the map 
\begin{equation}
  H_{-*}( \Omega_{q_i}(Q^{op}_{i}) ) \to H_{-*}( \Omega_{q_i}(Q_i) )
\end{equation}
induced by the inclusion $Q^{op}_{i} \subset   \Omega_{q_i}(Q_i)  $.  By our assumption on the dimension of $Q_i$, this map is an isomorphism in degree $0$.  We conclude:
\begin{lem}
Equations \eqref{eq:map_to_group_ring} and \eqref{eq:surjection} are isomorphisms.
\end{lem}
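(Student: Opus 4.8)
The plan is to deduce the Lemma formally, once one knows (i) that \eqref{eq:surjection} is surjective and (ii) that the composite
\[
\bZ[\pi_1(Q_i)] \xrightarrow{\;\eqref{eq:surjection}\;} HW^0(\L[i]) \xrightarrow{\;\eqref{eq:map_to_group_ring}\;} H_0(\Omega_{q_i}Q_i)
\]
is an isomorphism. Indeed, (ii) forces \eqref{eq:surjection} to be injective, which together with (i) makes it bijective, and then \eqref{eq:map_to_group_ring} is the composite of an isomorphism with the inverse of \eqref{eq:surjection}, hence itself an isomorphism. Since (i) is already established, the content is (ii).

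To establish (ii) I would run the parametrized moduli space argument of Section 5 of \cite{wrapped-based}, adapted to the stretched plumbing. Glue the domains of the half-cylinders \eqref{eq:negative_half_cylinder} and \eqref{eq:positive_half_cylinder} along their common asymptotic chord to obtain pseudo-holomorphic annuli of large modular parameter, and study the moduli space of pseudo-holomorphic annuli of arbitrary modulus $\rho \in (0,+\infty)$ with the corresponding boundary conditions on $\L[i]$ and on $Q_i$. As $\rho \to +\infty$ the annulus breaks into the two-level configuration whose count computes the composite of \eqref{eq:map_to_CW} with \eqref{eq:map_CW_C_based_loops}, i.e.\ on homology the composite of \eqref{eq:surjection} with \eqref{eq:map_to_group_ring}. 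As $\rho \to 0$ it degenerates to a disc carrying a single interior marked point constrained to $Q_i$, and counting rigid configurations of this type recovers the tautological chain map $CM_{-*}^{T}(\Energy^{T}) \to C_{-*}(\Omega_{q_i}(Q_i))$ which sends a based geodesic to a triangulation of its descending manifold. Compactness and gluing for this one-parameter family then supply a chain homotopy between the two chain-level maps, with the neck-stretched metrics of Section \ref{sec:neck-stretching} and Lemma \ref{lem:short_loops_independent_of_T} ensuring that every chord and geodesic of bounded action stays inside the region $Q_i^{T}$ on which the metrics $g_i^{T}$ agree.

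Passing to the direct limit in $T$, to homology in degree $0$, and invoking \eqref{eq:increasing_union_loops_complement}, the composite of \eqref{eq:surjection} and \eqref{eq:map_to_group_ring} is then identified with the map $H_0(\Omega_{q_i}(Q_i^{op})) \to H_0(\Omega_{q_i}(Q_i))$ induced by the inclusion $Q_i^{op} \subset Q_i$. Since $\dim Q_i \geq 3$, every loop in $Q_i$ is homotopic into $Q_i^{op}$ and every homotopy of such loops may be pushed into $Q_i^{op}$, so $\pi_1(Q_i^{op}) \to \pi_1(Q_i)$ is an isomorphism and hence so is the displayed map on $H_0$. This gives (ii), and the formal argument above then yields the Lemma. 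The main obstacle is the analytic input behind the annulus family: transversality for the interpolating moduli spaces, the two gluing statements at $\rho = +\infty$ and $\rho = 0$, and the check that the comparison respects the action filtrations so that the two complexes being compared have matching bases indexed by short based geodesics; this is a routine adaptation of \cite{ASchwarz} and \cite{wrapped-based}, the only genuinely new ingredient being the localization near $q_i$ provided by the neck-stretching.
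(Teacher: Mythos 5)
Your proposal is correct and follows essentially the same route as the paper: you establish the key input (ii) via the gluing/cobordism argument of Section~5 of \cite{wrapped-based}, interpolating between the broken configuration computing the composite of \eqref{eq:map_to_CW} and \eqref{eq:map_CW_C_based_loops} and the tautological chain map, then identify the composite on homology with the inclusion-induced map $H_0(\Omega_{q_i}(Q_i^{op}))\to H_0(\Omega_{q_i}(Q_i))$ (an isomorphism since $\dim Q_i\geq 3$), and finally run the same formal argument using the already-known surjectivity of \eqref{eq:surjection}. This matches the paper's proof in structure and in every essential step.
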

\begin{proof}
Since the compositions of Equations \eqref{eq:map_to_group_ring} and \eqref{eq:surjection} is the identity, the first map must be surjective, and the second injective.  However, we already know that Equation \eqref{eq:surjection} is a surjection, so it is therefore an isomorphism.  Since the inverse to an isomorphism is necessarily injective, we conclude that Equations \eqref{eq:map_to_group_ring} is an isomorphism as well.
\end{proof}


\begin{bibdiv}
\begin{biblist}

\bib{ASchwarz}{article}{
   author={Abbondandolo, Alberto},
   author={Schwarz, Matthias},
   title={Floer homology of cotangent bundles and the loop product},
   journal={Geom. Topol.},
   volume={14},
   date={2010},
   number={3},
   pages={1569--1722},
   issn={1465-3060},
   review={\MR{2679580}},
   doi={10.2140/gt.2010.14.1569},
}

\bib{AS}{article}{
author={Mohammed Abouzaid},
author={Paul Seidel},
title={An open string analogue of Viterbo   functoriality},
journal ={ Geom.\ Topol.},
volume ={14}, 
date = {2010},
pages = { 627--718},
}

\bib{plumbings}{article}{  
  title={{A topological model for the Fukaya categories of plumbings}},
  author={Mohammed Abouzaid},
  journal ={Journal of Differential Geometry},
volume ={87},
number={1},
date = {2011},
pages = { 1--80},
 }

\bib{generate}{article}{
   author={Abouzaid, Mohammed},
   title={A geometric criterion for generating the Fukaya category},
   journal={Publ. Math. Inst. Hautes \'Etudes Sci.},
   number={112},
   date={2010},
   pages={191--240},
   issn={0073-8301},
   review={\MR{2737980}},
   doi={10.1007/s10240-010-0028-5},
}

\bib{wrapped-based}{article}{
title={On the wrapped Fukaya category and based loops},
author = {Abouzaid, Mohammed},
eprint = {0907.5606},
status ={to appear in the Journal of Symplectic Geometry},
}

\bib{fibres-generate}{article}{
title={A cotangent fibre generates the Fukaya category},
author = {Abouzaid, Mohammed},
eprint = {1003.4449},
status ={to appear in Advances in Mathematics},
}

\bib{Abouzaid-ExactLag}{article}{
author={Mohammed Abouzaid},
title = {Nearby Lagrangians of vanishing Maslov class are homotopy equivalent},
 eprint = {arXiv:1005.0358},
 status = {to appear in Inventiones Mathematicae},
 }

 \bib{BishopONeill}{article}{
   author={Bishop, R. L.},
   author={O'Neill, B.},
   title={Manifolds of negative curvature},
   journal={Trans. Amer. Math. Soc.},
   volume={145},
   date={1969},
   pages={1--49},
   }
 
 \bib{Bott}{article}{
   author={Bott, Raoul},
   title={On manifolds all of whose geodesics are closed},
   journal={Ann. of Math. (2)},
   volume={60},
   date={1954},
   pages={375--382},
   }

 \bib{BEE}{article}{
 author={Frederic Bourgeois},
 author ={Yakov Eliashberg},
 author={Tobias Ekholm},
 title={Effect of Legendrian surgery},
 eprint ={arXiv:0911.0026},
 }

\bib{DGI}{article}{
   author={Dwyer, W. G.},
   author={Greenlees, J. P. C.},
   author={Iyengar, S.},
   title={Duality in algebra and topology},
   journal={Adv. Math.},
   volume={200},
   date={2006},
   number={2},
   pages={357--402},
   issn={0001-8708},
   review={\MR{2200850 (2006k:55017)}},
   doi={10.1016/j.aim.2005.11.004},
}

\bib{floer}{article}{
   author={Floer, Andreas},
   title={Morse theory for Lagrangian intersections},
   journal={J. Differential Geom.},
   volume={28},
   date={1988},
   number={3},
   pages={513--547},
   issn={0022-040X},
   review={\MR{965228 (90f:58058)}},
}

\bib{FO3}{book}{
   author={Fukaya, Kenji},
   author={Oh, Yong-Geun},
   author={Ohta, Hiroshi},
   author={Ono, Kaoru},
   title={Lagrangian intersection Floer theory: anomaly and obstruction.
   Part I},
   series={AMS/IP Studies in Advanced Mathematics},
   volume={46},
   publisher={American Mathematical Society},
   place={Providence, RI},
   date={2009},
   }
   
   \bib{FSS}{article}{
   author={Fukaya, Kenji},
   author={Seidel, Paul},
   author={Smith, Ivan},
   title={Exact Lagrangian submanifolds in simply-connected cotangent
   bundles},
   journal={Invent. Math.},
   volume={172},
   date={2008},
   number={1},
   pages={1--27},
   }

\bib{FSS2}{article}{
   author={Fukaya, Kenji},
   author={Seidel, Paul},
   author={Smith, Ivan},
   title={The Symplectic Geometry of Cotangent Bundles from a Categorical Viewpoint},
   conference={
      title={Homological Mirror Symmetry},
   },
   book={
      series={Lecture Notes in Physics},
      volume={757},
      publisher={Springer},
      place={Berlin / Heidelberg},
   },
   date={2009},
   pages={1--26},
   issn={1616-6361},
}

\bib{Hind}{article}{
   author={Hind, R.},
   title={Lagrangian spheres in $S^2\times S^2$},
   journal={Geom. Funct. Anal.},
   volume={14},
   date={2004},
   number={2},
   pages={303--318},
   }

\bib{IU}{article}{
   author={Ishii, Akira},
   author={Uehara, Hokuto},
   title={Autoequivalences of derived categories on the minimal resolutions
   of $A_n$-singularities on surfaces},
   journal={J. Differential Geom.},
   volume={71},
   date={2005},
   number={3},
   pages={385--435},
}

\bib{IUU}{article}{
   author={Ishii, Akira},
   author={Ueda, Kazushi},
   author={Uehara, Hokuto},
   title={Stability conditions on $A_n$-singularities},
   journal={J. Differential Geom.},
   volume={84},
   date={2010},
   number={1},
   pages={87--126},
   }

\bib{kad}{article}{
   author={Kadeishvili, T. V.},
   title={The algebraic structure in the homology of an $A(\infty
   )$-algebra},
   language={Russian, with English and Georgian summaries},
   journal={Soobshch. Akad. Nauk Gruzin. SSR},
   volume={108},
   date={1982},
   number={2},
   pages={249--252 (1983)},
   }

\bib{kontsevich-09}{article}{
author = {Kontsevich, Maxim},
title = {Symplectic geometry of homological algebra},
eprint = {http://www.ihes.fr/~maxim/publicationsanglais.html},
}

\bib{McLean}{article}{
   author={McLean, Mark},
   title={Lefschetz fibrations and symplectic homology},
   journal={Geom. Topol.},
   volume={13},
   date={2009},
   number={4},
   pages={1877--1944},
   }

\bib{Mane}{article}{
   author={Ma{\~n}{\'e}, R.},
   title={On a theorem of Klingenberg},
   conference={
      title={Dynamical systems and bifurcation theory},
      address={Rio de Janeiro},
      date={1985},
   },
   book={
      series={Pitman Res. Notes Math. Ser.},
      volume={160},
      publisher={Longman Sci. Tech.},
      place={Harlow},
   },
   date={1987},
   pages={319--345},
   }

\bib{Milnor}{article}{
	author={Milnor, J.},
	title={On simply connected 4-manifolds},
	conference={
		title={Symposium Internacionale topologia algebraica},
		address={Mexico},
		date={1958},
		},
		date={1958},
		pages={122--128},
		}

\bib{nadler}{article}{
author={ David Nadler},
title={ Microlocal branes are constructible sheaves},
journal={Selecta Mathematica, New Series},
volume={ 15},
number={ 4},
date={2009},
doi={10.1007/s00029-009-0008-0},
pages={563-619},
}

\bib{NZ}{article}{
   author={Nadler, David},
   author={Zaslow, Eric},
   title={Constructible sheaves and the Fukaya category},
   journal={J. Amer. Math. Soc.},
   volume={22},
   date={2009},
   number={1},
   pages={233--286},	
	}
	
\bib{Oancea:SpecSeq}{article}{
   author={Oancea, Alexandru},
   title={The K\"unneth formula in Floer homology for manifolds with
   restricted contact type boundary},
   journal={Math. Ann.},
   volume={334},
   date={2006},
   number={1},
   pages={65--89},	
   }
	
	\bib{Polterovich}{article}{
   author={Polterovich, L.},
   title={The surgery of Lagrange submanifolds},
   journal={Geom. Funct. Anal.},
   volume={1},
   date={1991},
   number={2},
   pages={198--210},
   }
	
\bib{Ritter}{article}{
   author={Ritter, Alexander F.},
   title={Deformations of symplectic cohomology and exact Lagrangians in ALE
   spaces},
   journal={Geom. Funct. Anal.},
   volume={20},
   date={2010},
   number={3},
   pages={779--816},
   }
	
\bib{Robbin-Salamon}{article}{	
	author = {Robbin, Joel},
	author = {Salamon, Dietmar},
     title= {The {M}aslov index for paths},
   journal = {Topology},
  volume = {32},
      date = {1993},
    number = {4},
     pages = {827--844},
	}
	
\bib{seidel-book}{book}{
   author={Seidel, Paul},
   title={Fukaya categories and Picard-Lefschetz theory},
   series={Zurich Lectures in Advanced Mathematics},
   publisher={European Mathematical Society (EMS), Z\"urich},
   date={2008},
   pages={viii+326},
   isbn={978-3-03719-063-0},
   review={\MR{2441780}},
}

\bib{Seidel:graded}{article}{
   author={Seidel, Paul},
   title={Graded Lagrangian submanifolds},
   language={English, with English and French summaries},
   journal={Bull. Soc. Math. France},
   volume={128},
   date={2000},
   number={1},
   pages={103--149},
}

\bib{Seidel:LES}{article}{
   author={Seidel, Paul},
   title={A long exact sequence for symplectic Floer cohomology},
   journal={Topology},
   volume={42},
   date={2003},
   number={5},
   pages={1003--1063},
   }

\bib{Seidel:bias}{article}{
   author={Seidel, Paul},
   title={A biased view of symplectic cohomology},
   conference={
      title={Current developments in mathematics, 2006},
   },
   book={
      publisher={Int. Press, Somerville, MA},
   },
   date={2008},
   pages={211--253},
}

\bib{seidel:kronecker}{article}{
   author={Seidel, Paul},
   title={Exact Lagrangian submanifolds in $T^*S^n$ and the graded
   Kronecker quiver},
   conference={
      title={Different faces of geometry},
   },
   book={
      series={Int. Math. Ser. (N. Y.)},
      volume={3},
      publisher={Kluwer/Plenum, New York},
   },
   date={2004},
   pages={349--364},
   }

\bib{ThomasYau}{article}{
   author={Thomas, R. P.},
   author={Yau, S.-T.},
   title={Special Lagrangians, stable bundles and mean curvature flow},
   journal={Comm. Anal. Geom.},
   volume={10},
   date={2002},
   number={5},
   pages={1075--1113},
   }

\bib{Wendl}{article}{
   author={Wendl, Chris},
   title={Strongly fillable contact manifolds and $J$-holomorphic
   foliations},
   journal={Duke Math. J.},
   volume={151},
   date={2010},
   number={3},
   pages={337--384},
   }

\end{biblist}
\end{bibdiv}

\end{document}